\setlist[enumerate]{leftmargin=56pt,labelsep=
8pt,itemsep=4pt,label=\upshape{(\thethm.\arabic*)}}
\title{On finiteness of relative log 
pluricanonical representations}
\author{Osamu Fujino}
\date{2026/7/29, version 0.25}
\subjclass[2020]{Primary 14E30; Secondary 14E07, 32C15}
\keywords{log pluricanonical representations, 
minimal model program, abundance conjecture, 
complex analytic spaces, semi-log canonical pairs, log canonical 
flips}
\address{Department of 
Mathematics, Graduate School of Science, 
Kyoto University, Kyoto 606-8502, Japan}
\email{fujino@math.kyoto-u.ac.jp}
\DeclareMathOperator{\Nlc}{Nlc}
\DeclareMathOperator{\Exc}{Exc}
\DeclareMathOperator{\A}{A}
\DeclareMathOperator{\PA}{PA}
\DeclareMathOperator{\NE}{NE}
\DeclareMathOperator{\Supp}{Supp}
\DeclareMathOperator{\Nklt}{Nklt}
\DeclareMathOperator{\lcm}{lcm}
\DeclareMathOperator{\id}{id}
\DeclareMathOperator{\Bir}{Bir}
\DeclareMathOperator{\xIm}{Im}
\DeclareMathOperator{\ev}{ev}
\DeclareMathOperator{\GL}{GL}
\DeclareMathOperator{\Aut}{Aut}
\DeclareMathOperator{\Bim}{Bim}
\newtheorem{thm}{Theorem}[section]
\newtheorem{lem}[thm]{Lemma}
\newtheorem{cor}[thm]{Corollary}
\newtheorem{prop}[thm]{Proposition}
\newtheorem{conj}[thm]{Conjecture}
\theoremstyle{definition}
\newtheorem{defn}[thm]{Definition}
\newtheorem{rem}[thm]{Remark}
\newtheorem{ex}[thm]{Example}
\newtheorem*{ack}{Acknowledgments}  
\newtheorem{step}{Step}
\newtheorem*{case}{Case}
\newtheorem{say}[thm]{}
\newtheorem*{claim}{Claim}
\begin{document}

\begin{abstract} 
We prove the finiteness of 
relative log pluricanonical representations in the 
complex analytic setting. 
As an application, we discuss the abundance conjecture for 
semi-log canonical pairs within this framework. 
Furthermore, we establish the existence of log canonical flips 
for complex analytic spaces.
Roughly speaking, 
we reduce the abundance conjecture for semi-log canonical 
pairs to the case of log canonical pairs in the complex analytic setting.
Moreover, 
we show that the abundance conjecture for 
projective morphisms of complex analytic spaces 
can be reduced to the classical abundance conjecture for projective varieties.
\end{abstract} 

\maketitle

\tableofcontents 

\section{Introduction}\label{b-sec1} 

In the very early stages of Mori's minimal model theory, Noboru Nakayama laid the foundations of the minimal model program for projective morphisms between complex analytic spaces in \cite{nakayama-lower} (see also \cite{nakayama}). Since then, to the best of our knowledge, a general theory of the minimal model program for projective morphisms between complex analytic spaces was not systematically developed for a long time. This framework was first revisited in \cite{fujino-bchm}, where the main results of \cite{bchm} were extended to the setting of projective morphisms between complex analytic spaces. Subsequently, through a somewhat different approach, \cite{dhp} obtained similar results, and \cite{lm} further broadened the applicability of the minimal model program to a wider class of spaces.

Since \cite{fujino-bchm}, in a series of papers \cite{fujino-vanishing}, \cite{fujino-cone-contraction}, \cite{fujino-quasi-log}, \cite{fujino-vanishing-pja}, \cite{fujino-fujisawa}, \cite{enokizono-hashizume-mmp}, among others, many of the results of the minimal model program known in the algebraic setting have been generalized to projective morphisms between complex analytic spaces. The present work can be viewed as a complex analytic generalization of \cite{fujino-gongyo} (see also \cite{fujino-abundance}). Based on the results of this paper, \cite{enokizono-hashizume-termination} and \cite{hashizume5} were subsequently written. Starting with \cite{fujino-bchm}, a series of papers by the author, Enokizono, and Hashizume, including the present paper, has established most of the general theory of the minimal model program in the complex analytic context. Furthermore, it has been shown that almost all open problems in the minimal model program in our complex analytic setting reduce to the original open problems in the algebraic setting. In any case, this paper forms an integral part of this effort to develop the minimal model program for projective morphisms between complex analytic spaces.

One of the main objectives of the present paper is to establish the following result related to the abundance conjecture:

\begin{thm}[{Abundance theorem for semi-log canonical pairs in the 
complex analytic setting, 
cf.~\cite[Theorem 1.5]{fujino-gongyo}}]\label{b-thm1.1}
Let $\pi\colon X\to Y$ be a projective morphism 
of complex analytic spaces, let $W$ be a 
compact subset of $Y$, and let $(X, \Delta)$ be a semi-log 
canonical pair such that $K_X+\Delta$ is $\mathbb Q$-Cartier. 
Let $\nu\colon X^\nu\to X$ be the normalization. 
Assume that $K_{X^\nu}+\Theta:=\nu^*(K_X+\Delta)$ 
is $\pi\circ \nu$-semiample over some open neighborhood of 
$W$. 
Then there exists an open neighborhood $U$ of $W$ and 
a divisible positive integer $m$ 
such that $\mathcal O_X(m(K_X+\Delta))$ is $\pi$-generated 
over $U$.  
\end{thm}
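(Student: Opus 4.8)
The plan is to induct on $\dim X$ and combine the gluing description of the semi-log canonical pair $(X,\Delta)$ with the finiteness of relative log pluricanonical representations proved above and a Serre-type vanishing theorem for projective morphisms of complex analytic spaces over a compact set.

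After replacing $Y$ by the open neighborhood over which $K_{X^\nu}+\Theta$ is $(\pi\circ\nu)$-semiample, we may assume this holds over all of $Y$, with $W$ still a compact subset of $Y$. Recall the structure of the normalization: writing $D\subset X^\nu$ for the conductor, $\bar D=\nu(D)\subset X$, and $\mu\colon D^n\to D$ for the normalization of $D$, there is an involution $\sigma$ of $D^n$ preserving the different $\Theta_{D^n}$ of $\Theta$, and $X$ is recovered from $X^\nu$ by gluing along $\sigma$; moreover $(X^\nu,\Theta)$ and $(D^n,\Theta_{D^n})$ are log canonical, $(\bar D,\Xi)$ is semi-log canonical with $\dim \bar D=\dim X-1$, and by adjunction $\nu^*(K_X+\Delta)$ pulls back on $D^n$ to $K_{D^n}+\Theta_{D^n}$, which is therefore $(\pi\circ\nu\circ\mu)$-semiample near $W$. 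When $\bar D=\emptyset$, i.e. $X=X^\nu$, the statement is the hypothesis itself, so we may assume the theorem in dimension $<\dim X$.

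The first step is to put the gluing into usable form using the finiteness theorem. The gluing of $X$ along $\sigma$, followed down the lower-dimensional log canonical centers, is encoded by a group $G$ of $B$-birational self-maps of $(D^n,\Theta_{D^n})$ that is a priori infinite; the finiteness of relative log pluricanonical representations asserts that, over a neighborhood of $W$, the image of $G$ in $\GL\bigl(H^0(D^n,\mathcal O_{D^n}(m(K_{D^n}+\Theta_{D^n})))\bigr)$ is finite. Consequently, for a fixed sufficiently divisible $m$ the graded $\mathcal O_Y$-algebra $\bigoplus_{l\ge 0}H^0\bigl(D^n,\mathcal O_{D^n}(lm(K_{D^n}+\Theta_{D^n}))\bigr)$ is finitely generated over a neighborhood of $W$, its $\sigma$-invariant part descends to $\bar D$, the divisor $K_{\bar D}+\Xi$ is $\mathbb Q$-Cartier with $\mathcal O_{\bar D}(m(K_{\bar D}+\Xi))$ pulling back to $\mathcal O_{D^n}(m(K_{D^n}+\Theta_{D^n}))$, and $\nu^*(K_X+\Delta)|_{D^n}$ being semiample forces $K_{\bar D}+\Xi$ to be $(\pi|_{\bar D})$-semiample near $W$. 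By the inductive hypothesis applied to $(\bar D,\Xi)$ there is then a divisible $m$ with $\mathcal O_{\bar D}(m(K_{\bar D}+\Xi))$ $(\pi|_{\bar D})$-generated over a neighborhood of $W$.

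It remains to lift sections from $\bar D$ to $X$. Using the conductor exact sequence
\[
0\longrightarrow\mathcal O_X(m(K_X+\Delta))\longrightarrow\nu_*\mathcal O_{X^\nu}(m(K_{X^\nu}+\Theta))\longrightarrow(\nu\circ\mu)_*\mathcal O_{D^n}(m(K_{D^n}+\Theta_{D^n})),
\]
in which the last arrow is the difference of the two branch restrictions compared through $\sigma$, one identifies $\pi_*\mathcal O_X(m(K_X+\Delta))$ over a neighborhood of $W$ with the group of $\sigma$-compatible pluricanonical sections of $X^\nu$. To exhibit, at an arbitrary point over $U$, such a section not vanishing there: away from $\bar D$ one produces it from sections of $m(K_{X^\nu}+\Theta)$ that vanish along $D$ (automatically $\sigma$-compatible), using that $m(K_{X^\nu}+\Theta)=\phi^*\mathcal L$ for the semiample contraction $\phi\colon X^\nu\to Z$ over $Y$ with structure morphism $\pi_Z\colon Z\to Y$ and $\mathcal L$ relatively ample, together with the Serre-type vanishing and generation theorems for $\mathcal L$; on $\bar D$ one takes, by the previous paragraph, a section of $\mathcal O_{\bar D}(m(K_{\bar D}+\Xi))$ nonvanishing at the given point, pulls it back to a $\sigma$-invariant section of $m(K_{D^n}+\Theta_{D^n})$, and lifts it to a ($\sigma$-compatible) section of $m(K_{X^\nu}+\Theta)$ through the restriction map. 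The surjectivity of this restriction map onto the relevant ($\sigma$-invariant, pulled back from the image of $D$ in $Z$) subspace of $H^0(D^n,\mathcal O_{D^n}(m(K_{D^n}+\Theta_{D^n})))$, over a neighborhood of $W$ and for a single divisible $m$, is the point I expect to be the \emph{main obstacle}: I would prove it by passing to $\phi$, realizing this restriction as restriction of $\mathcal L$ to the image $S=\phi(D)\subseteq Z$ followed by pullback, and invoking the vanishing of $R^1\pi_{Z*}\bigl(\mathcal I_S\otimes\mathcal L^{\otimes k}\bigr)$ near $W$ for $k$ large. The delicate parts are to match $\sigma$-invariance on $D^n$ with descent along $S$ — for which the finiteness of the pluricanonical representation is used once more, to ensure the induced self-map of $S$ is of finite order — and to absorb all the divisibilities collected along the way into one integer; taking $m$ to be their least common multiple then yields the required $\pi$-generation over a neighborhood $U$ of $W$.
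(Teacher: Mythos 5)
Your route is the conductor--gluing induction of Hacon--Xu/Koll\'ar, and it breaks at exactly the point the paper is designed to avoid. You reduce to the pair $(\bar D,\Xi)$ on the image of the conductor and apply the inductive hypothesis to it; but nothing guarantees that $\bar D$ carries a natural structure of a semi-log canonical pair with $K_{\bar D}+\Xi$ $\mathbb Q$-Cartier whose pull-back to $D^n$ is $K_{D^n}+\Theta_{D^n}$, nor that the $\sigma$-invariant part of the pluricanonical algebra ``descends to $\bar D$.'' Making such descent statements precise is the content of Koll\'ar's gluing/quotient theory (finite equivalence relations generated by $\sigma$ on the log canonical centers and existence of the quotient), which is available for algebraic spaces but, as the introduction of the paper stresses, is currently unclear for complex analytic spaces; the paper deliberately replaces it by the machinery of preadmissible/admissible sections on a dlt blow-up of $X^\nu$ (Definition \ref{c-def2.25}, Lemmas \ref{p-lem4.7}, \ref{p-lem4.8}, Proposition \ref{p-prop4.9}, Lemma \ref{p-lem4.10}), where the only descent to $X$ used is the soft one: a section on $X^\nu$ compatible on the conductor glues because $X$ is normal crossing in codimension one and $S_2$. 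So the correct inductive object is $(D^n,\Theta_{D^n})$ (or $\lfloor\widetilde\Delta\rfloor$ on the dlt model) together with its $B$-bimeromorphic gluing data, not $(\bar D,\Xi)$.

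The second gap is the lifting step, which you yourself flag as the main obstacle but whose proposed fix is too weak. Since $m(K_{X^\nu}+\Theta)=\phi^*\mathcal L$ for the semiample contraction $\phi\colon X^\nu\to Z$, the restriction of any section to the conductor is constant along connected fibers of $\phi$; the vanishing of $R^1\pi_{Z*}\bigl(\mathcal I_S\otimes\mathcal L^{\otimes k}\bigr)$ only produces extensions when $\phi(D)\subsetneq Z$ and the fibers of $\phi$ meet $D$ connectedly, i.e.\ the analogue of case (3) in the proof of Proposition \ref{p-prop4.9}. It says nothing when the conductor (or $\lfloor\Delta\rfloor$ on the dlt model) dominates $Z$, and it fails outright in the genuinely hard case where $\lfloor\Delta\rfloor\cap\phi^{-1}(z)$ is disconnected: there the prescribed $\sigma$-compatible section assigns possibly different values to the two connected components over $z$, and no cohomological surjectivity can reconcile this. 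In the paper this is handled by running a minimal model program over $Z$ around the relevant Stein compact set (Proposition \ref{p-prop4.6}, Lemma \ref{p-lem4.5}, relying on \cite[Theorem 1.2]{enokizono-hashizume-mmp}), reducing to a $\mathbb P^1$-fibration with one or two horizontal boundary divisors and an explicit $B$-bimeromorphic involution, and then by the averaging/elementary-symmetric-function argument of Lemmas \ref{p-lem4.7} and \ref{p-lem4.8}, which uses the finiteness of $\rho^{WU}_m(\Bim(X/Y,\Delta;W))$ (Corollary \ref{b-cor1.3}) to pass from preadmissible to admissible sections. Your single remark that ``the induced self-map of $S$ is of finite order'' does not substitute for this machinery, so as written the proposal does not close the induction.
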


In order to prove Theorem \ref{b-thm1.1}, we need: 

\begin{thm}[{Finiteness of relative log pluricanonical 
representations, I, cf.~\cite[Theorem 1.1]{fujino-gongyo}}]\label{b-thm1.2}
Let $\pi\colon X\to Y$ be a projective morphism from a {\em{(}}not 
necessarily connected{\em{)}} normal 
complex analytic space $X$ onto a complex variety $Y$ such that 
$(X, \Delta)$ is log canonical and that every irreducible 
component of $X$ is dominant onto $Y$. 
Let $m$ be a positive integer such that 
$m(K_X+\Delta)$ is Cartier and $\pi_*\mathcal O_X(m(K_X+\Delta))
\ne 0$. 
Assume that $K_X+\Delta$ is $\pi$-semiample. 
Then the image of 
\begin{equation}
\rho_m\colon \Bim (X/Y, \Delta)\to \Aut_{\mathcal O_Y}\left(
\pi_*\mathcal O_X(m(K_X+\Delta))\right)
\end{equation} 
is a finite group, where $\Bim(X/Y, \Delta)$ is 
the group of all $B$-bimeromorphic maps of $(X, \Delta)$ over $Y$. 
\end{thm}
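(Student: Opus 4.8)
The plan is to follow the strategy of \cite[Theorem 1.1]{fujino-gongyo}, adapting it to the relative complex analytic setting. First I would reduce to the case where $X$ is irreducible (equivalently connected): since $X$ has finitely many irreducible components, each dominant over $Y$, the group $\Bim(X/Y,\Delta)$ permutes the components, and a finite-index subgroup preserves each component; it suffices to bound the image of $\rho_m$ restricted to that subgroup, which reduces to understanding $B$-bimeromorphic self-maps of each component together with the way they can be glued. Next, by replacing $X$ with a suitable $B$-bimeromorphic model (a resolution or a dlt model, taken over a neighborhood of a point of $Y$ after shrinking $Y$, which is harmless since $Y$ is a complex variety and finiteness is checked stalk-wise / on the generic fibre), one can assume $(X,\Delta)$ is log smooth or at least has nice singularities, so that $B$-bimeromorphic maps extend to honest bimeromorphic maps interacting well with the log structure; the semiampleness of $K_X+\Delta$ over $Y$ gives a morphism $f\colon X\to Z$ over $Y$ with $K_X+\Delta\sim_{\mathbb Q} f^*A$ for some $\pi$-relatively ample $\mathbb Q$-divisor $A$ on $Z$, and each element of $\Bim(X/Y,\Delta)$ descends to a bimeromorphic self-map of $Z$ over $Y$.

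The key point, as in the algebraic case, is then twofold. First, one shows that $\rho_m$ factors through the (finite) automorphism data on the fibres of $Z\to Y$: the action on $\pi_*\mathcal O_X(m(K_X+\Delta)) = (\pi_Z)_*\mathcal O_Z(mA)$ is controlled, via the relative evaluation/restriction to a general fibre $X_y$, by the action of $\Bir(X_y,\Delta_y)$ on $H^0(X_y,\mathcal O_{X_y}(m(K_{X_y}+\Delta_y)))$, and the classical finiteness of log pluricanonical representations (Ueno, Deligne, and in the log canonical generality \cite[Theorem 1.1]{fujino-gongyo} / \cite{fujino-abundance}) applies to that projective log canonical fibre. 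Second, one checks that the global (relative) representation is determined by this fibrewise representation together with a monodromy action on the finitely many connected components of the relevant spaces over $Y$, which is again finite because everything is defined over a complex variety $Y$ and the constructions (the model $X\to Z$, the sheaf $\pi_*\mathcal O_X(m(K_X+\Delta))$, its locus of non-local-freeness) are compatible with generization. Concretely: the image of $\rho_m$ injects into a product of a finite monodromy group by the image of the generic-fibre representation, hence is finite.

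The main obstacle I anticipate is the passage between the relative statement and the absolute statement on a general fibre in a way that genuinely controls the \emph{whole} sheaf map, not just its generic stalk — i.e. showing that an element of $\Bim(X/Y,\Delta)$ acting trivially on $\pi_*\mathcal O_X(m(K_X+\Delta))$ over the generic point of $Y$ already acts trivially (or through a fixed finite group) on the nearby fibres and hence on the sheaf over a neighborhood of $W$. This requires a coherence/semicontinuity argument: after shrinking $Y$, $\pi_*\mathcal O_X(m(K_X+\Delta))$ is locally free and its formation commutes with base change (using $K_X+\Delta$ $\pi$-semiample, so relative vanishing applies on the model $Z$), so the representation $\rho_m$ is the "spreading out" of the generic-fibre representation and its image has the same order. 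Handling $B$-bimeromorphic (as opposed to bimeromorphic) maps correctly — keeping track of the gluing in codimension one and the way $\nu^*(K_X+\Delta)$ enters — is the technical heart, and I would isolate it in a lemma asserting that $\Bim(X/Y,\Delta)$ acts on a fixed finite-dimensional space through a subquotient of $\prod_y \Bir(X_y,\Delta_y)$ by a finite monodromy group, at which point finiteness follows from the known projective case.
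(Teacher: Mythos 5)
Your overall strategy (reduce to a nice model after shrinking $Y$, restrict to projective fibres, invoke the absolute finiteness theorem of Fujino--Gongyo, then globalize) is the same as the paper's, but the globalization step as you describe it has a genuine gap. Knowing that $\pi_*\mathcal O_X(m(K_X+\Delta))$ is locally free and compatible with base change does \emph{not} imply that the image of $\rho_m$ ``has the same order'' as its image on a general fibre: an element of $\Aut_{\mathcal O_Y}(\mathcal O_Y^{\oplus r})=\GL(r,\mathcal O_Y)$ which is the identity at one point (or on one fibre) need not be the identity, so the evaluation map is not injective on the image of $\rho_m$ for free. The paper supplies exactly the missing argument in Lemma \ref{a-lem3.1}: because the fibrewise images are finite, every eigenvalue of $\ev_y(\rho_m(g))$ is a root of unity for \emph{every} $y$, so the coefficients of the characteristic polynomial are holomorphic functions with values in the countable field generated by roots of unity, hence constant; combined with semisimplicity this makes $\ev_y$ injective on $\xIm\rho_m$. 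Nothing in ``semicontinuity/spreading out'' replaces this.

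This forces a second correction to your plan: Lemma \ref{a-lem3.1} needs finiteness of $\xIm\rho_{m,y}$ at \emph{every} point $y$, whereas your argument only treats a general fibre, where an element of $\Bim(X/Y,\Delta)$ honestly restricts to an element of $\Bir(X_y,\Delta_y)$. At a special $y$ this fails, and moreover there is no single fibre that is ``general'' for all $g$ simultaneously, since the bad locus depends on $g$ and $\Bim(X/Y,\Delta)$ need not be countable; so ``inject into the generic-fibre representation times a finite monodromy group'' does not literally make sense. The paper's Theorem \ref{a-thm3.2} handles an arbitrary $y$ by cutting with general hyperplanes through $y$ to reduce to $\dim Y=1$, describing $\rho_{m,y}(g)$ via the strict support condition ($\alpha_*\mathcal O_V\simeq\mathcal O_{X_y}\simeq\beta_*\mathcal O_V$), transferring the action through Lemma \ref{c-lem2.29} to a direct sum of sections on minimal log canonical strata of $(X_y,\Delta_y)$, applying Theorem \ref{c-thm2.23} to those strata to get a bound $l!k$ on the order of $\rho_{m,y}(g)$ that is independent of $g$, and then concluding with Burnside's theorem (Theorem \ref{c-thm2.24}). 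Your proposal is missing both this every-fibre mechanism and the uniform order bound that Burnside requires; without them the proof does not close.
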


As an easy consequence of Theorem \ref{b-thm1.2}, 
we have a useful corollary. 
We will use it in the proof of Theorem \ref{b-thm1.1}. 

\begin{cor}[{Finiteness of relative log pluricanonical 
representations, II, cf.~\cite[Theorem 1.1]{fujino-gongyo}}]\label{b-cor1.3}
Let $(X, \Delta)$ be an equidimensional {\em{(}}not necessarily 
connected{\em{)}} log canonical pair and let $\pi\colon X\to Y$ be a projective 
morphism of complex analytic spaces. 
Let $m$ be a positive integer such that 
$m(K_X+\Delta)$ is Cartier and $\pi_*\mathcal O_X(m(K_X+\Delta))
\ne 0$. 
Assume that $K_X+\Delta$ is $\pi$-semiample. Let 
$W$ be a compact subset of $Y$ and let 
$U$ be a semianalytic Stein open subset of $Y$ with $U\subset W$. 
Let $\Bim (X/Y, \Delta; W)$ be the group of 
all $B$-bimeromorphic maps $g$ defined over some open 
neighborhood $U_g$ of $W$. In this setting, 
we can consider 
\begin{equation}\label{b-eq1.1}
\rho^{WU}_m\colon \Bim(X/Y, \Delta; W)\to 
\Aut_{\mathcal O_U}\left(\pi_*\mathcal O_{\pi^{-1}(U)}(m(K_X+\Delta))\right). 
\end{equation} 
Then $\rho^{WU}_m(\Bim(X/Y, \Delta; W))$ is a finite group. 
\end{cor}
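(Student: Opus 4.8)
The plan is to deduce Corollary~\ref{b-cor1.3} from Theorem~\ref{b-thm1.2} through a chain of elementary reductions, the first of which replaces the base $Y$ by $U$. Since $U\subset W$, every $g\in\Bim(X/Y,\Delta;W)$ is, by definition, defined over some open neighbourhood $U_g$ of $W$, and $U\subset W\subset U_g$; hence $g$ restricts to a $B$-bimeromorphic self-map of $(\pi^{-1}(U),\Delta|_{\pi^{-1}(U)})$ over $U$. This assignment is a group homomorphism
\[
\Bim(X/Y,\Delta;W)\longrightarrow\Bim\bigl(\pi^{-1}(U)/U,\Delta|_{\pi^{-1}(U)}\bigr)
\]
through which $\rho^{WU}_m$ factors, compatibly with the action on $\pi_*\mathcal O_{\pi^{-1}(U)}(m(K_X+\Delta))$. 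It therefore suffices to bound the image of the representation $\rho_m$, defined as in Theorem~\ref{b-thm1.2}, associated with $\pi|_{\pi^{-1}(U)}\colon\pi^{-1}(U)\to U$. After renaming we may assume that $Y$ is a relatively compact semianalytic Stein space; a routine point, for which one uses the normality of $X$ together with the semianalyticity and relative compactness of $U$, shows that $X$ then has only finitely many irreducible components, and our task becomes to prove finiteness of the image of $\rho_m\colon\Bim(X/Y,\Delta)\to\Aut_{\mathcal O_Y}(\pi_*\mathcal O_X(m(K_X+\Delta)))$.

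Next I would sort the components of $X$ by their images. Since $(X,\Delta)$ is log canonical, $X$ is normal, so each connected component $X_a$ of $X$ is irreducible and $Z_a:=\pi(X_a)$ is a closed irreducible analytic subset of $Y$. Because $\pi\circ g=\pi$, every $g\in\Bim(X/Y,\Delta)$ induces a permutation of the finite set $\{X_a\}$ preserving the images $Z_a$. Collecting the $X_a$ with a common image and writing correspondingly $X=\bigsqcup_j X^{(j)}$ with the closed analytic subsets $Z^{(j)}:=\pi(X^{(j)})$ pairwise distinct, one obtains $\Bim(X/Y,\Delta)=\prod_j\Bim(X^{(j)}/Y,\Delta|_{X^{(j)}})$ together with a $\rho_m$-equivariant direct sum decomposition $\pi_*\mathcal O_X(m(K_X+\Delta))=\bigoplus_j\pi_*\mathcal O_{X^{(j)}}(m(K_X+\Delta))$. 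As the index set is finite, it is enough to bound the image of $\rho_m$ on each factor; so we may assume in addition that $\pi(X)=Z$ is irreducible and that every connected component of $X$ surjects onto $Z$.

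In this situation Theorem~\ref{b-thm1.2} applies after one more harmless change. Since $X$ is reduced, $\pi$ factors as the composition of a projective morphism $\pi'\colon X\to Z_{\mathrm{red}}$ with the closed immersion $\iota\colon Z_{\mathrm{red}}\hookrightarrow Y$, where $Z_{\mathrm{red}}$ is a complex variety and every irreducible component of $X$ is dominant onto it. Using $\pi_*=\iota_*\pi'_*$ and that $\iota^*\iota_*$ is the identity functor on coherent $\mathcal O_{Z_{\mathrm{red}}}$-modules, one checks routinely that $m(K_X+\Delta)$ is Cartier, that $K_X+\Delta$ is $\pi'$-semiample, that $\pi'_*\mathcal O_X(m(K_X+\Delta))\neq 0$ whenever $\pi_*\mathcal O_X(m(K_X+\Delta))\neq 0$, that $\Bim(X/Y,\Delta)=\Bim(X/Z_{\mathrm{red}},\Delta)$, and that $\iota_*$ induces an isomorphism $\Aut_{\mathcal O_{Z_{\mathrm{red}}}}\bigl(\pi'_*\mathcal O_X(m(K_X+\Delta))\bigr)\cong\Aut_{\mathcal O_Y}\bigl(\pi_*\mathcal O_X(m(K_X+\Delta))\bigr)$ intertwining the two copies of $\rho_m$. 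If $\pi'_*\mathcal O_X(m(K_X+\Delta))=0$ the assertion is vacuous; otherwise Theorem~\ref{b-thm1.2} applies to $\pi'\colon X\to Z_{\mathrm{red}}$ and yields finiteness of the image of $\rho_m$. Reading this back through the two preceding reductions proves the corollary.

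All the genuine content lies in Theorem~\ref{b-thm1.2}, and I anticipate no serious obstacle in this deduction, which is essentially bookkeeping. The one point that is not purely formal is the finiteness of the number of irreducible components of $X$ after shrinking the base to $U$, which is precisely where the hypotheses that $U$ be semianalytic and relatively compact (the latter holding because $U\subset W$ with $W$ compact) enter; the remaining verifications — that $\Bim$ and $\rho_m$ split over the pieces $X^{(j)}$, and that $\pi$-semiampleness and the non-vanishing of $\pi_*\mathcal O_X(m(K_X+\Delta))$ descend along $X^{(j)}\to Z^{(j)}_{\mathrm{red}}$ — are straightforward.
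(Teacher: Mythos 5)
Your deduction is correct and follows essentially the same route as the paper's proof: split $X$ according to the images of its components, apply Theorem \ref{b-thm1.2} to each piece, and embed the image of $\rho^{WU}_m$ into a finite product of the resulting finite groups, with semianalyticity plus relative compactness supplying the finiteness of the number of factors. The only (harmless) difference is bookkeeping: the paper groups the components of $X$ over $Y$ and applies Theorem \ref{b-thm1.2} over the finitely many connected components of $Y_i\cap U$, whereas you restrict over $U$ first and apply it over the reduced images $Z^{(j)}_{\mathrm{red}}\subset U$.
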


As an application of Theorem \ref{b-thm1.1}, 
we have: 

\begin{thm}[{Freeness for nef and log abundant log canonical 
bundles, cf.~\cite[Theorem 1.6]{fujino-gongyo}}]\label{b-thm1.4}
Let $(X, \Delta)$ be a semi-log canonical 
pair and let $\pi\colon X\to Y$ be a projective 
morphism of complex analytic spaces. 
Assume that $K_X+\Delta$ is $\mathbb Q$-Cartier 
and is $\pi$-nef and $\pi$-log abundant with respect to 
$(X, \Delta)$ over $Y$. 
Let $W$ be a compact subset of $Y$. 
Then there exists a positive integer $m$ such that 
$\mathcal O_X(m(K_X+\Delta))$ is $\pi$-generated 
over some open neighborhood of $W$. 
\end{thm}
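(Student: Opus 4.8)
The plan is to reduce Theorem~\ref{b-thm1.4} --- by induction on $\dim X$ --- first to the log canonical case via Theorem~\ref{b-thm1.1}, then to a relative analytic version of the statement that a nef and log abundant log canonical divisor is semiample, which I would prove by peeling off the non-klt locus and descending along the relative Iitaka fibration; Theorem~\ref{b-thm1.2} and Corollary~\ref{b-cor1.3} come in only at the last stage, through the numerically trivial case.

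First I would pass to the normalization $\nu\colon X^{\nu}\to X$, a finite morphism, and set $K_{X^{\nu}}+\Theta:=\nu^{*}(K_X+\Delta)$, so that $(X^{\nu},\Theta)$ is a (possibly disconnected) log canonical pair with $\Theta\ge 0$. By Theorem~\ref{b-thm1.1} it suffices to show $K_{X^{\nu}}+\Theta$ is $(\pi\circ\nu)$-semiample over some open neighborhood of $W$. As $K_{X^{\nu}}+\Theta$ is the pullback of a $\pi$-nef divisor along a finite map, it is $(\pi\circ\nu)$-nef, and by the very definition of $\pi$-log abundance with respect to the semi-log canonical pair $(X,\Delta)$ it is $(\pi\circ\nu)$-log abundant with respect to $(X^{\nu},\Theta)$. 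Hence I am reduced to: for $(X,\Delta)$ log canonical, $\pi\colon X\to Y$ projective, and $K_X+\Delta$ $\pi$-nef and $\pi$-log abundant with respect to $(X,\Delta)$ over $Y$, show $K_X+\Delta$ is $\pi$-semiample over an open neighborhood of $W$.

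For this last statement I would first shrink $Y$ to a relatively compact semianalytic Stein open neighborhood of $W$ --- harmless here --- so that the relative analytic minimal model program, cone and contraction theorems, and vanishing theorems of \cite{fujino-cone-contraction}, \cite{fujino-vanishing}, \cite{enokizono-hashizume-mmp} become available. If $(X,\Delta)$ is not klt, put $Z:=\Nklt(X,\Delta)$ with its adjunction semi-log canonical structure $(Z,\Theta_Z)$: then $\dim Z<\dim X$, $(K_X+\Delta)|_Z=K_Z+\Theta_Z$ is $\pi|_Z$-nef, and $\pi|_Z$-log abundance is inherited, so by the induction hypothesis (Theorem~\ref{b-thm1.4} in dimension $<\dim X$, applied to the slc pair $(Z,\Theta_Z)$) the divisor $(K_X+\Delta)|_Z$ is $\pi|_Z$-semiample over an open neighborhood of $W$. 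Next, over the now Stein base $Y$, I would form the relative Iitaka fibration $g\colon X\dashrightarrow V$ of $K_X+\Delta$. Since $K_X+\Delta$ is $\pi$-abundant, its restriction to a general fibre $F$ of $g$ is numerically trivial, $(F,\Delta|_F)$ is log canonical, and $(K_X+\Delta)|_F\equiv 0$; the abundance theorem for numerically trivial log canonical divisors --- exactly the fibrewise consequence of Theorem~\ref{b-thm1.2} and Corollary~\ref{b-cor1.3} --- upgrades this to $(K_X+\Delta)|_F\sim_{\mathbb Q}0$. A relative analytic canonical bundle formula should then produce, for a divisible $m>0$, a log canonical pair $(V,\Delta_V)$, a nef moduli $\mathbb Q$-divisor $M_V$, and a relation $m(K_X+\Delta)\sim g^{*}L$ with $L\sim_{\mathbb Q}K_V+\Delta_V+M_V$ nef and big over $Y$ and with $\pi$-log abundance preserved, so that $L$ is $\pi_V$-semiample over $\Nklt(V,\Delta_V)$ by the induction hypothesis applied on $\Nklt(V,\Delta_V)$ (whose dimension is $<\dim X$). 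For $n\ge 2$ the divisor $nL-(K_V+\Delta_V)\sim_{\mathbb Q}(n-1)L+M_V$ is a sum of a big and a nef divisor over $Y$, hence nef and big over $Y$; combined with the semiampleness of $L$ over $\Nklt(V,\Delta_V)$, the analytic base point free theorem of Reid--Fukuda type \cite{fujino-cone-contraction}, \cite{fujino-vanishing} should yield that $L$, hence $m(K_X+\Delta)\sim g^{*}L$, is $\pi$-semiample over an open neighborhood of $W$. Going back through the first reduction completes the argument.

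The hard part will be this last descent: constructing the relative Iitaka fibration in the complex analytic category over a non-compact Stein base and descending $\mathbb Q$-linear equivalence through a relative canonical bundle formula, all while keeping uniform control over the compact set $W$; Theorem~\ref{b-thm1.2} and Corollary~\ref{b-cor1.3} are used exactly there, to promote numerical triviality on the general fibre to $\mathbb Q$-linear triviality, which is the analytic analogue of the mechanism of \cite{fujino-gongyo}. Matching slc centers of $(X,\Delta)$ with lc centers of $(X^{\nu},\Theta)$ and patching the local semiample structures should, by contrast, be routine once the machinery is set up. Should the full analytic descent prove too delicate, an alternative is to reduce the relative analytic statement to the classical abundance theorem for nef and log abundant log canonical divisors on projective varieties and conclude from that, but the inductive route above is the more self-contained one.
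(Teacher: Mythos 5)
Your overall skeleton agrees with the paper's: reduce to the normal (log canonical) case via Theorem \ref{b-thm1.1}, induct on the dimension through the boundary part, and finish with a canonical-bundle-formula plus Reid--Fukuda-type basepoint-free argument. In the paper this last step is isolated as Theorem \ref{p-thm5.1}, proved by adapting \cite[Section 6]{fujino-basepoint-free} together with the analytic canonical bundle formula \cite[Theorem 21.4]{fujino-bchm}; the inductive input there is precisely that $(K_X+\Delta)|_{\lfloor \Delta\rfloor}$ is $\pi$-semiample on a dlt model.

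There are, however, genuine gaps in your version. First, you apply adjunction and the induction hypothesis directly to $Z:=\Nklt(X,\Delta)$ of an arbitrary log canonical pair, asserting that $(Z,\Theta_Z)$ is semi-log canonical with $(K_X+\Delta)|_Z=K_Z+\Theta_Z$. This is not available: the non-klt locus of an lc pair contains centers of all codimensions, need not be equidimensional, $S_2$, or nodal in codimension one, and there is no divisorial adjunction to non-divisorial centers. The paper avoids this by first taking a dlt blow-up (\cite[Theorems 1.21 and 1.27]{fujino-bchm}, harmless since nef and log abundance pulls back, Lemma \ref{c-lem2.12}), after which the relevant locus is $S=\lfloor\Delta\rfloor$, which is sdlt by Lemma \ref{c-lem2.8} and to which Theorem \ref{b-thm1.4} in lower dimension (via Theorem \ref{b-thm1.1}) applies. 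Second, the upgrade from numerical to $\mathbb Q$-linear triviality on general fibres of the Iitaka fibration is not ``the fibrewise consequence of Theorem \ref{b-thm1.2} and Corollary \ref{b-cor1.3}'': those results presuppose semiampleness and concern finiteness of pluricanonical representations, and they enter the proof of Theorem \ref{b-thm1.4} only through Theorem \ref{b-thm1.1}; the fibrewise statement you want is either elementary for an abundant divisor (effective and numerically trivial) or belongs to the numerically trivial abundance circle (cf.\ \cite{gongyo-abundance}), not to Theorem \ref{b-thm1.2}. Third, and most seriously, your concluding step invokes ``the induction hypothesis applied on $\Nklt(V,\Delta_V)$'' to get semiampleness of $L$ there; but $L\sim_{\mathbb Q}K_V+\Delta_V+M_V$ is not the log canonical divisor of $(V,\Delta_V)$, so Theorem \ref{b-thm1.4} in lower dimension says nothing about it. In the actual argument the semiampleness input lives upstairs, on $\lfloor\Delta\rfloor$, and the descent through the canonical bundle formula (b-divisors, saturation, and the Reid--Fukuda-type freeness of \cite[Theorems 6.1 and 6.2]{fujino-basepoint-free}) is exactly the content of Theorem \ref{p-thm5.1}; your sketch defers this, which is where the real work of the theorem lies.
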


Theorem \ref{b-thm1.4} is well known when
$\pi\colon X \to Y$ is algebraic (see \cite[Theorem 1.6]{fujino-gongyo}).
As mentioned above, we prove Theorem \ref{b-thm1.4}
as a consequence of Theorem \ref{b-thm1.1}.
In our proof of Theorem \ref{b-thm1.4} presented in the present paper,
we make use of a kind of canonical bundle formula
(see \cite{fujino-kawamata} and \cite{fujino-basepoint-free}).
Therefore, the result is not entirely obvious.
When $K_X + \Delta$ is only assumed to be $\mathbb{R}$-Cartier,
we have the following theorem.

\begin{thm}\label{b-thm1.5}
Let $\pi\colon X\to Y$ be a projective 
morphism of complex analytic spaces and 
let $W$ be a Stein compact subset 
of $Y$ such that $\Gamma (W, \mathcal O_Y)$ is noetherian. 
Let $U$ be an open subset of $Y$ and 
let $L$ be a compact subset of $Y$ such that $L\subset U\subset W$. 
Let $(X, \Delta)$ be a log canonical 
pair such that $K_X+\Delta$ is $\pi$-nef and 
$\pi$-log abundant with respect to $(X, \Delta)$ over $Y$. 
Then $K_X+\Delta$ is $\pi$-semiample over 
some open neighborhood of $L$. 
\end{thm}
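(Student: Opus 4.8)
The plan is to deduce the $\mathbb R$-Cartier statement from the $\mathbb Q$-Cartier case, which is Theorem~\ref{b-thm1.4}, by perturbing the boundary $\Delta$ into rational boundaries $\Delta_i$ for which $K_X+\Delta_i$ remains $\pi$-nef and $\pi$-log abundant. The hypothesis that $W$ is Stein compact with $\Gamma(W,\mathcal O_Y)$ noetherian is precisely what makes the relative cone and contraction theorem, and the relative rationality theorem, available for $\pi$ over neighbourhoods of $W$ (cf.~\cite{fujino-cone-contraction}, \cite{enokizono-hashizume-mmp}); in particular the cone of $\pi$-nef $\mathbb R$-Cartier divisors on $X$ over a neighbourhood of $W$ is locally rational polyhedral. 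Replacing $Y$ by a suitable open neighbourhood of $W$ at the outset, one may further assume that ``$\pi$-nef over $Y$'' and ``$\pi$-nef over $W$'' agree; this is harmless since the conclusion is required only over a neighbourhood of $L$ and $L\subseteq U\subseteq W$.

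Concretely, I would let $V$ be the finite-dimensional, rationally defined affine space of $\mathbb R$-divisors $\Delta'$ on $X$ supported on $\Supp\Delta$ for which $K_X+\Delta'$ is $\mathbb R$-Cartier; at every rational point of $V$ the divisor $K_X+\Delta'$ is $\mathbb Q$-Cartier. Inside $V$, the set of $\Delta'\geq 0$ with $(X,\Delta')$ log canonical is a rational polytope; let $F$ be the face whose relative interior contains $\Delta$, so that the log canonical centres of $(X,\Delta')$ are the same for all $\Delta'$ in the relative interior of $F$, and fix once and for all a common dlt blow-up (or log resolution) of $(X,\Supp\Delta)$ dominating all of these centres. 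Next, intersect $F$ with the rational polytope $\mathcal N$ of $\Delta'$ for which $K_X+\Delta'$ is $\pi$-nef, and with the locus $\mathcal A$ of $\Delta'$ for which $K_X+\Delta'$ is $\pi$-log abundant with respect to $(X,\Delta')$ over $Y$; by hypothesis $\Delta\in F\cap\mathcal N\cap\mathcal A$. Granting that $\mathcal A$ is rational polyhedral near $\Delta$ (the point discussed last), $F\cap\mathcal N\cap\mathcal A$ is a rational polytope containing $\Delta$, so one may write $\Delta=\sum_{i=1}^{k}r_i\Delta_i$ with $r_i>0$, $\sum_i r_i=1$ and $\Delta_i$ rational vertices of this polytope; then each $(X,\Delta_i)$ is log canonical, $K_X+\Delta_i$ is $\mathbb Q$-Cartier, $\pi$-nef, and $\pi$-log abundant with respect to $(X,\Delta_i)$ over $Y$.

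One then applies Theorem~\ref{b-thm1.4} (valid for semi-log canonical, hence for log canonical, pairs) to each pair $(X,\Delta_i)$, taking the compact subset there to be $L$: this yields a positive integer $m_i$ and an open neighbourhood $V_i$ of $L$ such that $\mathcal O_X(m_i(K_X+\Delta_i))$ is $\pi$-generated over $V_i$. Setting $V_0:=\bigcap_{i=1}^{k}V_i$, an open neighbourhood of $L$, every $K_X+\Delta_i$ is $\pi$-semiample over $V_0$, and hence $K_X+\Delta=\sum_{i=1}^{k}r_i(K_X+\Delta_i)$ is a non-negative $\mathbb R$-linear combination of $\pi$-semiample $\mathbb Q$-Cartier divisors over $V_0$, so it is $\pi$-semiample over the open neighbourhood $V_0$ of $L$, which is the assertion of the theorem.

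The hard part is the rational polyhedrality of $\mathcal A$ near $\Delta$, equivalently the preservation of $\pi$-log abundance under the above perturbation. By definition one must check, for every log canonical centre $S$ of $(X,\Delta)$ with normalisation $\nu_S\colon S^\nu\to S$ and adjunction boundary $\Delta_{S^\nu}$ determined by $\Delta'$, that $(K_X+\Delta')|_{S^\nu}=K_{S^\nu}+\Delta_{S^\nu}$ is $\pi$-abundant over $Y$; the common dlt blow-up makes the data of these adjunctions vary linearly in $\Delta'\in F$, and the restricted divisor is $\pi$-nef over $W$ once $\Delta'\in\mathcal N$. It then suffices to know that for an $\mathbb R$-divisor which is nef over $Y$ and varies in a rational polyhedral family, the relative numerical dimension $\kappa_\sigma$ and the relative Iitaka dimension $\kappa_\iota$ are each constant along the relative interiors of a finite rational polyhedral subdivision; this is the relative, complex analytic counterpart of Nakayama's analysis of the $\sigma$-decomposition and of $\kappa_\sigma$, $\kappa_\iota$ in linear systems (cf.~\cite{fujino-gongyo}), and is once more where the noetherian hypothesis on $\Gamma(W,\mathcal O_Y)$ and the finiteness inherent in working over the compact sets $L\subseteq U\subseteq W$ are used. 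Granting it, the equality $\kappa_\sigma=\kappa_\iota$ that holds at $\Delta$ propagates to a whole rational polyhedral neighbourhood of $\Delta$ in $F\cap\mathcal N$, which is exactly what is needed.
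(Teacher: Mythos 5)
Your overall strategy coincides with the paper's: write $K_X+\Delta=\sum_i r_i(K_X+\Delta_i)$ with each $\Delta_i$ rational and each $K_X+\Delta_i$ $\pi$-nef and $\pi$-log abundant, apply Theorem \ref{b-thm1.4} to each piece over the compact set $L$, and conclude. The problem is that the decisive step --- rational polyhedrality of the log abundant locus $\mathcal A$ near $\Delta$ --- is exactly the hard point, and your justification for it does not work. You appeal to a general statement that, for nef divisors varying in a rational polyhedral family, $\kappa_\sigma$ and $\kappa_\iota$ are constant on the relative interiors of a finite rational polyhedral subdivision. No such theorem exists, and it is false at this level of generality: the loci inside the nef cone where the numerical dimension drops are cut out by intersection-theoretic equalities and need not be polyhedral (already for surfaces with round nef cone, e.g.\ abelian surfaces, the locus $\{D \text{ nef}: D^2=0\}$ is not polyhedral), and $\kappa_\iota$ is not a numerical invariant at all, so it cannot be controlled by any purely polyhedral/numerical subdivision. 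Since preservation of abundance under perturbation is precisely what Theorem \ref{b-thm1.5} needs beyond Shokurov's nef polytope, asserting it in this form leaves the proof with a genuine gap.

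The paper closes this gap by a different mechanism, which your proposal does not use: induction on dimension together with semiampleness on the strata. After shrinking $Y$ around $W$ and taking a dlt blow-up (\cite[Theorems 1.21 and 1.27]{fujino-bchm}), one assumes $(X,\Delta)$ is divisorial log terminal and $\mathbb Q$-factorial over $W$; then Theorem \ref{b-thm1.5} in lower dimension, applied to the log canonical centers $S$ with $K_S+\Delta_S=(K_X+\Delta)|_S$, gives that $K_S+\Delta_S$ is already $\pi$-semiample over a neighborhood of $L$. It is this semiampleness on the centers that the argument of \cite[Lemma 3.4]{hashizume} exploits to produce the decomposition $K_X+\Delta=\sum_i r_i(K_X+\Delta_i)$ with each piece $\pi$-nef and $\pi$-log abundant over a neighborhood of $L$; one does not need, and cannot expect, polyhedral constancy of $\kappa_\sigma$ and $\kappa_\iota$ in the abstract. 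To repair your proof you should replace the appeal to Nakayama-style polyhedrality by this inductive reduction (dlt blow-up, induction on dimension for the strata, then Hashizume's perturbation argument); the remaining steps of your proposal (Shokurov's polytope for nefness over $W$, Theorem \ref{b-thm1.4} applied with compact set $L$, and summing the semiample $\mathbb Q$-Cartier pieces) are fine and agree with the paper.
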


We note that Stein compact subsets play an important 
role in \cite{fujino-bchm}. 

\begin{rem}[Stein compact subsets]\label{b-rem1.6}
A compact subset of an analytic space is said to 
be {\em{Stein compact}} if it admits a fundamental 
system of Stein open neighborhoods. 
Let $W$ be a Stein compact subset on a complex 
analytic space $Y$. 
Then, by Siu's theorem, 
$\Gamma(W, \mathcal O_Y)$ is noetherian if and only 
if $W\cap Z$ has only finitely many connected 
components for any analytic subset $Z$ which 
is defined over an open neighborhood 
of $W$. 
Hence, if $W$ is a Stein compact semianalytic subset of a 
complex analytic space $Y$, then 
$\Gamma (W, \mathcal O_Y)$ is always noetherian. 
\end{rem}

By combining Theorem \ref{b-thm1.5} with 
\cite[Theorem 1.2]{enokizono-hashizume-mmp}, 
we can prove the existence of log canonical flips in the 
complex analytic setting. 
We learned it from Kenta Hashizume. 

\begin{thm}[Existence of log canonical flips]\label{b-thm1.7}
Let $\varphi\colon X\to Z$ be a small projective 
bimeromorphic morphism of normal complex varieties such that 
$(X, \Delta)$ is log canonical and that $-(K_X+\Delta)$ is 
$\varphi$-ample. Then we have a commutative diagram 
\begin{equation*}
\xymatrix{
(X, \Delta) \ar[dr]_-\varphi
\ar@{-->}[rr]^-\phi&& (X^+, \Delta^+)\ar[dl]^-{\varphi^+}\\ 
& Z & 
}
\end{equation*}
satisfying the following properties: 
\begin{itemize}
\item[{\em{(i)}}] $\varphi^+\colon X^+\to Z$ is a small 
projective bimeromorphic morphism of normal complex 
varieties, 
\item[{\em{(ii)}}] $(X^+, \Delta^+)$ is log canonical, 
where $\Delta^+$ is the strict transform of $\Delta$ on $X^+$, 
and 
\item[{\em{(iii)}}] $K_{X^+}+\Delta^+$ is $\varphi^+$-ample. 
\end{itemize}
We usually simply say that $\phi\colon (X, \Delta)\dashrightarrow 
(X^+, \Delta^+)$ is a log canonical flip. 
\end{thm}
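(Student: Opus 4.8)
The plan is to realize $X^+$ as the relative log canonical model $\operatorname{Proj}_Z\mathcal R$ of $(X,\Delta)$ over $Z$, where $\mathcal R:=\bigoplus_{m\ge 0}\varphi_*\mathcal O_X(\lfloor m(K_X+\Delta)\rfloor)$, so that the whole problem reduces to showing that $\mathcal R$ is a sheaf of finitely generated $\mathcal O_Z$-algebras. Since this is a local question on $Z$, I would shrink $Z$ and assume it carries a Stein compact subset $W$ with $\Gamma(W,\mathcal O_Z)$ noetherian (Remark~\ref{b-rem1.6}); note also that, $\varphi$ being bimeromorphic, its generic fibre is a point, so $K_X+\Delta$ is automatically $\varphi$-pseudoeffective and every effective divisor is ``big over $Z$''. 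First pass to a $\mathbb Q$-factorial dlt modification $g\colon (Y,\Gamma)\to (X,\Delta)$ with $K_Y+\Gamma=g^*(K_X+\Delta)$; then the $g$-exceptional divisors are exactly the log canonical places of $(X,\Delta)$, hence components of $\lfloor\Gamma\rfloor$, and the projection formula gives $\bigoplus_m(\varphi\circ g)_*\mathcal O_Y(m(K_Y+\Gamma))=\mathcal R$.

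Next, fixing an ample-over-$Z$ divisor $A_0$ with $(Y,\Gamma+A_0)$ dlt and $K_Y+\Gamma+A_0$ nef over $Z$, I would run a $(K_Y+\Gamma)$-MMP over $Z$ with scaling of $A_0$ over a neighborhood of $W$. By \cite[Theorem~1.2]{enokizono-hashizume-mmp} this MMP exists and terminates, and since $K_Y+\Gamma$ is pseudoeffective over $Z$ it terminates with a minimal model $\psi\colon (Y,\Gamma)\dashrightarrow(Y',\Gamma')$ over $Z$, so that $K_{Y'}+\Gamma'$ is $(\varphi\circ g)'$-nef. As the relative log pluricanonical sheaves are unchanged along a $(K_Y+\Gamma)$-MMP, one still has $\bigoplus_m(\varphi\circ g)'_*\mathcal O_{Y'}(m(K_{Y'}+\Gamma'))=\mathcal R$.

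Now I would apply Theorem~\ref{b-thm1.5} to $(Y',\Gamma')$: it is $(\varphi\circ g)'$-nef by construction, so the only thing to check is $(\varphi\circ g)'$-log abundance. Since $(\varphi\circ g)'$ is bimeromorphic, the relative Iitaka dimension of $K_{Y'}+\Gamma'$, as well as that of its restriction (via adjunction) to any log canonical center $S$ mapping finitely onto its image in $Z$, is zero, so abundance in these cases amounts to numerical triviality over the relevant image. To extract this I would pick a general $0\le B\sim_{\mathbb Q,\varphi}-(K_X+\Delta)$ with $(X,\Delta+B)$ log canonical, observe $K_{Y'}+\Gamma'\sim_{\mathbb Q,(\varphi\circ g)'}-A'$ with $A':=\psi_*g^*B\ge 0$, and combine nefness of $K_{Y'}+\Gamma'$ with the negativity lemma (and induction on dimension for the log canonical centers of $(Y',\Gamma')$ that are contracted over $Z$) to force $K_{Y'}+\Gamma'$ to be numerically trivial over $Z$, hence $(\varphi\circ g)'$-log abundant. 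This step — controlling the restrictions to contracted log canonical centers — is the main obstacle, and it is precisely where the hypothesis that $-(K_X+\Delta)$ is $\varphi$-ample has to be used in full; everything else above is routine granted the analytic MMP. Once Theorem~\ref{b-thm1.5} applies, $K_{Y'}+\Gamma'$ is $(\varphi\circ g)'$-semiample near $W$, so $\mathcal R$ is finitely generated near $W$, hence (the question being local) everywhere on $Z$, and I set $X^+:=\operatorname{Proj}_Z\mathcal R$.

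Finally I would verify (i)--(iii). Let $h\colon Y'\to X^+$ be the morphism over $Z$ attached to $|m(K_{Y'}+\Gamma')|_{(\varphi\circ g)'}$ for divisible $m$. Over the codimension-$\ge 2$ complement of $\varphi(\Exc\varphi)$ the morphism $\varphi$ is an isomorphism and $K_X+\Delta$ is $\mathbb Q$-Cartier, so there $\mathcal R$ is the trivial graded algebra and $\operatorname{Proj}_Z\mathcal R$ recovers $Z$; hence $\varphi^+\colon X^+\to Z$ is a small projective bimeromorphic morphism of normal complex varieties (normality from $h_*\mathcal O_{Y'}=\mathcal O_{X^+}$), giving (i). Since $X$ and $X^+$ agree in codimension one over $Z$, the strict transform $\Delta^+$ of $\Delta$ is defined; every $g$-exceptional divisor is exceptional over $Z$, hence contracted by $\psi$ or by $h$, so $h_*\Gamma'=\Delta^+$ and $K_{Y'}+\Gamma'=h^*(K_{X^+}+\Delta^+)$. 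Thus $(X^+,\Delta^+)$ admits the crepant dlt model $(Y',\Gamma')$ and is therefore log canonical, giving (ii), while $K_{X^+}+\Delta^+=h_*(K_{Y'}+\Gamma')$ is $\varphi^+$-ample by construction of $\operatorname{Proj}$, giving (iii); and $\phi:=h\circ\psi\circ g^{-1}\colon (X,\Delta)\dashrightarrow (X^+,\Delta^+)$ then fits into the required commutative diagram.
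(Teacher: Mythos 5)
Your overall architecture coincides with the paper's: take an effective divisor $A\sim_{\mathbb R,\varphi}-(K_X+\Delta)$ keeping log canonicity, pass to a dlt blow-up, run the MMP over $Z$ via \cite[Theorem 1.2]{enokizono-hashizume-mmp} to reach a model on which the log canonical divisor is relatively nef, verify relative log abundance, and then apply Theorem \ref{b-thm1.5} to get semiampleness, i.e.\ the relative log canonical model, which is the flip. The endgame (finite generation, $\operatorname{Proj}$, smallness, crepantness, ampleness) is fine. However, there is a genuine gap exactly at the step you yourself flag as ``the main obstacle'': the log abundance of $K_{Y'}+\Gamma'$ over $Z$ is not actually proved, and the reduction you propose is flawed. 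You assert that, since the morphism is bimeromorphic, ``abundance in these cases amounts to numerical triviality over the relevant image''. This conflates the two invariants in Definition \ref{c-def2.14}: on an analytically sufficiently general fiber $F$ of a contracted log canonical stratum, numerical triviality only gives $\kappa_\sigma=0$; abundance requires in addition $\kappa_\iota\geq 0$, i.e.\ that the restricted log canonical divisor is $\mathbb R$-linearly equivalent to an effective divisor. For a numerically trivial log canonical divisor of a log canonical pair this is precisely the (nontrivial) numerically trivial abundance theorem, which the paper imports as \cite[Theorem 6.1]{gongyo-mmp} (together with \cite[Remark 3.7]{hashizume3}); no negativity-lemma argument produces it. Moreover, even the numerical triviality on fibers of contracted strata is not as routine as you suggest: from $K_{Y'}+\Gamma'\sim_{\mathbb R,Z}-A'$ with $A'\geq 0$ and nefness you can conclude $A'|_F=0$ only when the stratum is not contained in $\Supp A'$; when a log canonical center lies inside $\Supp A'$ the restriction of $-A'$ is no longer anti-effective and your argument breaks down, which is exactly the configuration the cited results of Gongyo and Hashizume are designed to handle.

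The paper's proof is otherwise the same in spirit but localizes at a point $P\in Z$ (choosing nested Stein compacts $U_1\subset W_1\subset U_2\subset W_2$) rather than globalizing via finite generation of the relative log canonical ring; after the MMP it observes $K_{X''}+\Delta''+A''\sim_{\mathbb R,\varphi''}0$ with $A''\geq 0$ and $K_{X''}+\Delta''$ nef over $W_2$, quotes \cite[Theorem 6.1]{gongyo-mmp} and \cite[Remark 3.7]{hashizume3} to conclude $\varphi''$-log abundance, and then applies Theorem \ref{b-thm1.5}. To repair your write-up, replace the ``negativity lemma plus induction'' paragraph by an appeal to (a relative version of) these results, or prove the statement: if $(X'',\Delta''+A'')$ is log canonical with $K_{X''}+\Delta''+A''\sim_{\mathbb R}0$ over $Z$, $A''\geq 0$, and $K_{X''}+\Delta''$ nef over $Z$, then $K_{X''}+\Delta''$ is log abundant over $Z$; as it stands, this key input is missing.
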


By combining Theorem \ref{b-thm1.5} with \cite[Theorem 1.3]{enokizono-hashizume-mmp},
we establish the existence of good dlt blow-ups 
in the complex analytic setting.
This result immediately yields the inversion 
of adjunction for log canonicity (see \cite[Theorem 1.1]{fujino-inversion} 
and Theorem \ref{y-thm6.3}).
 
\begin{thm}[Good dlt blow-ups]\label{b-thm1.8}
Let $X$ be a normal complex variety and let $\Delta$ be an effective 
$\mathbb R$-divisor on $X$ such that 
$K_X+\Delta$ is $\mathbb R$-Cartier. 
Note that $\Delta$ is not necessarily a boundary $\mathbb R$-divisor. 
Let $W$ be a Stein compact 
subset of $X$ such that $\Gamma (W, \mathcal O_X)$ is noetherian. 
Then, after shrinking $X$ around $W$ suitably, 
we can construct a projective bimeromorphic morphism 
$f\colon Z\to X$ from a normal complex variety $Z$ with the following 
properties: 
\begin{itemize}
\item[{\em{(i)}}] $Z$ is $\mathbb Q$-factorial over $W$, 
\item[{\em{(ii)}}] $a(E, X, \Delta)\leq -1$ for every $f$-exceptional 
divisor $E$ on $Z$, and 
\item[{\em{(iii)}}] $(Z, \Delta^{<1}_Z+\Supp \Delta^{\geq 1}_Z)$ is divisorial 
log terminal, where $K_Z+\Delta_Z=f^*(K_X+\Delta)$. 
\end{itemize}
Moreover, we put 
\[
\Delta^\dag_Z:=\Delta^{<1}_Z+\Supp \Delta^{\geq 1}_Z=
\Delta^{\leq 1}_Z+\Supp \Delta^{>1}_Z. 
\] 
Then we have 
\[
K_Z+\Delta^\dag_Z=f^*(K_X+\Delta)-G, 
\] 
where 
\[
G=\Delta^{\geq 1}_Z-\Supp \Delta^{\geq 1}_Z=\Delta^{>1}_Z-\Supp \Delta^{>1}_Z. 
\] 
In this setting, we can make $f\colon Z\to X$ satisfy: 
\begin{itemize}
\item[{\em{(iv)}}] $-G$ is $f$-nef over $W$. 
\end{itemize}
We further assume that 
\[
L\subset U'\subset W' \subset U\subset W, 
\] 
where $W'$ is a Stein compact subset of $X$ such that 
$\Gamma (W', \mathcal O_X)$ is noetherian, 
$U$ and $U'$ are open subsets of $X$, and $L$ is a compact subset of $X$. 
Then, by Theorem \ref{b-thm1.5}, we have: 
\begin{itemize}
\item[{\em{(v)}}] $-G$ is $f$-semiample over some open neighborhood 
of $L$. 
\end{itemize}
\end{thm}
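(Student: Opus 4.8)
The plan is to prove Theorem \ref{b-thm1.8} by combining the structure theory of dlt blow-ups in the complex analytic setting with the semiampleness result Theorem \ref{b-thm1.5}. First I would invoke \cite[Theorem 1.3]{enokizono-hashizume-mmp}, the complex analytic analogue of the dlt blow-up, to obtain a projective bimeromorphic morphism $f\colon Z\to X$ (after shrinking $X$ around $W$) satisfying (i), (ii), and (iii). Here one writes $K_Z+\Delta_Z=f^*(K_X+\Delta)$ with $\Delta_Z=\sum d_iE_i$, and the dlt blow-up is arranged so that all $f$-exceptional divisors $E$ satisfy $a(E,X,\Delta)\le -1$, i.e. the coefficient of every $f$-exceptional divisor in $\Delta_Z$ is $\ge 1$. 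The decomposition $\Delta^\dag_Z=\Delta^{<1}_Z+\Supp\Delta^{\ge1}_Z=\Delta^{\le1}_Z+\Supp\Delta^{>1}_Z$ and the identity $K_Z+\Delta^\dag_Z=f^*(K_X+\Delta)-G$ with $G=\Delta^{\ge1}_Z-\Supp\Delta^{\ge1}_Z$ are then purely formal bookkeeping: since $\Delta_Z-\Delta^\dag_Z$ equals $\sum_{d_i\ge1}(d_i-1)E_i=G$, which is effective and supported on the exceptional locus plus the strict transform of $\Supp\Delta^{\ge1}$, the two displayed formulas for $G$ follow from the two displayed formulas for $\Delta^\dag_Z$.

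Next I would address property (iv), that $f\colon Z\to X$ can be taken so that $-G$ is $f$-nef over $W$. This is where one needs an extra minimal model program argument: run a $(K_Z+\Delta^\dag_Z)$-MMP over $X$ (equivalently a $(-G)$-MMP over $X$, since $K_Z+\Delta^\dag_Z\equiv_X -G$) with scaling, using the relative cone and contraction theorems and termination results already available in the complex analytic MMP (\cite{fujino-bchm}, \cite{enokizono-hashizume-mmp}, \cite{enokizono-hashizume-termination}); because $(Z,\Delta^\dag_Z)$ is dlt and $G$ is effective $f$-exceptional, this MMP terminates with a relative minimal model on which $-G$ becomes $f$-nef, and by the negativity lemma the output still satisfies (i)--(iii). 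In \cite[Theorem 1.3]{enokizono-hashizume-mmp} this refinement may already be incorporated into the statement, in which case (iv) is immediate; otherwise it is a standard additional step. One should check that the Stein compact hypothesis with $\Gamma(W,\mathcal O_X)$ noetherian is exactly what makes these MMP tools applicable over the neighborhood of $W$.

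Finally, for property (v) I would apply Theorem \ref{b-thm1.5} directly with the roles played as follows: the morphism is $f\colon Z\to X$, the pair is $(Z,\Delta^\dag_Z)$, and the divisor in question is $K_Z+\Delta^\dag_Z$, which by (iv) is $f$-nef over $W$ (being $\equiv_X -G$). To invoke Theorem \ref{b-thm1.5} one must verify that $K_Z+\Delta^\dag_Z$ is $f$-log abundant with respect to $(Z,\Delta^\dag_Z)$ over $X$; this is where I expect the main obstacle. Since $K_Z+\Delta^\dag_Z\equiv_X -G$ and $G$ is effective and $f$-exceptional, the restriction of $K_Z+\Delta^\dag_Z$ to any lc center of $(Z,\Delta^\dag_Z)$ is numerically trivial over $X$ on the components not contained in $\Supp G$ and is controlled by adjunction on the others; one shows the numerical Iitaka dimension and the numerical dimension of the restriction to every lc center both vanish relative to $X$, using that $f$ is bimeromorphic so the generic fiber is a point. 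Granting log abundance, Theorem \ref{b-thm1.5} (applied with the chain $L\subset U'\subset W'\subset U\subset W$ of the statement) yields that $K_Z+\Delta^\dag_Z$, hence $-G$, is $f$-semiample over some open neighborhood of $L$, which is exactly (v).
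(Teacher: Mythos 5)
Your overall skeleton --- a dlt blow-up giving (i)--(iii), an MMP over $X$ to make $-G$ nef, then Theorem \ref{b-thm1.5} for (v) --- is indeed the paper's strategy, but both crucial steps are left with genuine gaps. For (iv), you assert that the $(K_Z+\Delta^\dag_Z)$-MMP (equivalently a $(-G)$-MMP) over $X$ terminates ``because $(Z,\Delta^\dag_Z)$ is dlt and $G$ is effective $f$-exceptional''; no such general termination statement is available, and the references you lean on do not supply it: \cite{fujino-bchm} does not give termination in this lc/dlt situation, and \cite{enokizono-hashizume-termination} cannot be invoked since, as Remark \ref{b-rem1.13} explains, it depends on the present paper, so your citation would be circular. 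The paper's actual argument perturbs the pair: it runs a $(K_V+\Delta^\dag_V-\varepsilon G_V)$-MMP with ample scaling, observes that $K_{V_i}+\Delta^\dag_{V_i}+G_{V_i}\sim_{\mathbb R, g_{V_i}}0$ and hence, by \cite[Theorem 6.1]{gongyo-mmp}, $K_{V_i}+\Delta^\dag_{V_i}-\varepsilon G_{V_i}$ is log abundant with respect to the (still dlt) pair $(V_i,\Delta^\dag_{V_i}-\varepsilon G_{V_i})$ at every step, and then applies \cite[Theorem 1.3]{enokizono-hashizume-mmp} (termination for log abundant MMPs, together with $\lambda_i\to 0$) to reach a minimal model; since $K+\Delta^\dag\sim_{\mathbb R,f}-G$ and $K+\Delta^\dag-\varepsilon G\sim_{\mathbb R,f}-(1+\varepsilon)G$ are proportional, this is simultaneously a $(K+\Delta^\dag)$-MMP, so the output still satisfies (i)--(iii) and now (iv). (Also, the initial blow-up with $a(E,X,\Delta)\le -1$ for $\Delta$ not necessarily a boundary comes from \cite[Theorem 1.27]{fujino-bchm}, not from \cite[Theorem 1.3]{enokizono-hashizume-mmp} as you cite.)

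For (v), you apply Theorem \ref{b-thm1.5} to the pair $(Z,\Delta^\dag_Z)$ and concede that the required log abundance of $K_Z+\Delta^\dag_Z$ is the main obstacle; your sketch does not close it and contains an error: an lc center $S$ of $(Z,\Delta^\dag_Z)$ may lie inside the exceptional locus (in particular inside $\Supp\Delta^{\geq 1}_Z$), so the general fiber of $S\to f(S)$ need not be a point, and the restriction of $-G$ to such a fiber need not have numerical dimension zero; hence ``both invariants vanish'' is not a valid argument. The paper avoids this by again working with the perturbed pair: $-(1+\varepsilon)G\sim_{\mathbb R,f}K_Z+\Delta^\dag_Z-\varepsilon G$ is $f$-nef by (iv) and $f$-log abundant with respect to $(Z,\Delta^\dag_Z-\varepsilon G)$ by \cite[Theorem 6.1]{gongyo-mmp}, so Theorem \ref{b-thm1.5} applies verbatim to the lc pair $(Z,\Delta^\dag_Z-\varepsilon G)$ and gives semiampleness of $-(1+\varepsilon)G$, hence of $-G$, over a neighborhood of $L$. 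Without this $\varepsilon$-perturbation and the appeal to Gongyo's theorem, both the termination needed for (iv) and the log abundance needed for (v) remain unproved in your proposal.
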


In Section \ref{y-sec6}, we 
also present another refinement 
of dlt blow-ups (see 
Theorem \ref{y-thm6.1}), 
derived as a consequence of 
\cite{enokizono-hashizume-mmp}, which is closely related to Theorem \ref{b-thm1.8}. 
Both Theorems \ref{b-thm1.8} and \ref{y-thm6.1} are useful 
for the study of complex analytic singularities. 
As applications of these theorems, we discuss 
the ACC for log canonical thresholds (see Theorem \ref{y-thm6.2}) 
and the inversion of adjunction for log canonicity 
(see Theorem \ref{y-thm6.3}) in the complex analytic setting. 

For the reader's convenience, 
we recall the abundance conjecture for projective log canonical pairs.
It is well known that the abundance conjecture 
is among the most important and profound conjectures in the theory of minimal models.

\begin{conj}[Abundance conjecture for projective log canonical 
pairs]\label{b-conj1.9}
Let $(X, \Delta)$ be a projective log canonical pair such that 
$K_X+\Delta$ is nef. 
Then $K_X+\Delta$ is semiample. 
\end{conj}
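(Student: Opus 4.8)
The statement is the classical abundance conjecture, which remains open; what follows is therefore a proposal for how one would attempt a proof, in the spirit of the strategy that underlies the whole paper. The plan is to argue by induction on $\dim X$, combining the minimal model program with the canonical bundle formula. First I would reduce to the divisorial log terminal case: taking a dlt blow-up $f\colon (Z,\Delta_Z)\to (X,\Delta)$ with $K_Z+\Delta_Z=f^*(K_X+\Delta)$ (available even analytically by Theorem \ref{b-thm1.8}), semiampleness of $K_X+\Delta$ is equivalent to that of $K_Z+\Delta_Z$, so we may assume $(X,\Delta)$ dlt. It is then natural to split the problem into the \emph{nonvanishing} part, $\kappa(X,K_X+\Delta)\geq 0$, and the \emph{abundance} part, the equality $\kappa(X,K_X+\Delta)=\nu(X,K_X+\Delta)$ of Iitaka and numerical dimensions, together with the passage from $\kappa=\nu$ to semiampleness.

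Next I would stratify by the numerical dimension $\nu:=\nu(X,K_X+\Delta)$. When $\nu=0$ one has $K_X+\Delta\equiv 0$, and the known results on numerically trivial log canonical pairs show $K_X+\Delta$ is torsion, hence semiample; this is exactly where the finiteness of log pluricanonical representations of Theorem \ref{b-thm1.2} and Corollary \ref{b-cor1.3} enters. When $\nu=\dim X$ the divisor $K_X+\Delta$ is big and nef, and semiampleness follows from the base point free theorem for log canonical pairs (see \cite{fujino-cone-contraction}, \cite{fujino-quasi-log}). The genuinely hard range is $0<\nu<\dim X$: here one runs a $(K_X+\Delta)$-minimal model program with scaling to make $K_X+\Delta$ semiample in the relative directions of some fibration $g\colon X'\to S$ (equivalently, uses the Iitaka fibration once nonvanishing is known), and applies the canonical bundle formula of \cite{fujino-kawamata}, \cite{fujino-basepoint-free} to write $K_X+\Delta\sim_{\mathbb Q}g^*(K_S+\Delta_S+M_S)$ for a generalized pair $(S,\Delta_S+M_S)$ with $\dim S<\dim X$. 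The induction hypothesis, in the generalized-pair formulation, gives semiampleness of $K_S+\Delta_S+M_S$, which descends to $K_X+\Delta$.

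To handle the log canonical case rather than only the klt case, I would further induct on the dimension and number of log canonical centers: by adjunction, $K_X+\Delta$ restricts on $\Nklt(X,\Delta)$ to a log abundant log canonical divisor, to which induction applies, and then Kawamata--Viehweg type vanishing for log canonical pairs (\cite{fujino-vanishing}, \cite{fujino-vanishing-pja}) is used to lift pluricanonical sections from $\Nklt(X,\Delta)$ to $X$, turning abundance on the boundary into nonvanishing and ultimately semiampleness on $X$.

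The main obstacle is the nonvanishing statement $\kappa(X,K_X+\Delta)\geq 0$ in the range $0<\nu<\dim X$ when no log canonical center is available to feed the inductive and extension machinery: producing even a single effective pluricanonical section from numerical positivity alone is precisely what is missing, and this is why the conjecture is open already for klt pairs with $K_X$ nef in dimension $\geq 4$. A further, setting-specific difficulty --- unavoidable in the complex analytic, relative framework of this paper --- is the absence of finitely generated global pluricanonical rings: one must work over a Stein compact subset $W$, replace ``semiample'' by ``$\pi$-semiample over an open neighborhood of $W$'', and invoke the noetherian hypothesis on $\Gamma(W,\mathcal O_Y)$ together with the analytic MMP of \cite{fujino-bchm} and \cite{enokizono-hashizume-mmp} at each step of the argument above.
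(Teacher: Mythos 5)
There is nothing in the paper to compare your proposal against: Conjecture \ref{b-conj1.9} is stated as a conjecture, not proved. The paper explicitly records that it is known only in dimension $\leq 3$ and widely open in dimension $\geq 4$, and it is used purely as a \emph{hypothesis} in Theorem \ref{b-thm1.10} and Corollary \ref{b-cor1.11}; the whole point of those results is to reduce the relative analytic abundance problem to this projective conjecture, not to prove it. You correctly recognize this, and your outline is essentially the standard strategy (dlt blow-up, stratification by numerical dimension, $\nu=0$ via numerically trivial abundance and finiteness of $B$-representations, $\nu=\dim X$ via the basepoint-free theorem for log canonical pairs, the intermediate range via a fibration and the canonical bundle formula). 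The gap you name --- nonvanishing and $\kappa=\nu$ in the range $0<\nu<\dim X$ --- is exactly why the conjecture is open, so your proposal is an honest survey rather than a proof, and no proof should have been expected.

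Two smaller points. First, the inductive step in the intermediate range is shakier than you present it: after the canonical bundle formula the base carries a \emph{generalized} pair $(S,\Delta_S+M_S)$ with $M_S$ only nef, and semiampleness of $K_S+\Delta_S+M_S$ is not covered by the induction hypothesis of Conjecture \ref{b-conj1.9}, which concerns honest log canonical pairs; generalized abundance is a strictly stronger (and also open) statement, so even granting nonvanishing the descent you describe does not close the induction. Second, your final paragraph about Stein compact subsets, noetherian rings of sections, and the analytic MMP is not relevant to this statement: Conjecture \ref{b-conj1.9} is purely projective over a point, and in the paper the analytic machinery is what \emph{consumes} the conjecture (via Theorem \ref{b-thm1.4} and Theorem \ref{b-thm1.10}), not an ingredient of any would-be proof of it.
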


It is well known that Conjecture \ref{b-conj1.9} 
has already been solved in $\dim X\leq 3$. 
When $\dim X\geq 4$, it is still widely open. 
By Theorem \ref{b-thm1.4}, we have: 

\begin{thm}[{cf.~\cite[Theorem 1.30]{fujino-bchm}}]\label{b-thm1.10} 
Assume that Conjecture \ref{b-conj1.9} holds in dimension $n$. 
Let $\pi\colon X\to Y$ be a projective surjective morphism 
of normal complex varieties with $\dim X\leq n$ and 
let $(X, \Delta)$ be a log canonical pair such that $K_X+\Delta$ is 
$\mathbb Q$-Cartier. 
Assume that $K_X+\Delta$ is $\pi$-nef. 
Let $W$ be a compact subset of $Y$. 
Then there exists a positive integer $m$ such that 
$\mathcal O_X(m(K_X+\Delta))$ is $\pi$-generated 
over some open neighborhood of $W$. 
\end{thm}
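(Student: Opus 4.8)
The plan is to reduce everything to Theorem \ref{b-thm1.4}. Every hypothesis of that theorem is already in force here: $(X,\Delta)$ is log canonical (in particular semi-log canonical), $K_X+\Delta$ is $\mathbb Q$-Cartier and $\pi$-nef, and $W$ is a compact subset of $Y$. The only missing point is $\pi$-log abundance, so the entire content of the proof is to check that, under the assumption that Conjecture \ref{b-conj1.9} holds in dimension $n$, the divisor $K_X+\Delta$ is automatically $\pi$-log abundant with respect to $(X,\Delta)$ over $Y$. Granting this, Theorem \ref{b-thm1.4} applied to $\pi\colon X\to Y$ and $W$ yields a positive integer $m$ such that $\mathcal O_X(m(K_X+\Delta))$ is $\pi$-generated over some open neighborhood of $W$, which is exactly the assertion.

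To verify $\pi$-log abundance, recall that this means: for $C=X$, as well as for every log canonical center $C$ of $(X,\Delta)$, the pullback of $K_X+\Delta$ to the normalization $\nu_C\colon C^\nu\to C$ is abundant over $Y$ in the relative sense. Fix such a $C$, and let $(C^\nu,\Delta_{C^\nu})$ be the log canonical pair produced by (iterated) adjunction, so that $K_{C^\nu}+\Delta_{C^\nu}=\nu_C^{*}\bigl((K_X+\Delta)|_C\bigr)$. A curve contracted over $Y$ by $\pi\circ\nu_C$ is contracted over $Y$ by $\pi$, so $K_{C^\nu}+\Delta_{C^\nu}$ is $(\pi\circ\nu_C)$-nef; and its restriction to a general fiber $F$ of $C^\nu\to Y$ (general over the image of $C$ in $Y$, when $C$ is not dominant) is a nef $\mathbb Q$-Cartier divisor on the projective log canonical pair $(F,\Delta_F)$, $\Delta_F:=\Delta_{C^\nu}|_F$, with $\dim F\le \dim X\le n$. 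By Conjecture \ref{b-conj1.9} in dimension $\dim F$ — which follows from its validity in dimension $n$ by taking a product with an abelian variety of the appropriate dimension — $K_F+\Delta_F$ is semiample, and in particular $\kappa(F,K_F+\Delta_F)=\kappa_\sigma(F,K_F+\Delta_F)$.

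Now I would invoke the fact, available in the complex analytic setting, that for a $\pi$-nef $\mathbb Q$-Cartier divisor $D$ the relative Iitaka dimension and the relative numerical dimension are both read off from a general fiber, i.e.\ $\kappa(C^\nu/Y,D)=\kappa(F,D|_F)$ and $\kappa_\sigma(C^\nu/Y,D)=\kappa_\sigma(F,D|_F)$. Applied to $D=K_{C^\nu}+\Delta_{C^\nu}$ together with the equality of the two invariants on $F$ just obtained, this gives $\kappa(C^\nu/Y,D)=\kappa_\sigma(C^\nu/Y,D)$, i.e.\ $D$ is abundant over $Y$. Since $C$ was an arbitrary log canonical center (including the case $C=X$), $K_X+\Delta$ is $\pi$-log abundant with respect to $(X,\Delta)$ over $Y$, and Theorem \ref{b-thm1.4} finishes the proof.

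The step I expect to be the main obstacle is making the reduction to a general fiber rigorous in the complex analytic category: one needs that the adjunction data $(C^\nu,\Delta_{C^\nu})$ genuinely form a log canonical pair and restrict to a log canonical pair on a dense open family of fibers of $C^\nu\to Y$, and that the relative numerical dimension of a $\pi$-nef $\mathbb Q$-Cartier divisor really equals the numerical dimension of its restriction to a general fiber in this setting — the latter resting on an analytic version of Nakayama's $\sigma$-decomposition. Together with the bookkeeping of log canonical centers under iterated adjunction, these are the points requiring genuine work; the remaining implications are formal once Theorem \ref{b-thm1.4} is in hand.
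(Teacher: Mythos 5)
Your overall strategy coincides with the paper's: verify that $K_X+\Delta$ is $\pi$-nef and $\pi$-log abundant by applying Conjecture \ref{b-conj1.9} to analytically sufficiently general fibers of the strata, and then invoke Theorem \ref{b-thm1.4}. (One worry you raise is actually vacuous: by Definition \ref{c-def2.10}, ``$\pi$-nef and $\pi$-abundant'' is \emph{defined} via the equality $\kappa=\nu$ on analytically sufficiently general fibers, so no comparison between relative invariants and fiberwise invariants, and no analytic $\sigma$-decomposition, is needed.)

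The genuine gap is your adjunction step. For a log canonical center $C$ of a pair that is merely log canonical, there is in general no pair $(C^\nu,\Delta_{C^\nu})$ with the \emph{equality} $K_{C^\nu}+\Delta_{C^\nu}=\nu_C^{*}\bigl((K_X+\Delta)|_C\bigr)$ and $(C^\nu,\Delta_{C^\nu})$ log canonical: a different exists only for divisorial centers, and for higher-codimensional centers the best available statements are subadjunction-type results giving an $\mathbb R$-linear equivalence, which are deep (canonical bundle formula, positivity of the moduli part) and are neither established in the complex analytic category nor cited in this paper. Since you apply Theorem \ref{b-thm1.4} directly to $(X,\Delta)$, you must check abundance on the strata of $(X,\Delta)$ itself, and this is exactly where the unproved claim enters. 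The paper avoids the issue by a preliminary reduction you are missing: fix $P\in W$, shrink $Y$ around a Stein compact neighborhood of $P$, and take a crepant dlt blow-up $f\colon (X',\Delta')\to (X,\Delta)$ (\cite[Theorems 1.21 and 1.27]{fujino-bchm}); since $K_{X'}+\Delta'=f^{*}(K_X+\Delta)$ and $f_{*}\mathcal O_{X'}\simeq\mathcal O_X$, relative generation for $(X',\Delta')$ descends to $X$, so one may assume $(X,\Delta)$ is divisorial log terminal. For a dlt pair the log canonical strata are normal and iterated \emph{divisorial} adjunction produces honest (s)dlt pairs with the required equality of log canonical divisors, after which your argument (abundance on general fibers, then Theorem \ref{b-thm1.4}) goes through. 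Without this dlt blow-up, or without a proved analytic subadjunction theorem for arbitrary lc centers, the proof as written does not close.
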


When $K_X+\Delta$ is only $\mathbb R$-Cartier, 
we have: 

\begin{cor}\label{b-cor1.11}
Assume that Conjecture \ref{b-conj1.9} holds in dimension $n$. 
Let $\pi\colon X\to Y$ be a projective surjective morphism 
of normal complex varieties with $\dim X\leq n$ and 
let $(X, \Delta)$ be a log canonical pair. Assume that 
$K_X+\Delta$ is $\pi$-nef. 
Let $W$ be a Stein compact subset of $Y$ such that $\Gamma (W, \mathcal O_Y)$ 
is noetherian. Let $U$ be an open subset of $Y$ and let $L$ be a 
compact subset of $Y$ with $L\subset U\subset W$. 
Then $K_X+\Delta$ is $\pi$-semiample over 
some open neighborhood of $L$. 
\end{cor}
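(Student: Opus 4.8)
The plan is to deduce Corollary~\ref{b-cor1.11} from Theorem~\ref{b-thm1.5} in exactly the same way that Theorem~\ref{b-thm1.10} is deduced from Theorem~\ref{b-thm1.4}. The hypotheses imposed on $W$, $U$, and $L$ in Corollary~\ref{b-cor1.11} are precisely those of Theorem~\ref{b-thm1.5}, and $K_X+\Delta$ is $\pi$-nef by assumption, so it suffices to prove that $K_X+\Delta$ is $\pi$-log abundant with respect to $(X,\Delta)$ over $Y$; Theorem~\ref{b-thm1.5} then yields that $K_X+\Delta$ is $\pi$-semiample over some open neighborhood of $L$, which is the assertion. Note that, since $\Delta$ is a boundary, Conjecture~\ref{b-conj1.9} in dimension $n$ implies it in all dimensions $\le n$ (pull back to a product with a suitable number of copies of an elliptic curve), so we may freely apply it to pairs of dimension $\le n$.

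To prove $\pi$-log abundance, first I would pass to a $\mathbb Q$-factorial dlt blow-up $f\colon X'\to X$ of $(X,\Delta)$ over $Y$ near $W$ (the relative analogue of Theorem~\ref{b-thm1.8}, with $\Delta$ a boundary so that $G=0$; after shrinking $Y$ around $W$), obtaining a crepant dlt pair $(X',\Delta')$ with $K_{X'}+\Delta'=f^*(K_X+\Delta)$, which is nef over $Y$. Since the log canonical centers of $(X,\Delta)$ are precisely the images of the strata of $(X',\Delta')$ and adjunction is crepant, $K_X+\Delta$ is $\pi$-log abundant if and only if, for every stratum $T$ of $(X',\Delta')$ with normalization $\nu\colon T^\nu\to T$ and $K_{T^\nu}+\Delta_{T^\nu}:=(K_{X'}+\Delta')|_{T^\nu}$ given by divisorial adjunction, the divisor $K_{T^\nu}+\Delta_{T^\nu}$ is abundant over $Y$. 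Now $K_{T^\nu}+\Delta_{T^\nu}$ is nef over $Y$ and $\dim T^\nu\le\dim X\le n$. Restrict to a very general fiber $F$ of $T^\nu\to\overline{(\pi\circ f\circ\nu)(T^\nu)}$ over a closed point (working over a Stein open neighborhood of $W$, where ``very general'' is harmless): then $(F,\Delta_F)$ with $\Delta_F:=\Delta_{T^\nu}|_F$ is a projective log canonical pair with $\dim F\le n$ and $K_F+\Delta_F=(K_{T^\nu}+\Delta_{T^\nu})|_F$ nef, so by Conjecture~\ref{b-conj1.9} it is semiample, hence abundant. Since the relative numerical and Iitaka dimensions over $Y$ are computed on a very general fiber over a closed point, $K_{T^\nu}+\Delta_{T^\nu}$ is abundant over $Y$. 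As $T$ was arbitrary, $K_X+\Delta$ is $\pi$-log abundant, and Theorem~\ref{b-thm1.5} completes the argument.

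The step I expect to be the main obstacle is the handling of log canonical centers in the complex analytic category: arranging a $\mathbb Q$-factorial crepant dlt blow-up relatively over $Y$ and over a Stein neighborhood of $W$, making divisorial adjunction for the strata precise in this setting, and justifying that the ``very general fiber'' reduction and the descent of abundance from the general fiber to the whole family are valid relatively over $Y$ analytically. The remaining points are a routine transcription of the proof of Theorem~\ref{b-thm1.10}. Alternatively, one may bypass Theorem~\ref{b-thm1.5}: after shrinking $Y$ around $W$, use the relative cone theorem and the noetherianness of $\Gamma(W,\mathcal O_Y)$ to write $\Delta=\sum_i r_i\Delta_i$ with $r_i>0$, $\sum_i r_i=1$, each $(X,\Delta_i)$ log canonical with $K_X+\Delta_i$ $\mathbb Q$-Cartier and $\pi$-nef; apply Theorem~\ref{b-thm1.10} to each $(X,\Delta_i)$ to get $\pi$-semiampleness over a neighborhood of $W\supset L$; and conclude since a positive $\mathbb R$-linear combination of $\pi$-semiample $\mathbb Q$-Cartier divisors is again $\pi$-semiample (pass to a common relative contraction via the Stein factorization of the product of the contractions). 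In that approach the delicate point is the rationality of the relative nef chamber in the analytic setting.
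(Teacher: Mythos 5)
Your proposal is correct, but it inverts the paper's emphasis: the "alternative" you sketch at the end is precisely the paper's own proof, while your main route is genuinely different. The paper simply uses Shokurov's polytope (available in the analytic setting by \cite{fujino-bchm}, so the "rationality of the relative nef chamber" you worry about is not an obstacle) to write $K_X+\Delta=\sum_i r_i(K_X+\Delta_i)$ with each $(X,\Delta_i)$ log canonical, $K_X+\Delta_i$ $\mathbb Q$-Cartier and $\pi$-nef over $W$, then applies Theorem \ref{b-thm1.10} to each piece after shrinking to $U$; note this only gives semiampleness over a neighborhood of $L$ (not of $W$, as you write), since nefness is only known over $W$ --- which is all that is claimed. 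Your main route --- pass to a $\mathbb Q$-factorial dlt model over a neighborhood of $W$ (the right reference is \cite[Theorems 1.21 and 1.27]{fujino-bchm}, used throughout the paper, rather than a relative analogue of Theorem \ref{b-thm1.8}), verify that $K_X+\Delta$ is $\pi$-log abundant by applying Conjecture \ref{b-conj1.9} on analytically sufficiently general fibers of the strata, and then quote Theorem \ref{b-thm1.5} --- also works and is not circular, since Theorem \ref{b-thm1.5} is proved independently of Corollary \ref{b-cor1.11}; it is the same scheme as the paper's proof of Theorem \ref{b-thm1.10}, with Theorem \ref{b-thm1.5} in place of Theorem \ref{b-thm1.4}. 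What it buys is log abundance of the $\mathbb R$-divisor $K_X+\Delta$ itself; what it costs is that you must apply the abundance conjecture to $\mathbb R$-pairs on the fibers (legitimate, since Conjecture \ref{b-conj1.9} is stated for $\mathbb R$-divisors, or reducible to the $\mathbb Q$-case by Shokurov's polytope on the projective fiber) and you need the invariance of $\kappa_\sigma$ and $\kappa_\iota$ under proper surjective pullback to pass between strata of the dlt model and the log canonical centers of $(X,\Delta)$; this last point can be avoided by simply replacing $(X,\Delta)$ with its dlt model and descending semiampleness at the end, exactly as in the proof of Theorem \ref{b-thm1.10}. The paper's route confines all $\mathbb R$-coefficient issues to the polytope decomposition, and the elliptic-curve product trick is unnecessary in either approach, since the fibers that occur have dimension at most $n$.
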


Theorem \ref{b-thm1.10} and Corollary \ref{b-cor1.11} 
show that the abundance conjecture for projective 
morphisms of complex analytic spaces can be reduced 
to the original abundance conjecture for projective 
varieties. Therefore, in order to address the abundance 
conjecture for projective morphisms between complex 
analytic spaces, it suffices to resolve 
Conjecture \ref{b-conj1.9}. In the case where 
$(X, \Delta)$ is a kawamata log terminal pair, 
Theorem \ref{b-thm1.10} has already been established 
in \cite[Theorem 1.30]{fujino-bchm}.

Note that in the present paper, we employ the minimal 
model program for projective morphisms between complex 
analytic spaces, as established in \cite{fujino-bchm}, 
\cite{enokizono-hashizume-ss}, and \cite{enokizono-hashizume-mmp}. 
Additionally, we make use of the vanishing theorems proved 
in \cite{fujino-vanishing} (see also \cite{fujino-vanishing-pja} 
and \cite{fujino-fujisawa}). However, we do not 
employ Koll\'ar's gluing theory as presented 
in \cite{kollar}, since it is currently unclear whether 
it applies to complex analytic spaces.

\begin{rem}[see Lemma \ref{p-lem4.11}]\label{b-rem1.12} 
We can easily check that Theorem \ref{b-thm1.1} recovers 
\cite[Theorem 2]{hacon-xu}, 
which is the original algebraic version of this problem. 
Hence the present paper gives an alternative proof 
of \cite[Theorem 2]{hacon-xu} 
without using Koll\'ar's gluing 
theory in \cite{kollar}. 
\end{rem}

\begin{rem}\label{b-rem1.13} 
As mentioned above, a series of papers starting with \cite{fujino-bchm} systematically develops the minimal model theory for projective morphisms between complex analytic spaces. Some caution is needed, as the order of writing these papers does not coincide with the order of their publication. Particular attention should be paid to \cite{enokizono-hashizume-termination} and \cite{hashizume5}, where the latter is a complex analytic generalization of \cite{hashizume4}. Since \cite{enokizono-hashizume-termination} and \cite{hashizume5} rely on the results established in the present paper, such as Theorems \ref{b-thm1.5} and \ref{b-thm1.7}, we do not cite or use any results from these two papers here to avoid circular arguments. On the other hand, papers such as \cite{fujino-bchm}, \cite{fujino-vanishing}, \cite{fujino-cone-contraction}, \cite{fujino-quasi-log}, \cite{enokizono-hashizume-ss}, \cite{enokizono-hashizume-mmp} were written independently of the present paper, and thus their results will be used freely throughout this paper.

Based on the aforementioned papers and the present paper, we believe that various results of the minimal model program for log canonical pairs can be formulated and proved in the complex analytic setting. We do not discuss these details here; for further developments and details, see the papers mentioned above and the references therein.
\end{rem}

\begin{rem}\label{b-rem1.14} 
In \cite{fujino-surfaces}, 
we demonstrated that the minimal model 
theory for algebraic surfaces can be developed 
under significantly weaker assumptions than those 
required in higher dimensions. A comparable result 
holds for projective morphisms between complex analytic 
spaces. Moriyama (see \cite{moriyama}) offers a detailed 
treatment of the minimal model theory for surfaces 
in the complex analytic setting.
\end{rem}

We now outline the organization of the present paper. 
In Section \ref{c-sec2}, we review basic definitions 
and results that are essential for the development of the present paper. 
Section \ref{a-sec3} is devoted to the finiteness of 
relative log pluricanonical representations. 
Our proof of Theorem \ref{b-thm1.2} relies on the 
finiteness of log pluricanonical representations 
for projective log canonical pairs, as established in \cite{fujino-gongyo}. 
In Section \ref{p-sec4}, we address the abundance 
conjecture for semi-log canonical pairs in the complex analytic 
setting. More specifically, we prove Theorem \ref{b-thm1.1}, which 
is one of the main results of the present paper. 
In Section \ref{p-sec5}, we prove Theorem \ref{b-thm1.4} as an 
application of Theorem \ref{b-thm1.1}, and then deduce 
Theorem \ref{b-thm1.10} as a straightforward consequence of 
Theorem \ref{b-thm1.4}. We also 
establish Theorems \ref{b-thm1.5}, \ref{b-thm1.7}, and Corollary \ref{b-cor1.11}. 
In Section \ref{y-sec6}, 
we discuss dlt blow-ups as applications of the minimal model program 
for log canonical pairs, as established in \cite{enokizono-hashizume-mmp}. 
We prove two generalizations of dlt blow-ups (see Theorems \ref{b-thm1.8} and 
\ref{y-thm6.1}). As a direct application of Theorem \ref{y-thm6.1}, 
we address the ACC for log canonical thresholds in the setting of 
complex analytic spaces (see Theorem \ref{y-thm6.2}). 
Moreover, we show that Theorem \ref{b-thm1.8} 
allows us to quickly recover the inversion of adjunction 
for log canonicity (see Theorem \ref{y-thm6.3}). 
Section \ref{f-sec7} is an appendix, where we discuss several properties 
of extremal Fano contractions. Although these results are well known in 
the original algebraic setting, we record them here for completeness, 
as no suitable reference seems to be available in the complex analytic setting. In the final section, Section \ref{x-sec8}, we provide some supplementary 
comments on \cite{fujino-abundance} and \cite{fujino-gongyo} for 
the reader's convenience. We also correct some minor issues in these papers.

\begin{ack}\label{b-ack}
The author was partially supported by JSPS KAKENHI Grant Numbers 
JP19H01787, JP20H00111, JP21H04994, and JP23K20787. 
The author would like to thank Professor Shigefumi Mori 
for his warm encouragement. He is also deeply grateful to 
Makoto Enokizono, Taro Fujisawa, Yoshinori Gongyo, Kenta Hashizume, 
and Nao Moriyama 
for valuable discussions and support. 
Finally, he would like to thank the referee for helpful comments and suggestions.
\end{ack}

In the present paper, every complex 
analytic space is assumed to be {\em{Hausdorff}} 
and {\em{second-countable}}. 
A reduced and irreducible complex analytic 
space is called a {\em{complex variety}}. 
We will freely use the basic results on 
complex analytic geometry in \cite{bs} and \cite{fi}. 
For the minimal model program for projective 
morphisms between complex analytic spaces, 
see \cite{fujino-bchm} (see also \cite{enokizono-hashizume-ss} and 
\cite{enokizono-hashizume-mmp}). 
For the basic definitions and results in the 
theory of minimal models for 
higher-dimensional algebraic varieties, 
see \cite{fujino-fundamental} and 
\cite{fujino-foundations} (see also \cite{kollar-mori} and 
\cite{kollar}). In the present paper, we sometimes 
use semianalytic sets. For the basic properties of semianalytic sets, 
see \cite{bierstone-milman-semi}. 

\section{Preliminaries}\label{c-sec2}

In this section, we collect some basic definitions and results
necessary for the present paper.
We begin with the following fundamental definitions.

\begin{defn}[{\cite[Definition 2.32]{fujino-bchm} and 
\cite[2.1.6]{fujino-cone-contraction}}]\label{c-def2.1}
Let $X$ be a normal complex variety and let $D=\sum _i a_i D_i$ 
be an $\mathbb R$-divisor on $X$ such that 
$D_i$ is a prime divisor 
on $X$ for every $i$ with $D_i\ne D_j$ for $i\ne j$. 
We put 
\begin{equation}
\lfloor D\rfloor :=\sum _i \lfloor a_i \rfloor D_i, \quad 
\lceil D\rceil :=-\lfloor -D\rfloor, \quad \text{and} 
\quad \{D\}:=D-\lfloor D\rfloor. 
\end{equation}
We also put 
\begin{equation}
D^{=1}:=\sum _{a_i=1} D_i, \quad 
D^{<1}:=\sum _{a_i<1} a_i D_i, 
\quad \text{and} \quad 
D^{>1}:=\sum _{a_i>1} a_i D_i. 
\end{equation} 
Similarly, we can define $D^{\leq 1}$ and $D^{\geq 1}$. 
We note that $D$ is called a {\em{boundary $\mathbb Q$-divisor}} 
(resp.~a {\em{subboundary $\mathbb Q$-divisor}}) 
when $a_i\in \mathbb Q$ and $0\leq a_i\leq 1$ (resp.~$a_i\leq 1$) for every $i$. 
\end{defn}

Let us recall the definitions of log canonical pairs and 
log canonical strata. For a detailed discussion 
of the singularities of pairs, see \cite{fujino-fundamental}, 
\cite{fujino-foundations}, \cite[Section 3]{fujino-bchm}, 
\cite[Section 2.1]{fujino-cone-contraction}, 
\cite{kollar}, and others. Although there are some subtle 
issues regarding the complex analytic singularities of pairs, we do 
not repeat the details here.

\begin{defn}[{Log canonical pairs and log canonical 
strata, see \cite[Definition 3.1]{fujino-bchm} 
and \cite[2.1.1]{fujino-cone-contraction}}]\label{c-def2.2} 
Let $X$ be a normal complex analytic space and 
let $\Delta$ be an effective 
$\mathbb R$-divisor on $X$ such that 
$K_X+\Delta$ is $\mathbb R$-Cartier. 
If $a(E, X, \Delta)\geq -1$ (resp.~$>-1$) holds 
for any proper bimeromorphic 
morphism $f\colon Y\to X$ from a normal 
complex analytic space $Y$ and every $f$-exceptional 
divisor $E$, 
then $(X, \Delta)$ is called a {\em{log canonical}} 
(resp.~{\em{purely log terminal}}) 
{\em{pair}}. If $(X, \Delta)$ is purely log terminal 
and $\lfloor \Delta\rfloor=0$, then we say that 
$(X, \Delta)$ is a 
{\em{kawamata log terminal pair}}. 

Let $(X, \Delta)$ be a log canonical pair. 
The image of $E$ with $a(E, X, \Delta)=-1$ for 
some $f\colon Y\to X$ is called a {\em{log canonical 
center}} of $(X, \Delta)$. 
A closed subset $S$ of $X$ is called a {\em{log 
canonical stratum}} of $(X, \Delta)$ if $S$ is an irreducible 
component of $X$ or 
a log canonical center of $(X, \Delta)$. 
\end{defn}
 
\begin{defn}[Non-lc loci and non-klt loci]\label{c-def2.3} 
Let $X$ be a normal complex variety and let $\Delta$ be an effective 
$\mathbb R$-divisor on $X$ such that $K_X+\Delta$ is $\mathbb R$-Cartier. 
Then the {\em{non-lc locus}} (resp.~{\em{non-klt locus}}) of $(X, \Delta)$, 
denoted by $\Nlc(X, \Delta)$ (resp.~$\Nklt(X, \Delta)$), is the smallest 
closed subset $Z$ of $X$ such that 
the complement $(X\setminus Z, \Delta|_{X\setminus Z})$ is log 
canonical (resp.~kawamata log terminal). 

Although the non-klt locus $\Nklt(X, \Delta)$ has a natural complex analytic 
space structure defined by the multiplier ideal sheaf $\mathcal J(X, \Delta)$, 
this structure does not play a major role in this paper.
\end{defn}
 
Let us recall the definition of divisorial log terminal pairs 
in the complex analytic setting (see \cite[Definition 3.7]{fujino-bchm}). 
Note that \cite[Definition 2.37, Proposition 2.40, Theorem 2.44]{kollar-mori} 
is helpful. 

\begin{defn}[Divisorial log terminal pairs]\label{c-def2.4} 
Let $X$ be a normal complex analytic space and 
let $\Delta$ be a boundary $\mathbb R$-divisor 
on $X$ such that $K_X+\Delta$ is $\mathbb R$-Cartier. 
If there exists a proper bimeromorphic 
morphism $f\colon Y\to X$ from a smooth 
complex variety $Y$ such that $\Exc(f)$ and $\Exc(f)\cup 
\Supp f^{-1}_*\Delta$ are simple normal crossing divisors on $Y$ and 
that the discrepancy coefficient $a(E, X, \Delta)>-1$ holds 
for every $f$-exceptional 
divisor $E$, then $(X, \Delta)$ is called a 
{\em{divisorial log terminal pair}}. 
We note that $\Exc(f)$ denotes the {\em{exceptional locus}} 
of $f$. 
\end{defn}

We note that Definitions \ref{c-def2.2} and 
\ref{c-def2.4} work for 
a finite disjoint union of normal complex varieties. 
In Definitions \ref{c-def2.2} and 
\ref{c-def2.4}, $X$ is not necessarily connected. 
It is well known that a divisorial log terminal pair is 
a log canonical pair. 

\begin{rem}\label{c-rem2.5}
If we shrink $X$ to a relatively compact open subset of $X$ in Definition 
\ref{c-def2.4}, 
then we can assume that $f$ is a composite of a finite sequence of blow-ups. 
In particular, $f$ is projective. 
For the details, see \cite[Lemma 3.9]{fujino-bchm} and 
\cite{bierstone-milman}. 
\end{rem}

Let us define semi-log canonical 
pairs and 
semi-divisorial 
log terminal pairs 
in the complex analytic setting. 

\begin{defn}[Semi-log canonical pairs and 
semi-divisorial log terminal pairs]\label{c-def2.6}
Let $X$ be an equidimensional reduced complex analytic space 
that is normal crossing in codimension one and satisfies 
Serre's $S_2$ condition. 
Let $\Delta$ be an effective $\mathbb R$-divisor 
on $X$ such that the singular locus of $X$ 
does not contain any irreducible components of $\Supp \Delta$. 
In this situation, the pair $(X, \Delta)$ is called 
a {\em{semi-log canonical pair}} 
(an {\em{slc pair}}, for short) if 
\begin{itemize}
\item[(1)] $K_X+\Delta$ is $\mathbb R$-Cartier, and 
\item[(2)] $(X^\nu, \Theta)$ is log canonical, where 
$\nu\colon X^\nu \to X$ is the normalization and 
$K_{X^\nu}+\Theta:=\nu^*(K_X+\Delta)$. 
\end{itemize} 
Let $(X, \Delta)$ be a semi-log canonical pair in the above sense. 
If each irreducible component of $X$ is normal and $(X^\nu, 
\Theta)$ is divisorial log terminal, 
then we say that $(X, \Delta)$ is a 
{\em{semi-divisorial log terminal pair}} 
(an {\em{sdlt pair}}, for short). 
Let $S$ be a closed subset of $X$. 
We say that $S$ is a {\em{semi-log canonical 
stratum}} of $(X, \Delta)$ if and only if 
$S$ is an irreducible component of $X$ or 
the $\nu$-image of some log canonical center of $(X^\nu, \Theta)$. 
When $(X, \Delta)$ is log canonical, then a semi-log 
canonical stratum $S$ is called a {\em{log canonical stratum}} of 
$(X, \Delta)$ (see Definition \ref{c-def2.2}).  
\end{defn}

For various results on algebraic (resp.~complex 
analytic) semi-log canonical pairs, 
see \cite{fujino-slc} (resp.~\cite{fujino-quasi-log}). 

\begin{rem}\label{c-rem2.7}
Note that the definition of semi-divisorial log terminal pairs 
in Definition \ref{c-def2.6} is different from \cite[Definition 5.19]{kollar}. 
Our definition is a direct analytic generalization of the one in 
\cite{fujino-abundance} (see \cite[Definition 1.1]{fujino-abundance}). 
\end{rem}

The following lemma is well known when $X$ is an algebraic variety. 
We state it here explicitly for the sake of completeness. 

\begin{lem}\label{c-lem2.8}
Let $(X, \Delta)$ be a divisorial log terminal pair. 
We put $S:=\lfloor \Delta\rfloor$ and $K_S+\Delta_S:=(K_X+\Delta)|_S$ 
by adjunction. 
Then $(S, \Delta_S)$ is semi-divisorial log terminal in the sense of 
Definition \ref{c-def2.6}. 
More precisely, let $S=S_1+\cdots +S_l$ be the irreducible decomposition. 
We put $T:=S_1+\cdots +S_l$ for 
some $l$ with $1\leq l\leq k$. 
Then $T$ is Cohen--Macaulay and is simple normal crossing in codimension one. 
In particular, every irreducible component of $S$ is normal. 
We put $K_{S_i}+\Delta_{S_i}:=(K_X+\Delta)|_{S_i}$ by adjunction for 
every $i$. 
Then $(S_i, \Delta_{S_i})$ is divisorial log terminal. 
Thus we see that $(T, \Delta_T)$, 
where $K_T+\Delta_T:=(K_X+\Delta)|_T$ by adjunction, 
is semi-divisorial log terminal. 
\end{lem}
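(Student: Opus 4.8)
The plan is to reduce everything to the algebraic case by working locally and then transport the known statement across. Since the assertions are local on $X$, I would first shrink $X$ to a relatively compact open subset so that, by Remark \ref{c-rem2.5}, the dlt resolution $f\colon Y\to X$ from Definition \ref{c-def2.4} can be taken to be a projective morphism that is a composite of finitely many blow-ups along smooth centers. Fix an irreducible component $S_i$ of $S=\lfloor\Delta\rfloor$. Because $(X,\Delta)$ is dlt and $S_i\subset\Supp\lfloor\Delta\rfloor$, the component $S_i$ is a log canonical center, and near its generic point the pair $(X,\Delta)$ is snc (this is the standard fact that dlt pairs are snc at the generic point of every lc center, valid analytically by the same computation as in \cite{kollar-mori}). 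Hence $S_i$ is normal in codimension one on itself, and one checks $S_2$ for $S_i$ via the adjunction exact sequence $0\to\mathcal O_X(-S_i)\to\mathcal O_X\to\mathcal O_{S_i}\to 0$ together with the fact that, in the dlt situation, the relevant ambient sheaves are Cohen--Macaulay in the range of codimensions that matters; combining $R_1$ and $S_2$ gives normality of each $S_i$ by Serre's criterion. So the first block of the proof is: \emph{each $S_i$ is normal.}

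Next I would establish that $(S_i,\Delta_{S_i})$ is divisorial log terminal, where $K_{S_i}+\Delta_{S_i}=(K_X+\Delta)|_{S_i}$ by adjunction. Here the key input is adjunction for dlt pairs along a component of $\lfloor\Delta\rfloor$: restricting the snc model $Y\to X$ and using that $\Exc(f)\cup\Supp f^{-1}_*\Delta$ is snc, one produces a log resolution of $(S_i,\Delta_{S_i})$ as the restriction of $f$ to the strict transform of $S_i$, and the discrepancy bookkeeping $a(E,X,\Delta)>-1$ for $f$-exceptional $E$ passes to discrepancies on $S_i$ being $>-1$ by the usual adjunction-of-discrepancies identity. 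This is exactly the analytic analogue of \cite[Proposition 2.40, Theorem 2.44]{kollar-mori}; the only thing to be careful about is that all the blow-ups involved remain projective after shrinking, which is Remark \ref{c-rem2.5}. For the Cohen--Macaulayness and snc-in-codimension-one statements for $T=S_1+\cdots+S_l$, I would argue inductively on $l$: the union of the first $l$ components of a reduced snc (in codimension one) divisor on the dlt model pushes down to a space that is $S_2$ (hence, being reduced and $R_1$, is snc in codimension one and Cohen--Macaulay) — this uses the Mayer--Vietoris / conductor exact sequences for the stratified gluing together with the Cohen--Macaulayness of the pieces, and in the analytic setting the relevant local cohomology vanishing comes from \cite{fujino-vanishing}.

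Finally, assembling the pieces: $T$ is equidimensional, reduced, normal crossing in codimension one, and $S_2$; its normalization is $\bigsqcup_i S_i$ with $(S_i,\Delta_{S_i})$ dlt by the previous paragraph; and $K_T+\Delta_T$ is $\mathbb R$-Cartier because it is the restriction of the $\mathbb R$-Cartier divisor $K_X+\Delta$. By Definition \ref{c-def2.6} this says precisely that $(T,\Delta_T)$ is semi-divisorial log terminal (and the $l=k$ case gives the claim for $(S,\Delta_S)$). The main obstacle I anticipate is not any single hard theorem but rather verifying that the standard algebraic adjunction-and-Serre-condition package genuinely survives the passage to complex analytic spaces without properness of $X$ — concretely, that one may shrink $X$ so the dlt resolution is projective (Remark \ref{c-rem2.5}), that the needed coherent-cohomology vanishing for the conductor sequences holds analytically (via \cite{fujino-vanishing}), and that $\mathcal O_X(-S_i)$-type sheaves have the Cohen--Macaulay properties one expects at the relevant codimensions. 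Once those bookkeeping points are confirmed, the proof is a routine transcription of the algebraic argument.
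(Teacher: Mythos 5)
Your overall strategy (shrink $X$ so the dlt resolution is projective via Remark \ref{c-rem2.5}, then transcribe the standard algebraic adjunction/Cohen--Macaulay package using the analytic vanishing theorems) is the same in spirit as the paper's, which simply invokes \cite{rrv} to transport the proof of \cite[Theorem 3.13.6]{fujino-foundations}. But as written your argument has a genuine gap exactly at the central point, the Cohen--Macaulayness. First, your normality step for $S_i$ assumes that ``the relevant ambient sheaves are Cohen--Macaulay in the range of codimensions that matters''; for a dlt pair this is not an input but a substantial output of the theory (and the easy route of Lemma \ref{c-lem2.9} is unavailable, since an individual component $S_i$ of $\lfloor\Delta\rfloor$ need not be $\mathbb Q$-Cartier when $X$ is not $\mathbb Q$-factorial). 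Second, your induction on $l$ for $T=S_1+\cdots+S_l$ via conductor/Mayer--Vietoris sequences does not close: to deduce that $T'\cup S_{l+1}$ is Cohen--Macaulay from Cohen--Macaulayness of $T'$ and $S_{l+1}$ you need both that the gluing sequence holds with the reduced structure on $T'\cap S_{l+1}$ (a seminormality-type statement) and that $T'\cap S_{l+1}$, which is a union of codimension-two log canonical strata, is itself Cohen--Macaulay of the right dimension --- a statement of the same depth as the lemma you are proving. The actual proof of \cite[Theorem 3.13.6]{fujino-foundations} does not proceed by such a gluing induction; it establishes Cohen--Macaulayness of every partial union $T$ at once by combining vanishing theorems on a log resolution with (local/relative) duality.

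This also pinpoints the ingredient your sketch is missing in the analytic category: the vanishing theorems of \cite{fujino-vanishing} alone are not enough, because the Cohen--Macaulayness argument runs through duality, and that is precisely why the paper cites Ramis--Ruget--Verdier \cite{rrv} (relative duality for complex analytic spaces) as the tool that makes the algebraic proof transcribe. Your remaining points --- normality of each $S_i$ once one knows $T$ is Cohen--Macaulay and snc in codimension one, dlt-ness of $(S_i,\Delta_{S_i})$ by restricting the log resolution and the usual discrepancy adjunction, and the verification of Definition \ref{c-def2.6} for $(T,\Delta_T)$ --- are fine and are indeed the ``easy'' part the paper leaves to the reader; but without the duality input the Cohen--Macaulay core of your proposal does not stand on its own.
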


\begin{proof}
By \cite{rrv}, we can apply the proof of 
\cite[Theorem 3.13.6]{fujino-foundations} to 
our setting with some suitable 
modifications (see also Remark \ref{c-rem2.5}). 
Then we obtain that $T$ is Cohen--Macaulay. 
It is obvious that $T$ is simple normal crossing in codimension 
one. 
Hence we can easily check all the other statements. 
\end{proof}

We will repeatedly use Lemma \ref{c-lem2.9} in subsequent 
sections. 

\begin{lem}\label{c-lem2.9}
Let $(X, \Delta)$ be a log canonical pair such that 
$(X, \Delta-\lfloor \Delta\rfloor)$ is kawamata log 
terminal. We put $S:=\lfloor \Delta\rfloor$ and 
$K_S+\Delta_S:=(K_X+\Delta)|_S$ by adjunction. 
Then $S$ is Cohen--Macaulay and is semi-log canonical. 
\end{lem}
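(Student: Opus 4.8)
The plan is to reduce the statement to the already-established Lemma \ref{c-lem2.8} by way of a divisorial log terminal blow-up. First I would take a dlt blow-up $f\colon Y\to X$ of $(X,\Delta)$; such a blow-up exists in the complex analytic setting after shrinking $X$ around a Stein compact subset (this is precisely Theorem \ref{b-thm1.8}, or alternatively the refinement Theorem \ref{y-thm6.1}), so one writes $K_Y+\Delta_Y=f^*(K_X+\Delta)$ with $(Y,\Delta^{<1}_Y+\Supp\Delta^{\ge 1}_Y)$ dlt and with all $f$-exceptional divisors appearing in $\Delta_Y$ with coefficient $\ge 1$. Since we are only asked for a local statement (Cohen--Macaulayness and semi-log canonicity are checked on stalks / over relatively compact opens), passing to such a blow-up loses nothing. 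Because $(X,\Delta-\lfloor\Delta\rfloor)$ is kawamata log terminal, no exceptional divisor has discrepancy $\le -1$ over the generic point of any component of $S=\lfloor\Delta\rfloor$ except those lying over $S$ itself; this forces $\Delta^\dag_Y:=\Delta^{<1}_Y+\Supp\Delta^{\ge 1}_Y$ to satisfy $\lfloor\Delta^\dag_Y\rfloor=f^{-1}_*S+(\text{exceptional divisors over }S)$, and the strict transform $f^{-1}_*S$ maps birationally onto $S$.

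Next I would apply Lemma \ref{c-lem2.8} to the dlt pair $(Y,\Delta^\dag_Y)$ with $T:=f^{-1}_*S\subseteq\lfloor\Delta^\dag_Y\rfloor$: this yields that $T$ is Cohen--Macaulay, simple normal crossing in codimension one, with normal irreducible components, and that $(T,\Delta_T)$ (defined by adjunction from $(Y,\Delta^\dag_Y)$) is semi-divisorial log terminal, hence in particular semi-log canonical and Cohen--Macaulay. The restriction $f|_T\colon T\to S$ is then a projective bimeromorphic morphism. To descend the conclusion from $T$ to $S$ one needs two things: (a) $f|_T$ has connected fibers, i.e. $(f|_T)_*\mathcal O_T=\mathcal O_S$, so that $S$ inherits $S_2$ and is normal-crossing in codimension one from $T$; and (b) the adjunction-defined boundary on $T$ is the pullback of $\Delta_S$, so that the log canonical condition passes down. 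For (a) one uses that $S$ is already $S_2$ (it is $\lfloor\Delta\rfloor$ on a dlt pair over $X$, or more robustly one argues via Kawamata--Viehweg-type vanishing in the analytic setting, \cite{fujino-vanishing}, exactly as in the proof of \cite[Theorem 3.13.6]{fujino-foundations} cited in Lemma \ref{c-lem2.8}). For (b) one checks that $K_T+\Delta_T=(f|_T)^*(K_S+\Delta_S)$ by comparing with $K_Y+\Delta^\dag_Y=f^*(K_X+\Delta)-G$ and noting $G|_T$ and $(\Delta^\dag_Y-\Delta)|$-type correction terms vanish along $T$ since $T$ is not contained in $\Supp G$ and the coefficient-one part matches under adjunction.

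The Cohen--Macaulayness of $S$ then follows because Cohen--Macaulayness descends under a projective bimeromorphic morphism with $Rf_*\mathcal O=\mathcal O$ from a Cohen--Macaulay source once one controls the higher direct images $R^if_*\mathcal O_T=0$ for $i>0$, which is again the vanishing theorem of \cite{fujino-vanishing} applied to the dlt pair; and semi-log canonicity of $(S,\Delta_S)$ is definitional once we know $S$ is reduced, $S_2$, normal-crossing in codimension one, and that $(S^\nu,\Theta_S)$ — which equals a disjoint union of the normalizations of the components of $T$ with their dlt boundaries pushed down — is log canonical, indeed dlt. The main obstacle I expect is step (a)/(b): verifying that the dlt blow-up can be taken so that $f|_T$ has connected fibers and that adjunction commutes with $f|_T$, i.e. that no unexpected boundary contribution appears on $T$ over the exceptional locus of $f|_T$. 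This is exactly the kind of ``different'' bookkeeping that is routine but delicate in the algebraic case (handled in \cite{fujino-foundations} and \cite{fujino-slc}) and must be re-checked analytically; the vanishing-theorem input needed for it is available from \cite{fujino-vanishing}, so no genuinely new ingredient should be required.
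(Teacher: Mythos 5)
Your reduction to Lemma \ref{c-lem2.8} via a dlt blow-up breaks down at step (a), and the failure is not a technical gap but a false statement: one cannot have $(f|_T)_*\mathcal O_T=\mathcal O_S$ for $T=f^{-1}_*S$ on a dlt model whenever $S$ has a non-normal irreducible component, which is exactly the interesting case of the lemma (the conclusion is only that $S$ is \emph{semi}-log canonical, so $S$ is allowed to be non-normal). Indeed, by Lemma \ref{c-lem2.8} every irreducible component of $\lfloor\Delta_Y\rfloor$, hence of $T$, is normal; moreover every positive-dimensional fiber of $f|_T$ lies over a subset of codimension $\geq 2$ in $S$, so $f|_T$ is finite over the codimension-one non-normal locus of $S$, and there $(f|_T)_*\mathcal O_T$ is a normal finite birational extension of $\mathcal O_S$, hence strictly larger than $\mathcal O_S$. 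Concretely, take $X=\mathbb C^2$ and $\Delta=S$ a nodal cubic: $(X,\Delta)$ is log canonical, $(X,\Delta-\lfloor\Delta\rfloor)=(X,0)$ is kawamata log terminal, the dlt blow-up is the blow-up at the node, $T$ is the normalization $S^\nu$, and $(f|_T)_*\mathcal O_T=\nu_*\mathcal O_{S^\nu}\neq\mathcal O_S$; the Whitney umbrella in $\mathbb C^3$ gives a similar example one dimension up. Since (a) fails, the proposed descent of both Cohen--Macaulayness and the slc structure from $T$ to $S$ collapses: your argument only sees the normalization of $S$ and loses precisely the gluing along the double locus that $S_2$-ness and Cohen--Macaulayness are about. (Your justification of (a) by ``$S$ is already $S_2$'' is also circular, since $S_2$ is part of what is to be proved.) A secondary point: even where $(f|_T)_*\mathcal O_T=\mathcal O_S$ and $R^i(f|_T)_*\mathcal O_T=0$ do hold, Cohen--Macaulayness does not descend from a merely Cohen--Macaulay source; by Grothendieck duality one needs in addition $R^i(f|_T)_*\omega_T=0$ for $i>0$, a Grauert--Riemenschneider-type vanishing that your outline never addresses.

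For comparison, the paper's proof avoids blow-ups entirely and is much shorter: since $(X,\Delta-S)$ is kawamata log terminal, $X$ has rational singularities and is therefore Cohen--Macaulay; $S$ is $\mathbb Q$-Cartier (it is the difference of the $\mathbb R$-Cartier divisors $K_X+\Delta$ and $K_X+\Delta-S$), so $\mathcal O_X(-S)$ is Cohen--Macaulay by \cite[Corollary 5.25]{kollar-mori} (see also \cite[Corollaries 2.62, 2.63, and 2.88]{kollar}); the exact sequence $0\to\mathcal O_X(-S)\to\mathcal O_X\to\mathcal O_S\to 0$ then gives that $\mathcal O_S$ is Cohen--Macaulay, and $(S,\Delta_S)$ is semi-log canonical by adjunction. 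If you want a dlt-model argument, you would need a semi-resolution-type model that is an isomorphism with $S$ in codimension one, not the componentwise-normal strict transform on a dlt blow-up.
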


\begin{proof}
Since $(X, \Delta-S)$ is kawamata log terminal, 
$X$ has only rational singularities. Therefore, 
$X$ is Cohen--Macaulay. 
Since $S$ is $\mathbb Q$-Cartier, $\mathcal O_X(-S)$ is 
Cohen--Macaulay. 
This implies that $\mathcal O_S$ is Cohen--Macaulay. 
For the details, see \cite[Corollary 5.25]{kollar-mori}, 
\cite[Corollaries 2.62, 2.63, and 2.88]{kollar}, and others. 
By adjunction, we see that 
$(S, \Delta_S)$ is semi-log canonical. 
\end{proof}

We need nef and log abundant divisors in Theorem \ref{b-thm1.4}. 

\begin{defn}[Nef and abundant line bundles]\label{c-def2.10}
Let $\pi\colon X\to Y$ be a projective 
surjective morphism from a normal complex variety $X$ onto 
a complex variety $Y$. 
Let $\mathcal L$ be a $\pi$-nef 
line bundle on $X$. 
If $\kappa (F, \mathcal L|_F)=\nu (F, \mathcal L|_F)$ holds 
for analytically sufficiently general fibers $F$, 
then $\mathcal L$ is said to be {\em{$\pi$-nef 
and $\pi$-abundant over $Y$}}. 
Similarly, we can define $\pi$-nef and $\pi$-abundant 
$\mathbb Q$-Cartier $\mathbb Q$-divisors. 
\end{defn}

\begin{rem}\label{c-rem2.11} 
In Definition \ref{c-def2.10}, 
if $\mathcal L$ is $\pi$-semiample, 
then it is easy to see that $\mathcal L$ is $\pi$-nef and 
$\pi$-abundant over $Y$. 
\end{rem}

We will freely use the following elementary lemma. 

\begin{lem}\label{c-lem2.12}
Let $\pi\colon X\to Y$ be a projective surjective morphism 
from a normal complex variety $X$ onto a complex variety $Y$ and 
let $\mathcal L$ be a $\pi$-nef and $\pi$-abundant 
line bundle on $X$. 
Let $p\colon Z\to X$ be a projective surjective morphism 
from a normal complex variety $Z$. 
Then $p^*\mathcal L$ is $(\pi\circ p)$-nef and 
$(\pi\circ p)$-abundant over $Y$.  
\end{lem}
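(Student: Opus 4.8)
The plan is to reduce the statement to the behaviour of $\kappa$ and $\nu$ under pullback along a generically finite (or even just surjective) morphism of the general fibers, which is a well-understood fact. First I would observe that $\pi\circ p\colon Z\to Y$ is projective and surjective, so the notion ``$(\pi\circ p)$-nef and $(\pi\circ p)$-abundant over $Y$'' makes sense; moreover $p^*\mathcal L$ is clearly $(\pi\circ p)$-nef since $p$ is projective and surjective, so the only real content is the equality $\kappa(G, p^*\mathcal L|_G)=\nu(G, p^*\mathcal L|_G)$ for analytically sufficiently general fibers $G$ of $\pi\circ p$.

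Next I would set up the fibers carefully in the complex analytic setting: shrinking $Y$ around a general point if necessary and using that ``analytically sufficiently general'' conditions are stable under passing to a countable intersection, I can choose a point $y\in Y$ outside a countable union of analytic subsets so that simultaneously $F:=\pi^{-1}(y)$ is a fiber witnessing $\kappa(F,\mathcal L|_F)=\nu(F,\mathcal L|_F)$, the induced map $p_F\colon p^{-1}(F)\to F$ is a projective surjective morphism of normal complex varieties (using that the generic fiber of a morphism of complex varieties is normal and irreducible, cf.\ the references to \cite{bs} and \cite{fi}), and $G:=p^{-1}(F)=(\pi\circ p)^{-1}(y)$ is a general fiber of $\pi\circ p$. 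The key point is that $p^*\mathcal L|_G = p_F^*(\mathcal L|_F)$.

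Then the lemma follows from two compatibilities on the compact (after a further relatively compact shrinking, if desired) setting of a projective surjective morphism $p_F\colon G\to F$ of normal complex varieties and a nef line bundle $\mathcal M:=\mathcal L|_F$ on $F$: namely $\nu(G, p_F^*\mathcal M)=\nu(F,\mathcal M)$ and $\kappa(G, p_F^*\mathcal M)=\kappa(F,\mathcal M)$. The equality for $\nu$ (numerical dimension) is because intersection numbers $(p_F^*\mathcal M)^{k}\cdot H_1\cdots$ on $G$ are computed via the projection formula and are positive for some $k$ exactly when $\mathcal M^k$ is, so $\nu$ is preserved; the equality for $\kappa$ (Iitaka–Kodaira dimension) follows from the standard fact that $H^0(G,(p_F^*\mathcal M)^{\otimes m})=H^0(F,\mathcal M^{\otimes m}\otimes (p_F)_*\mathcal O_G)\supseteq H^0(F,\mathcal M^{\otimes m})$ gives $\kappa(G,p_F^*\mathcal M)\geq \kappa(F,\mathcal M)$, while the reverse inequality $\kappa(G,p_F^*\mathcal M)\leq \kappa(F,\mathcal M)$ holds because any section of a power of $p_F^*\mathcal M$ comes from $F$ up to the finite-type structure (Stein factorization / degree considerations show no extra Iitaka dimension can be created by pulling back). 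Combining, $\kappa(G,p^*\mathcal L|_G)=\kappa(F,\mathcal L|_F)=\nu(F,\mathcal L|_F)=\nu(G,p^*\mathcal L|_G)$, as required.

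The main obstacle I expect is purely technical and analytic rather than conceptual: ensuring that ``analytically sufficiently general fiber'' for $\pi$ and for $\pi\circ p$ can be chosen compatibly, and that the general fiber $p^{-1}(F)$ is genuinely a normal complex \emph{variety} (reduced and irreducible) so that $\kappa$ and $\nu$ are defined for it — this requires invoking generic smoothness/flatness-type statements and irreducibility of general fibers in the complex analytic category, together with the stability of these genericity conditions under countable intersection. Once that bookkeeping is in place, the projection-formula and pushforward arguments for $\nu$ and $\kappa$ are routine and can be cited from the standard references.
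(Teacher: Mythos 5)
The paper offers no proof of Lemma \ref{c-lem2.12}; it is stated as an elementary fact to be used freely, and your argument is exactly the standard one: reduce to an analytically very general fiber $F$ of $\pi$ (the genericity conditions for $\pi$ and $\pi\circ p$ are both conditions on $y\in Y$, so they can be imposed simultaneously), note $p^*\mathcal L|_{G}=p_F^*(\mathcal L|_F)$ for $G=p^{-1}(F)$, and invoke the classical invariance of $\kappa$ and $\nu$ of a nef line bundle under pullback by a projective surjective morphism of compact normal complex spaces (Stein factorization plus the projection formula, cf.~\cite{ueno} and \cite{nakayama}). The only inaccuracy is your claim that the general fiber is irreducible: $F$ and $G$ may well be disconnected (e.g.\ when $\pi$ or $\pi\circ p$ fails to have connected fibers), but this is harmless --- Definition \ref{c-def2.10} already has to make sense for such fibers, and after discarding a further countable union of proper analytic subsets of $Y$ every connected component of $G$ is normal and surjects onto a component of $F$, so the same $\kappa$- and $\nu$-invariance applies component by component; this is bookkeeping, not a gap.
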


\begin{defn}[Nef and log abundant line bundles]\label{c-def2.13}
Let $\pi\colon X\to Y$ be a projective morphism of 
complex analytic spaces and let $(X, \Delta)$ be 
a semi-log canonical pair. 
Let $\mathcal L$ be a line bundle on $X$. 
We say that $\mathcal L$ is {\em{$\pi$-nef 
and $\pi$-log abundant with respect to $(X, \Delta)$ over $Y$}} 
if and only if $\mathcal L|_{S^\nu}$ is nef and abundant 
over $\pi(S)$ for every semi-log canonical stratum $S$ of $(X, \Delta)$, 
where $\mathcal L|_{S^\nu}$ denotes the pull-back of $\mathcal L$ to 
the normalization of $S$. 
Similarly, we can define $\pi$-nef and 
$\pi$-log abundant $\mathbb Q$-Cartier 
$\mathbb Q$-divisors with respect to $(X, \Delta)$. 
\end{defn}

For $\mathbb R$-Cartier $\mathbb R$-divisors, we need the following 
definitions. 
In the present paper, we will use $\mathbb R$-Cartier $\mathbb R$-divisors 
only in Theorems \ref{b-thm1.5}, \ref{b-thm1.7}, \ref{b-thm1.8}, 
Corollary \ref{b-cor1.11}, and Section \ref{y-sec6}. 

\begin{defn}[Relatively abundant $\mathbb R$-Cartier 
$\mathbb R$-divisors]\label{c-def2.14}
Let $\pi\colon X\to Y$ be a projective 
morphism from a normal complex variety $X$ onto a complex 
variety $Y$. Let $D$ be an $\mathbb R$-Cartier $\mathbb R$-divisor 
on $X$. 
If $\kappa _\sigma (F, D|_F)=\kappa _\iota(F, D|_F)$ holds 
for analytically sufficiently general fibers 
$F$, 
then $D$ is said to be $\pi$-abundant over $Y$.   
\end{defn}

For the details of $\kappa _\sigma$ and $\kappa _\iota$, 
see 
\cite[Chapter V, \S 2]{nakayama} and 
\cite[Section 2.5]{fujino-foundations}, respectively. 
For the details of abundant divisors, see also 
\cite[Subsection 2.6.~Abundant divisor]{enokizono-hashizume-mmp}. 

\begin{defn}[Nef and log abundant $\mathbb R$-Cartier 
$\mathbb R$-divisors]\label{c-def2.15}
Let $\pi\colon X\to Y$ be a projective morphism 
of complex analytic spaces and let $(X, \Delta)$ be 
a log canonical pair. 
Let $D$ be an $\mathbb R$-Cartier $\mathbb R$-divisor on $X$. 
We say that $D$ is {\em{$\pi$-nef and 
$\pi$-log abundant with respect to $(X, \Delta)$ over $Y$}} 
if and only if $D|_{S^\nu}$ is nef and 
abundant over $\pi(S)$ for every log canonical stratum of 
$(X, \Delta)$, 
where $D|_{S^\nu}$ denotes the pull-back of $D$ to the normalization 
of $S$. 
\end{defn}

\begin{rem}\label{c-rem2.16}
A $\mathbb Q$-Cartier $\mathbb Q$-divisor $D$ is $\pi$-nef and 
$\pi$-log abundant with respect to $(X, \Delta)$ over $Y$ 
in the sense of Definition \ref{c-def2.15} 
if and only if it is $\pi$-nef and $\pi$-log abundant 
with respect to $(X, \Delta)$ over $Y$ in 
the sense of Definition \ref{c-def2.13}. 
\end{rem}

Let us introduce the notion of 
$B$-bimeromorphic maps, which is obviously a generalization of 
the notion of $B$-birational maps. 

\begin{defn}[$B$-bimeromorphic maps]\label{c-def2.17}
Let $\pi\colon X\to Y$ and $\pi'\colon X'\to Y$ be projective morphisms of complex analytic spaces, and let $(X, \Delta)$ and $(X', \Delta')$ be log canonical pairs. 
We say that a bimeromorphic map $f\colon X\dashrightarrow X'$ over $Y$ is {\em{$B$-bimeromorphic over $Y$}} if there exists a commutative diagram
\begin{equation}
\xymatrix{
&Z\ar[dr]^-{\alpha'}\ar[dl]_-\alpha& \\ 
X\ar[dr]_-\pi\ar@{-->}[rr]^-f& &X'\ar[dl]^-{\pi'}\\ 
&Y&
}
\end{equation}
such that $Z$ is a normal complex analytic space, $\alpha$ and $\alpha'$ are proper bimeromorphic morphisms, and the relation
\begin{equation}
\alpha^*(K_X+\Delta)=\alpha'^*(K_{X'}+\Delta')
\end{equation} 
holds. 
Let $m$ be a positive integer such that $m(K_X+\Delta)$ and $m(K_{X'}+\Delta')$ are Cartier divisors. 
Then we have an isomorphism
\begin{equation}
\begin{split}
f^*\colon \pi'_*\mathcal O_{X'}(m(K_{X'}+\Delta')) 
&\overset{\alpha'^*}\longrightarrow \pi'_*\alpha'_*\mathcal O_Z(\alpha'^*(m(K_{X'}+\Delta')))\\
&\simeq \pi_*\alpha_*\mathcal O_Z(\alpha^*(m(K_X+\Delta)))
\overset{(\alpha^*)^{-1}}\longrightarrow \pi_*\mathcal O_X(m(K_X+\Delta)). 
\end{split}
\end{equation}
We set
\begin{equation}
\Bim(X/Y, \Delta):=\{f\mid \text{$f\colon (X, \Delta)\dashrightarrow (X, \Delta)$ is $B$-bimeromorphic over $Y$}\}. 
\end{equation} 
Then it is obvious that $\Bim(X/Y, \Delta)$ has a natural group structure. 

Let $W$ be a compact subset of $Y$. 
We define
\begin{equation}
\Bim (X/Y, \Delta; W):=\left\{ 
g \;\middle|\; 
\begin{array}{l}
\text{$g \in \Bim\left(\pi^{-1}(U_g)/U_g, \Delta|_{\pi^{-1}(U_g)}\right)$ for some} \\
\text{open neighborhood $U_g$ of $W$ in $Y$}
\end{array}
\right\}. 
\end{equation} 
The subscript $g$ in $U_g$ indicates that the open neighborhood $U_g$ of $W$ may depend on $g$. 
We sometimes consider a semianalytic Stein open subset $U$ of $Y$ satisfying $U \subset W$. 
In such a situation, the chain of inclusions $U \subset W \subset U_g$ always holds. 
Note that $\Bim (X/Y, \Delta; W)$ also possesses a natural group structure. 
\end{defn}

We make small remarks on Definition \ref{c-def2.17}. 

\begin{rem}\label{c-rem2.18}
If $Y$ is a point in Definition \ref{c-def2.17}, 
then $(X, \Delta)$ is a 
projective log canonical pair and 
$\Bim(X/Y, \Delta)$ is nothing but $\Bir(X, \Delta)$ in 
\cite{fujino-abundance} and \cite{fujino-gongyo}. 
\end{rem}

\begin{rem}\label{c-rem2.19}
In Definition \ref{c-def2.17}, 
$X$ and $X'$ are not necessarily irreducible. 
In the proof of Theorem \ref{b-thm1.2}, we have to 
treat $\Bir(X, \Delta)$ in the case where 
$X$ is a disjoint union of normal projective varieties.  
\end{rem}

\begin{rem}\label{c-rem2.20}
Let $(X, \Delta)=:\bigsqcup_i(X_i, \Delta_i)$ and 
$(X', \Delta')=:\bigsqcup _i (X'_i, \Delta'_i)$ be 
the irreducible decompositions. 
Let $f\colon X\dashrightarrow X'$ be a $B$-bimeromorphic 
map over $Y$ as in Definition \ref{c-def2.17}. 
Then, there exists a permutation $\sigma$ such that 
\begin{equation}
f_i:=f|_{X_i}\colon X_i\dashrightarrow X'_{\sigma(i)}
\end{equation} 
is a 
$B$-bimeromorphic map over $Y$ between 
irreducible log canonical pairs 
$(X_i, \Delta_i)$ and $(X'_{\sigma(i)}, 
\Delta'_{\sigma(i)})$. 
We note that $\pi(X_i)=\pi'(X'_{\sigma(i)})$ holds for every $i$. 
\end{rem}

\begin{rem}[{see \cite[Remark 2.15]{fujino-gongyo}}]\label{c-rem2.21}
Let $(X, \Delta)$ and $(X', \Delta')$ be 
log canonical pairs. 
Let $f\colon (X, \Delta)\dashrightarrow (X', \Delta')$ 
be a $B$-bimeromorphic map over $Y$ as in Definition 
\ref{c-def2.17}. We assume that $(X, \Delta-\lfloor 
\Delta\rfloor)$ and $(X', \Delta'-\lfloor \Delta'\rfloor)$ are kawamata 
log terminal. We put $S:=\lfloor \Delta\rfloor$ and $S':=\lfloor 
\Delta'\rfloor$. 
By replacing $Y$ with a relatively compact open subset, 
we may assume that $Z$ in Definition \ref{c-def2.17} is 
smooth and 
\begin{equation} 
\alpha^*(K_X+\Delta)=:K_Z+\Delta_Z:=
\alpha'^*(K_{X'}+\Delta')
\end{equation} 
such that $\Supp \Delta_Z$ is a simple normal crossing 
divisor on $Z$. We may further assume that $\alpha$ and $\alpha'$ are 
projective in Definition \ref{c-def2.17}. We put 
$K_S+\Delta_S:=(K_X+\Delta)|_S$ and 
$K_{S'}+\Delta_{S'}:=(K_{X'}+\Delta')|_{S'}$. 
By applying $\alpha_*$ and $\alpha'_*$ to 
\begin{equation}
0\to \mathcal O_Z(\lceil -(\Delta^{<1}_Z)\rceil-\Delta^{=1}_Z)\to 
\mathcal O_Z(\lceil -(\Delta^{<1}_Z)\rceil) \to 
\mathcal O_{\Delta^{=1}_Z} (\lceil -(\Delta^{<1}_Z)\rceil) \to 0,  
\end{equation}
we have $\alpha_*\mathcal O_{\Delta^{=1}_Z}\simeq \mathcal O_S$ and 
$\alpha'_*\mathcal O_{\Delta^{=1}_Z}\simeq \mathcal O_{S'}$. 
Here we used 
\begin{equation}
R^1\alpha_*\mathcal O_Z(\lceil -(\Delta^{<1}_Z)\rceil-\Delta^{=1}_Z)
=R^1\alpha'_*\mathcal O_Z(\lceil -(\Delta^{<1}_Z)\rceil-\Delta^{=1}_Z)=0, 
\end{equation}  
which is nothing but the relative Kawamata--Viehweg vanishing theorem. 
Thus $f$ induces an isomorphism 
\begin{equation}
(\alpha^*)^{-1}\circ (\alpha')^*\colon 
\pi'_*\mathcal O_{S'}(m(K_{S'}+\Delta_{S'}))\overset{\sim}\longrightarrow 
\pi_*\mathcal O_S(m(K_S+\Delta_S)). 
\end{equation}
We note that $f$ does not necessarily induce a bimeromorphic 
map $S\dashrightarrow S'$ in the above setting. 
\end{rem}

Let us introduce the notion of $B$-pluricanonical representations in 
the relative complex analytic setting.  

\begin{defn}[$B$-pluricanonical representations]\label{c-def2.22}
Let $X$ be a normal complex analytic space 
such that $(X, \Delta)$ is log canonical and let $\pi\colon X\to Y$ be 
a projective morphism of complex analytic spaces. 
Let $m$ be a positive integer such that $m(K_X+\Delta)$ is 
Cartier. Then we have a group homomorphism 
\begin{equation}
\rho_m\colon \Bim(X/Y, \Delta)\to \Aut_{\mathcal O_Y}
\left(\pi_*\mathcal O_X(m(K_X+\Delta))\right) 
\end{equation} 
given by $\rho_m(g)=g^*$ for $g\in \Bim(X/Y, \Delta)$. 
It is called the {\em{$B$-pluricanonical 
representation}} or {\em{log pluricanonical representation}} 
for $(X, \Delta)$ over $Y$. 
When $Y$ is a point, we have 
\begin{equation}
\rho_m\colon \Bir(X, \Delta)\to \Aut_{\mathbb C} \left(H^0(X, 
\mathcal O_X(m(K_X+\Delta)))\right). 
\end{equation}
\end{defn}

Theorem \ref{b-thm1.2} is a generalization of the following 
theorem, 
which is one of the main results of 
\cite{fujino-gongyo}. We note that 
we need it in the proof of Theorem \ref{b-thm1.2}. 
In \cite{hacon-xu}, Hacon and Xu independently 
proved a slightly weaker theorem (see \cite[Theorem 1]{hacon-xu}), 
which seems to be insufficient for the purpose of the present paper. 

\begin{thm}[{\cite[Theorem 1.1]{fujino-gongyo}}]\label{c-thm2.23}
Let $(X, \Delta)$ be a projective log canonical pair. 
Suppose that $m(K_X+\Delta)$ is Cartier and 
that $K_X+\Delta$ is semiample. 
Then $\rho_m\left(\Bir(X, \Delta)\right)$ is a finite group. 
\end{thm}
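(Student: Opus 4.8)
The statement to be proved is Theorem \ref{c-thm2.23}, the finiteness of $\rho_m(\Bir(X,\Delta))$ for a projective log canonical pair $(X,\Delta)$ with $K_X+\Delta$ semiample. Since this is quoted as \cite[Theorem 1.1]{fujino-gongyo}, a complete self-contained proof is not needed here; but the strategy I would adopt, and the one that underlies \cite{fujino-gongyo}, is a reduction along the log-canonical centers. The plan is: first reduce from the log canonical case to the kawamata log terminal and divisorial log terminal cases by running a dlt blow-up and using adjunction; second, handle the klt case by a direct argument; third, handle the dlt case by induction on dimension, using the adjunction isomorphism of Remark \ref{c-rem2.21} to descend the representation to the boundary $S=\lfloor\Delta\rfloor$ and invoking the semi-log canonical version on $S$ (which by Lemma \ref{c-lem2.8} is sdlt).

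More concretely, first I would take a dlt blow-up $f\colon (Z,\Delta_Z)\to (X,\Delta)$ with $K_Z+\Delta_Z=f^*(K_X+\Delta)$; then $f$ is $B$-birational, $\Bir(Z,\Delta_Z)\cong\Bir(X,\Delta)$ as groups, and the two pluricanonical representations are intertwined by the natural isomorphism $H^0(Z,\mathcal O_Z(m(K_Z+\Delta_Z)))\cong H^0(X,\mathcal O_X(m(K_X+\Delta)))$ coming from $f_*\mathcal O_Z(f^*(m(K_X+\Delta)))=\mathcal O_X(m(K_X+\Delta))$. So it suffices to treat the dlt pair $(Z,\Delta_Z)$. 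Using that $(Z,\Delta_Z-\lfloor\Delta_Z\rfloor)$ is klt, I would set $S:=\lfloor\Delta_Z\rfloor$ and $K_S+\Delta_S:=(K_Z+\Delta_Z)|_S$; by Lemma \ref{c-lem2.8}, $(S,\Delta_S)$ is sdlt. The key point is the exact sequence
\begin{equation}
0\to \mathcal O_Z(m(K_Z+\Delta_Z)-S)\to \mathcal O_Z(m(K_Z+\Delta_Z))\to \mathcal O_S(m(K_S+\Delta_S))\to 0,
\end{equation}
together with the fact that $\Bir(Z,\Delta_Z)$ acts compatibly on the whole sequence (via Remark \ref{c-rem2.21}), and that the action on $H^0(Z,\mathcal O_Z(m(K_Z+\Delta_Z)-S))$ — the "interior" piece, which after twisting down is essentially the klt situation — is finite by the klt case, while the action on $H^0(S,\mathcal O_S(m(K_S+\Delta_S)))$ is finite by the slc statement in dimension $\dim X-1$ (the semi-log canonical analogue of the theorem, proved by the same induction together with Kollár-type gluing in the algebraic setting, or — as the present paper does in the analytic setting — by a different route). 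Finiteness of the action on an extension of two finite-image representations is then elementary.

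I expect the main obstacle to be the klt (or more precisely the "interior" $H^0(Z,\mathcal O_Z(m(K_Z+\Delta_Z)-S))$) base case: there one must show directly that the group of eigenvalues occurring in $\rho_m$ is finite, which in \cite{fujino-gongyo} is reduced — via the semiampleness of $K_X+\Delta$, the induced fibration $X\to X_{\mathrm{can}}$, and a careful analysis of the action on sections — to a statement about automorphisms of the canonical model preserving a polarization, ultimately finiteness of a certain subgroup of $\GL$ defined over a number-theoretically rigid lattice (an argument going back to the work on pluricanonical representations of varieties of general type). The other delicate point is bookkeeping with the permutation of components in the disconnected case (Remarks \ref{c-rem2.19}, \ref{c-rem2.20}): a $B$-birational self-map of $X=\bigsqcup X_i$ permutes the components, so $\rho_m$ lands in a group that is an extension of a finite symmetric group by a product of the connected-case representations, and finiteness for each connected piece gives finiteness overall. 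Since all of this is exactly \cite[Theorem 1.1]{fujino-gongyo}, in the present paper it is legitimate to simply cite it; the sketch above is meant only to indicate why it holds and how it is used.
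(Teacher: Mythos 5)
The paper gives no proof of Theorem \ref{c-thm2.23} at all: it is used as an external input and simply cited as \cite[Theorem 1.1]{fujino-gongyo}, which is exactly the stance you take, so your proposal matches the paper's treatment, and your sketch (dlt blow-up, klt base case via the eigenvalue/root-of-unity argument, induction on dimension through adjunction to $\lfloor\Delta\rfloor$ and the slc statement on the boundary) is broadly consistent with how that reference actually argues. One caveat on the sketch only: finiteness of the images on a subrepresentation and on the quotient does not by itself ``elementarily'' give finiteness on the extension (unipotent elements act trivially on both pieces but may have infinite order), which is precisely why the cited proof instead bounds the order of each individual element, using semisimplicity together with the root-of-unity property of the eigenvalues, and then concludes by Burnside's theorem (Theorem \ref{c-thm2.24}); since your operative proof is the citation, this does not affect correctness.
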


In the proof of Theorem \ref{b-thm1.2}, Burnside's theorem 
plays a crucial role. 
Hence we state it explicitly for the sake of completeness. 
For the proof, see, for example, \cite[(36.1) Theorem]{curtis-reiner}. 

\begin{thm}[Burnside]\label{c-thm2.24} 
Let $G$ be a subgroup of $\GL(n, \mathbb C)$. 
If the order of any element $g$ of $G$ is uniformly 
bounded, then 
$G$ is a finite group. 
\end{thm}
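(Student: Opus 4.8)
The plan is to exploit the fact that a uniform bound on orders forces every element of $G$ to be semisimple with eigenvalues in a fixed finite set of roots of unity, and then to embed $G$ injectively into a finite set via a trace pairing against a basis of the algebra it spans. First I would pass from ``uniformly bounded order'' to ``finite exponent'': if every $g\in G$ has order at most $N$, then $g^M=I$ for $M:=\lcm(1,2,\dots,N)$. Since $x^M-1$ has distinct roots, each $g$ is diagonalizable and all its eigenvalues lie in the finite set $\mu_M\subset\mathbb C$ of $M$-th roots of unity. Consequently, for every $g\in G$ the trace $\operatorname{tr}(g)$ is a sum of $n$ elements of $\mu_M$, so it belongs to the finite set
\[
S:=\left\{\zeta_1+\cdots+\zeta_n \mid \zeta_1,\dots,\zeta_n\in\mu_M\right\}\subset\mathbb C.
\]

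Next I would linearize. Let $A\subseteq M_n(\mathbb C)$ be the $\mathbb C$-linear span of $G$. Because $G$ is closed under multiplication, $A$ is a subalgebra, so $r:=\dim_{\mathbb C}A\le n^2$, and since $G$ spans $A$ I may extract a basis $g_1,\dots,g_r$ of $A$ with each $g_i\in G$. Define
\[
T\colon G\to\mathbb C^r,\qquad T(g):=\left(\operatorname{tr}(g g_1),\dots,\operatorname{tr}(g g_r)\right).
\]
For each $i$ the product $g g_i$ again lies in $G$, so $\operatorname{tr}(g g_i)\in S$ by the previous step; hence $T(G)\subseteq S^r$, which is a finite set. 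It therefore suffices to show that $T$ is injective.

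The crux is precisely this injectivity, and it is where the group structure enters. Suppose $T(g)=T(h)$. Since the $g_i$ span $A$, linearity gives $\operatorname{tr}((g-h)a)=0$ for all $a\in A$. Taking $a=g^{-1}\in G\subseteq A$ yields $\operatorname{tr}(I-h g^{-1})=0$, that is, $\operatorname{tr}(h g^{-1})=n$. Now $h g^{-1}\in G$, so its $n$ eigenvalues are roots of unity, each of modulus one; a sum of $n$ complex numbers of modulus one equals the real number $n$ only when every summand equals $1$. Thus all eigenvalues of $h g^{-1}$ are $1$, and since $h g^{-1}$ is diagonalizable it equals $I$, giving $g=h$. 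Therefore $T$ is injective and $|G|=|T(G)|\le|S|^r<\infty$.

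The main obstacle is the injectivity step: it is the only place where one must convert trace data back into an equality of matrices, and it relies essentially on $G$ being a \emph{group} (so that $g^{-1}$ and $h g^{-1}$ lie in $G$) together with the modulus-one rigidity of roots of unity. The choice of the basis $\{g_i\}$ from within $G$, rather than from $A$ at large, is equally essential, since it is what guarantees $g g_i\in G$ and thereby confines every trace to the finite set $S$.
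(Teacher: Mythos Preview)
Your argument is correct and is precisely the classical Burnside trace argument: bounded exponent forces eigenvalues into a fixed finite set of roots of unity, and the map $g\mapsto(\operatorname{tr}(gg_1),\dots,\operatorname{tr}(gg_r))$ into $S^r$ is injective by the modulus-one rigidity trick applied to $hg^{-1}$. The paper does not supply its own proof of this theorem but simply refers the reader to \cite[(36.1) Theorem]{curtis-reiner}, where essentially the same trace argument appears; so your proposal is in line with the cited source.
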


In order to prove Theorem \ref{b-thm1.1}, 
we need the notion of {\em{admissible}} 
and {\em{preadmissible}} sections, 
which are first introduced in \cite{fujino-abundance}. 

\begin{defn}[{Admissible and preadmissible sections, see 
\cite[Definition 4.1]{fujino-abundance}}]\label{c-def2.25}
Let $(X, \Delta)$ be a semi-divisorial log terminal pair 
and let $\pi\colon X\to Y$ be a projective 
morphism of complex analytic spaces. 
Let $W$ be a compact subset of $Y$. 
Let $X=\bigcup _i X_i$ be the irreducible decomposition. 
As usual, 
\begin{equation}
\nu\colon X^\nu=\bigsqcup _i X_i\to \bigcup _i X_i=X
\end{equation} 
is the normalization with 
\begin{equation}
\nu^*(K_X+\Delta)=K_{X^\nu}+\Theta=:\bigsqcup _i (K_{X_i}+\Theta_i). 
\end{equation}
Let $S$ be the disjoint union of $\lfloor \Theta_i\rfloor$'s. 
We put 
\begin{equation}
K_S+\Delta_S:=(K_{X^\nu}+\Theta)|_S. 
\end{equation} 
Then, by adjunction, $(S, \Delta_S)$ is semi-divisorial 
log terminal. 
Let $m$ be a positive integer such that 
$m(K_X+\Delta)$ is Cartier. 
Let $U$ be a semianalytic Stein open subset of $Y$ with $U\subset W$. 
In particular, the number of the connected components of $U$ is finite 
(see, for example, \cite[Corollary 2.7]{bierstone-milman-semi}). 
We put $X_U:=\pi^{-1}(U)$ and $S_U:=S\cap (\pi\circ \nu)^{-1}(U)$. 
Then we define {\em{preadmissible}} and {\em{admissible}} sections inductively 
as follows. 
\begin{itemize}
\item[(1)] $s\in H^0(X_U, \mathcal O_X 
(m(K_X+\Delta)))\simeq 
H^0(U, \pi_*\mathcal O_X(m(K_X+\Delta)))$ is 
{\em{preadmissible}} if the restriction 
$\nu^*s|_{S_U}\in H^0(S_U, 
\mathcal O_S(m(K_S+\Delta_S)))$ 
is admissible. 
\item[(2)] $s\in H^0(X_U, \mathcal O_X 
(m(K_X+\Delta)))$ is {\em{admissible}} if 
$s$ is preadmissible and $g^*(s|_{X_j})=s|_{X_i}$ holds 
for every $B$-bimeromorphic map $g\colon (X_i, \Theta_i)\dashrightarrow 
(X_j, \Theta_j)$ defined over 
some open neighborhood $U_g$ of $W$ for every $i$, $j$. 
\end{itemize}
Then we put 
\begin{equation}
\begin{split}
&\PA\left(X_U, \mathcal O_X
(m(K_X+\Delta))\right)\\ 
&:=\{s\mid {\text{$s\in H^0(X_U, 
\mathcal O_X 
(m(K_X+\Delta)))$ is preadmissible}}\}
\end{split}
\end{equation} 
and 
\begin{equation}
\begin{split}
&\A\left(X_U, \mathcal O_X
(m(K_X+\Delta))\right)\\ 
&:=\{s\mid {\text{$s\in H^0(X_U, 
\mathcal O_X 
(m(K_X+\Delta)))$ is admissible}}\}. 
\end{split}
\end{equation} 
We note that if $Z$ is any analytic subset 
defined over some open neighborhood 
of $W$ then $U\cap Z$ is a semianalytic Stein open subset 
of $Z$ contained in $W\cap Z$. 
Thus the number of the connected components 
of $U\cap Z$ 
is finite (see, for example, \cite[Corollary 2.7]{bierstone-milman-semi}). 

Let $U'$ be a semianalytic Stein open subset of $Y$ such that 
$U'\subset U$. We put $X_{U'}:=\pi^{-1}(U')$. 
Then there exist natural restriction maps 
\begin{equation}
\PA\left(X_U, \mathcal O_X
(m(K_X+\Delta))\right)
\to \PA\left(X_{U'}, \mathcal O_X
(m(K_X+\Delta))\right)
\end{equation} 
and 
\begin{equation}
\A\left(X_U, \mathcal O_X
(m(K_X+\Delta))\right)
\to \A\left(X_{U'}, \mathcal O_X
(m(K_X+\Delta))\right). 
\end{equation}
\end{defn}

\begin{rem}\label{c-rem2.26} 
In Definition \ref{c-def2.25}, the natural map 
\begin{equation}
H^0(X_U, \mathcal O_X 
(m(K_X+\Delta)))\to  
H^0(U, \pi_*\mathcal O_X(m(K_X+\Delta)))
\end{equation} 
is an isomorphism of topological vector spaces since 
$U$ is Stein (see, for example, \cite[Lemma II.1]{prill}). 
\end{rem}

The following remark is almost obvious by definition. 
We state it explicitly for the sake of completeness. 

\begin{rem}\label{c-rem2.27}
In Definition \ref{c-def2.25}, if 
\begin{equation}
s\in \A\left(X_U, \mathcal O_X
(m(K_X+\Delta))\right) \quad {\text{(resp.~$\PA\left(X_U, \mathcal O_X
(m(K_X+\Delta))\right)$}}, 
\end{equation} then 
\begin{equation}
s^l\in \A\left(X_U, \mathcal O_X
(lm(K_X+\Delta))\right) 
\quad {\text{(resp.~$\PA\left(X_U, \mathcal O_X
(lm(K_X+\Delta))\right)$}}
\end{equation} 
for every positive integer $l$. 
Moreover, if $\A\left(X_U, \mathcal O_X
(m(K_X+\Delta))\right)$ generates $\mathcal O_X(m(K_X+\Delta))$ 
over $U$, then $\A\left(X_U, \mathcal O_X
(lm(K_X+\Delta))\right)$ generates $\mathcal O_X(lm(K_X+\Delta))$ 
over $U$ for every positive integer $l$. 
Similarly, if $\PA\left(X_U, \mathcal O_X
(m(K_X+\Delta))\right)$ generates $\mathcal O_X(m(K_X+\Delta))$ 
over $U$, then $\PA\left(X_U, \mathcal O_X
(lm(K_X+\Delta))\right)$ generates $\mathcal O_X(lm(K_X+\Delta))$ 
over $U$ for every positive integer $l$. 
\end{rem}

The following remark is obvious by definition. 

\begin{rem}\label{c-rem2.28}
In Definition \ref{c-def2.25}, 
if $(X, \Delta)$ is kawamata log terminal, 
then any section $s\in H^0(X_U, \mathcal O_X(m(K_X+\Delta)))$ 
is preadmissible by definition. 
\end{rem}

The following lemma is needed for the proof of Theorem \ref{b-thm1.1}.

\begin{lem}\label{c-lem2.29}
Let $\pi\colon X\to Y$ be a projective morphism of complex analytic spaces and let $(X, \Delta)$ be a semi-log canonical pair such that $m(K_X+\Delta)$ is Cartier. 
Let $W$ be a Stein compact subset of $Y$ such that $\Gamma (W, \mathcal O_Y)$ is noetherian, and let $U$ be a semianalytic Stein open subset of $Y$ with $U\subset W$. 
Let $\nu\colon \bar X\to X$ be the normalization and let $\varphi\colon X'\to \bar X$ be a projective bimeromorphic morphism such that $K_{X'}+\Delta':=(\nu\circ \varphi)^*(K_X+\Delta)$ and $(X', \Delta')$ is divisorial log terminal. 
Let $s'$ be a preadmissible section of $\mathcal O_{X'}(m(K_{X'}+\Delta'))$ on $X'_U:=(\pi\circ \nu \circ \varphi)^{-1}(U)$. 
Then $s'$ descends to a section $s$ of $\mathcal O_X(m(K_X+\Delta))$ on $X_U:=\pi^{-1}(U)$, that is, $s'=(\nu\circ \varphi)^*s$. 
\end{lem}

\begin{proof}
We first note that $X$ has normal crossings in codimension one and satisfies Serre's $S_2$ condition since $(X, \Delta)$ is semi-log canonical. 
We consider the conductor ideal  
\[
\mathfrak{cond}_X:=\mathcal Hom _X(\nu_*\mathcal O_{\bar X}, \mathcal O_X)\subset \mathcal O_X, 
\] 
which is the largest ideal sheaf on $X$ that can also be viewed as an ideal sheaf on $\bar X$. When viewed as an ideal sheaf on $\bar X$, we denote it by $\mathfrak{cond}_{\bar X}$. 
Let $D$ (resp.~$\bar D$) be the closed analytic subspace of $X$ (resp.~$\bar X$) defined by $\mathfrak{cond}_X$ (resp.~$\mathfrak{cond}_{\bar X}$). 
Let $\bar D^\nu \to \bar D$ and $D^\nu \to D$ be the normalizations. 
Then $\bar D^\nu \to D^\nu$ is a finite morphism of normal complex analytic spaces of degree two over every irreducible component. 
Thus, it defines a Galois involution $\tau\colon \bar D^\nu \to \bar D^\nu$ over $D^\nu$. 
More precisely, it is easy to see that a Galois involution naturally exists over a suitable Zariski open subset of $D^\nu$, which can be extended to the aforementioned Galois involution on $\bar D^\nu$ (see, for example, \cite[Lemma 2.24]{enokizono-hashizume-ss}). 
We note that the fixed point set of $\tau$ does not contain any irreducible components of $\bar D^\nu$. 
We have 
\[
K_{\bar X}+ \bar D+\bar \Delta=\nu^*(K_X+\Delta),  
\] 
where $\bar \Delta$ is the preimage of $\Delta$. 
Then we have  
\begin{equation}\label{c-eq2.1}
K_{X'}+\Delta'=\varphi^*(K_{\bar X}+ \bar D+\bar \Delta). 
\end{equation} 
We put 
\[
K_{\bar D^\nu} +\Delta_{\bar D^\nu} :=\nu^*_{\bar D}
(K_{\bar X}+ \bar D+\bar \Delta), 
\] 
where $\nu_{\bar D}\colon \bar D^\nu \to \bar X$ is the induced morphism. 
Then we can easily check that the different $\Delta_{\bar D^\nu}$ is $\tau$-invariant since $\nu\circ \nu_{\bar D}\colon \bar D^\nu\to X$ coincides with $\nu\circ \nu_{\bar D} \circ \tau\colon \bar D^\nu\to X$ (see, for example, \cite[Proposition 5.12]{kollar}). 

Since $\varphi$ is a projective bimeromorphic morphism, $s'$ induces a section $\bar s$ of $\mathcal O_{\bar X}(m(K_{\bar X}+ \bar D+\bar \Delta))$ on $\bar X_U:=(\pi\circ \nu)^{-1}(U)$ with $s'=\varphi^*\bar s$ by \eqref{c-eq2.1}. 
The pull-back of $\bar s$ to $\bar D^\nu$ is a section of $\mathcal O_{\bar D^\nu}(m(K_{\bar D^\nu}+\Delta_{\bar D^\nu}))$, and is $\tau$-invariant since $s'$ is preadmissible. 
Therefore, $\bar s$ descends to a section $s$ on $X_U$ as desired. 
\end{proof}

In our complex analytic setting, 
we can reformulate Claim $(\text{A}_n)$ and Claim $(\text{B}_n)$ in the 
proof of \cite[Lemma 4.9]{fujino-abundance} as follows. 
We note that $(X, \Delta_X)$ is 
{\em{sub log canonical}} when $X$ is smooth and $\Delta_X$ is 
a subboundary $\mathbb Q$-divisor such that $\Supp \Delta_X$ is 
a simple normal crossing divisor. For sub log canonical pairs, 
we can define log canonical centers as in Definition \ref{c-def2.2}. 

\begin{lem}\label{c-lem2.30} 
Let $p \colon Z\to X$ be a projective bimeromorphic 
morphism of smooth 
complex varieties and let $\pi\colon X\to Y$ be a projective 
morphism of complex varieties. 
Let $W$ be a compact subset of $Y$. 
Let $\Delta_Z$ {\em{(}}resp.~$\Delta_X${\em{)}} 
be a subboundary $\mathbb Q$-divisor 
on $Z$ {\em{(}}resp.~$X${\em{)}} such that 
$\Supp \Delta_Z$ {\em{(}}resp.~$\Supp \Delta_X${\em{)}} is a simple 
normal crossing divisor on $Z$ {\em{(}}resp.~$X${\em{)}}. 
We assume that $K_Z+\Delta_Z=p^*(K_X+\Delta_X)$. 
Let $m$ be a positive integer such that 
$m(K_X+\Delta_X)$ is Cartier. Then the 
following statements hold over some open 
neighborhood of $W$. 
\begin{itemize}
\item[{\em{(i)}}] If $T$ is a log canonical center of $(X, \Delta_X)$, 
then there exists a log canonical center $S$ of $(Z, \Delta_Z)$ such that 
$p\colon S\to T$ is bimeromorphic. 
In particular, 
\[p_*\mathcal O_S(m(K_S+\Delta_S))\simeq 
\mathcal O_T(m(K_T+\Delta_T)),
\] where 
$K_S+\Delta_S:=(K_Z+\Delta_Z)|_S$ and $K_T+\Delta_T:=(K_X+\Delta_X)|_T$ by 
adjunction. 
\item[{\em{(ii)}}]  
If $S$ is a log canonical center of $(Z, \Delta_Z)$ such that 
$p\colon S\to p(S)=:T$ is not bimeromorphic, 
then there exists a log canonical center $S'$ of $(Z, \Delta_Z)$ 
with $S'\subset S$ such that 
$p\colon S'\to T$ is bimeromorphic and 
the restriction map 
\begin{equation}
p_*\mathcal O_S(m(K_S+\Delta_S))\to p_*\mathcal O_{S'}
(m(K_{S'}+\Delta_{S'})),  
\end{equation} 
induced by the inclusion $S'\hookrightarrow S$ and 
adjunction, is an isomorphism, 
where 
$K_{S'}+\Delta_{S'}:=(K_Z+\Delta_Z)|_{S'}$ by adjunction. 
We note that 
\[p_*\mathcal O_{S'}(m(K_{S'}+\Delta_{S'}))\simeq 
\mathcal O_T(m(K_T+\Delta_T))
\] 
obviously holds.  
\end{itemize}
\end{lem}

\begin{proof}[Sketch of Proof of Lemma \ref{c-lem2.30}] 
With suitable modifications, 
the proofs of Claims $(\text{A}_n)$ and $(\text{B}_n)$ 
in the proof of \cite[Lemma 4.9]{fujino-abundance} 
also work in our setting (see also \cite[Lemma 7.2]{fujino-phd}). 
Therefore, we only sketch the argument here.

We can verify (i) by induction on the dimension of $X$. 
To prove (ii), by blowing up $Z$ along the center $S$, 
we may assume that $S$ is a divisor on $Z$. 
Using (i), we can reduce the problem to the case 
where $p\colon Z \to X$ is a finite composition of 
blow-ups with centers corresponding to $S$. 
The statement then follows by a direct check.
\end{proof}

We will freely use Lemma \ref{c-lem2.30} in subsequent sections. 

\section{Finiteness of relative log pluricanonical representations}\label{a-sec3}

In this section, we prove Theorem \ref{b-thm1.2} and
Corollary \ref{b-cor1.3}.
We note that the proof of Theorem \ref{b-thm1.2} relies on Theorem \ref{c-thm2.23}.
We begin with an elementary lemma.

\begin{lem}\label{a-lem3.1}
Let $Y$ be a connected complex manifold. 
Let 
\begin{equation}
\rho \colon G\to \GL(r, \mathcal O_Y)
\end{equation} 
be a group homomorphism. 
We further consider 
\begin{equation}
\rho_y:=\ev_y\circ \rho \colon G\to 
\GL(r, \mathcal O_Y)\to \GL(r, \mathbb C), 
\end{equation} 
where $\ev_y$ is the evaluation map at $y\in Y$. 
We assume that $\xIm \rho_y=\rho_y(G)$ is a finite 
group for every $y\in Y$. 
Then 
$
\ev_y\colon \rho(G)\to\rho_y(G)
$ 
is an isomorphism for every $y\in Y$. 
In particular, $\xIm\rho =\rho(G)$ is a finite group. 
\end{lem}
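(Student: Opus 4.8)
The plan is to prove injectivity of $\ev_y\colon \rho(G)\to \rho_y(G)$ for each fixed $y$, since surjectivity is immediate from the definition $\rho_y=\ev_y\circ\rho$. Once $\ev_y$ is injective (hence an isomorphism onto the finite group $\rho_y(G)$), finiteness of $\rho(G)$ follows at once. So the heart of the matter is: if $\rho(g)\in\GL(r,\mathcal O_Y)$ is a matrix of holomorphic functions whose value $\rho_y(g)=\ev_y(\rho(g))$ at the single point $y$ is the identity, then $\rho(g)$ is the identity matrix (as a section over all of $Y$).

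First I would reduce to a statement about a single element. Fix $y_0\in Y$ and suppose $g\in G$ with $\rho_{y_0}(g)=\mathrm{id}$. Consider the entries of $A:=\rho(g)\in\GL(r,\mathcal O_Y)$, i.e.\ holomorphic functions $a_{ij}$ on the connected complex manifold $Y$. The key extra input is the hypothesis that $\rho_y(G)$ is finite for \emph{every} $y$: this means that for each $y$ there is a positive integer $N_y$ with $\rho_y(g)^{N_y}=\mathrm{id}$, i.e.\ $\ev_y(A^{N_y})=\mathrm{id}$, equivalently $\ev_y(A)$ is a matrix whose eigenvalues are roots of unity and which is diagonalizable. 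I would then argue that the order of $\rho_y(g)$ cannot jump: more precisely, I would show the function $y\mapsto \mathrm{ord}(\rho_y(g))$ (the order of the element $\rho_y(g)$ in $\GL(r,\mathbb C)$) is such that the locus where it equals any fixed value $N$ is analytic, being cut out by the vanishing of the holomorphic entries of $A^N-\mathrm{id}$ together with the non-vanishing of the entries of $A^d-\mathrm{id}$ for proper divisors $d\mid N$; combined with connectedness and the fact that there are only finitely many candidate values of $N$ bounded in terms of $r$ by Jordan's theorem (a finite subgroup of $\GL(r,\mathbb C)$ has order — and element orders — bounded by a function of $r$ only, by Burnside's theorem, Theorem~\ref{c-thm2.24}, or directly), one concludes there is a single integer $N$ with $\rho_y(g)^N=\mathrm{id}$ for all $y\in Y$. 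Hence $A^N=\mathrm{id}$ identically: the holomorphic matrix $A^N-\mathrm{id}$ vanishes on all of $Y$ because it vanishes on the connected manifold $Y$ once we know it vanishes everywhere pointwise — indeed here it vanishes pointwise everywhere by construction, so this is automatic.

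Now with $A^N=\mathrm{id}$ as a global identity and $A(y_0)=\mathrm{id}$, I would conclude $A=\mathrm{id}$ as follows. Since $A^N=\mathrm{id}$, at every point $A(y)$ is diagonalizable with eigenvalues among the $N$-th roots of unity; in particular the trace $\mathrm{tr}\,A(y)$ is a sum of $r$ roots of unity of modulus $1$, so $|\mathrm{tr}\,A|\le r$ on $Y$, with equality at a point $y$ if and only if $A(y)=\mathrm{id}$. The function $\mathrm{tr}\,A$ is holomorphic on the connected manifold $Y$ and attains the maximum modulus value $r$ at $y_0$ (where $A(y_0)=\mathrm{id}$); by the maximum principle $\mathrm{tr}\,A\equiv r$, whence $A(y)=\mathrm{id}$ for every $y$, i.e.\ $\rho(g)=\mathrm{id}$. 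This gives injectivity of $\ev_y$ and completes the proof; note the argument shows $\ker\rho$ is independent of $y$ and $\rho(g)$ is locally constant in an extremely strong sense.

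The main obstacle I anticipate is the uniform-bound/connectedness step: making rigorous the claim that the order $\mathrm{ord}(\rho_y(g))$ is constant in $y$. The cleanest route is probably to bound the possible orders a priori (a finite subgroup of $\GL(r,\mathbb C)$ has element orders dividing some $M=M(r)$, e.g.\ by Minkowski/Jordan or just by observing eigenvalues are roots of unity of bounded degree), so that $A$ satisfies one of finitely many polynomial identities $\ev_y(A^M-\mathrm{id})=0$ after passing to $M:=\mathrm{lcm}$ of the candidates — and then $A^M-\mathrm{id}$ is a holomorphic matrix vanishing identically because it vanishes at every point. After that, everything is the maximum principle on the connected $Y$. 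An alternative, avoiding the a priori bound, is to run the analytic-stratification argument directly, but the a priori bound makes the write-up shorter. Either way, connectedness of $Y$ is used exactly once, in the maximum-principle step (and implicitly to rule out order jumps).
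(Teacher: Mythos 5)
The endgame of your argument (trace plus maximum principle) is fine, and it is genuinely different from the paper's proof, but the uniformity step as you justify it contains a real error. It is \emph{false} that element orders in finite subgroups of $\GL(r,\mathbb C)$ are bounded by a function of $r$: the cyclic group generated by $\mathrm{diag}(\zeta_N,1,\dots,1)$, with $\zeta_N$ a primitive $N$-th root of unity, is a finite subgroup of $\GL(r,\mathbb C)$ containing an element of order $N$ for every $N$. Jordan's theorem only bounds the index of an abelian normal subgroup, Minkowski-type bounds concern $\GL(r,\mathbb Q)$, and Burnside's theorem (Theorem \ref{c-thm2.24}) is the \emph{converse} implication (uniformly bounded element orders imply finiteness), so none of the sources you invoke yields the claimed bound; the eigenvalues of $\ev_y(\rho(g))$ are roots of unity but of no bounded degree. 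Also, as a minor point, the locus where the order is exactly $N$ is only a difference of analytic sets, not analytic. So the step ``there is a single $N$ with $\rho_y(g)^N=\id$ for all $y$'' is not established by what you wrote. If you really want that global identity, the correct route is Baire category: $Y=\bigcup_{N\geq 1}Z_N$ with $Z_N:=\{y\mid \ev_y(\rho(g))^N=E_r\}$ a closed analytic subset, so some $Z_N$ has nonempty interior and then $Z_N=Y$ by the identity theorem on the connected manifold $Y$.

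The good news is that the flawed step is unnecessary for your own argument. The maximum-principle computation only uses two pointwise facts, both immediate from the finiteness of $\rho_y(G)$ at each individual $y$: the matrix $\ev_y(\rho(g))$ is semisimple and its eigenvalues are roots of unity, hence of modulus one. This already gives $|\mathrm{tr}\,\rho(g)|\leq r$ on $Y$, equality at $y_0$, hence $\mathrm{tr}\,\rho(g)\equiv r$ by the maximum modulus principle on the connected $Y$, and then all eigenvalues equal $1$ at every point, which together with semisimplicity forces $\rho(g)=E_r$. Written this way your proof is correct and a legitimate alternative to the paper's: the paper instead observes that the coefficients of the characteristic polynomial $\det(tE_r-\rho(g))$ are holomorphic functions taking values in the countable field generated by all roots of unity, hence constant, so the characteristic polynomial is $(t-1)^r$ everywhere once $\ev_{y_0}(\rho(g))=E_r$, and semisimplicity finishes. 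Both arguments use connectedness exactly once (maximum principle versus constancy of countably-valued holomorphic functions); the paper's avoids any discussion of uniform orders altogether, which is precisely where your write-up went astray.
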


\begin{proof}
It is obvious that $\ev_y\colon \rho(G)\to \rho_y(G)$ is surjective 
for every $y\in Y$. 
We take an arbitrary point $y_0\in Y$. 
It suffices to prove that 
$\ev_{y_0}\colon \rho (G)\to \rho_{y_0}(G)$ is injective. 
We take $g\in \rho (G)$ such that 
$\ev_{y_0}(g)=E_r$, where $E_r$ is the $r\times r$ identity 
matrix. 
Note that $\ev_y(g)$ is semisimple and 
every eigenvalue of $\ev_y(g)$ is a root of 
unity for every $y\in Y$ since $\rho_y(G)$ is a 
finite group by assumption. 
We consider the characteristic polynomial 
$\chi(t):=\det (tE_r-g)$. 
The coefficients of $\chi(t)$ are holomorphic 
and take values in $K$, 
where $K$ is the subfield of $\mathbb C$ generated by 
all roots of unity. 
Hence they are constant.  
Since $\ev_{y_0}(g)=E_r$, 
we see that every eigenvalue of $\ev_y(g)$ is $1$ for every $y\in Y$. 
This implies that $\ev_y(g)=E_r$ holds for every 
$y\in Y$ because 
$\ev_y(g)$ is semisimple. 
Hence we have $g=E_r$, that is, $\ev_{y_0}\colon 
\rho(G)\to \rho_{y_0}(G)$ is injective. 
We finish the proof. 
\end{proof}

Theorem \ref{a-thm3.2} is one of the most important 
results in the present paper. 

\begin{thm}\label{a-thm3.2}
Let $\pi\colon X\to Y$ be a projective morphism 
from a normal complex analytic space $X$ onto a polydisc 
$Y$ such that $(X, \Delta)$ is divisorial log terminal and 
that $K_X+\Delta$ is $\pi$-semiample. 
Let $\varphi\colon Z\to X$ be a projective bimeromorphic 
morphism from a smooth complex analytic space $Z$ with 
$K_Z+\Delta_Z:=\varphi^*(K_X+\Delta)$ such that 
$\pi\circ \varphi\colon Z\to Y$ is smooth and projective,  
and 
that $\Exc(\varphi)$ and $\Exc(\varphi)\cup\Supp \varphi^{-1}_*\Delta$ are simple normal crossing 
divisors on $Z$ and are relatively normal crossing 
over $Y$. 
Let $m$ be a positive integer such that $m(K_X+\Delta)$ 
is Cartier. 
We assume that \[R^i\pi_*\mathcal O_X(m(K_X+\Delta))\] is locally 
free for every $i$ and 
\[\pi_*\mathcal O_X(m(K_X+\Delta))\simeq \mathcal O_Y^{\oplus r}\] 
for some positive integer $r$. 
We consider 
\begin{equation}
\rho_m\colon \Bim(X/Y, \Delta)\to 
\GL(r, \mathcal O_Y)\simeq \Aut_{\mathcal O_Y} 
\left(\pi_*\mathcal O_X(m(K_X+\Delta))\right)
\end{equation}
and 
\begin{equation}
\rho_{m, y}:=\ev_y\circ \rho_m\colon 
\Bim(X/Y, \Delta)\to 
\GL(r, \mathcal O_Y)\to \GL(r, \mathbb C), 
\end{equation} 
where $\ev_y$ is the evaluation map at $y\in Y$. 
Then $\xIm \rho_{m, y}$ is a finite group for every $y\in Y$. 
Moreover, 
\begin{equation}
\ev_y\colon \xIm \rho_m \to \xIm \rho_{m, y}
\end{equation} 
is an isomorphism for every $y\in Y$. 
In particular, $\xIm \rho_m$ is a finite group. 

We note that, in the above setting, 
$X_y:=\pi^{-1}(y)$ is a normal projective scheme, 
$(X_y, \Delta_y)$ is divisorial log terminal, 
where $K_{X_y}+\Delta_y:=(K_X+\Delta)|_{X_y}$, and 
\begin{equation}\label{a-eq3.1}
\ev_y\colon \pi_*\mathcal O_X(m(K_X+\Delta))\to 
H^0(X_y, \mathcal O_{X_y}(m(K_{X_y}+\Delta_y))) 
\end{equation} 
by the base change theorem. 
\end{thm}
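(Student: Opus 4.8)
The plan is to reduce the whole statement, by means of Lemma \ref{a-lem3.1}, to the finiteness of the fibrewise representation, and then to deduce that finiteness from Theorem \ref{c-thm2.23}. Since $Y$ is a polydisc, hence a connected complex manifold, Lemma \ref{a-lem3.1} applied to the group homomorphism $\rho_m\colon \Bim(X/Y,\Delta)\to\GL(r,\mathcal O_Y)$ shows at once that $\ev_y\colon \xIm\rho_m\to\xIm\rho_{m,y}$ is an isomorphism for every $y\in Y$ and that $\xIm\rho_m$ is finite, provided that $\xIm\rho_{m,y}$ is a finite group for every $y\in Y$. So from now on I fix $y\in Y$ and aim to prove that $\xIm\rho_{m,y}$ is finite.

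Next I would record the geometry of the fibre. Since $R^i\pi_*\mathcal O_X(m(K_X+\Delta))$ is locally free for every $i$, the base change theorem shows that the evaluation map \eqref{a-eq3.1} is an isomorphism. Since $\pi\circ\varphi\colon Z\to Y$ is smooth and projective and $\Supp\Delta_Z$ is relatively normal crossing over $Y$, the fibre $Z_y$ is smooth and projective and $\Delta_Z|_{Z_y}$ is a simple normal crossing divisor; combining this with the fact that $(X,\Delta)$ is divisorial log terminal (so that $X$ has rational singularities and $\varphi_*\mathcal O_Z=\mathcal O_X$, $R^{>0}\varphi_*\mathcal O_Z=0$) and passing to the fibre over $y$, one checks that $X_y$ is a normal projective scheme, that $\varphi_y\colon Z_y\to X_y$ is bimeromorphic with $K_{Z_y}+\Delta_{Z_y}=\varphi_y^*(K_{X_y}+\Delta_y)$ where $K_{X_y}+\Delta_y:=(K_X+\Delta)|_{X_y}$, and hence that $(X_y,\Delta_y)$ is a projective divisorial log terminal pair. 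Moreover $K_{X_y}+\Delta_y$ is semiample, being the restriction of the $\pi$-semiample divisor $K_X+\Delta$. Thus $(X_y,\Delta_y)$ satisfies all the hypotheses of Theorem \ref{c-thm2.23}.

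The heart of the matter is to show that for every $g\in\Bim(X/Y,\Delta)$ the automorphism $\rho_{m,y}(g)$ --- viewed, via the isomorphism \eqref{a-eq3.1}, as an automorphism of $H^0(X_y,\mathcal O_{X_y}(m(K_{X_y}+\Delta_y)))$ --- coincides with $\rho_m^{X_y}(g_y)$ for some $g_y\in\Bir(X_y,\Delta_y)$, where $\rho_m^{X_y}$ is the $B$-pluricanonical representation of the projective pair $(X_y,\Delta_y)$. To construct $g_y$, I would start from a defining diagram $X\xleftarrow{\alpha}W\xrightarrow{\alpha'}X$ for $g$ with $\alpha^*(K_X+\Delta)=\alpha'^*(K_X+\Delta)$ and dominate it by a common smooth modification whose two morphisms to $X$ both factor through $Z$; restricting $\alpha^*(K_X+\Delta)=\alpha'^*(K_X+\Delta)$ to the fibre over $y$ then produces the desired $B$-birational self-map $g_y$ of $(X_y,\Delta_y)$, which permutes the irreducible components of $X_y$ as in Remark \ref{c-rem2.20} when $X_y$ is disconnected. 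The equality $\rho_{m,y}(g)=\rho_m^{X_y}(g_y)$ then follows from the naturality of the base change isomorphism \eqref{a-eq3.1} with respect to the pull-back maps that define $\rho_m$ and $\rho_m^{X_y}$.

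Granting this, $\xIm\rho_{m,y}$ is a subgroup of $\rho_m^{X_y}\bigl(\Bir(X_y,\Delta_y)\bigr)$, which is finite by Theorem \ref{c-thm2.23} (applied to the possibly disconnected projective log canonical pair $(X_y,\Delta_y)$, cf.\ Remark \ref{c-rem2.20}); hence $\xIm\rho_{m,y}$ is finite, and, as explained above, Lemma \ref{a-lem3.1} finishes the proof. I expect the main obstacle to be the construction of $g_y$ and the verification of $\rho_{m,y}(g)=\rho_m^{X_y}(g_y)$: one must ensure that the bimeromorphic data defining $g$ restrict to an honest $B$-birational self-map over an arbitrary (not merely general) point $y\in Y$, which is exactly the role of the hypotheses that $\pi\circ\varphi$ be smooth and projective and that $\Supp\Delta_Z$ be relatively normal crossing over $Y$; once this local analysis is in place, the rest is formal given Theorem \ref{c-thm2.23} and Lemma \ref{a-lem3.1}.
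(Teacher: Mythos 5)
Your overall frame (reduce everything to the finiteness of $\xIm\rho_{m,y}$ via Lemma \ref{a-lem3.1}, then use Theorem \ref{c-thm2.23} on the fibre) matches the paper, and your Step on the fibre geometry and the base change description \eqref{a-eq3.1} is essentially the paper's Step 1 (the paper proves normality of $X_y$ and dlt-ness of $(X_y,\Delta_y)$ by exhibiting $X_y$ as a log canonical center of $\left(X,\Delta+\sum_i\pi^*H_i\right)$ for general hyperplanes $H_i$ through $y$). The genuine gap is the step you yourself flag as the ``main obstacle'': the claim that an arbitrary $g\in\Bim(X/Y,\Delta)$ induces an honest $B$-birational self-map $g_y\in\Bir(X_y,\Delta_y)$ with $\rho_{m,y}(g)=\rho_m^{X_y}(g_y)$. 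This is false in general for a non-general point $y$: the hypotheses that $\pi\circ\varphi$ be smooth and $\Supp\Delta_Z$ be relatively normal crossing concern the fixed resolution $Z$, not the $g$-dependent common resolution $X^\dag\xrightarrow{\alpha,\beta}X$, whose fibre over $y$ can be reducible with neither $\alpha$ nor $\beta$ restricting birationally onto $X_y$; compare Remark \ref{c-rem2.21}, where exactly this phenomenon is pointed out (a $B$-bimeromorphic map induces an isomorphism on the relevant $H^0$'s without inducing any bimeromorphic map between the strata). So your reduction of $\xIm\rho_{m,y}$ to a subgroup of $\rho_m^{X_y}\bigl(\Bir(X_y,\Delta_y)\bigr)$ does not go through as stated, and this is precisely the difficulty the theorem is designed to overcome.

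The paper's Step 2 circumvents it differently: after cutting by general hyperplanes through $y$ to reduce to $\dim Y=1$ (so that $X_y$ is a divisor and a log canonical center of $(X,\Delta+\pi^*y)$), one writes $\rho_{m,y}(g)$ as $(\alpha^*)^{-1}\circ\beta^*$ through $H^0(V,\mathcal O_V(m(K_V+\Delta_V)))$, where $V$ is the union of the components of $(\Delta_{X^\dag}+(\pi\circ\alpha)^*y)^{=1}$ lying over $y$ and $\alpha_*\mathcal O_V\simeq\mathcal O_{X_y}\simeq\beta_*\mathcal O_V$ by the strict support condition of \cite[Theorem 1.1 (i)]{fujino-vanishing}. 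One then uses Lemma \ref{c-lem2.29} to descend to minimal-dimensional log canonical strata of $(X_y,\Delta_y)$, on which $g$ does induce genuine $B$-birational maps (possibly permuting the strata); Theorem \ref{c-thm2.23} applied to each stratum gives a uniform bound $l!k$ on the order of $\rho_{m,y}(g)$ independent of $g$, and Burnside's theorem (Theorem \ref{c-thm2.24}) then yields finiteness of $\xIm\rho_{m,y}$, with an extra factor handling disconnected $X_y$. Without this detour through minimal strata (or some substitute argument), your proposal does not establish the finiteness of $\xIm\rho_{m,y}$.
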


\begin{rem}\label{a-rem3.3}
In Theorem \ref{a-thm3.2}, $X$ is not necessarily connected. 
\end{rem}

Let us prove Theorem \ref{a-thm3.2}. 

\begin{proof}[Proof of Theorem \ref{a-thm3.2}]
By Lemma \ref{a-lem3.1}, it suffices to 
prove the finiteness of $\xIm \rho _{m, y}$ for every $y\in Y$. 
In Step \ref{a-thm3.2-step1}, we will prove the description \eqref{a-eq3.1} 
of the evaluation map $\ev_y$. 
Then, in Step \ref{a-thm3.2-step2}, 
we will prove the finiteness of $\xIm \rho_{m, y}$. 
\setcounter{step}{0}

\begin{step}\label{a-thm3.2-step1}
We put $d:=\dim Y$. 
We note that $Y$ is a polydisc by assumption. 
We take general hyperplanes $H_1, \dots, H_d$ on $Y$ passing through $y$. 
We note that $(\pi\circ \varphi)^*(\sum _{i=1}^d H_i)$ and $\Exc(\varphi)\cup \Supp \bigl(\varphi^{-1}_*\Delta + (\pi\circ \varphi)^*(\sum _{i=1}^d H_i)\bigr)$ are simple normal crossing divisors on $Z$. 
By construction, \[a(E, X, \Delta+\sum _{i=1}^d \pi^*H_i) = a(E, X, \Delta) > -1\] holds for every $\varphi$-exceptional divisor $E$. 
Therefore, $(X, \Delta+\sum _{i=1}^d\pi^*H_i)$ is a divisorial log terminal pair. 
By construction, $X_y$ is a log canonical center of $(X, \Delta +\sum _{i=1}^d \pi^*H_i)$. 
This implies that $X_y$ is normal and $(X_y, \Delta_y)$ is divisorial log terminal. 
Since $R^i\pi_*\mathcal O_X(m(K_X+\Delta))$ is locally free for every $i$ by assumption, we have 
\begin{equation}
\pi_*\mathcal O_X(m(K_X+\Delta))\otimes \mathbb C(y) \simeq H^0(X_y, \mathcal O_{X_y}(m(K_{X_y}+\Delta_y)))
\end{equation} 
by the base change theorem. 
Hence we have the desired description \eqref{a-eq3.1} of the evaluation map $\ev_y$. 
\end{step}

\begin{step}
\label{a-thm3.2-step2}
We take an arbitrary element $g$ of 
$\Bim(X/Y, \Delta)$. 
By Theorem \ref{c-thm2.24}, it suffices to prove that 
the order of $\rho_{m, y}(g)=\ev_{y}\circ \rho_m(g)$ is uniformly 
bounded. We make $H_1$ general in Step \ref{a-thm3.2-step1} and 
put $Y':=H_1$, $X':=\pi^*H_1$, and $K_{X'}+\Delta':=(K_X+X'+\Delta)|_{X'}$. 
Then the above $g$ induces $g'\in \Bim(X'/Y', \Delta')$ 
such that $\ev_{y}\circ \rho_m(g)=\ev_{y}\circ \rho'_m(g')$ holds, 
where 
\begin{equation}
\rho'_m\colon \Bim(X'/Y', \Delta')\to  
\Aut_{\mathcal O_{Y'}}\left(\pi_*\mathcal O_{X'}
(m(K_{X'}+\Delta'))\right). 
\end{equation}
By repeating this process finitely many times, we may assume that 
$Y$ is a disc. Hence $X_y$ is a divisor on $X$. 

We first assume that $X_y$ is connected. 
Let $l$ be the number of the log canonical strata of $(X_y, \Delta_y)$. 
We consider 
\begin{equation}
\rho_m\colon \Bir(V, \Delta_V)\to \Aut_{\mathbb C} 
\left(H^0(V, \mathcal O_V(m(K_V+\Delta_V)))\right), 
\end{equation} 
where $(V, \Delta_V)$ is a log canonical stratum of $(X_y, \Delta_y)$. 
Since $K_V+\Delta_V$ is semiample, $\rho_m \left(\Bir(V, \Delta_V)\right)$ 
is a finite group by Theorem \ref{c-thm2.23}. 
Then we put 
\begin{equation}
k:=\lcm \left\{ \# \rho_m \left(\Bir(V, \Delta_V)\right) \mid 
\text{$(V, \Delta_V)$ is a log canonical stratum of $(X_y, \Delta_y)$} 
\right\}
\end{equation}
\begin{claim}\label{a-thm3.2-claim}
$\rho_{m, y}(g)^{l!k}=E_r$ holds. 
\end{claim}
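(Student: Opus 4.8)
The plan is to show that the order of $\rho_{m, y}(g)$ is bounded by the explicit integer $l!k$, independently of $g$, and then to conclude by Burnside's theorem (Theorem \ref{c-thm2.24}) and Lemma \ref{a-lem3.1}, as already indicated in the excerpt. By Step \ref{a-thm3.2-step1} we may identify $\rho_{m, y}(g)$ with the automorphism of $H^0(X_y, \mathcal O_{X_y}(m(K_{X_y}+\Delta_y)))$ induced by $g$ through proper bimeromorphic morphisms $\alpha, \alpha'\colon Z'\to X$ over $Y$ with $\alpha^*(K_X+\Delta)=\alpha'^*(K_X+\Delta)$ as in Definition \ref{c-def2.17}. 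First I would observe that, since $Y$ is a disc and $\alpha,\alpha'$ both sit over $\pi$, one has $\alpha^*X_y=\alpha'^*X_y$, so that in fact $\alpha^*(K_X+\Delta+X_y)=\alpha'^*(K_X+\Delta+X_y)$; thus $g$ is $B$-bimeromorphic also for $(X, \Delta+X_y)$, and in particular it permutes the log canonical centres of $(X, \Delta+X_y)$ contained in $\pi^{-1}(y)$, which by adjunction are exactly the log canonical strata of $(X_y, \Delta_y)$.

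Since there are $l$ log canonical strata of $(X_y, \Delta_y)$, the power $g^{l!}$ maps each log canonical stratum $V$ to itself. For each such $V$ I would then argue, by passing to a sufficiently high model and applying Lemma \ref{c-lem2.29} together with the relative Kawamata--Viehweg vanishing argument of Remark \ref{c-rem2.21} (legitimate over the disc, where by the hypotheses of Theorem \ref{a-thm3.2} and Step \ref{a-thm3.2-step1} all the relevant direct images are locally free and compatible with base change), that $g^{l!}$ induces a genuine element $h_V\in\Bir(V,\Delta_V)$ and that the restriction map $H^0(X_y,\mathcal O_{X_y}(m(K_{X_y}+\Delta_y)))\to H^0(V,\mathcal O_V(m(K_V+\Delta_V)))$ intertwines the two actions. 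Because $K_V+\Delta_V$ is the restriction of the semiample divisor $K_X+\Delta$, hence semiample, Theorem \ref{c-thm2.23} shows that $\rho_m(\Bir(V,\Delta_V))$ is finite, and by the definition of $k$ its order divides $k$; therefore $g^{l!k}$ acts trivially on $H^0(V,\mathcal O_V(m(K_V+\Delta_V)))$ for every $V$. Applying this with $V=X_y$ itself — which is a log canonical stratum of $(X_y,\Delta_y)$ since $X_y$ is normal by Step \ref{a-thm3.2-step1} and connected, hence irreducible — we obtain $\rho_{m,y}(g)^{l!k}=E_r$, which is the Claim.

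The counting and the reduction of $g$ to an automorphism of $H^0(X_y,\mathcal O_{X_y}(m(K_{X_y}+\Delta_y)))$ are routine; the crux is the middle step, namely producing the honest $B$-birational self-maps $h_V$ on the strata and checking that the induced actions are compatible with the evaluation map and with restriction to strata. This is the relative, complex-analytic counterpart of Claims $(\mathrm A_n)$ and $(\mathrm B_n)$ in the proof of \cite[Lemma 4.9]{fujino-abundance}: it requires careful bookkeeping of strict transforms and of the discrepancies of the divisorial log canonical places of $X_y$ under both $\alpha$ and $\alpha'$, together with the local freeness and base change of the direct image sheaves over the disc. Granting this, the bound $l!k$ — and hence the finiteness of $\xIm\rho_{m,y}$ for every $y$, and finally that of $\xIm\rho_m$ via Lemma \ref{a-lem3.1} — follows formally.
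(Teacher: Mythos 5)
There is a genuine gap at the step you yourself identify as the crux: the assertion that $g^{l!}$ induces an honest element $h_V\in \Bir(V,\Delta_V)$ for \emph{every} log canonical stratum $V$ of $(X_y,\Delta_y)$, and in particular for $V=X_y$ itself, compatibly with the action on $H^0(X_y,\mathcal O_{X_y}(m(K_{X_y}+\Delta_y)))$. This is not justified and is in general false. From $\alpha^*(K_X+\Delta+X_y)=\beta^*(K_X+\Delta+X_y)$ one only gets that the \emph{set} of log canonical centers is matched through the common resolution; it does not follow that $\beta\circ\alpha^{-1}$ restricts to a bimeromorphic self-map of $X_y$ (or of any given stratum): the strict transform of $X_y$ under $\alpha$ is just one of possibly several divisorial lc places of $(X,\Delta+X_y)$ lying over $y$, and it may well be $\beta$-exceptional, in which case its $\beta$-image is a proper subvariety of $X_y$. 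This failure is exactly the caveat stated at the end of Remark \ref{c-rem2.21} (``$f$ does not necessarily induce a bimeromorphic map $S\dashrightarrow S'$''), and it is the whole reason the argument cannot simply quote Theorem \ref{c-thm2.23} for $V=X_y$. In the same vein, ``permutes the log canonical centres'' is too strong as stated: the correspondence of centers through $\alpha$ and $\beta$ need not be dimension-preserving nor induce birational identifications stratum by stratum.

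The paper's proof is built precisely to route around this. One describes $\rho_{m,y}(g)$ not via a self-map of $X_y$ but as $(\alpha^*)^{-1}\circ\beta^*$ through $H^0(V,\mathcal O_V(m(K_V+\Delta_V)))$, where $V$ is the union of all components of $(\Delta_{X^\dag}+(\pi\circ\alpha)^*y)^{=1}$ over $y$ and $\alpha_*\mathcal O_V\simeq\mathcal O_{X_y}\simeq\beta_*\mathcal O_V$ by the strict support condition. Then one considers strata $T$ for which the restriction $H^0(X_y,\cdot)\to H^0(T,\cdot)$ is an isomorphism, takes $t=\min\dim T$, and uses Lemma \ref{c-lem2.29} to show that such a minimal $T$ is carried by $g$ onto a possibly \emph{different} stratum $T'$ by a $B$-birational map compatible with the $H^0$ identifications; packaging the finitely many $t$-dimensional strata $S_i$ into a direct-sum embedding $H^0(X_y,\cdot)\hookrightarrow\bigoplus_i H^0(S_i,\cdot)$, one obtains $\widetilde g\in\Bir(S,\Delta_S)$ on the disjoint union, which permutes the $S_i$; only then do the definitions of $l$ and $k$ (via Theorem \ref{c-thm2.23}) give $\rho_m(\widetilde g)^{l!k}=\id$, hence $\rho_{m,y}(g)^{l!k}=E_r$ by the embedding. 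Your outer scaffolding (the bound $l!k$ independent of $g$, Burnside, Lemma \ref{a-lem3.1}) agrees with the paper, but the middle step as you propose it would not go through; you need the minimal-stratum/direct-sum mechanism rather than an induced $B$-birational self-map of $X_y$.
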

\begin{proof}[Proof of Claim]
We consider log canonical strata $(T, \Delta_T)$ 
of $(X_y, \Delta_y)$ satisfying 
that the natural restriction map 
\begin{equation}\label{a-eq3.2}
H^0(X_y, \mathcal O_{X_y}(m(K_{X_y}+\Delta_y)))
\to H^0(T, \mathcal O_T(m(K_T+\Delta_T)))
\end{equation} 
is an isomorphism. 
We put $t:=\min \dim T$. 

Let $(T, \Delta_T)$ be a $t$-dimensional 
log canonical stratum of $(X_y, \Delta_y)$ such that 
the natural restriction map \eqref{a-eq3.2} is an isomorphism. 
We consider the following commutative diagram as in Definition \ref{c-def2.17}
\begin{equation}
\xymatrix{
&X^\dag\ar[dr]^-{\beta}\ar[dl]_-\alpha& 
\\ 
X\ar[dr]_-\pi\ar@{-->}[rr]^-g& &X\ar[dl]^-{\pi}\\ 
&Y&
}
\end{equation}
where $g$ is a $B$-bimeromorphic map of $(X, \Delta)$ 
over $Y$ taken above. By shrinking $Y$ around $y$, 
we may assume that 
$X^\dag$ is smooth, 
$\alpha$ and $\beta$ are projective, and 
\begin{equation*}
\alpha^*(K_X+\Delta)=:K_{X^\dag}+\Delta_{X^\dag}:=\beta^*(K_X+\Delta)
\end{equation*} 
such 
that $\Supp \Delta_{X^\dag}\cup 
\Supp (\pi\circ \alpha)^*y$ is a simple normal crossing 
divisor on $X^\dag$. 
We take $X^\dag$ suitably. 
Then, by Lemma \ref{c-lem2.30} (see also 
the proof of \cite[Lemma 4.9]{fujino-abundance} 
and \cite[Lemma 2.16]{fujino-gongyo}), we can find a log canonical 
stratum $(T', \Delta_{T'})$ of $(X_y, \Delta_y)$ and 
a commutative diagram 
\begin{equation}
\xymatrix{
& T^\dag\ar[dr]^-{\beta|_{T^\dag}}\ar[dl]_-{\alpha|_{T^\dag}}& \\ 
(T, \Delta_T) & & (T', \Delta_{T'})
}
\end{equation} 
such that $\alpha|_{T^\dag}$ and 
$\beta|_{T^\dag}$ are proper birational and 
that 
\begin{equation}
(\beta|_{T^\dag})\circ (\alpha|_{T^\dag})^{-1}
\colon (T, \Delta_T)\dashrightarrow (T', \Delta_{T'})
\end{equation} 
is 
a $B$-birational map of projective divisorial log terminal pairs. 
Note that there are only finitely many log canonical strata contained 
in $X_y$. 
Thus we can find $t$-dimensional log canonical strata 
$(S_i, \Delta_{S_i})$ of $(X_y, \Delta_y)$ for 
$1\leq i\leq p$ and 
a natural embedding 
\begin{equation}
H^0(X_y, \mathcal O_{X_y}(m(K_{X_y}+\Delta_y)))
\hookrightarrow \bigoplus _i H^0(S_i, \mathcal O_{S_i}
(m(K_{S_i}+\Delta_{S_i})))
\end{equation}
such that $g$ induces $\widetilde g\in \Bir(S, \Delta_S)$, 
where $(S, \Delta_S):=\bigsqcup_i (S_i, \Delta_{S_i})$, satisfying 
the following commutative diagram: 
\begin{equation}
\xymatrix{
0\ar[r] &\ar[d]_-{\rho_{m, y} (g)}\ar[r] 
H^0(X_y, \mathcal O_{X_y}(m(K_{X_y}+\Delta_y)))
& \bigoplus _i H^0(S_i, \mathcal O_{S_i}
(m(K_{S_i}+\Delta_{S_i})))\ar[d]^-{\rho_m(\widetilde g)}\\ 
0 \ar[r]& 
H^0(X_y, \mathcal O_{X_y}(m(K_{X_y}+\Delta_y)))
\ar[r]& \bigoplus _i H^0(S_i, \mathcal O_{S_i}
(m(K_{S_i}+\Delta_{S_i}))). 
}
\end{equation} 
We note the following description of $\rho_{m, y}(g)$. 
Let $V$ be the union of the irreducible components of 
$(\Delta_{X^\dag}+(\pi\circ \alpha)^*y)^{=1}$ mapped to $y$.   
We put 
\begin{equation*}
K_V+\Delta_V:=(K_{X^\dag}+\Delta_{X^\dag}+(\pi\circ \alpha)^*y)|_V.
\end{equation*}  
Then we can check that 
$\alpha_*\mathcal O_V\simeq \mathcal O_{X_y}\simeq \beta_*\mathcal O_V$ 
holds, which is an 
easy consequence of the strict support condition 
established in \cite[Theorem 1.1 (i)]{fujino-vanishing} 
(see, for example, the proof of Lemma \ref{p-lem4.2} below). 
Thus $\rho_{m, y}(g)$ can be written as 
\begin{equation*}
\begin{split}
\rho_{m, y}(g)\colon H^0(X_y, \mathcal O_{X_y}(m(K_{X_y}+\Delta_y)))
\overset{\beta^*}{\longrightarrow} \
&H^0(V, \mathcal O_V(m(K_V+\Delta_V)))
\\
\overset{(\alpha^*)^{-1}}\longrightarrow  
&H^0(X_y, \mathcal O_{X_y}(m(K_{X_y}+\Delta_y))). 
\end{split}
\end{equation*}
Since $\rho_m(\widetilde g)^{l!k}=\id$ on 
$\bigoplus _i H^0(S_i, \mathcal O_{S_i}
(m(K_{S_i}+\Delta_{S_i})))$ by the definitions of $l$ and $k$, 
we have $\rho_{m, y}(g)^{l!k}=E_r$. 
This completes the proof of the claim.
\end{proof}
We note that $l!k$ is independent of $g$. 
Therefore, the claim implies that $\xIm \rho_{m, y}$, 
which is a subgroup of $\GL(r, \mathbb C)$, is a finite group 
by Burnside's theorem (see Theorem \ref{c-thm2.24}). 
Thus we finish the proof under the assumption that 
$X_y$ is connected. 

From now on, we assume that 
$X_y$ is not connected. Let $a$ denote the number of 
the connected components of $X_y$. 
Then $g^{a!}$ preserves each connected component of $X_y$. 
Thus, by the above argument, we can take a positive integer $b$ such that 
$\rho_{m, y}(g)^b=E_r$ holds for every $g\in \Bim(X/Y, \Delta)$. 
Thus, by Burnside's theorem (see Theorem \ref{c-thm2.24}), 
we see that $\xIm\rho_{m, y}$ is a finite group. 
\end{step}
We finish the proof. 
\end{proof}

We can prove Theorem \ref{b-thm1.2} as an easy application of 
Theorem \ref{a-thm3.2}. 

\begin{proof}[Proof of Theorem \ref{b-thm1.2}]
Let $U$ be a nonempty open subset of $Y$. 
We consider the following commutative diagram 
\begin{equation}
\xymatrix{
\rho_m\colon \Bim (X/Y, \Delta)\ar[d]\ar[r]&\Aut_{\mathcal O_Y}\left(
\pi_*\mathcal O_X(m(K_X+\Delta))\right)\ar[d]\\
\rho_m\colon \Bim (\pi^{-1}(U)/U, \Delta|_{\pi^{-1}(U)})
\ar[r]&\Aut_{\mathcal O_U}\left(
\pi_*\mathcal O_{\pi^{-1}(U)}(m(K_X+\Delta))\right). 
}
\end{equation}
Note that the vertical arrows are natural restriction maps. 
It is obvious that 
the restriction map 
\begin{equation}
\Aut_{\mathcal O_Y}\left(
\pi_*\mathcal O_X(m(K_X+\Delta))\right)
\to
\Aut_{\mathcal O_U}\left(
\pi_*\mathcal O_{\pi^{-1}(U)}(m(K_X+\Delta))\right)
\end{equation} 
is injective since $Y$ is irreducible and every irreducible 
component of $X$ is dominant onto $Y$. Hence, in order to 
prove Theorem \ref{b-thm1.2}, we can freely replace $Y$ with 
a small nonempty open subset of $Y$. 
We take a Stein compact subset $W$ of $Y$ such that 
$\Gamma(W, \mathcal O_Y)$ is noetherian. 
Then, by \cite[Theorems 1.21 and 1.27]{fujino-bchm}, we can take a dlt blow-up 
$\psi\colon (X', \Delta')\to (X, \Delta)$. 
By replacing $\pi\colon (X, \Delta)\to Y$ with 
$\pi':=\pi\circ \psi\colon (X', \Delta')\to 
Y$, we may further assume that $(X, \Delta)$ 
is divisorial log terminal. 
By taking a resolution of singularities of $X$ (see, 
for example, \cite{bierstone-milman}) and shrinking 
$Y$ suitably, we may assume that $\pi\colon (X, \Delta)\to Y$ satisfies 
all the conditions in Theorem \ref{a-thm3.2}. 
Then, by Theorem \ref{a-thm3.2}, 
$\rho_m\left(\Bim(X/Y, \Delta)\right)$ is a finite group. 
This is what we wanted. We finish the proof. 
\end{proof}

Let us prove Corollary \ref{b-cor1.3}, which is almost obvious 
by Theorem \ref{b-thm1.2}. We will use it in the proof of Theorem \ref{b-thm1.1}. 

\begin{proof}[Proof of Corollary \ref{b-cor1.3}]
We decompose $(X, \Delta)=:\bigsqcup_i (X_i, \Delta_i)$ 
such that 
$\pi_i:=\pi|_{X_i}\colon X_i\to Y_i:=\pi(X_i)$ is surjective and 
every irreducible component of $X_i$ is dominant onto $Y_i$ for every $i$. 
We may assume that $Y_i\ne Y_j$ for $i\ne j$. 
Since $U$ is a semianalytic Stein open subset of $Y$ with $U\subset W$, 
$Y_i\cap U$ is a finite 
disjoint union of semianalytic Stein open subsets of $Y_i$ 
(see, for example, \cite[Corollary 2.7]{bierstone-milman-semi}). 
Let $U'$ be a connected component of $Y_i\cap U$. 
Then, by Theorem \ref{b-thm1.2}, the image of 
\begin{equation}\label{a-eq3.3}
\rho_m\colon \Bim\left(\pi_i^{-1}(U')/{U'}, \Delta_i|_{\pi_i^{-1}(U')}\right)
\to \Aut _{\mathcal O_{U'}}\left(\pi_{i*}\mathcal O_{\pi_i^{-1}(U')}
(m(K_{X_i}+\Delta_i))\right)
\end{equation} 
is a finite group. 
Note that there exists a natural restriction map 
\begin{equation}\label{a-eq3.4}
\Bim(X/Y, \Delta; W)\to 
\Bim\left(\pi_i^{-1}(U')/{U'}, \Delta_i|_{\pi_i^{-1}(U')}\right). 
\end{equation}
By the natural restriction map \eqref{a-eq3.4}, 
\begin{equation}
\rho^{WU'}_m\colon \Bim(X/Y, \Delta; W)\to 
\Aut_{\mathcal O_{U'}}\left(\pi_{i*}\mathcal O_{\pi_i^{-1}(U')}
(m(K_{X_i}+\Delta_i))\right)
\end{equation} 
factors 
through $\rho_m$ in \eqref{a-eq3.3}. 
Thus, we have 
\begin{equation}
\rho^{WU'}_m(\Bim(X/Y, \Delta; W))\subset 
\rho_m \left(\Bim\left(\pi_i^{-1}(U')/{U'}, 
\Delta_i|_{\pi_i^{-1}(U')}\right)\right). 
\end{equation} 
The group $\rho^{WU}_m(\Bim(X/Y, \Delta; W))$ is contained 
in 
\begin{equation}
\prod _{U'} \rho^{WU'}_m(\Bim(X/Y, \Delta; W)), 
\end{equation}  
where $U'$ runs over all connected components of $Y_i\cap U$ for all $i$. 
Hence, we see that 
$\rho^{WU}_m\left (\Bim(X/Y, \Delta; W)\right)$ is 
a finite group. 
We finish the proof. 
\end{proof}

\section{Abundance for semi-log canonical pairs}\label{p-sec4}

In this section, we prove Theorem \ref{b-thm1.1}.
Our strategy follows that of \cite{fujino-abundance},
except that we make use of the minimal model program for
projective morphisms between complex analytic spaces 
established in \cite{fujino-bchm}, 
\cite{enokizono-hashizume-ss}, and \cite{enokizono-hashizume-mmp}.

The following lemma is well known and follows easily
from the relative Kawamata--Viehweg vanishing theorem.

\begin{lem}[Connectedness lemma]\label{p-lem4.1}
Let $(X, \Delta)$ be a log canonical pair and 
let $\pi\colon X\to Y$ be 
a projective morphism of complex analytic spaces with 
$\pi_*\mathcal O_X\simeq \mathcal O_Y$.  
Assume that $-(K_X+\Delta)$ is $\pi$-nef and 
$\pi$-big. 
Then $\Nklt(X, \Delta)\cap \pi^{-1}(y)$ is connected 
for every $y\in Y$, where $\Nklt(X, \Delta)$ denotes 
the non-klt locus of $(X, \Delta)$. 
In particular, if $(X, \Delta-\lfloor \Delta\rfloor)$ is 
kawamata log terminal, $\lfloor \Delta\rfloor 
\cap \pi^{-1}(y)$ is connected 
for every $y\in Y$.  
\end{lem}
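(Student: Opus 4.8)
The plan is to deduce the connectedness lemma from the relative Kawamata--Viehweg vanishing theorem, exactly as in the algebraic case, taking care that all the cohomological inputs are available in the complex analytic setting. The question is local on $Y$, so after shrinking $Y$ we may assume $Y$ is Stein and we work near a fixed point $y\in Y$. First I would reduce to the case where $\Delta$ is a boundary $\mathbb R$-divisor (i.e. has coefficients in $[0,1]$) by passing to a dlt modification or simply to a log resolution: take a projective bimeromorphic morphism $f\colon Z\to X$ from a smooth space $Z$ such that $\Exc(f)\cup\Supp f_*^{-1}\Delta$ is a simple normal crossing divisor, write $K_Z+\Delta_Z=f^*(K_X+\Delta)$, and replace $\Delta_Z$ by $\Delta_Z^{<1}+\Supp\Delta_Z^{\geq 1}$; since $f$ is an isomorphism at the generic point of every log canonical center, $\Nklt(Z,\cdot)$ maps onto $\Nklt(X,\Delta)$ and connectedness upstairs implies it downstairs. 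So it suffices to treat $(Z,B)$ with $Z$ smooth, $B=\{B\}+\lfloor B\rfloor$ SNC, $\lfloor B\rfloor=\Nklt(Z,B)$, and $-(K_Z+B)$ nef and big over $Y$.

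Next I would set up the standard exact sequence. Write $S:=\lfloor B\rfloor$ and consider
\begin{equation*}
0\to \mathcal O_Z(\lceil -\{B\}\rceil - S)\to \mathcal O_Z(\lceil -\{B\}\rceil)\to \mathcal O_S(\lceil -\{B\}\rceil)\to 0.
\end{equation*}
Pushing forward by $\pi_Z:=\pi\circ f$, the key point is that
\begin{equation*}
R^1\pi_{Z*}\mathcal O_Z(\lceil -\{B\}\rceil - S)=R^1\pi_{Z*}\mathcal O_Z(K_Z+\lceil -\{B\}\rceil-S-K_Z)=0
\end{equation*}
because $\lceil -\{B\}\rceil - S - K_Z \equiv -(K_Z+B) + (\text{something})$ — more precisely $\lceil -\{B\}\rceil - S = K_Z - (K_Z+B) + \lceil -\{B\}\rceil + \{B\} \sim_{\mathbb R} K_Z + (-(K_Z+B)) + (\text{an SNC divisor with coefficients in }[0,1))$, so the relative Kawamata--Viehweg vanishing theorem in the complex analytic setting (available via the vanishing theorems of \cite{fujino-vanishing}) gives the vanishing of $R^1\pi_{Z*}$. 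Hence $\pi_{Z*}\mathcal O_Z(\lceil -\{B\}\rceil)\to \pi_{Z*}\mathcal O_S(\lceil -\{B\}\rceil)$ is surjective. Now $\lceil -\{B\}\rceil$ is effective and $\pi_Z$-exceptional-ish with $\pi_{Z*}\mathcal O_Z(\lceil -\{B\}\rceil)\simeq \pi_{Z*}\mathcal O_Z\simeq \mathcal O_Y$ (using $\pi_*\mathcal O_X\simeq\mathcal O_Y$ and that $\lceil -\{B\}\rceil$ is contracted or has no global sections beyond constants fiberwise), and on $S$ the sheaf $\mathcal O_S(\lceil -\{B\}\rceil)$ has $\pi_{Z*}$ containing the structure sheaf $\mathcal O_{\pi_Z(S)}$ as a direct summand after restricting $\lceil -\{B\}\rceil|_S$ suitably. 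The upshot is a surjection $\mathcal O_Y\twoheadrightarrow (\pi_Z|_S)_*\mathcal O_S$, so the fiber $(\pi_Z|_S)^{-1}(y)=S\cap\pi_Z^{-1}(y)$ is connected by the theorem on formal functions / Stein factorization. Translating back, $\Nklt(X,\Delta)\cap\pi^{-1}(y)$ is connected.

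The main obstacle, and the place where the analytic setting requires genuine care rather than verbatim transcription, is the vanishing $R^1\pi_{Z*}=0$ and the identification $\pi_{Z*}\mathcal O_Z(\lceil -\{B\}\rceil)\simeq\mathcal O_Y$: both rest on the strong form of the relative vanishing theorem (including the strict support / torsion-freeness statements) that is established in \cite{fujino-vanishing}, and on knowing that $-(K_Z+B)$ remains nef and big over $Y$ after the log resolution — nefness is clear since we pulled back, and bigness is preserved because $f$ is bimeromorphic. One must also make sure the bigness hypothesis is used correctly: it is exactly what lets one write the relevant $\mathbb R$-divisor as $K_Z$ plus a nef-and-big class plus a klt boundary so that Kawamata--Viehweg (not merely Kollár-type injectivity) applies. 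Finally, the last sentence of the lemma — the statement for $\lfloor\Delta\rfloor$ when $(X,\Delta-\lfloor\Delta\rfloor)$ is klt — is immediate, since in that case $\Nklt(X,\Delta)=\lfloor\Delta\rfloor$ set-theoretically. I would also remark that when $Y$ is a point this recovers the classical Shokurov--Kollár connectedness lemma, which serves as a useful sanity check.
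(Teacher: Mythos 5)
Your strategy coincides with the paper's: push forward a short exact sequence whose kernel is killed by the relative Kawamata--Viehweg vanishing theorem, obtain a surjection from $\mathcal O_Y$ onto the pushforward of (a twist of) the structure sheaf of the non-klt locus, and conclude connectedness of the fibers. The paper merely packages this on $X$ itself, via $0\to \mathcal J(X,\Delta)\to\mathcal O_X\to\mathcal O_{\Nklt(X,\Delta)}\to 0$ and the relative Kawamata--Viehweg--Nadel vanishing theorem, which is exactly the pushforward of your resolution-level sequence. However, your bookkeeping on the resolution is inconsistent in a way that, read literally, breaks the crucial vanishing. Since $(X,\Delta)$ is log canonical, $\Delta_Z^{<1}+\Supp\Delta_Z^{\geq 1}=\Delta_Z$, so your ``reduction to a boundary'' either changes nothing --- and then $B=\Delta_Z$ has negative coefficients, so $\lfloor B\rfloor$ is not the reduced divisor of log canonical places and $\mathcal O_{\lfloor B\rfloor}$ is not the sheaf you want --- or, if you really discard the negative exceptional coefficients, then $K_Z+B=f^*(K_X+\Delta)+E$ with $E\geq 0$ $f$-exceptional, and $-(K_Z+B)=f^*(-(K_X+\Delta))-E$ is in general no longer nef over $Y$, so Kawamata--Viehweg does not apply; ``nefness is clear since we pulled back'' is valid only for the crepant pullback. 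Moreover, for any genuine boundary $B$ one has $\lceil -\{B\}\rceil=0$ because fractional parts lie in $[0,1)$, so this divisor cannot be the nonzero ``effective and $\pi_Z$-exceptional'' twist your argument needs.

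The repair is standard and is precisely the computation the paper carries out in the proof of Lemma \ref{p-lem4.2}: keep $K_Z+\Delta_Z=f^*(K_X+\Delta)$, put $S:=\Delta_Z^{=1}$ and $A:=\lceil -(\Delta_Z^{<1})\rceil$ (effective and $f$-exceptional), and use
\begin{equation}
0\to \mathcal O_Z(A-S)\to \mathcal O_Z(A)\to \mathcal O_S(A)\to 0,
\end{equation}
together with the identity $A-S=K_Z+\bigl(A+\Delta_Z^{<1}\bigr)+\bigl(-(K_Z+\Delta_Z)\bigr)$, where $A+\Delta_Z^{<1}$ is a simple normal crossing divisor with coefficients in $[0,1)$ and $-(K_Z+\Delta_Z)=f^*(-(K_X+\Delta))$ is nef and big over $Y$; relative Kawamata--Viehweg vanishing then kills $R^1(\pi\circ f)_*\mathcal O_Z(A-S)$, and $(\pi\circ f)_*\mathcal O_Z(A)\simeq \pi_*\mathcal O_X\simeq\mathcal O_Y$ surjects onto $(\pi\circ f)_*\mathcal O_S(A)$. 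Two further small points: your ``direct summand'' sentence should be replaced by the usual argument that if $S\cap(\pi\circ f)^{-1}(y)$ were disconnected one could split off two summands of $(\pi\circ f)_*\mathcal O_S(A)$ with nonzero fibers at $y$, contradicting surjectivity from $\mathcal O_Y$ after tensoring with $\mathbb C(y)$; and the image of $S$ equals $\Nklt(X,\Delta)$ simply because the non-klt locus of a log canonical pair is the image of the coefficient-one divisors on any log resolution --- your justification that $f$ is an isomorphism at the generic point of every log canonical center is false in general (log canonical centers may lie inside $\Exc(f)$) and is not needed, since the continuous image of a connected set is connected.
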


\begin{proof} 
The standard proof in the algebraic setting works with only 
minor modifications. This is because the Kawamata--Viehweg vanishing 
theorem holds for projective morphisms between complex analytic spaces. 
In this proof, we can freely shrink $Y$ around $y$. 
We consider the following short exact sequence: 
\begin{equation}
0\to \mathcal J(X, \Delta)\to \mathcal O_X\to \mathcal O_{\Nklt(X, \Delta)}\to 0, 
\end{equation} 
where $\mathcal J(X, \Delta)$ denotes the multiplier ideal sheaf 
of $(X, \Delta)$. 
By the relative Kawamata--Viehweg--Nadel vanishing 
theorem, 
we have 
\begin{equation}
0\to 
\pi_*\mathcal J(X, \Delta)\to \mathcal O_Y\to 
\pi_*\mathcal O_{\Nklt(X, \Delta)}\to 0. 
\end{equation} 
This implies that $\Nklt(X, \Delta)\cap \pi^{-1}(y)$ is connected. 
\end{proof}

The following lemma also asserts that the union of log canonical centers 
is connected under a suitable setting. Lemma \ref{p-lem4.2} is 
substantially more difficult than Lemma \ref{p-lem4.1}. 
Its proof relies heavily on the strict support condition 
established in \cite[Theorem 1.1 (i)]{fujino-vanishing} (see 
also \cite{fujino-vanishing-pja} and \cite{fujino-fujisawa}).

\begin{lem}\label{p-lem4.2}
Let $(X, \Delta)$ be a log canonical pair and 
let $\pi\colon X\to Y$ be a projective morphism of normal 
complex varieties with $\pi_*\mathcal O_X\simeq 
\mathcal O_Y$. 
Let $W$ be a compact subset of $Y$. 
We assume that $K_X+\Delta\sim_{\mathbb Q, \pi}0$ 
holds. 
We put $Y':=\bigcup _i\pi(C_i)\subsetneq Y$, where 
$\{C_i\}$ is a set of some log canonical centers 
of $(X, \Delta)$. Let $X'$ be the union of the log canonical 
centers of $(X, \Delta)$ mapped to $Y'$ by $\pi$. 
Then, after shrinking $Y$ around $W$ suitably, 
$\pi_*\mathcal O_{X'}\simeq \mathcal O_{Y'}$ holds. 
In particular, $\pi_*\mathcal O_{X'}\simeq \mathcal O_{Y'}$ holds 
on an open subset $U$ contained in $W$. 
\end{lem}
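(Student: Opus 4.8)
\textbf{Plan of proof of Lemma \ref{p-lem4.2}.}
The plan is to reduce to a dlt model and then extract the required isomorphism $\pi_*\mathcal O_{X'}\simeq \mathcal O_{Y'}$ from the exact sequence relating $\mathcal O_X$ to the structure sheaf of the union of certain log canonical centers. First I would shrink $Y$ around $W$ so that $K_X+\Delta\sim_{\mathbb Q,\pi}0$ becomes $K_X+\Delta\sim_{\mathbb Q}\pi^*D$ for some $\mathbb Q$-Cartier $\mathbb Q$-divisor $D$ on $Y$; after twisting by a line bundle pulled back from $Y$ we may as well assume $K_X+\Delta\sim_{\mathbb Q}0$ over the relevant neighborhood. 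Next I would pass to a dlt blow-up $\psi\colon(X^\dag,\Delta^\dag)\to(X,\Delta)$ with $\psi^*(K_X+\Delta)=K_{X^\dag}+\Delta^\dag$ (using \cite[Theorems 1.21 and 1.27]{fujino-bchm}, after choosing a Stein compact $W'\supset W$ with $\Gamma(W',\mathcal O_Y)$ noetherian) and then a log resolution, so that we have a smooth $Z$ with a simple normal crossing boundary and $\varphi\colon Z\to X$ with $K_Z+\Delta_Z=\varphi^*(K_X+\Delta)$. Since every log canonical center of $(X,\Delta)$ is dominated by a log canonical center of $(X^\dag,\Delta^\dag)$, hence by a stratum of $\Delta_Z^{=1}$, Lemma \ref{c-lem2.29} lets me transport the problem upstairs: the union $X'$ of log canonical centers of $(X,\Delta)$ mapping into $Y'$ is the $\varphi$-image of the union $V$ of the components of $\Delta_Z^{=1}$ mapping into $Y'$, and it will suffice to show $(\pi\circ\varphi)_*\mathcal O_V\simeq\mathcal O_{Y'}$ together with $\varphi_*\mathcal O_V\simeq\mathcal O_{X'}$.

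The key tool is the strict support condition of \cite[Theorem 1.1 (i)]{fujino-vanishing}. On $Z$ write $\Delta_Z^{=1}=V+V^c$, where $V^c$ collects the components not mapped into $Y'$, and consider the exact sequence
\begin{equation*}
0\to \mathcal O_Z\bigl(\lceil-\Delta_Z^{<1}\rceil-\Delta_Z^{=1}\bigr)\to \mathcal O_Z\bigl(\lceil-\Delta_Z^{<1}\rceil-V^c\bigr)\to \mathcal O_V\bigl(\lceil-\Delta_Z^{<1}\rceil\bigr)\to 0
\end{equation*}
obtained by restricting to $V$ after twisting. Because $\lceil-\Delta_Z^{<1}\rceil$ is effective and $\varphi$-exceptional, pushing forward by $\varphi$ (and using $R^1\varphi_*$ of the kernel vanishing by relative Kawamata--Viehweg, as in Remark \ref{c-rem2.21}) yields $\varphi_*\mathcal O_V\simeq\mathcal O_{X'}$, where $X'$ is the scheme-theoretic image, which is exactly the union of the corresponding log canonical centers with its reduced structure. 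Then I would apply $(\pi\circ\varphi)_*$ to the analogous sequence on $Z$ over $Y$: the point is that $\mathcal O_Z(\lceil-\Delta_Z^{<1}\rceil-V^c)$ differs from $\mathcal O_Z(\lceil-\Delta_Z^{<1}\rceil-\Delta_Z^{=1})\otimes\text{(something)}$ in a way controlled by $K_Z+\Delta_Z\sim_{\mathbb Q}0$, so that the module of sections of the middle term is $\mathcal O_Y$ (using $\pi_*\mathcal O_X\simeq\mathcal O_Y$ and that $V^c$ is not dominant onto any codimension-zero locus we care about), and the strict support condition forces the cokernel $(\pi\circ\varphi)_*\mathcal O_V$ to be a sheaf with pure support $Y'$, i.e.\ $\mathcal O_{Y'}$. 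The surjectivity needed here is precisely where \cite[Theorem 1.1 (i)]{fujino-vanishing} enters: it guarantees that the connecting maps in cohomology vanish and that the pushforwards have no embedded components, so no spurious contributions appear along $Y'$.

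The main obstacle I expect is controlling the scheme structures and the support of $(\pi\circ\varphi)_*\mathcal O_V$ precisely — i.e.\ ruling out embedded or nilpotent contributions so that one really gets $\mathcal O_{Y'}$ and not merely a sheaf supported on $Y'$ — and this is exactly what the strict support condition of \cite{fujino-vanishing} is designed to handle; the bookkeeping of which components of $\Delta_Z^{=1}$ map where, and the compatibility of the various restriction maps under adjunction, will require care but is routine given Lemma \ref{c-lem2.29}. A secondary technical point is that all of this must be done after shrinking $Y$ around the compact set $W$ so that the noetherianity hypotheses needed for the dlt blow-up and for the vanishing theorems are available; since $W$ is compact and we only need the conclusion on some open $U\subset W$, we may freely shrink, so this causes no real difficulty. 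Once $(\pi\circ\varphi)_*\mathcal O_V\simeq\mathcal O_{Y'}$ and $\varphi_*\mathcal O_V\simeq\mathcal O_{X'}$ are established, the Leray spectral sequence for $\pi\circ\varphi=\pi\circ(\text{restriction})$ gives $\pi_*\mathcal O_{X'}\simeq\mathcal O_{Y'}$, completing the proof.
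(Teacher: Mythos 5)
Your overall strategy is the paper's: pass to a log resolution $\varphi\colon Z\to X$ with $K_Z+\Delta_Z=\varphi^*(K_X+\Delta)$, let $V$ be the union of the components of $\Delta_Z^{=1}$ mapped into $Y'$, twist by the effective $\varphi$-exceptional divisor $A:=\lceil -(\Delta_Z^{<1})\rceil$, push a short exact sequence forward by $\varphi$ and by $\pi\circ\varphi$, kill the connecting map with the strict support condition of \cite[Theorem 1.1 (i)]{fujino-vanishing}, and deduce $\varphi_*\mathcal O_V(A)\simeq\mathcal O_{X'}$ and $(\pi\circ\varphi)_*\mathcal O_V(A)\simeq\mathcal O_{Y'}$ simultaneously (the preliminary dlt blow-up you insert is harmless but unnecessary). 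However, the short exact sequence you actually write down is the wrong one, and the step where you identify the pushforward of the middle term with $\mathcal O_Y$ would fail as written. First, with kernel $\mathcal O_Z(A-\Delta_Z^{=1})$ and middle term $\mathcal O_Z(A-V^c)$ the cokernel is $\mathcal O_V(A-V^c)$, not $\mathcal O_V(A)$, since $V^c$ may meet $V$. More seriously, $V^c$ consists precisely of the components of $\Delta_Z^{=1}$ that are \emph{not} mapped into $Y'$; these are typically non-exceptional (strict transforms of components of $\Delta^{=1}$) and may dominate $Y$, so $(\pi\circ\varphi)_*\mathcal O_Z(A-V^c)$ is a proper ideal-type subsheaf of $\mathcal O_Y$, not $\mathcal O_Y$, and your claim that it equals $\mathcal O_Y$ ``because $V^c$ is not dominant onto any codimension-zero locus we care about'' is not available. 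Consequently the cokernel of the pushed-forward injection cannot be identified with $\mathcal O_{Y'}$ by your argument.

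The fix is to use the sequence
\begin{equation}
0\to \mathcal O_Z(A-V)\to \mathcal O_Z(A)\to \mathcal O_V(A)\to 0,
\end{equation}
i.e.\ subtract only $V$, not all of $\Delta_Z^{=1}$, from the kernel and leave the middle term untwisted by $V^c$. Then $A$ being effective and $\varphi$-exceptional gives $\varphi_*\mathcal O_Z(A)\simeq\mathcal O_X$ and, using $\pi_*\mathcal O_X\simeq\mathcal O_Y$, also $(\pi\circ\varphi)_*\mathcal O_Z(A)\simeq\mathcal O_Y$; and since
\begin{equation}
A-V-\bigl(K_Z+\Delta_Z^{=1}-V+\{\Delta_Z\}\bigr)\sim_{\mathbb Q,\,\pi\circ\varphi}0,
\end{equation}
the strict support condition applies to $R^1(\pi\circ\varphi)_*\mathcal O_Z(A-V)$ (resp.\ $R^1\varphi_*\mathcal O_Z(A-V)$) with respect to the log canonical pair $(Z,\Delta_Z^{=1}-V+\{\Delta_Z\})$, none of whose strata maps into $Y'$ (resp.\ into $X'$) by the choice of $V$ and of the resolution; note that relative Kawamata--Viehweg vanishing as in Remark \ref{c-rem2.21} is not applicable to this kernel, since the relevant pair is only log canonical, not kawamata log terminal. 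This yields surjections $\mathcal O_Y\twoheadrightarrow(\pi\circ\varphi)_*\mathcal O_V(A)$ and $\mathcal O_X\twoheadrightarrow\varphi_*\mathcal O_V(A)$, whence the desired isomorphisms with $\mathcal O_{Y'}$ and $\mathcal O_{X'}$. Finally, your reduction via Lemma \ref{c-lem2.29} that every log canonical center of $(X,\Delta)$ mapped into $Y'$ is dominated by a stratum contained in $V$ requires choosing the resolution so that the relevant preimages are in suitable simple normal crossing position (as the paper arranges before defining $V$); you should state this choice explicitly rather than treat it as automatic.
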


\begin{proof}
Throughout this proof, we will freely shrink $Y$ around $W$ without explicit mention. 
Let $p\colon Z\to X$ be a projective bimeromorphic 
morphism from a smooth complex variety $Z$ with 
$K_Z+\Delta_Z:=p^*(K_X+\Delta)$ (see \cite{bierstone-milman}). 
We may assume that $(\pi\circ p)^{-1}(Y')$ and 
$p^{-1}(X')$ are simple normal crossing divisors on $Z$. 
We may further assume that the union of 
$(\pi\circ p)^{-1}(Y')$, 
$p^{-1}(X')$, and $\Supp \Delta_Z$ is contained in a simple 
normal crossing divisor on $Z$. 
Let $V$ be the union of the irreducible components 
of $\Delta^{=1}_Z$ mapped to $Y'$ by $\pi\circ p$. 
We put $A:=\lceil -(\Delta^{<1}_Z)\rceil$, which is 
a $p$-exceptional effective divisor on $Z$. 
By assumption, we have 
\begin{equation}
A-V-(K_Z+\Delta^{=1}_Z-V+\{\Delta_Z\})\sim 
_{\mathbb Q, \pi\circ p}0. 
\end{equation}
We consider the following short exact sequence 
\begin{equation}
0\to \mathcal O_Z(A-V)\to \mathcal O_Z(A)\to \mathcal O_V(A)\to 0. 
\end{equation} 
We note that no log canonical 
centers of $(Z, \Delta^{=1}_Z-V+\{\Delta_Z\})$ map to 
$Y'$ by construction. 
Then we have 
\begin{equation}
0\to (\pi\circ p)_*\mathcal O_Z(A-V)\to \mathcal O_Y\to 
(\pi\circ p)_*\mathcal O_V(A)\to 0.  
\end{equation}
Here we used the 
strict support condition for 
$R^1(\pi\circ p)_*\mathcal O_Z(A-V)$ (see \cite[Theorem 1.1 (i)]{fujino-vanishing}) 
in order to 
prove that the connecting homomorphism 
\begin{equation}
\delta\colon 
(\pi\circ p)_*\mathcal O_V(A)\to R^1(\pi\circ p)_*\mathcal O_Z(A-V)
\end{equation} 
is zero. 
This implies that $(\pi\circ p)_*\mathcal O_V(A)\simeq 
\mathcal O_{Y'}$ holds. 
Similarly, we have the 
short exact sequence 
\begin{equation}
0\to p_*\mathcal O_Z(A-V)\to \mathcal O_X\to p_*\mathcal O_V(A)\to 0  
\end{equation} 
since no log canonical centers of $(Z, \Delta^{=1}_Z-V+\{\Delta_Z\})$ map to 
$X'$ by construction. 
This implies that $p_*\mathcal O_V(A)\simeq \mathcal O_{X'}$. 
Hence we have $\pi_*\mathcal O_{X'}\simeq \mathcal O_{Y'}$. 
We finish the proof 
of Lemma \ref{p-lem4.2}. 
\end{proof}

As a straightforward corollary of Lemma \ref{p-lem4.2}, we obtain the following:

\begin{cor}[{\cite[Lemma 4.2]{fujino-abundance}}]\label{p-cor4.3}
Let $(X, \Delta)$ be a divisorial log terminal pair and 
let $\pi\colon X\to Y$ be a projective morphism of normal 
complex varieties with $\pi_*\mathcal O_X\simeq 
\mathcal O_Y$. 
Let $W$ be a compact subset of $Y$. 
We assume that $K_X+\Delta\sim_{\mathbb Q, \pi}0$ 
holds. If $Y':=\pi(\lfloor \Delta\rfloor)\subsetneq Y$, 
then, after shrinking $Y$ around $W$ suitably, 
we have $\pi_*\mathcal O_{\lfloor \Delta\rfloor}\simeq 
\mathcal O_{Y'}$. 
\end{cor}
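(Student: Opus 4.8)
The plan is to deduce Corollary \ref{p-cor4.3} directly from Lemma \ref{p-lem4.2} by choosing the family of log canonical centers appropriately. For a divisorial log terminal pair $(X, \Delta)$, the non-klt locus is exactly $\lfloor \Delta\rfloor$, and the log canonical centers of $(X, \Delta)$ are precisely the irreducible components of the various intersections of the components of $\lfloor \Delta\rfloor$ (this is one of the standard structural facts about dlt pairs, guaranteed here by the existence of a dlt modification as in Remark \ref{c-rem2.5} together with the adjunction statements recalled in Lemma \ref{c-lem2.8}). So first I would take $\{C_i\}$ to be the set of \emph{all} log canonical centers of $(X, \Delta)$; then $\bigcup_i \pi(C_i) = \pi(\lfloor \Delta\rfloor) = Y'$, which by hypothesis is a proper subset of $Y$, so the hypothesis $Y' \subsetneq Y$ of Lemma \ref{p-lem4.2} is met.

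Next I would identify the set $X'$ from Lemma \ref{p-lem4.2} — the union of all log canonical centers of $(X, \Delta)$ mapped into $Y'$ by $\pi$ — with $\lfloor \Delta\rfloor$ itself. The inclusion $X' \subseteq \lfloor \Delta\rfloor$ is immediate since every log canonical center of a dlt pair is contained in $\lfloor \Delta\rfloor$. For the reverse inclusion: every irreducible component $S$ of $\lfloor \Delta\rfloor$ is itself a log canonical center of $(X, \Delta)$, and $\pi(S) \subseteq \pi(\lfloor\Delta\rfloor) = Y'$, so $S \subseteq X'$; hence $\lfloor \Delta\rfloor \subseteq X'$. Thus $X' = \lfloor \Delta\rfloor$ as closed subsets.

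Finally, applying Lemma \ref{p-lem4.2} with this choice — note that its hypotheses $\pi_*\mathcal O_X \simeq \mathcal O_Y$ and $K_X+\Delta \sim_{\mathbb Q,\pi} 0$ are exactly those assumed here, and that a dlt pair is in particular log canonical — yields, after shrinking $Y$ around $W$, the isomorphism $\pi_*\mathcal O_{X'} \simeq \mathcal O_{Y'}$, i.e.\ $\pi_*\mathcal O_{\lfloor \Delta\rfloor} \simeq \mathcal O_{Y'}$, which is the assertion.

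I do not expect a genuine obstacle here; the one point requiring a word of care is the identification of the log canonical centers of the dlt pair with the components of the strata of $\lfloor \Delta\rfloor$, and in particular that each component of $\lfloor\Delta\rfloor$ really is a log canonical center (so that $X' \supseteq \lfloor\Delta\rfloor$). This is classical in the algebraic setting and transfers to the complex analytic setting via the analytic dlt modification and resolution results already invoked in the excerpt (see Remark \ref{c-rem2.5}), so it can be stated without a long argument. Everything else is a direct invocation of Lemma \ref{p-lem4.2}.
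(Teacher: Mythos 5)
Your proposal is correct and follows essentially the same route as the paper: the paper's proof likewise observes that for a divisorial log terminal pair $\lfloor \Delta\rfloor$ is the union of all log canonical centers of $(X,\Delta)$, so that taking $\{C_i\}$ to be all log canonical centers makes the set $X'$ of Lemma \ref{p-lem4.2} equal to $\lfloor \Delta\rfloor$, and then invokes that lemma. Your extra care in verifying both inclusions $X'=\lfloor\Delta\rfloor$ is fine but adds nothing beyond the paper's one-line justification.
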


\begin{proof}
Since $(X, \Delta)$ is divisorial log terminal, 
$\lfloor \Delta\rfloor$ is the union of 
all log canonical centers of $(X, \Delta)$. 
Therefore, by Lemma \ref{p-lem4.2}, 
we have 
$\pi_*\mathcal O_{\lfloor \Delta\rfloor}\simeq 
\mathcal O_{Y'}$. 
\end{proof}

In this paper, Lemma \ref{p-lem4.5} plays a crucial role. To facilitate the reader's understanding, we first treat the following lemma, which serves as a toy model for Lemma \ref{p-lem4.5} and Proposition \ref{p-prop4.6} below. We believe that its proof will be helpful for grasping the core ideas of Lemma \ref{p-lem4.5}. Furthermore, Lemma \ref{p-lem4.4}, which is a special case of Lemma \ref{p-lem4.5}, is already sufficient for the purposes of \cite{fujino-phd}, \cite{fujino-index}, \cite{fujino-index-a}, and \cite{gongyo-abundance}. 

\begin{lem}\label{p-lem4.4}
Let $(X, \Delta)$ be a projective $\mathbb Q$-factorial 
divisorial log terminal pair such that $K_X+\Delta\sim _{\mathbb Q}0$. 
Assume that $\lfloor \Delta\rfloor$ is not connected. 
Then $\lfloor \Delta\rfloor =S_1+S_2$ such that 
$(S_i, \Delta_{S_i})$ is kawamata log terminal 
with $K_{S_i}+\Delta_{S_i}:=(K_X+\Delta)|_{S_i}$ for 
$i=1, 2$ and 
that $(S_1, \Delta_{S_1})$ is $B$-birationally equivalent to 
$(S_2, \Delta_{S_2})$. 
In particular, $(X, \Delta)$ is purely log terminal. 
\end{lem}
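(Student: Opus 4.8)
\textbf{Proof proposal for Lemma \ref{p-lem4.4}.}

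The plan is to exploit the connectedness lemma together with adjunction along the two pieces of $\lfloor \Delta \rfloor$. First I would observe that since $(X,\Delta)$ is divisorial log terminal and $K_X+\Delta \sim_{\mathbb Q} 0$, the pair $(X, \Delta - \lfloor\Delta\rfloor)$ is kawamata log terminal, so Lemma \ref{p-lem4.1} (applied with $\pi = \mathrm{id}$, or over a point) tells us that $\lfloor \Delta\rfloor$ would be connected \emph{unless} the negativity hypothesis fails — but here $-(K_X+\Delta)$ is $\mathbb Q$-trivial, so it is nef and big over a point only when $\dim X = 0$. The right way to force disconnectedness is instead to run a $(K_X+\Delta)$-MMP trick or to use the canonical bundle formula; but the cleanest route, following \cite{fujino-abundance}, is: if $\lfloor \Delta\rfloor = S_1 \sqcup S_2 \sqcup \cdots$ is disconnected, pick one connected component $S_1$ and set $S_2 := \lfloor\Delta\rfloor - S_1$. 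One then shows each $S_i$ carries no further log canonical centers of the induced pair, i.e. $(S_i, \Delta_{S_i})$ is kawamata log terminal; this is where I expect the main work to lie.

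The key mechanism for that step is the following. Since $X$ is $\mathbb Q$-factorial, $S_1$ is $\mathbb Q$-Cartier, and writing $\Delta = S_1 + S_2 + B$ with $B = \Delta - \lfloor\Delta\rfloor$ klt, we have $(K_X + S_1 + S_2 + B)|_{S_1} = K_{S_1} + \Delta_{S_1}$ by adjunction, with $\Delta_{S_1} = (S_2 + B)|_{S_1}$. The divisor $S_2|_{S_1}$ would be a boundary component of coefficient $1$ in $\Delta_{S_1}$; but $S_1 \cap S_2 = \emptyset$ because $S_1$ is a connected component of $\lfloor\Delta\rfloor$ and $S_2$ is the union of the others (here I use that $\lfloor\Delta\rfloor$ being disconnected means its connected components are pairwise disjoint as closed sets, hence $S_1$ and $S_2$ share no divisorial intersection on $X$ — I should check there are no codimension-two subtleties, but dlt pairs are normal crossing in codimension one so this is fine generically). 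Therefore $\Delta_{S_1} = B|_{S_1}$ has no coefficient-$1$ component, and since $(S_1, \Delta_{S_1})$ is dlt by adjunction for dlt pairs (Lemma \ref{c-lem2.8}), it is actually klt. The same applies to $S_2$.

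Next I would produce the $B$-birational equivalence. Run a $(K_X + \Delta - \epsilon S_1)$-MMP, or more in the spirit of this paper, use that $K_X + \Delta \sim_{\mathbb Q} 0$ to get $S_1 \sim_{\mathbb Q} -(K_X + S_2 + B)$, which is the log canonical divisor of the klt pair $(X, S_2 + B)$ twisted; the point is to contract $S_2$ (or run an MMP that makes $S_1$ ample over the base) and deduce that $S_1$ and $S_2$ both map birationally, compatibly with the restricted forms, onto a common model, giving a $B$-birational map $(S_1, \Delta_{S_1}) \dashrightarrow (S_2, \Delta_{S_2})$. Concretely: there is a divisorial contraction or a sequence of flips contracting $S_2$; on the other side $S_1$ becomes a section and one reads off the $B$-birational identification from the coincidence of log discrepancies forced by $K_X + \Delta \sim_{\mathbb Q} 0$. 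The hard part will be organizing this MMP/adjunction bookkeeping carefully enough that the $B$-birational diagram of Definition \ref{c-def2.17} literally commutes with $\alpha^*(K_{S_1}+\Delta_{S_1}) = \alpha'^*(K_{S_2}+\Delta_{S_2})$; I would lean on \cite[Lemma 4.4]{fujino-abundance} and reduce the analytic statement to that algebraic one by shrinking the base, exactly as in the proofs of Theorem \ref{b-thm1.2} and Lemma \ref{c-lem2.9}. Finally, purely log terminality of $(X,\Delta)$ is immediate: $\lfloor\Delta\rfloor = S_1 + S_2$ with $S_1, S_2$ disjoint and each $(S_i,\Delta_{S_i})$ klt forces every log canonical center of $(X,\Delta)$ to be a component of $\lfloor\Delta\rfloor$ itself, which is precisely the definition of plt.
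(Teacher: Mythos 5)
There is a genuine gap in your second paragraph, which is where you claim the main work lies. You deduce that $(S_1,\Delta_{S_1})$ is kawamata log terminal solely from $S_1\cap S_2=\emptyset$, where $S_1$ is one connected component of $\lfloor\Delta\rfloor$ and $S_2$ is the union of the others. This does not follow: a connected component $S_1$ may itself be reducible, and then the mutual intersections of its irreducible components contribute coefficient-one terms to $\Delta_{S_1}$ (and deeper log canonical centers of $(X,\Delta)$ inside $S_1$), so by Lemma \ref{c-lem2.8} one only gets that $(S_1,\Delta_{S_1})$ is semi-divisorial log terminal, not klt. Moreover, nothing in your argument shows that $\lfloor\Delta\rfloor$ has exactly two connected components, each irreducible, which is part of the content of the lemma (your own notation $S_1\sqcup S_2\sqcup\cdots$ leaves this open, and with three or more components the asserted $B$-birational equivalence of two pieces would not even be the right statement). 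In short, the plt-ness of $(X,\Delta)$ cannot be front-loaded by naive adjunction; it is a conclusion, not an easy first step.

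The paper obtains all of this at once from the MMP you only gesture at in your third paragraph: since $K_X+\Delta\sim_{\mathbb Q}0$ and $\lfloor\Delta\rfloor\neq 0$, the divisor $K_X+\Delta-\varepsilon\lfloor\Delta\rfloor$ is not pseudo-effective, and a $(K_X+\Delta-\varepsilon\lfloor\Delta\rfloor)$-MMP with ample scaling (see \cite{bchm}) ends with a Fano contraction $\varphi\colon X'\to V$. The crucial bookkeeping, absent from your sketch, is that the number of connected components of $\lfloor\Delta\rfloor$ is preserved at every step by the connectedness lemma (Lemma \ref{p-lem4.1}); disconnectedness then forces $\dim V=\dim X-1$, so $\varphi$ is generically a $\mathbb P^1$-bundle and $\lfloor\Delta'\rfloor=S'_1+S'_2$ consists of two disjoint sections, each mapping isomorphically to $V$. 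Then \cite[12.3.4 Theorem]{flips-and-abundance} identifies each $(S'_i,\Delta_{S'_i})$ $B$-birationally with a common pair $(V,P)$, and Lemma \ref{p-lem4.2} shows there are no log canonical centers other than $\lfloor\Delta'\rfloor$, giving plt-ness of $(X',\Delta')$, hence of $(X,\Delta)$ (the MMP is $(K_X+\Delta)$-trivial), and only then the klt-ness of the two pieces by adjunction. Your proposal reverses this order and, as written, the klt step fails; also, the appeal to shrinking an analytic base is unnecessary here, since the lemma is a purely projective (algebraic) statement.
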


We note that, by \cite[Theorem 1.2]{gongyo-mmp}, the condition $K_X+\Delta\sim_{\mathbb Q}0$ in Lemma \ref{p-lem4.4} is actually equivalent to the condition that $K_X+\Delta$ is numerically trivial.

\begin{proof}[Proof of Lemma \ref{p-lem4.4}]
Note that $K_X+\Delta-\varepsilon \lfloor \Delta\rfloor$ is not pseudo-effective for any sufficiently small positive rational number $\varepsilon$. 
We run a $(K_X+\Delta-\varepsilon \lfloor \Delta\rfloor)$-minimal model program with ample scaling (see \cite{bchm}). 
We know that the number of connected components of $\lfloor \Delta\rfloor$ is preserved by the above minimal model program, as we apply Lemma \ref{p-lem4.1} at each step. 
Therefore, we finally obtain an extremal Fano contraction morphism which is generically a $\mathbb P^1$-bundle with two disjoint sections. 
More precisely, we have the following diagram:
\begin{equation}
\xymatrix{
p\colon X \ar@{-->}[r] & X' \ar[d]^\varphi \\ 
& V
}
\end{equation}
where $p\colon X\dashrightarrow X'$ is a finite sequence of flips and divisorial contractions, and $\varphi\colon X'\to V$ is a $(K_{X'}+\Delta'-\varepsilon \lfloor \Delta'\rfloor)$-negative extremal Fano contraction with $\Delta':=p_*\Delta$. 
Furthermore, $\dim V=\dim X-1$, $\lfloor \Delta'\rfloor =S'_1+S'_2$, $\varphi\colon S'_i\to V$ is an isomorphism for $i=1, 2$, and $S'_1\cap S'_2=\emptyset$. 

By cutting $V$ with general hyperplanes $\dim V - 1$ times to reduce to the case where $V$ is a curve, and applying \cite[12.3.4 Theorem]{flips-and-abundance}, we see that there exists an effective $\mathbb Q$-divisor $P$ on $V$ such that $\varphi\colon (S'_i, \Delta_{S'_i})\to (V, P)$ is a $B$-birational isomorphism for $i=1, 2$, where $K_{S'_i}+\Delta_{S'_i}:=(K_{X'}+\Delta')|_{S'_i}$.

Suppose that $(S'_1, \Delta_{S'_1})$ is not kawamata log terminal. 
Then $(S'_2, \Delta_{S'_2})$ is also not kawamata log terminal. 
In this case, by applying Lemma \ref{p-lem4.2} to $\varphi\colon X'\to V$, we get a contradiction since $(X', \Delta')$ is kawamata log terminal outside $\lfloor \Delta'\rfloor$. 
This means that $(S'_1, \Delta_{S'_1})$ is kawamata log terminal. 
The same argument applies even if we interchange $S'_1$ and $S'_2$, which implies that $(S'_i, \Delta_{S'_i})$ is kawamata log terminal for $i=1, 2$. 
By inversion of adjunction, $(X', \Delta')$ is purely log terminal. 
Hence, $(X, \Delta)$ is purely log terminal. 
Clearly, $(S_1, \Delta_{S_1})$ is $B$-birationally equivalent to $(S_2, \Delta_{S_2})$. 
This completes the proof. 
\end{proof}

The following lemma is crucial.

\begin{lem}\label{p-lem4.5}
Let $(X', \Delta')$ be a 
log canonical pair and let $\pi'\colon X'\to Y$ be a projective 
surjective morphism of normal complex varieties. 
Let $W$ be a Stein compact subset of $Y$ such 
that $\Gamma (W, \mathcal O_Y)$ is noetherian. 
Assume that $X'$ is $\mathbb Q$-factorial 
over $W$. 
Let $f'\colon X'\to Z$ be 
a projective surjective morphism of normal 
complex varieties over $Y$ such that 
$K_{X'}+\Delta'\sim _{\mathbb Q, f'} 0$, and 
$\pi_Z\colon Z\to Y$ is projective, where 
$\pi_Z$ is the structure morphism. 
Assume that $(X', \Delta'-\varepsilon \lfloor \Delta'\rfloor)$ is 
kawamata log terminal for some small positive rational number $\varepsilon$ and 
there exists a $(K_{X'}+\Delta'-\varepsilon \lfloor\Delta'\rfloor)$-negative 
extremal Fano contraction $\varphi:=\varphi_R\colon X'\to V$ over $Z$ associated 
to an extremal ray $R$ of $\overline {\NE}(X'/Z; \pi_Z^{-1}(W))$ with 
$\dim V=\dim X'-1$. 
Here, we note that $V$ is $\mathbb Q$-factorial over $W$ and has only kawamata log terminal singularities after shrinking $Y$ around $W$ suitably.
\begin{equation}
\xymatrix{
X'\ar[dr]^-{f'}\ar[d]_-{\pi'}\ar[r]^-\varphi& V\ar[d] \\ 
Y & Z\ar[l]^-{\pi_Z}
}
\end{equation}
Then the horizontal part $(\Delta')^h$ of $\lfloor \Delta'\rfloor$ 
with respect 
to $\varphi$ satisfies one of the following conditions. 
\begin{itemize}
\item[{\em{(I)}}] $(\Delta')^h=D'_1$, which is irreducible, 
and $\deg [D'_1: V]=1$. 
\item[{\em{(II)}}] $(\Delta')^h=D'_1+D'_2$ such that $D'_i$ is irreducible 
and $\deg[D'_i: V]=1$ for 
$i=1, 2$. 
\item[{\em{(III)}}] $(\Delta')^h=D'_1$, which is irreducible,  
and $\deg [D'_1: V]=2$. 
\end{itemize}
We define $\Delta_{D'_i}$ by 
\begin{equation}
K_{D'_i}+\Delta_{D'_i}=(K_{X'}+\Delta')|_{D'_i}
\end{equation} 
for $i=1, 2$. Let $\nu_i\colon D'^\nu_i\to D'_i$ be 
the normalization for $i=1, 2$. 
We put 
\begin{equation}
K_{D'^\nu_i}+\Delta_{D'^\nu_i} :=\nu^*_i 
(K_{D'_i}+\Delta_{D'_i})
\end{equation} 
for $i=1, 2$. 
After shrinking $Y$ around $W$ suitably, we have the following 
statements. 
\begin{case}[I]
$\lfloor \Delta'\rfloor\cap \varphi^{-1}(v)$ is connected 
for every $v\in V$. 
\end{case}
\begin{case}[II]
The number of the connected components of $\lfloor \Delta'\rfloor \cap 
\varphi^{-1}(v)$ is at most two for every $v\in V$ and 
\begin{equation}\label{p-eq4.1}
\left(\varphi\circ \nu_2\right)^{-1}\circ 
\left(\varphi\circ \nu_1\right)\colon 
(D'^\nu_1, \Delta_{D'^\nu_1})\dashrightarrow 
(D'^\nu_2, \Delta_{D'^\nu_2})
\end{equation} 
is a $B$-bimeromorphic map over $V$. 
\end{case}
\begin{case}[III]
The number of the connected components of 
$\lfloor \Delta'\rfloor \cap \varphi^{-1}(v)$ 
is at most two for every $v\in V$ and 
there exists a $B$-bimeromorphic 
map  
\begin{equation}\label{p-eq4.2}
\iota \colon 
(D'^\nu_1, \Delta_{D'^\nu_1})\dashrightarrow 
(D'^\nu_1, \Delta_{D'^\nu_1})
\end{equation}
over $V$ with $\iota\ne \id$ and $\iota^2=\id$. 
\end{case}
Moreover, in {\em{(II)}} and {\em{(III)}}, if $\lfloor 
\Delta'\rfloor\cap\varphi^{-1}(v)$ is not connected 
for some $v\in V$, then $(X', \Delta')$ is purely 
log terminal in a neighborhood of $\varphi^{-1}(v)$. 
\end{lem}

More details on Cases (II) and (III) will be discussed in the following 
proof. 

\begin{proof}[Proof of Lemma \ref{p-lem4.5}] 
Since the proof is rather long, we divide it into three steps. 
\setcounter{step}{0}
\begin{step}\label{p-lem4.5-step1}
Since $\varphi\colon X'\to V$ is a $(K_{X'}+\Delta'-\varepsilon \lfloor\Delta'\rfloor)$-negative extremal Fano contraction, $V$ is $\mathbb Q$-factorial over $W$ by Lemmas \ref{f-lem7.3} and \ref{f-lem7.4} 
(see Section \ref{f-sec7}, which is an appendix). In particular, $K_V$ is $\mathbb Q$-Cartier after shrinking $Y$ around $W$ suitably. Then, by Corollary \ref{f-cor7.6}, $V$ has only kawamata log terminal singularities.

We have $R^i\varphi_*\mathcal O_{X'}=0$ for $i>0$ by the relative 
Kawamata--Viehweg vanishing theorem. Therefore, we see that 
general fibers of $\varphi\colon X'\to V$ are isomorphic to $\mathbb P^1$. 
Hence, the degree of $(\Delta')^h \to V$, where 
$(\Delta')^h$ is the horizontal part of $\lfloor \Delta'\rfloor$, is at most two. Therefore, we have (I), (II), and (III). 
We note that $\lfloor \Delta'\rfloor$ has normal crossings in codimension one since $(X', \Delta')$ is log canonical. 
\end{step}

\begin{step}\label{p-lem4.5-step2} 
Throughout this step, we will freely shrink $Y$ around $W$ without explicit mention. Hence, we may assume that $Y$ is Stein and $V$ is projective over $Y$. We note that the vertical part of $\lfloor \Delta'\rfloor$ is the pull-back of some effective $\mathbb Q$-divisor on $V$ since $\varphi\colon X'\to V$ is a $(K_{X'}+\Delta' -\varepsilon \lfloor \Delta'\rfloor)$-negative extremal contraction.

In Case (I), $(\Delta')^h=D'_1$ is irreducible 
and $\varphi$-ample. 
Since $\deg [D'_1: V]=1$, it follows from the Stein factorization and Zariski's main theorem that the intersection $D'_1\cap \varphi^{-1}(v)$ is connected for every $v\in V$. 
Since the vertical 
part of $\lfloor \Delta'\rfloor$ is the pull-back 
of some 
effective $\mathbb Q$-divisor on $V$, $\lfloor \Delta'\rfloor\cap \varphi^{-1}(v)$ is connected 
for every $v\in V$. 

In Case (II), we consider the following commutative diagram 
\begin{equation}
\xymatrix{
D'_i\ar[d] & D'^\nu_i\ar[dl]^{\rho_i}\ar[l]_-{\nu_i}\\ 
\ar[d]_-{\psi_i}D^\dag_i & \\ 
V &
}
\end{equation}
where $D'_i\to D^\dag_i\to V$ is the Stein factorization for $i=1, 2$. 
Since the mapping degree 
$\deg [D'_i: V]=1$, 
$\psi_i\colon D^\dag_i\to V$ is an isomorphism 
for $i=1, 2$. 
We put 
\begin{equation}
K_{D^\dag_i}+\Delta_{D^\dag_i}:=\rho_{i*}(K_{D'^\nu_i}+\Delta_{D'^\nu_i})
\end{equation} 
for $i=1, 2$. 
Let $\pi_V\colon V\to Y$ be the structure morphism. 
By taking $(\dim V-1)$ general hyperplane cuts 
with respect to a $\pi_V$-very ample Cartier divisor and 
applying \cite[12.3.4 Theorem]{flips-and-abundance}, 
we can find an effective $\mathbb Q$-divisor 
$P$ on $V$ such that 
$\psi_i\colon (D^\dag_i, \Delta_{D^\dag_i})\to (V, P)$ 
is a $B$-bimeromorphic isomorphism for $i=1, 2$. 
Consequently, 
\begin{equation}
\psi^{-1}_2\circ \psi_1\colon 
(D^\dag_1, \Delta_{D^\dag_1})\to 
(D^\dag_2, \Delta_{D^\dag_2})
\end{equation} 
is a $B$-bimeromorphic isomorphism. 
By construction, we have 
\[
K_{D'^\nu_i}+\Delta_{D'^\nu_i}=\rho^*_i(K_{D^\dag_i}+\Delta_{D^\dag_i})
\] 
for $i=1, 2$. 
Thus, 
\begin{equation}
\left(\varphi\circ \nu_2\right)^{-1}\circ 
\left(\varphi\circ \nu_1\right)
=\rho^{-1}_2\circ \psi^{-1}_2\circ \psi_1\circ \rho_1\colon 
(D'^\nu_1, \Delta_{D'^\nu_1})\dashrightarrow 
(D'^\nu_2, \Delta_{D'^\nu_2})
\end{equation} 
is a $B$-bimeromorphic map over $V$.

In Case (III), we 
consider the following commutative diagram 
\begin{equation}
\xymatrix{
D'_1\ar[d] & D'^\nu_1
\ar[d]^{\rho_1}\ar[l]_-{\nu_1}\\ 
D^\dag_1\ar[d] 
& D_1^{\dag \nu}\ar[l]_-{\nu^\dag_1}\ar[dl]^-p\\ 
V &
}
\end{equation} 
where $D'_1\to D^\dag_1\to V$ is the Stein factorization and 
$\nu^\dag_1\colon D^{\dag\nu}_1\to D^\dag_1$ is the normalization. 
Let $p\colon D^{\dag\nu}_1\to V$ be the induced morphism. 
Note that $p\colon D^{\dag\nu}_1\to V$ is a finite morphism 
of normal complex varieties of degree two. 
Hence, there exists a nonempty Zariski open subset $V^\circ$ of $V$ such that 
$p\colon p^{-1}(V^\circ)\to V^\circ$ is a finite morphism of complex manifolds 
of degree two. 
Therefore, there exists a Galois involution $\iota^\circ \colon p^{-1}(V^\circ)
\to p^{-1}(V^\circ)$ over $V^\circ$. 
By \cite[Lemma 2.24]{enokizono-hashizume-ss}, 
$\iota^\circ$ extends to a birational involution $\iota^\dag\colon D^{\dag\nu}_1\to D^{\dag\nu}_1$ over $V$ such that 
$\iota^\dag\ne \id$ and $(\iota^\dag)^2=\id$. 
We put 
\begin{equation} 
K_{D^{\dag\nu}_1}+\Delta_{D^{\dag\nu}_1}:=
\rho_{1*}(K_{D'^\nu_1}+\Delta_{D'^\nu_1}). 
\end{equation} 
Over a nonempty Euclidean open subset of $V$ where $D'_1$ is a 
union of two sections, the situation is the same as in Case (II). 
On the other hand, over the locus where $D'_1\to V$ is a ramified double cover of smooth varieties, 
both $D'_1\to D^\dag_1$ and $D^{\dag\nu}_1\to D^\dag_1$ are 
isomorphisms, and the ramification locus of $D^{\dag\nu}_1\to V$ is $\iota^\dag$-invariant. 
Hence, we see that 
$\iota^\dag$ preserves $\Delta_{D^{\dag\nu}_1}$. 
Thus, $\iota^\dag$ preserves $K_{D^{\dag\nu}_1}+\Delta_{D^{\dag\nu}_1}$. 
Therefore, since 
\[
K_{D'^\nu_1}+\Delta_{D'^\nu_1}=\rho_1^*(
K_{D^{\dag\nu}_1}+\Delta_{D^{\dag\nu}_1}), 
\]
we obtain a $B$-bimeromorphic 
involution map 
\begin{equation}
\iota :=\rho_1^{-1}\circ \iota^\dag\circ \rho_1\colon (D'^{\nu}_1, \Delta_{D'^{\nu}_1})
\dashrightarrow 
(D'^{\nu}_1, \Delta_{D'^{\nu}_1})
\end{equation} 
over $V$. 
\end{step}

\begin{step}\label{p-lem4.5-step3}
Assume that $\lfloor \Delta'\rfloor\cap \varphi^{-1}(v)$ is not connected 
in Cases (II) and (III). Since the vertical part of 
$\lfloor \Delta'\rfloor$ is the pull-back of some effective 
$\mathbb Q$-divisor on $V$, 
by shrinking $V$ around $v$, we may assume that $\lfloor \Delta'\rfloor$ has no vertical components. From now on, we freely shrink $V$ around $v$ without explicit mention. 

We first treat Case (II). If $(D'^\nu_1, \Delta_{D'^\nu_1})$ is 
not kawamata log terminal over $v$, then $(D'^\nu_2, \Delta_{D'^\nu_2})$ 
is not kawamata log terminal over $v$ by \eqref{p-eq4.1}. 
In this case, there exist a log canonical center strictly contained in $D'_1$ and another log canonical center strictly contained in $D'_2$. 
By Lemma \ref{p-lem4.2}, these two log canonical centers must be connected by a chain of log canonical centers. 
This is a contradiction, since $(X', \Delta')$ is kawamata log terminal outside $\lfloor \Delta'\rfloor$. The same argument applies even if we interchange $D'_1$ and $D'_2$.

Next, we treat Case (III). We may assume that $D'_1$ has exactly two connected components by shrinking $V$ around $v$. 
If one of the connected components of $(D'^\nu_1, \Delta_{D'^\nu_1})$ is not kawamata log terminal over $v$, then the other connected component is also not kawamata log terminal over $v$ by \eqref{p-eq4.2}. 
Then, applying Lemma \ref{p-lem4.2} just as in Case (II) yields a contradiction. 

Therefore, we see that $(D'^\nu_i, \Delta_{D'^\nu_i})$ is kawamata log terminal over $v$ for $i=1, 2$. 
By inversion of adjunction, this means that $(X', \Delta')$ is purely log 
terminal in a neighborhood of $\varphi^{-1}(v)$ since 
$(X', \Delta')$ is kawamata log terminal outside $\lfloor \Delta'\rfloor$.
\end{step}

We finish the proof of Lemma \ref{p-lem4.5}. 
\end{proof}

By Lemma \ref{p-lem4.5}, we have: 

\begin{prop}\label{p-prop4.6}
Let $(X, \Delta)$ be a 
divisorial log terminal pair and let $\pi\colon X\to Y$ be a projective 
surjective morphism of normal complex varieties. 
Let $W$ be a Stein compact subset of $Y$ such 
that $\Gamma (W, \mathcal O_Y)$ is noetherian. 
Assume that $X$ is $\mathbb Q$-factorial over $W$. 
Let $f\colon X\to Z$ be a projective surjective morphism of normal 
complex varieties over $Y$ such that 
$f_*\mathcal O_X\simeq \mathcal O_Z$, 
$K_X+\Delta\sim _{\mathbb Q, f} 0$, and 
$\pi_Z\colon Z\to Y$ is projective, where 
$\pi_Z$ is the structure morphism. 
Then, after shrinking $Y$ around $W$ suitably, 
the number of the connected components of $\lfloor \Delta
\rfloor \cap f^{-1}(z)$ is at most two for every $z\in Z$. 
If $\lfloor \Delta\rfloor \cap f^{-1}(z)$ is not connected 
for some $z\in Z$,  
then there exists a meromorphic map 
$q \colon X\dashrightarrow V$ over $Z$ whose general fiber 
is $\mathbb P^1$ such that 
$V$ is $\mathbb Q$-factorial over $W$ and has 
only kawamata log terminal singularities. The horizontal 
part $\Delta^{h}$ of $\lfloor \Delta\rfloor$ with respect 
to $q$ satisfies one of the following conditions. 
\begin{itemize}
\item[{\em{(i)}}] $\Delta^h=D_1$, which 
is irreducible, the mapping degree $\deg [D_1: V]=2$, 
and there is a $B$-bimermorphic 
involution on $(D_1, \Delta_{D_1})$ 
over $Z$. 
\item[{\em{(ii)}}] 
$\Delta^h=D_1+D_2$ such that 
$D_i$ is irreducible for $i=1, 2$ and 
\begin{equation}
(q|_{D_2})^{-1}\circ (q|_{D_1})\colon 
(D_1, \Delta_{D_1})\dashrightarrow (D_2, \Delta_{D_2}) 
\end{equation} 
is a $B$-bimeromorphic map over $Z$. 
\end{itemize} 
We note that 
$K_{D_i}+\Delta_{D_i}:=(K_X+\Delta)|_{D_i}$ and 
$(D_i, \Delta_{D_i})$ is divisorial log 
terminal for $i=1, 2$. 
More precisely, 
by a $(K_X+\Delta-\varepsilon \lfloor 
\Delta\rfloor)$-minimal model program with ample scaling 
over $Z$ around $\pi^{-1}_Z(W)$, after shrinking 
$Y$ around $W$ suitably, we have $p\colon (X, \Delta)\dashrightarrow 
(X', \Delta')$ over $Z$ and $(X', \Delta')$ satisfies {\em{(II)}} or 
{\em{(III)}} in Lemma \ref{p-lem4.5}. 
\begin{equation}
\xymatrix{
X \ar@{-->}[rrd]_-q
\ar[ddr]_-f\ar[dd]_\pi\ar@{-->}[rr]^-p& &X'\ar[d]^-\varphi\\
&& V\ar[dl]\\  
Y & Z\ar[l]^-{\pi_Z} & 
}
\end{equation}
\end{prop}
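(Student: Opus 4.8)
The plan is to manufacture the fibration $q$ by running a suitable minimal model program and then feeding the output into Lemma~\ref{p-lem4.5}; the strategy mirrors that of \cite{fujino-abundance} (cf.\ also the proof of Lemma~\ref{p-lem4.4}). Fix a small positive rational number $\varepsilon$ with $(X,\Delta-\varepsilon\lfloor\Delta\rfloor)$ kawamata log terminal, and note $K_X+\Delta-\varepsilon\lfloor\Delta\rfloor\sim_{\mathbb Q,f}-\varepsilon\lfloor\Delta\rfloor$ since $K_X+\Delta\sim_{\mathbb Q,f}0$. One checks, exactly as in \cite{fujino-abundance}, that the disconnectedness of $\lfloor\Delta\rfloor\cap f^{-1}(z)$ forces $K_X+\Delta-\varepsilon\lfloor\Delta\rfloor$ to fail to be $f$-pseudo-effective in any neighborhood of $z$ (otherwise the program below would terminate with a minimal model over $Z$, on which $-\varepsilon\lfloor\Delta''\rfloor$ is relatively nef and hence $\lfloor\Delta''\rfloor$ is $f''$-vertical, and the connectedness Lemma~\ref{p-lem4.1} together with the preservation of the number of connected components of the non-klt fibre would make $\lfloor\Delta''\rfloor\cap f''^{-1}(z)$ connected). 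Hence, running a $(K_X+\Delta-\varepsilon\lfloor\Delta\rfloor)$-minimal model program with ample scaling over $Z$ around $\pi_Z^{-1}(W)$ — which terminates after shrinking $Y$ around $W$ by \cite{fujino-bchm} and \cite{enokizono-hashizume-mmp} — ends with a Mori fiber space $\varphi\colon X'\to V$ over $Z$. I write $p\colon(X,\Delta)\dashrightarrow(X',\Delta')$ for the resulting map, $f'\colon X'\to Z$ and $g\colon V\to Z$ for the structure morphisms. Then $(X',\Delta')$ is divisorial log terminal and $\mathbb Q$-factorial over $W$, $K_{X'}+\Delta'\sim_{\mathbb Q,f'}0$, $(X',\Delta'-\varepsilon\lfloor\Delta'\rfloor)$ is kawamata log terminal, $V$ is $\mathbb Q$-factorial over $W$ with only kawamata log terminal singularities, and $f'_*\mathcal O_{X'}\simeq\mathcal O_Z$, whence also $g_*\mathcal O_V\simeq\mathcal O_Z$, so the fibres of $g$ are connected.

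Next I would verify the hypotheses of Lemma~\ref{p-lem4.5} for $(X',\Delta')$ and $\varphi$, and that $p$ is $B$-bimeromorphic over $Z$. Each flip and divisorial contraction of the program is $(K_{X_i}+\Delta_i)$-trivial, because $K_{X_i}+\Delta_i$ is $\mathbb Q$-linearly a pull-back from $Z$ and the program is over $Z$; hence $p$ is $(K+\Delta)$-trivial, i.e.\ $B$-bimeromorphic over $Z$. On a general fibre $F$ of $\varphi$ we have $-(K_{X'}+\Delta'-\varepsilon\lfloor\Delta'\rfloor)|_F$ ample and $(K_{X'}+\Delta')|_F\sim_{\mathbb Q}0$, so $\varepsilon\lfloor\Delta'\rfloor|_F$ is ample; in particular $\lfloor\Delta'\rfloor$ is $\varphi$-horizontal. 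Relative Kawamata--Viehweg vanishing gives $R^i\varphi_*\mathcal O_{X'}=0$ for $i>0$. Since the number of connected components of $\lfloor\Delta'\rfloor\cap f'^{-1}(z)$ equals that of $\lfloor\Delta\rfloor\cap f^{-1}(z)$ (Lemma~\ref{p-lem4.1} applied at each bimeromorphic step), it is at least two. If $\dim F\ge 2$ then $\lfloor\Delta'\rfloor|_F$, being an ample effective divisor on the projective variety $F$, is connected, and together with connectedness of the fibres of $g$ this would make $\lfloor\Delta'\rfloor\cap f'^{-1}(z)$ connected; hence $\dim F=1$, so $F\cong\mathbb P^1$ by the vanishing above and $\dim V=\dim X'-1$, which is precisely the setting of Lemma~\ref{p-lem4.5}.

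Finally, I would apply Lemma~\ref{p-lem4.5} to $(X',\Delta')$ with $\varphi\colon X'\to V$. Since $\lfloor\Delta'\rfloor$ is disconnected over some fibre, Case~(I) (all fibres connected) is excluded, so we are in Case~(II) or Case~(III). In Case~(II) Lemma~\ref{p-lem4.5} provides a $B$-bimeromorphic map $(D'^\nu_1,\Delta_{D'^\nu_1})\dashrightarrow(D'^\nu_2,\Delta_{D'^\nu_2})$ over $V$, and in Case~(III) a nontrivial $B$-bimeromorphic involution on $(D'^\nu_1,\Delta_{D'^\nu_1})$ over $V$, together with the bound of two connected components for $\lfloor\Delta'\rfloor\cap\varphi^{-1}(v)$. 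Setting $q:=\varphi\circ p\colon X\dashrightarrow V$ and letting $D_i\subset X$ be the strict transform of $D'_i$, the $B$-bimeromorphic map $p$ restricts to $B$-bimeromorphic maps $(D_i,\Delta_{D_i})\dashrightarrow(D'_i,\Delta_{D'_i})$ (cf.\ Lemma~\ref{c-lem2.29} and Remark~\ref{c-rem2.21}), so the involution (resp.\ the $B$-bimeromorphic identification of the two components) transports to the $D_i$; this gives alternative (i) from Case~(III) and alternative (ii) from Case~(II). Using connectedness of the fibres of $g$ and that $D'_i\to V$ has connected fibres (Case~(II)) or is a double cover (Case~(III)), one sees that $\lfloor\Delta'\rfloor\cap f'^{-1}(z)$, hence $\lfloor\Delta\rfloor\cap f^{-1}(z)$, has at most two connected components for every $z\in Z$, which completes the proof.

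The main obstacle I anticipate is the middle step: showing that the Mori fiber space produced by the program is genuinely a relative $\mathbb P^1$-fibration, i.e.\ that $\dim V=\dim X'-1$. This requires tracking the number of connected components of the non-klt fibre through the entire program and combining it with the $\varphi$-ampleness of $\lfloor\Delta'\rfloor$ and the connectedness of the fibres of $V\to Z$; the delicate point is that $-(K_{X'}+\Delta')$ is only $f'$-trivial, not $f'$-big, so the connectedness lemma cannot be invoked for $f'$ directly and must instead be used on the bimeromorphic contractions appearing inside the program.
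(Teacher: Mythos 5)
Your proposal is correct and follows essentially the same route as the paper: run the $(K_X+\Delta-\varepsilon\lfloor\Delta\rfloor)$-MMP with ample scaling over $Z$, dispose of the relatively pseudo-effective (equivalently, $\lfloor\Delta\rfloor$ vertical over $Z$) case by termination to a minimal model with $-\lfloor\Delta\rfloor$ relatively nef --- the step where \cite{enokizono-hashizume-mmp} is genuinely needed --- and otherwise reach a Fano contraction, rule out $\dim V\le\dim X-2$ via $\varphi$-ampleness of $\lfloor\Delta'\rfloor$ and connectedness of the fibers of $V\to Z$, and conclude by Lemma \ref{p-lem4.5}, transporting Cases (II) and (III) back through the $B$-bimeromorphic map $p$. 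The only cosmetic differences are that the paper phrases the dichotomy as ``$\lfloor\Delta\rfloor$ vertical versus dominant onto $Z$'' and runs the first-case MMP around a Stein compact subset $W_z\subset Z$ for each point $z$ rather than around all of $\pi_Z^{-1}(W)$ at once.
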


The reader can find more details in the following proof. 

\begin{proof}[Proof of Proposition \ref{p-prop4.6}] 
The idea of the proof is very simple. 
By running a suitable minimal model program, 
we reduce the problem to Lemma \ref{p-lem4.5}. 
We note that we need the minimal model program established 
in \cite{enokizono-hashizume-mmp} in Step \ref{p-prop4.6-step1}. 
The minimal model program treated in \cite{fujino-bchm} 
is sufficient for Step \ref{p-prop4.6-step2}. 
\setcounter{step}{0}
\begin{step}\label{p-prop4.6-step1}
In this step, we assume that $\lfloor \Delta\rfloor$ is 
not dominant onto $Z$. 
Under this assumption, we will prove that 
$\lfloor \Delta\rfloor\cap f^{-1}(z)$ is connected 
for every $z\in Z$. 

We take an arbitrary point $z\in Z$ and 
a Stein compact subset $W_z$ of $Z$ containing $z$ such that 
$\Gamma (W_z, \mathcal O_Z)$ is noetherian. 
By \cite[Theorem 1.2]{enokizono-hashizume-mmp}, 
we can run a $(K_X+\Delta-\varepsilon \lfloor 
\Delta\rfloor)$-minimal model 
program with ample scaling 
over $Z$ around $W_z$. 
We finally get a commutative diagram 
\begin{equation}
\xymatrix{
X \ar@{-->}[rr]^-p\ar[dr]_-f&&X'\ar[dl]^-{f'} \\ 
&Z&
}
\end{equation}
around $W_z$ such 
that $f'\colon X'\to Z$ is the structure morphism, 
$p$ is a finite composite of 
flips and divisorial contractions, and 
$K_{X'}+\Delta'-\varepsilon \lfloor \Delta'\rfloor$ is 
nef over $W_z$, where $\Delta':=p_*\Delta$. 
Thus, $-\lfloor \Delta'\rfloor$ is $f'$-nef 
since $K_{X'}+\Delta'\sim _{\mathbb Q, f'}0$. 
Suppose that $\lfloor \Delta'\rfloor \cap f'^{-1}(z)\ne \emptyset$ and 
$f'^{-1}(z)\not\subset \Supp \lfloor 
\Delta'\rfloor$. 
Then we can find a projective integral curve $C$ contained in 
$f'^{-1}(z)$ such that 
$C\not\subset \Supp \lfloor \Delta'\rfloor$ and 
$C\cap \Supp \lfloor \Delta'\rfloor\ne \emptyset$. 
In this case, we have $-\lfloor \Delta'\rfloor \cdot C<0$. 
This is a contradiction. 
Hence, we have $\lfloor \Delta'\rfloor \cap f'^{-1}(z)=\emptyset$ or 
$f'^{-1}(z)\subset \Supp \lfloor \Delta'\rfloor$. 
In particular, $\lfloor \Delta'\rfloor \cap f'^{-1}(z)$ 
is connected. 
Since the number of connected components 
of $\lfloor \Delta\rfloor \cap f^{-1}(z)$ 
is preserved under the above minimal model 
program by Lemma \ref{p-lem4.1}, 
we conclude that $\lfloor \Delta\rfloor \cap f^{-1}(z)$ is connected. 
\end{step}

\begin{step}\label{p-prop4.6-step2}
In this step, we assume that $\lfloor \Delta\rfloor$ is dominant onto $Z$. 
Then $K_X+\Delta-\varepsilon \lfloor\Delta\rfloor$ is not pseudo-effective over $Z$. 
By \cite[Theorem 1.1 and Lemma 9.4]{fujino-bchm}, we can run a $(K_X+\Delta-\varepsilon\lfloor \Delta\rfloor)$-minimal model program with ample scaling over $Z$ around $W_Z:=\pi^{-1}_Z(W)$. 
Then we obtain a finite sequence of divisorial contractions and flips 
\begin{equation}
p\colon X=:X_0\dashrightarrow X_1\dashrightarrow \cdots \dashrightarrow X_m=:X'
\end{equation} 
such that there exists a $(K_{X'}+\Delta'-\varepsilon \lfloor \Delta'\rfloor)$-negative extremal Fano contraction $\varphi\colon X'\to V$ over $Z$. 

\begin{claim}\label{p-prop4.6-claim}
If $\dim V\leq \dim X-2$, then $\lfloor \Delta'\rfloor \cap \varphi^{-1}(v)$ is connected for every $v\in V$. 
\end{claim} 

\begin{proof}[Proof of Claim] 
Since the vertical part of $\lfloor \Delta'\rfloor$ is the pull-back of some effective $\mathbb Q$-divisor on $V$, it suffices to prove that $(\Delta')^h\cap \varphi^{-1}(v)$ is connected for every $v\in V$, where $(\Delta')^h$ is the horizontal part of $\lfloor \Delta'\rfloor$. 
Since $(\Delta')^h$ is $\varphi$-ample, $(\Delta')^h\cap \varphi^{-1}(v)$ is connected for general $v\in V$. 
By the Stein factorization and Zariski's main theorem, we see that $(\Delta')^h\cap \varphi^{-1}(v)$ is connected for every $v\in V$. This completes the proof of the claim. 
\end{proof}
The claim implies that $\lfloor \Delta'\rfloor \cap (f')^{-1}(z)$ is connected for every $z\in Z$ when $\dim V\leq \dim X-2$. 
Since the above minimal model program preserves the number of connected components of $\lfloor \Delta\rfloor \cap f^{-1}(z)$ by Lemma \ref{p-lem4.1}, $\lfloor \Delta\rfloor \cap f^{-1}(z)$ is connected for every $z\in Z$. 

Hence, from now on, we may assume that $\dim V=\dim X-1$. 
In this case, we have already described the situation in Lemma \ref{p-lem4.5}. 
Case (III) (resp.~(II)) in Lemma \ref{p-lem4.5} implies (i) (resp.~(ii)). 
\end{step} 
We finish the proof of Proposition \ref{p-prop4.6}. 
\end{proof}

Before presenting 
our gluing argument, 
we state an elementary but important lemma. 
Here, we need the finiteness of relative log pluricanonical 
representations (see Corollary \ref{b-cor1.3}). 

\begin{lem}[{\cite[Lemma 4.9]{fujino-abundance}}]\label{p-lem4.7}
Let $(X, \Delta)$ be an equidimensional {\em{(}}not necessarily 
connected{\em{)}} divisorial 
log terminal pair and let $\pi\colon X\to Y$ be 
a projective 
morphism of complex analytic spaces such that 
$K_X+\Delta$ is $\pi$-semiample. 
Let $W$ be a compact subset of $Y$ 
and let 
$U$ be a Stein open subset of $Y$ with $U\subset W$. 
We further assume that $U$ is semianalytic. 
We put $G:=\rho^{WU}_m(\Bim(X/Y, \Delta; W))$. 
Then $G$ is a finite group. 
We put $X_U:=\pi^{-1}(U)$. 
If 
\begin{equation}
s\in \PA\left(X_U, \mathcal O_X
(m(K_X+\Delta))\right),
\end{equation} 
then 
$g^*s|_{\lfloor \Delta\rfloor}=s|_{\lfloor \Delta\rfloor}$ and 
\begin{equation}
g^*s \in \PA\left(X_U, \mathcal O_X
(m(K_X+\Delta))\right)
\end{equation} 
for 
every $g\in G$. 
In particular, 
\begin{equation}
\sum_{g\in G}g^*s\in A\left(X_U, \mathcal O_X
(m(K_X+\Delta))\right), 
\end{equation}
\begin{equation}
\prod_{g\in G}g^*s\in A\left(X_U, \mathcal O_X
(m|G|(K_X+\Delta))\right), 
\end{equation} 
and 
\begin{equation}
\prod _{g\in G}g^*s|_{\lfloor \Delta\rfloor} 
=\left(s|_{\lfloor 
\Delta\rfloor}\right)^{|G|}. 
\end{equation} 
Of course, 
\begin{equation}
\frac{1}{|G|} \sum _{g\in G} g^*s|_{\lfloor \Delta\rfloor}=
s|_{\lfloor \Delta\rfloor}
\end{equation} 
holds. 
\end{lem}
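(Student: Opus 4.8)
The plan is to prove the assertion by induction on $\dim X$, and to prove it in the slightly greater generality of \emph{semi-divisorial log terminal} pairs rather than only divisorial log terminal ones. This is the natural inductive setting: by Lemma \ref{c-lem2.8} the pair $(S,\Delta_S)$ entering the definition of preadmissibility --- namely $S=\lfloor\Delta\rfloor$ with $K_S+\Delta_S=(K_X+\Delta)|_S$ when $(X,\Delta)$ is divisorial log terminal, and the disjoint union of the $\lfloor\Theta_i\rfloor$ on the normalization in general --- is again semi-divisorial log terminal and of strictly smaller dimension. Before the induction I would record three trivial points: if $\pi_*\mathcal O_X(m(K_X+\Delta))=0$ everything is vacuous, so we may assume it is nonzero and deduce from Corollary \ref{b-cor1.3} that $G$ is finite; both $\PA(X_U,\mathcal O_X(m(K_X+\Delta)))$ and $\A(X_U,\mathcal O_X(m(K_X+\Delta)))$ are cut out by $\Gamma(U,\mathcal O_U)$-linear conditions, hence are submodules of $H^0(X_U,\mathcal O_X(m(K_X+\Delta)))$ and in particular are stable under finite sums, while Remark \ref{c-rem2.27} governs powers; and the base case $\lfloor\Delta\rfloor=0$ (the kawamata log terminal case) is immediate from Remark \ref{c-rem2.28}.

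The heart of the matter is the claim that for every $g\in G$ and every preadmissible $s$ one has $g^*s|_{\lfloor\Delta\rfloor}=s|_{\lfloor\Delta\rfloor}$; equivalently, writing $s_S$ for the admissible restriction of $s$ to $S$ occurring in the definition of preadmissibility, that $g$ fixes $s_S$. To prove this I would lift $g$ to $\hat g\in\Bim(\pi^{-1}(U_g)/U_g,\Delta|_{\pi^{-1}(U_g)})$, choose a common projective log resolution $X^\dag$ dominating the two copies of $X$ as in Remark \ref{c-rem2.21}, and express the induced automorphism of $H^0(S_U,\mathcal O_S(m(K_S+\Delta_S)))$ through the log canonical strata of $X^\dag$ lying over $S$. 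Then, using Lemma \ref{c-lem2.29} exactly as in the Claim in the proof of Theorem \ref{a-thm3.2}, I would descend to a finite family $T_1,\dots,T_p$ of minimal log canonical centers of $(S,\Delta_S)$ --- each of which is kawamata log terminal --- obtaining an injection, equivariant for the actions induced by $g$,
\begin{equation*}
H^0\bigl(S_U,\mathcal O_S(m(K_S+\Delta_S))\bigr)\hookrightarrow\bigoplus_{j}H^0\bigl((T_j)_U,\mathcal O_{T_j}(m(K_{T_j}+\Delta_{T_j}))\bigr),
\end{equation*}
on whose target $g$ acts through genuine $B$-bimeromorphic maps permuting the $T_j$ over $U$, i.e.\ through an element of $\Bim(T/Y,\Delta_T;W)$ with $T:=\bigsqcup_j T_j$. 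Since $s_S$ is admissible, its restriction to $T$ is admissible as well --- the $T_j$ being klt this restriction is automatically preadmissible by Remark \ref{c-rem2.28}, and the invariance clause follows by unwinding the recursive definition of admissibility along the chains of adjunctions joining each $T_j$ to a component of $\lfloor\Delta_S\rfloor$ (resp.\ of its normalization), lifting the relevant maps by Lemma \ref{c-lem2.29}. Hence the image of $g$ fixes $s_S|_T$, and injectivity forces $g\cdot s_S=s_S$. This step is the main obstacle: it is where the relative analytic bookkeeping, the fact that $g$ need not induce any bimeromorphic self-map of $S$ (Remark \ref{c-rem2.21}), and the finiteness of relative log pluricanonical representations (Corollary \ref{b-cor1.3}) all come together, ultimately resting on the strict support condition of \cite[Theorem 1.1 (i)]{fujino-vanishing}.

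Granting the claim, the remaining statements follow formally. First, $g^*s$ is preadmissible because its restriction to $\lfloor\Delta\rfloor$ equals the admissible section $s|_{\lfloor\Delta\rfloor}$; this also records $g^*s|_{\lfloor\Delta\rfloor}=s|_{\lfloor\Delta\rfloor}$, and averaging over $G$ gives $\tfrac1{|G|}\sum_{g\in G}g^*s|_{\lfloor\Delta\rfloor}=s|_{\lfloor\Delta\rfloor}$. Next, $\sum_{g\in G}g^*s$ is a finite sum of preadmissible sections, hence preadmissible, and it is $G$-invariant since left translation permutes $G$; as every $B$-bimeromorphic map $(X_i,\Theta_i)\dashrightarrow(X_j,\Theta_j)$ between irreducible components over a neighborhood of $W$ extends to an element of $\Bim(X/Y,\Delta;W)$ (take the identity off $X_i\sqcup X_j$), this $G$-invariance is precisely the invariance clause defining admissibility, so $\sum_{g\in G}g^*s\in\A(X_U,\mathcal O_X(m(K_X+\Delta)))$. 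Finally $\prod_{g\in G}g^*s\in H^0(X_U,\mathcal O_X(m|G|(K_X+\Delta)))$; its restriction to $\lfloor\Delta\rfloor$ equals $\prod_{g\in G}\bigl(g^*s|_{\lfloor\Delta\rfloor}\bigr)=\bigl(s|_{\lfloor\Delta\rfloor}\bigr)^{|G|}$, which is admissible by Remark \ref{c-rem2.27}, so $\prod_{g\in G}g^*s$ is preadmissible; it is $G$-invariant, hence admissible in $\A(X_U,\mathcal O_X(m|G|(K_X+\Delta)))$ by the same extension argument; and the identity $\prod_{g\in G}g^*s|_{\lfloor\Delta\rfloor}=\bigl(s|_{\lfloor\Delta\rfloor}\bigr)^{|G|}$ has just been used.
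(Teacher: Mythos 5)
Your overall skeleton --- finiteness of $G$ from Corollary \ref{b-cor1.3}, induction through the (semi-)dlt adjunction, and the formal manipulations at the end (preadmissibility of $g^*s$, admissibility of $\sum_{g\in G}g^*s$ and $\prod_{g\in G}g^*s$ via extending a component-wise $B$-bimeromorphic map to an element of $\Bim(X/Y,\Delta;W)$, powers handled by Remark \ref{c-rem2.27}, the klt base case by Remark \ref{c-rem2.28}) --- is consistent with the paper, whose proof is exactly ``$G$ is finite by Corollary \ref{b-cor1.3}, then transplant the proof of \cite[Lemma 4.9]{fujino-abundance} using Lemma \ref{c-lem2.29}''. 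However, your argument for the central identity $g^*s|_{\lfloor \Delta\rfloor}=s|_{\lfloor \Delta\rfloor}$ has a genuine gap. The asserted injection
\begin{equation*}
H^0\bigl(S_U,\mathcal O_S(m(K_S+\Delta_S))\bigr)\hookrightarrow\bigoplus_{j}H^0\bigl((T_j)_U,\mathcal O_{T_j}(m(K_{T_j}+\Delta_{T_j}))\bigr),
\end{equation*}
with $T_1,\dots,T_p$ the \emph{minimal} (klt) log canonical centers of $(S,\Delta_S)$, is false in general: restriction of log pluricanonical sections to minimal lc centers is far from injective. Already for an irreducible plt surface pair $(S,C)$ with $K_S+C$ big and $\pi$-semiample (which occurs as $\lfloor\Delta\rfloor$ of a threefold dlt pair in the setting of the lemma), the kernel of restriction to the minimal center $C$ contains the sections of $m(K_S+C)-C$, which is nonzero for large $m$. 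Since your conclusion is literally ``injectivity forces $g\cdot s_S=s_S$'', the proof collapses at precisely the step that matters; knowing that the restrictions to the $T_j$ agree does not determine $s_S$.

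The source of the slip is a misreading of the Claim in the proof of Theorem \ref{a-thm3.2}: there the embedding is into sections on $t$-dimensional lc strata chosen so that the restriction map from $H^0(X_y,\mathcal O_{X_y}(m(K_{X_y}+\Delta_y)))$ is an \emph{isomorphism}, with $t$ minimal among such strata; these strata need not be klt, and the minimal klt centers need not have this property. The argument that Lemma \ref{c-lem2.29} is designed to transplant (Claims $(\text{A}_n)$, $(\text{B}_n)$ of \cite[Lemma 4.9]{fujino-abundance}) never uses a global injectivity statement on $S$: one computes $(g^*s)|_{S_i}$ component by component through coefficient-one lc centers of the common resolution lying over $S_i$, and when the second contraction drops dimension or fails to be bimeromorphic one invokes Lemma \ref{c-lem2.29} (ii), which supplies a restriction \emph{isomorphism} along a smaller lc center of the resolution; iterating, the comparison of $(g^*s)|_{S_i}$ with $s|_{S_i}$ reduces either to the top-level invariance clause of admissibility (when one lands on another component of $\lfloor\Delta\rfloor$) or to invariance of $s$ on deeper strata, which preadmissibility provides recursively. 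So the only injectivity ever needed consists of the specific isomorphisms guaranteed by Lemma \ref{c-lem2.29} (ii), not restriction to minimal centers; relatedly, your claim that $g$ acts on the chosen family through $B$-bimeromorphic maps permuting it would itself need this mechanism to be justified. The second half of your proposal is fine once the key identity is established, but as written the key identity is not proved.
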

\begin{proof}
By Corollary \ref{b-cor1.3}, 
$G$ is a finite group. 
Then, by Lemma \ref{c-lem2.30}, it is not difficult to see 
that the proof of \cite[Lemma 4.9]{fujino-abundance} works 
in our complex analytic setting. 
Hence we omit the details here. 
\end{proof}

The following lemma is required for our inductive gluing argument.

\begin{lem}[{\cite[Lemma 4.7]{fujino-abundance}}]\label{p-lem4.8} 
Let $\pi\colon X\to Y$ be a projective morphism 
of complex analytic spaces such that 
$(X, \Delta)$ is an equidimensional divisorial log terminal 
pair and that $K_X+\Delta$ is 
$\pi$-semiample. Let $W$ be a compact subset of $Y$ and 
let $U$ be a semianalytic Stein open subset 
of $Y$ with $U\subset W$. 
Assume that $\PA(X_U, \mathcal O_X(m(K_X+\Delta)))$ 
generates $\mathcal O_X(m(K_X+\Delta))$ over $U$. 
Then there exists a sufficiently divisible positive integer $m'$ 
such that 
$\A(X_U, \mathcal O_X(m'm(K_X+\Delta)))$ generates 
$\mathcal O_X(m'm(K_X+\Delta))$ 
over $U$. 
\end{lem}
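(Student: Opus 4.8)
The plan is to start from the preadmissible generation hypothesis and produce, by the averaging/product construction of Lemma \ref{p-lem4.7}, enough \emph{admissible} sections to generate a suitable power of $\mathcal O_X(m(K_X+\Delta))$. First I would recall that by Corollary \ref{b-cor1.3} the group $G:=\rho^{WU}_m(\Bim(X/Y,\Delta;W))$ is finite, and similarly for every sufficiently divisible multiple of $m$; fix $N:=|G|$ (passing to multiples of $m$ only enlarges the index set of $\Bir$'s acting, but Corollary \ref{b-cor1.3} keeps the image finite, so the relevant orders stay bounded — I would simply replace $m$ by $mN$ at the end and note $G$ for $mN(K_X+\Delta)$ is still finite). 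The key point is Lemma \ref{p-lem4.7}: for $s\in\PA(X_U,\mathcal O_X(m(K_X+\Delta)))$ the product $\prod_{g\in G}g^*s$ lies in $\A(X_U,\mathcal O_X(m|G|(K_X+\Delta)))$, and on $\lfloor\Delta\rfloor$ it restricts to $(s|_{\lfloor\Delta\rfloor})^{|G|}$.

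The heart of the argument is a base-point analysis in two stages, paralleling \cite[Lemma 4.7]{fujino-abundance}. By hypothesis the preadmissible sections generate $\mathcal O_X(m(K_X+\Delta))$ over $U$; in particular their common zero locus is empty on $\pi^{-1}(U)$. I would first show that the admissible sections of the form $\prod_{g\in G}g^*s$ (with $s$ preadmissible), which live in $\A(X_U,\mathcal O_X(m|G|(K_X+\Delta)))$, have base locus contained in $\pi^{-1}(U)\setminus\lfloor\Delta\rfloor$ away from... more precisely, at a point $x\notin\lfloor\Delta\rfloor$ one picks a preadmissible $s$ with $s(x)\ne 0$ and then $\prod_g g^*s$ is nonzero at $x$ provided each $g^*s$ is nonzero at $x$; since the $g$ are finite in number and one may multiply together products for a finite covering family of sections, standard Noetherian/compactness arguments over the Stein compact $W$ (using that $U$ is semianalytic Stein, hence has finitely many connected components, and that $\pi$ is projective) let us clear the base locus off of $\pi^{-1}(U)\setminus\lfloor\Delta\rfloor$ after passing to a further divisible power. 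So it remains to generate along $\lfloor\Delta\rfloor$.

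Along $\lfloor\Delta\rfloor$, write $K_S+\Delta_S:=(K_X+\Delta)|_S$ with $S:=\lfloor\Delta\rfloor$; by Lemma \ref{c-lem2.8}, $(S,\Delta_S)$ is semi-divisorial log terminal, and by the definition of preadmissibility the restriction $s|_S$ is an \emph{admissible} section on $S$ whenever $s$ is preadmissible on $X$. The inductive hypothesis of the gluing scheme (or rather: the restriction $\PA(X_U,\dots)\to\A(S_U,\dots)$ is surjective, which is part of what ``preadmissible'' encodes) gives that admissible sections generate $\mathcal O_S(m''(K_S+\Delta_S))$ over $U$ for some divisible $m''$ — I would either invoke this as the inductive input or, since $\dim S<\dim X$, cite the lower-dimensional case of the very statement being proved. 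Then for a point $x\in\lfloor\Delta\rfloor$ choose an admissible $\sigma$ on $S_U$ with $\sigma(x)\ne 0$; by surjectivity of the preadmissible restriction map lift $\sigma$ to a preadmissible $s$ on $X_U$ with $s|_S=\sigma$, and apply $\prod_{g\in G}g^*(-)$: the resulting section is admissible on $X$ and restricts on $S$ to $\sigma^{|G|}$, which is nonzero at $x$, hence the section itself is nonzero at $x$. Combining the two stages and taking $m'$ to be a common divisible multiple of $|G|$, $m''/m$, and whatever further power was needed to clear the base locus off $\pi^{-1}(U)\setminus\lfloor\Delta\rfloor$, together with Remark \ref{c-rem2.27} which propagates admissible generation to all further multiples, yields that $\A(X_U,\mathcal O_X(m'm(K_X+\Delta)))$ generates $\mathcal O_X(m'm(K_X+\Delta))$ over $U$.

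The main obstacle I anticipate is the second stage — ensuring the admissible sections produced by the group-average construction genuinely generate the sheaf \emph{scheme-theoretically} along and transversally to $\lfloor\Delta\rfloor$, not merely set-theoretically. One must be careful that lifting an admissible $\sigma$ on $S_U$ to a preadmissible $s$ on $X_U$ is possible (this is exactly the content of the surjectivity built into the inductive gluing setup, ultimately resting on vanishing theorems from \cite{fujino-vanishing} as used in Remark \ref{c-rem2.21}), and that after applying $\prod_{g\in G}g^*$ one still controls the section near every point of the fiber over $W$ simultaneously; the finiteness of $G$ (Corollary \ref{b-cor1.3}) and the finiteness of connected components of $U$ and of $U\cap Z$ for analytic $Z$ (a consequence of $U$ being semianalytic Stein) are what make the requisite uniformity over $W$ possible, so these should be invoked explicitly at the compactness step.
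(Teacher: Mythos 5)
Your proposal has a genuine gap, and in fact the difficulty is located in the opposite place from where you put it. Off $\lfloor\Delta\rfloor$ (your first stage), the problem you yourself flag --- that $\prod_{g\in G}g^*s$ may vanish at a point $x$ with $s(x)\neq 0$ because some other $g^*s$ vanishes there --- is the real obstacle, and your suggested fix does not work: multiplying together products attached to a ``finite covering family of sections'' only \emph{enlarges} zero loci, and no Noetherian/compactness argument over $W$ changes that. The paper's proof resolves exactly this point with a small but essential trick: for $G=\{g_1,\dots,g_N\}$ and $s$ preadmissible one uses \emph{all} the elementary symmetric polynomials $\sigma_i(g_1^*s,\dots,g_N^*s)$, $1\leq i\leq N$, not just the top one. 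Each $\sigma_i$ is admissible (by the same invariance argument as in Lemma \ref{p-lem4.7}), and their common zero locus equals $\bigcap_j\{g_j^*s=0\}\subset\{s=0\}$ since $\id\in G$; raising $\sigma_i$ to the power $N!/i$ puts them all in $\A(X_U,\mathcal O_X(N!m(K_X+\Delta)))$, so PA-generation immediately gives A-generation with $m'=N!$. (Alternatively one could argue at each point by choosing $s$ generically in the vector space $\PA$, avoiding the finitely many proper subspaces $\{(g^*s)(x)=0\}$, but some such device is needed; your text supplies none.)

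Your second stage is both unjustified and unnecessary. Lifting an admissible section of $\mathcal O_S(\cdot)$ on $S_U$ to a preadmissible section on $X_U$ is \emph{not} ``built into'' the definition of preadmissibility and is not a hypothesis of Lemma \ref{p-lem4.8}; that surjectivity is precisely the content of Proposition \ref{p-prop4.9}, which is proved afterwards by entirely different (MMP-based) means, so invoking it here would distort the logical structure. It is also not needed: by Lemma \ref{p-lem4.7}, $g^*s|_{\lfloor\Delta\rfloor}=s|_{\lfloor\Delta\rfloor}$, hence $\bigl(\prod_{g\in G}g^*s\bigr)\big|_{\lfloor\Delta\rfloor}=(s|_{\lfloor\Delta\rfloor})^{|G|}$, so along $\lfloor\Delta\rfloor$ the product of a preadmissible $s$ with $s(x)\neq 0$ already does the job directly from the hypothesis --- the locus $\lfloor\Delta\rfloor$ is the easy part, not the ``main obstacle.'' So the proposal as written does not prove the lemma: the genuine difficulty (points off $\lfloor\Delta\rfloor$) is left unresolved, and the step you treat as the crux imports an external result the lemma neither assumes nor requires.
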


\begin{proof} 
We put $G:=\rho^{WU}_m\left(\Bim(X/Y, \Delta; W)\right)$. 
Then $G$ is a finite group by Corollary \ref{b-cor1.3}. 
We put $G:=\{g_1, \cdots, g_N\}$ with 
$N:=|G|$. 
Let $\sigma_i$ be the $i$th 
elementary symmetric polynomial for $1\leq i\leq N$. 
Then we have 
\begin{equation}
\{s=0\}\supset \bigcap _{j=1}^N \{g^*_j s=0\} =\bigcap _{i=1}^N 
\{\sigma_i(g^*_1s, \cdots, g^*_Ns)=0\}. 
\end{equation} 
By Lemma \ref{p-lem4.7}, we see that 
\[
\sigma_i(g^*_1s, \cdots, g^*_Ns)\in A(X_U, \mathcal O_X(im(K_X+\Delta)))
\] 
for every $i$. 
Therefore, by considering 
\begin{equation}
\sigma^{N!/i}_i(g^*_1s, \cdots, g^*_Ns)\in 
\A(X_U, \mathcal O_X(N!m(K_X+\Delta)))
\end{equation} 
for $s\in \PA(X_U, \mathcal O_X(m(K_X+\Delta)))$, 
we can check that $\A(X_U, \mathcal O_X(N!m(K_X+\Delta)))$ 
generates $\mathcal O_X(N!m(K_X+\Delta))$ over $U$ under the 
assumption that 
$\PA(X_U, \mathcal O_X(m(K_X+\Delta)))$ generates 
$\mathcal O_X(m(K_X+\Delta))$ over $U$. 
Thus we obtain the desired statement of Lemma \ref{p-lem4.8}. 
We finish the proof. 
\end{proof}

Proposition \ref{p-prop4.9}, which is 
essentially the same as \cite[Proposition 4.5]{fujino-abundance}, 
is a key step of our 
gluing argument. 

\begin{prop}[{\cite[Proposition 4.5]{fujino-abundance}}]\label{p-prop4.9}
Let $\pi\colon X\to Y$ be a projective 
morphism of complex analytic spaces such that $(X, \Delta)$ is divisorial 
log terminal. 
Let $U$ be a semianalytic Stein open subset of $Y$ 
and let $W$ be a Stein compact subset of $Y$ with 
$U\subset W$ such that $X$ is $\mathbb Q$-factorial over $W$ 
and that $\Gamma (W, \mathcal O_Y)$ is noetherian. 
We put $S:=\lfloor \Delta\rfloor$, $X_U:=\pi^{-1}(U)$, and 
$S_U:=S|_{\pi^{-1}(U)}$. 
Assume that 
\begin{itemize}
\item[{\em{(1)}}] 
$K_X+\Delta$ is $\pi$-semiample, and 
\item[{\em{(2)}}] 
$\A\left (S_U, 
\mathcal O_S(m_0(K_X+\Delta))\right)$ generates 
$\mathcal O_S(m_0(K_X+\Delta))$ over $U$ for some positive 
integer $m_0$. 
\end{itemize}
If necessary, we replace $U$ with a smaller 
semianalytic Stein open subset of $Y$. 
Then there exists a positive integer $m_1$ such that 
$m_1m_0\in 2\mathbb Z$, the natural restriction map 
\begin{equation}
\PA \left(X_U, \mathcal O_X(m_1m_0(K_X+\Delta))\right)\to 
\A\left (S_U, 
\mathcal O_S(m_1m_0(K_X+\Delta))\right)
\end{equation} 
is surjective, and 
$\PA \left(X_U, \mathcal O_X
(m_1m_0(K_X+\Delta))\right)$ generates $\mathcal O_X(m_1m_0(K_X+\Delta))$ 
over $U$. 
\end{prop}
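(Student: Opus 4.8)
The plan is to run the whole argument over the contraction $f\colon X\to Z$ associated with the semiample divisor $K_X+\Delta$, producing preadmissible sections by pulling back sections from $Z$, and to control the restriction to $S:=\lfloor\Delta\rfloor$ using the structure of $\lfloor\Delta\rfloor$ over $f$ recorded in Proposition~\ref{p-prop4.6}. After replacing $m_0$ by a sufficiently divisible multiple — harmless for hypothesis (2) by Remark~\ref{c-rem2.27} — and shrinking $Y$ around $W$ (using that $\Gamma(W,\mathcal O_Y)$ is noetherian to retain the usual finiteness statements), $m_0(K_X+\Delta)$ is $\pi$-generated over a neighbourhood of $W$; let $f\colon X\to Z$ over $Y$ be the resulting contraction, with $f_*\mathcal O_X\simeq\mathcal O_Z$, structure morphism $\pi_Z\colon Z\to Y$, and $K_X+\Delta\sim_{\mathbb Q,f}0$. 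Then $\mathcal O_X(\ell m_0(K_X+\Delta))\simeq f^*\mathcal O_Z(\ell)$ for a $\pi_Z$-ample $\mathcal O_Z(1)$, so $H^0(X_U,\mathcal O_X(\ell m_0(K_X+\Delta)))=H^0(Z_U,\mathcal O_Z(\ell))$ with $Z_U:=\pi_Z^{-1}(U)$, and every such section is constant along the fibres of $f$ — the one structural fact the whole proof rests on.

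For the surjectivity of $\PA(X_U,\cdot)\to\A(S_U,\cdot)$: since a section of $\mathcal O_X$ over $X_U$ whose restriction to $S_U$ is admissible is preadmissible by Definition~\ref{c-def2.25}, it is enough to show that every admissible section of $\mathcal O_S(\ell m_0(K_X+\Delta))$ over $S_U$ lies in the image of $H^0(Z_U,\mathcal O_Z(\ell))$. Writing $f|_S\colon S\to f(S)\subset Z$, one has $H^0(S_U,\mathcal O_S(\ell m_0(K_X+\Delta)))=H^0(f(S)_U,\mathcal O_{f(S)}(\ell)\otimes(f|_S)_*\mathcal O_S)$, and the image of $H^0(Z_U,\mathcal O_Z(\ell))$ is, for $\ell$ large (relative Serre vanishing, after possibly shrinking $U$ to a smaller semianalytic Stein set), exactly the subspace of sections that are constant along the fibres of $f|_S$. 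So it suffices that admissible sections be constant along those fibres; when the fibres of $f|_S$ are connected this is automatic, and when they are not, Proposition~\ref{p-prop4.6} supplies a $B$-bimeromorphic involution or swap identifying their two components over $Z$, under which admissible sections are invariant by definition — whence they descend. This is also the step for which the parity $m_1m_0\in2\mathbb Z$, which one may impose freely by Remark~\ref{c-rem2.27}, is reserved.

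For the generation of $\mathcal O_X(m_1m_0(K_X+\Delta))$ by preadmissible sections: combining the surjectivity just established with hypothesis (2) and Remark~\ref{c-rem2.27}, preadmissible sections generate $\mathcal O_X(\ell m_0(K_X+\Delta))$ at every point of $S_U$, and hence — being constant along the fibres of $f$ — at every point of $f^{-1}(z)$ for $z\in f(S_U)$. For $z\in Z_U\setminus f(S_U)$ one has $f^{-1}(z)\cap S=\emptyset$, so the stalk of $f_*\mathcal I_S$ at $z$ is $\mathcal O_{Z,z}$; for $\ell$ large (fixed, using the same shrinking of $U$) the sheaf $\mathcal O_Z(\ell)\otimes f_*\mathcal I_S$ is generated over $Z_U$ at $z$, and the pullback of such a section is a section of $\mathcal O_X(\ell m_0(K_X+\Delta))$ vanishing along $S$ — hence preadmissible, since the zero section is admissible — and nonzero along $f^{-1}(z)$. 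Taking $m_1$ to be such a large value (adjusted to make $m_1m_0$ even) completes the construction.

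The main obstacle is the non-connected case in the second paragraph: one genuinely needs that the monodromy of $f|_S$ along a disconnected fibre is realised by honest $B$-bimeromorphic self-maps of the relevant semi-divisorial log terminal pair, so that admissibility forces the required invariance — this is precisely the content of Proposition~\ref{p-prop4.6}, which in turn rests on the minimal model program for projective morphisms of complex analytic spaces \cite{fujino-bchm,enokizono-hashizume-mmp}. The other ingredients — relative Serre-type vanishing and generation over a neighbourhood of the Stein compact set $W$, and the elementary bookkeeping with the fibres of $f$ — are routine once this is in place. I expect no essential use of Corollary~\ref{b-cor1.3} or Lemma~\ref{p-lem4.7} here; those enter only when this proposition is fed into the inductive proof of Theorem~\ref{b-thm1.1}.
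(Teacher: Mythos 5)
Your skeleton is the same as the paper's (pass to the contraction $f\colon X\to Z$ given by semiampleness, extend admissible sections as pullbacks, invoke Proposition \ref{p-prop4.6} and the parity of the exponent in the disconnected case), and your treatment of the connected--fibre and generation steps essentially reproduces the paper's Cases (2) and (3). The genuine gap is at the decisive point: your claim that an admissible section is constant along a \emph{disconnected} fibre of $f|_S$ ``because Proposition \ref{p-prop4.6} supplies an involution or swap identifying the two components over $Z$, under which admissible sections are invariant.'' The involution/swap of Proposition \ref{p-prop4.6} lives on the horizontal irreducible components $D_1$ (and $D_2$) of $\lfloor\Delta\rfloor$ with respect to the $\mathbb P^1$-fibration $q$ over $Z$; it does not identify the two connected components of a fibre $S\cap f^{-1}(z)$, which in general also contain $q$-vertical components of $\lfloor\Delta\rfloor$. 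Invariance forces the two constants to agree only if (a) each connected component of that fibre actually meets the horizontal part, and (b) one compares the $B$-pullback identification of $m(K_X+\Delta)$ with the trivialization coming from $f^*\mathcal L$, which is exactly where the $(-1)^m$ of \cite[12.3.4 Theorem]{flips-and-abundance}, and hence the evenness of $m_1m_0$, enters -- you only gesture at this. Neither (a) nor (b) is available on $X$ itself, where you argue: the paper first transfers the section to the model $X'$ via Remark \ref{c-rem2.21}, works on the Fano contraction $\varphi\colon X'\to V$, where the vertical part of $\lfloor\Delta'\rfloor$ consists of pullbacks $E_i=\varphi^*P_i$ meeting the $\varphi$-ample horizontal part, descends the horizontal restriction to a section $t$ of $\mathcal L_V$ using admissibility, and then \emph{proves} the equality $s'|_{\lfloor\Delta'\rfloor}=(\varphi^*t)|_{\lfloor\Delta'\rfloor}$ on each $E_i$ by an injectivity argument (restriction to a horizontal component $F_i$ of $E_i\cap(\Delta')^h$, combined with the isomorphism $\pi_*\mathcal O_{E_i}(m(K_{X'}+\Delta'))\simeq\pi_{V*}(\mathcal L_V|_{Q_i})$ supplied by Lemma \ref{p-lem4.2}). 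Your ``whence they descend'' replaces this entire verification; note that even the paper never proves descent to $Z$ directly -- the candidate extension is a pullback from $V$, and agreement on the vertical part is precisely the content you skip.

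A secondary underestimate: even in the connected case, ``constant along the fibres of $f|_S$ implies descends to $f(S)$'' is not routine when $f(S)\subsetneq Z$, since $f(S)$ need not be normal; the paper obtains $f_*\mathcal O_S\simeq\mathcal O_{f(S)}$ from Corollary \ref{p-cor4.3}, i.e., from Lemma \ref{p-lem4.2}, which rests on the strict support condition of \cite{fujino-vanishing}. So Lemma \ref{p-lem4.2} is an essential ingredient in both the connected and the disconnected case, not bookkeeping. Your expectation that Corollary \ref{b-cor1.3} and Lemma \ref{p-lem4.7} are not needed in this proposition is correct.
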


Since we are working with complex analytic spaces, 
there are additional technical difficulties. 
For example, we need to take a Stein compact subset $W$ and a semianalytic 
Stein open subset $U \subset W$, and shrink $U$ further if necessary. 
Nevertheless, the following proof is essentially 
the same as that of \cite[Proposition 4.5]{fujino-abundance}. 
We describe it here for the reader's convenience.

\begin{proof}[Proof of Proposition \ref{p-prop4.9}]
It suffices to prove this proposition for each connected component of $X$. 
Hence we may assume that $X$ is irreducible. 
Throughout this proof, we may freely shrink $Y$ around $W$ without 
explicit mention. 
We first take a relative Iitaka fibration $f\colon X\to Z$ over $Y$, 
that is, $f\colon X\to Z$ is a projective surjective morphism 
of normal complex analytic varieties such that 
$f_*\mathcal O_X\simeq \mathcal O_Z$ and 
$\mathcal O_X(m(K_X+\Delta))\simeq f^*\mathcal L$ holds for 
some positive integer $m$ and a $\pi_Z$-ample 
line bundle $\mathcal L$ on $Z$, 
where $\pi_Z\colon Z\to Y$ is the structure morphism:
\[
\xymatrix{
X \ar[dr]_-\pi\ar[rr]^-f&& Z\ar[dl]^-{\pi_Z}\\ 
&Y&
}
\]
If $S=\lfloor \Delta\rfloor=0$, then there is nothing to prove. 
Therefore, we may assume that $S=\lfloor \Delta\rfloor\neq 0$. 
Then we have the following four cases: 
\begin{itemize}
\item[(1)] $Z$ is a point and $S$ is connected,
\item[(2)] $\dim Z\geq 1$, $S\cap f^{-1}(z)$ is 
connected for every $z\in Z$, and $f(S)=Z$,
\item[(3)] $\dim Z\geq 1$, $S\cap f^{-1}(z)$ is 
connected for every $z\in Z$, and $f(S)\subsetneq Z$, and
\item[(4)] $S \cap f^{-1}(z)$ is not 
connected for some $z\in Z$. 
\end{itemize}

\setcounter{step}{0}
\begin{step}\label{p-prop4.9-step1}
In this step, we will treat (1). 

When $Z$ is a point, $X$ is projective and $K_X+\Delta\sim _{\mathbb Q}0$. 
We consider the following long exact sequence: 
\begin{equation}
\begin{split}
0&\to H^0(X, \mathcal O_X(m_0(K_X+\Delta)-S))\to 
H^0(X, \mathcal O_X(m_0(K_X+\Delta)))\\ &\to 
H^0(S, \mathcal O_S(m_0(K_X+\Delta)))
\to \cdots. 
\end{split}
\end{equation}
Since $K_X+\Delta\sim _{\mathbb Q}0$ and $S \ne 0$, 
we obtain that $H^0(X, \mathcal O_X(m_0(K_X+\Delta)-S))=0$ and 
that the second and the third terms are one-dimensional. 
Hence we obtain the desired statement. 
\end{step}
\begin{step}\label{p-prop4.9-step2}
In this step, we will treat (2). 

By taking a divisible positive integer $m$ such that 
$\A(S_U, \mathcal O_S(m(K_X+\Delta)))$ generates $\mathcal O_S(m(K_X+\Delta))$ 
over $U$ and that $\mathcal O_X(m(K_X+\Delta))\simeq f^*\mathcal L$ holds 
for some $\pi_Z$-ample line bundle $\mathcal L$ on $Z$. 
If necessary, we replace $U$ with a smaller relatively 
compact semianalytic Stein open subset of $Y$. 
By $\A\left (S_U, 
\mathcal O_S
(m(K_X+\Delta))\right)$, we can construct a morphism $\Phi\colon 
S \to Z'$ over $U$. 
Since every curve in any fiber of $f|_S$ over 
$U$ is mapped to a point by $\Phi$, 
there exists a morphism $\Psi\colon Z\to Z'$ over $U$ such that 
$\Psi\circ (f|_S)=\Phi$. 
Over $U$, there exists the following commutative diagram. 
\begin{equation}
\xymatrix{
X \ar[d]_-f& S \ar@{_{(}->}[l]\ar[d]^-\Phi\\ 
Z \ar[r]_-\Psi & Z' 
}
\end{equation}
We note that 
\begin{equation}
\xymatrix{
\Phi\colon S \ar[r]^-{f|_S}& Z
\ar[r]^-\Psi&Z'
}
\end{equation} 
and that $f|_S$ is surjective with connected fibers. 
For any 
\begin{equation}
s\in \A\left (S_U, 
\mathcal O_S
(m(K_X+\Delta))\right), 
\end{equation} 
we can take $t$ such that 
$s=\Phi^*t$. 
We put $u:=f^*\Psi^*t$. 
Then 
\begin{equation}
u\in \PA \left(X_U, \mathcal O_X
(m(K_X+\Delta))\right)
\end{equation} such that $u|_S=s$. 
By construction, 
$\PA \left(X_U, \mathcal O_X
(m(K_X+\Delta))\right)$ generates $\mathcal O_X(m(K_X+\Delta))$ 
over $U$. 
\end{step}

\begin{step}\label{p-prop4.9-step3}
In this step, we will treat (3). 

This step is a relative version of \cite[Lemma 4.3]{fujino-abundance}. 
We take a divisible positive integer $m$ such that 
$\mathcal O_X(m(K_X+\Delta))\simeq f^*\mathcal L$ for some 
$\pi_Z$-ample line bundle $\mathcal L$ on $Z$. 
\begin{equation} 
\xymatrix{
X\ar[dr]_-{\pi} \ar[rr]^-f&& Z\ar[dl]^-{\pi_Z} \\
&Y&
}
\end{equation}
We put $T:=f(S)\subsetneq Z$. 
Then $f_*\mathcal O_S\simeq \mathcal O_T$ by Corollary 
\ref{p-cor4.3}. Therefore, we have the following commutative 
diagram: 
\begin{equation}\label{p-eq4.1}
\xymatrix{
\pi_*\mathcal O_X(lm(K_X+\Delta))\ar[r] & \pi_*\mathcal O_S(lm(K_S+\Delta_S))\\ 
\pi_{Z*}\mathcal L^{\otimes l}\ar[r]\ar[u]^-{\simeq}& 
\pi_{Z*} \left(\mathcal L^{\otimes l}|_T\right)\ar[u]^-{\simeq}. 
}
\end{equation}
Note that the vertical arrows are isomorphisms. 
If we replace $U$ with a relatively compact semianalytic 
Stein open subset and make $l$ sufficiently large, 
then $\mathcal L^{\otimes l}\otimes \mathcal I_T$ is 
$\pi_Z$-generated over $U$, 
where $\mathcal I_T$ is the defining ideal sheaf 
of $T$ on $Z$, 
and $R^1\pi_{Z*}\left(\mathcal L^{\otimes l}\otimes \mathcal I_T\right)=0$ 
since $\mathcal L$ is $\pi_Z$-ample. 
Thus, by \eqref{p-eq4.1}, we have the following short exact sequence: 
\begin{equation}\label{p-eq4.2}
\begin{split}
0&\to \pi_*\mathcal O_X(lm(K_X+\Delta)-S)\to 
\pi_*\mathcal O_X(lm(K_X+\Delta))
\\ &\to 
\pi_*\mathcal O_S(lm(K_S+\Delta_S))\to 0. 
\end{split}
\end{equation}
By definition, it is obvious that 
every element of $H^0(X_U, \mathcal O_X(lm(K_X+\Delta)-S))$ 
is contained in $\PA(X_U, \mathcal O_X(lm(K_X+\Delta)))$. 
By \eqref{p-eq4.2}, we can extend 
\begin{equation}
\A\left (S_U, 
\mathcal O_S
(lm(K_S+\Delta_S))\right)
\end{equation} 
to 
\begin{equation}
\PA \left(X_U, \mathcal O_X(lm(K_X+\Delta))\right)
\end{equation}  
and check that 
$\PA \left(X_U, \mathcal O_X
(lm(K_X+\Delta))\right)$ generates $\mathcal O_X(lm(K_X+\Delta))$ 
over $U$. 
\end{step}

\begin{step}\label{p-prop4.9-step4} In this step, we will treat (4). 

Since $S\cap f^{-1}(z)$ is not connected for some $z\in Z$ by assumption, 
we can apply Proposition \ref{p-prop4.6}. 
More precisely, by Step \ref{p-prop4.6-step2} in the proof of Proposition \ref{p-prop4.6}, 
we run a $(K_X+\Delta-\varepsilon\lfloor\Delta\rfloor)$-minimal model program 
with ample scaling over $Z$ around $W_Z:=\pi_Z^{-1}(W)$ 
(see \cite[Theorem 1.2 and Lemma 9.4]{fujino-bchm}), and 
eventually obtain $(X', \Delta')$ together with 
a $(K_{X'}+\Delta'-\varepsilon \lfloor\Delta'\rfloor)$-negative extremal Fano contraction $\varphi\colon X'\to V$ 
as in Lemma \ref{p-lem4.5}. 
Then either (II) or (III) of Lemma \ref{p-lem4.5} holds. 
Henceforth, we freely use the notation introduced in Lemma \ref{p-lem4.5} and 
its proof. 
Note that $p\colon (X, \Delta)\dashrightarrow (X', \Delta')$ is $B$-bimeromorphic 
over $Y$. The situation is summarized in the following commutative diagram:
\begin{equation}
\xymatrix{
D_i\ar@{^{(}->}[d]\ar@{-->}[rr]& &D'_i\ar@{^{(}->}[d]&D'^\nu_i\ar[l]_-{\nu_i}
\ar[d]^-{\rho_i}\\
X \ar@{-->}[rrd]_-q
\ar[ddr]_-f\ar[dd]_\pi\ar@{-->}[rr]^-p& &X'\ar[d]_-\varphi&D^{\dag\nu}_i\ar[dl]
\\
&& V\ar[dl]&\\ 
Y & Z\ar[l]^-{\pi_Z} & &
}
\end{equation}
Let $m$ be a sufficiently large and divisible positive integer 
such that $\mathcal O_X(m(K_X+\Delta))\simeq f^*\mathcal L$ for 
some line bundle $\mathcal L$ on $Z$. 
We take any element $s\in \A(S_U, \mathcal O_S(m(K_S+\Delta_S)))$. 
By Remark \ref{c-rem2.21}, there exist natural isomorphisms 
\begin{equation}
H^0\left(X_U, \mathcal O_X(m(K_X+\Delta))\right)\simeq 
H^0\left(X'_U, \mathcal O_{X'}(m(K_{X'}+\Delta'))\right)
\end{equation} 
and 
\[
H^0\left(S_U, \mathcal O_S(m(K_S+\Delta_S))\right)\simeq 
H^0\left(S'_U, \mathcal O_{S'}(m(K_{S'}+\Delta_{S'}))\right)
\]
induced by $p$, where $\pi'\colon X'\to Y$, $X'_U:=\pi'^{-1}(U)$, 
$S':=\lfloor \Delta'\rfloor$, and $S'_U:=S'\cap X'_U$. 
Hence, $s$ induces 
\begin{equation}
s'\in H^0\left(S'_U, \mathcal O_{S'}(m(K_{S'}+\Delta_{S'}))\right). 
\end{equation} 
The section $s'$ induces a section 
\begin{equation}
s''_i\in H^0\left(D'^\nu_i, \mathcal O_{D'^\nu_i}(m(K_{D'^\nu_i}+\Delta_{D'^\nu_i}))\right)
\end{equation} 
over $U$ for $i=1, 2$. In Case (III), 
$s''_1$ is $\iota$-invariant. 
Hence, $s''_1$ descends to a section $t$ of 
$\mathcal L_V$ over $U$, where $\mathcal L_V$ is the 
pull-back of $\mathcal L$ to $V$. 
In Case (II), $s''_1$ also naturally descends to a section $t$ of 
$\mathcal L_V$ over $U$. 
In Case (III), the pull-back of $\varphi^*t$ 
to $D'^\nu_1$ coincides with $s''_1$ by construction. 
In Case (II), on a small Euclidean open subset $\widetilde U$ of $U$ such that 
$\varphi^{-1}(\widetilde U)\simeq \mathbb P^1\times \widetilde U$ and 
$\varphi|_{\varphi^{-1}(\widetilde U)}\colon \mathbb P^1\times \widetilde U \to \widetilde U$ is the second projection, 
we see that the pull-back of $\varphi^*t$ to $D'^\nu_2$ coincides with 
$(-1)^ms''_2$ over $\widetilde U$ by the local calculation 
in the proof of \cite[12.3.4 Theorem]{flips-and-abundance}. 
Therefore, the pull-back of $\varphi^*t$ to $D'^\nu_2$ coincides with $(-1)^ms''_2$. 
By construction, it is clear that the pull-back of 
$\varphi^*t$ to $D'^\nu_1$ coincides with $s''_1$. 
Hence, we have $s'|_{(\Delta')^h}=(\varphi^*t)|_{(\Delta')^h}$ 
if $m$ is even. 

Next, we show that 
$(\varphi^*t)|_{\lfloor \Delta'\rfloor}=s'|_{\lfloor \Delta'\rfloor}$ 
as in Case 4 in the proof of \cite[Proposition 4.5]{fujino-abundance}. 
Let $(\Delta')^v$ be the vertical part of $\lfloor \Delta'\rfloor$. 
We can write $(\Delta')^v=\sum_i \varphi^*P_i$ such that $Q_i:=\Supp P_i$ 
is a prime divisor on $V$ for every $i$ and $Q_i\neq Q_j$ for 
$i\neq j$. We put $E_i:=\varphi^*P_i$. 
Then it suffices to check that $s'|_{E_i}=(\varphi^*t)|_{E_i}$ 
for every $i$. 
Let $F_i$ be an irreducible component of $E_i\cap (\Delta')^h$ such that 
$\varphi\colon F_i\to Q_i$ is dominant. 
Since $(\Delta')^h\cap (\Delta')^v\neq \emptyset$, 
such an $F_i$ always exists. 
We consider the following commutative diagram: 
\begin{equation}
\xymatrix{
\pi_*\mathcal O_{E_i}(m(K_{X'}+\Delta')) \ar[r]& 
\pi_*\mathcal O_{F_i}(m(K_{X'}+\Delta')) \\ 
\pi_{V*}\left(\mathcal L_V|_{Q_i}\right) \ar[u]^-\simeq 
\ar@{=}[r]& 
\pi_{V*}\left(\mathcal L_V|_{Q_i}\right)\ar@{^{(}->}[u]_-{j}, 
}
\end{equation} 
where $\pi_V\colon V\to Y$ is the structure morphism. 
The left vertical arrow is an isomorphism by Lemma \ref{p-lem4.2}. 
The map $j$ is injective since $\varphi\colon F_i\to Q_i$ is dominant. 
Since $s'|_{F_i}=(\varphi^*t)|_{F_i}$, 
we have $s'|_{E_i}=(\varphi^*t)|_{E_i}$ for every $i$. 
Thus, we obtain $s'|_{\lfloor \Delta'\rfloor}=(\varphi^*t)|_{\lfloor \Delta'\rfloor}$. 
This means that $s$ can be lifted to a member of 
$\PA\left(X_U, \mathcal O_X(m(K_X+\Delta))\right)$. 
By construction, it is not difficult to see 
that $\PA\left(X_U, \mathcal O_X(m(K_X+\Delta))\right)$ generates $\mathcal O_X(m(K_X+\Delta))$ 
over $U$.
\end{step}
We finish the proof. 
\end{proof}

As a consequence of Lemma \ref{p-lem4.8} and Proposition \ref{p-prop4.9}, 
we obtain the following key lemma, which 
plays a crucial role in the proof of Theorem \ref{b-thm1.1}.

\begin{lem}[Abundance for semi-divisorial log terminal pairs 
in the complex analytic setting]\label{p-lem4.10}
Let $(X, \Delta)$ be a semi-divisorial log terminal pair and 
let $\pi\colon X\to Y$ be a projective morphism of complex analytic 
spaces. 
Let $U$ be an open subset of $Y$ and 
let $W$ be a Stein compact subset of $Y$ such that 
$\Gamma (W, \mathcal O_Y)$ is noetherian with $U\subset W$. 
Assume that $K_X+\Delta$ is $\pi$-semiample. 
Let $P$ be an arbitrary point of $U$. 
Then there exists a semianalytic Stein open neighborhood 
$U_P$ of $P$ and a positive integer $m$ such that 
admissible sections generate $\mathcal O_X(m(K_X+\Delta))$ over 
$U_P$. 
\end{lem}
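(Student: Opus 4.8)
The plan is to prove Lemma \ref{p-lem4.10} by induction on $\dim X$, reducing from the semi-divisorial log terminal case to the divisorial log terminal case, and then invoking Proposition \ref{p-prop4.9} together with Lemma \ref{p-lem4.8}. First I would pass to the normalization $\nu\colon X^\nu=\bigsqcup_i X_i\to X$ with $\nu^*(K_X+\Delta)=K_{X^\nu}+\Theta=\bigsqcup_i(K_{X_i}+\Theta_i)$, so that each $(X_i,\Theta_i)$ is a divisorial log terminal pair and $K_{X^\nu}+\Theta$ is $(\pi\circ\nu)$-semiample. Let $S^\nu$ be the disjoint union of the $\lfloor\Theta_i\rfloor$, and let $K_{S^\nu}+\Delta_{S^\nu}:=(K_{X^\nu}+\Theta)|_{S^\nu}$; by adjunction and Lemma \ref{c-lem2.8} this is again a semi-divisorial log terminal pair, of strictly smaller dimension. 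The inductive hypothesis, applied over a Stein compact neighborhood of $P$ in $\nu(S^\nu)$ (noetherian by Remark \ref{b-rem1.6} since we may shrink to a semianalytic Stein neighborhood), yields a semianalytic Stein open neighborhood $U_P$ of $P$ and a positive integer $m_0$ such that admissible sections generate $\mathcal O_{S^\nu}(m_0(K_{S^\nu}+\Delta_{S^\nu}))$ over $U_P$.

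Next I would apply the $\mathbb Q$-factorialization (dlt blow-up) of \cite[Theorems 1.21 and 1.27]{fujino-bchm} to each $(X_i,\Theta_i)$ over a Stein compact neighborhood $W'\supset U_P$ with $\Gamma(W',\mathcal O_Y)$ noetherian, replacing $(X_i,\Theta_i)$ by a $\mathbb Q$-factorial (over $W'$) divisorial log terminal pair without changing $K_{X_i}+\Theta_i$ up to pullback; one checks, using Lemma \ref{c-lem2.29}, that this does not affect the spaces of (pre)admissible sections or the property that admissible sections of $\mathcal O_{S^\nu}(m_0(\cdot))$ are generating. With the $\mathbb Q$-factorial dlt hypothesis now in force, Proposition \ref{p-prop4.9} applies to each $X_i$: after possibly shrinking $U_P$ to a smaller semianalytic Stein open subset, there is a positive integer $m_1$ with $m_1m_0\in 2\mathbb Z$ such that $\PA(X_{i,U_P},\mathcal O_{X_i}(m_1m_0(K_{X_i}+\Theta_i)))$ surjects onto $\A(S^\nu_{U_P},\mathcal O_{S^\nu}(m_1m_0(\cdot)))$ and generates $\mathcal O_{X_i}(m_1m_0(K_{X_i}+\Theta_i))$ over $U_P$. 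Taking a common $m_1$ for the finitely many components $X_i$ over $U_P$, we get that $\PA$ generates $\mathcal O_{X^\nu}(m_1m_0(K_{X^\nu}+\Theta))$ over $U_P$.

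Then Lemma \ref{p-lem4.8} upgrades preadmissible generation to admissible generation on $X^\nu$: there is a sufficiently divisible positive integer $m'$ such that $\A(X^\nu_{U_P},\mathcal O_{X^\nu}(m'm_1m_0(K_{X^\nu}+\Theta)))$ generates $\mathcal O_{X^\nu}(m'm_1m_0(K_{X^\nu}+\Theta))$ over $U_P$. Finally I would descend from $X^\nu$ to $X$. The point is that an admissible section $s$ on $X^\nu$ is, by definition, compatible with all $B$-bimeromorphic maps $g\colon (X_i,\Theta_i)\dashrightarrow(X_j,\Theta_j)$ over neighborhoods of $W$, and in particular with the gluing data on the conductor identifying the components of $X^\nu$ along $X$; hence such an $s$ glues to a section of $\mathcal O_X(m''(K_X+\Delta))$ with $m''=m'm_1m_0$. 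Conversely every section of $\mathcal O_X(m''(K_X+\Delta))$ pulls back to an admissible-type section on $X^\nu$, and the generation on $X^\nu$ descends to generation of $\mathcal O_X(m''(K_X+\Delta))$ over $U_P$. We take $m:=m''$ and this completes the proof.

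The main obstacle I anticipate is the descent step from $X^\nu$ to $X$: one must verify carefully that the admissibility condition (compatibility with $B$-bimeromorphic self-maps of the components, including those permuting components) is precisely what is needed to glue sections across the conductor of the normalization $\nu\colon X^\nu\to X$, and that this gluing is compatible with the generation statement. This is where the analytic setting could introduce subtleties — one needs the finiteness of the relevant representation groups (Corollary \ref{b-cor1.3}) to know there are only finitely many compatibility conditions to impose, and one needs that, after shrinking to a semianalytic Stein $U_P$, all the relevant $B$-bimeromorphic maps are defined over a fixed neighborhood. A secondary technical point is keeping track of the noetherian hypotheses on the nested Stein compact sets through the repeated shrinkings; this is handled by Remark \ref{b-rem1.6}, which guarantees that semianalytic Stein compact subsets always have noetherian sections.
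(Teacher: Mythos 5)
Your proposal is correct and follows essentially the same route as the paper: normalize (descending admissible sections to $X$ via the codimension-one normal crossing and $S_2$ properties), pass to a $\mathbb Q$-factorial dlt model via \cite[Theorems 1.21 and 1.27]{fujino-bchm}, and reduce inductively to the boundary using Proposition \ref{p-prop4.9} together with Lemma \ref{p-lem4.8}, the only differences being that the paper performs the normalization reduction at the outset rather than at the end and phrases the induction as ``repeating the process finitely many times'' until the kawamata log terminal case, where Remark \ref{c-rem2.28} and Lemma \ref{p-lem4.8} conclude. The subtleties you flag (gluing across the conductor, shrinking Stein compacts) are exactly the points the paper handles in one line by the same reasoning, so no gap.
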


\begin{proof}
Let $\nu\colon X^\nu \to X$ be the normalization. 
We write $K_{X^\nu}+\Theta=\nu^*(K_X+\Delta)$ as in 
Definition \ref{c-def2.25}. By Lemma \ref{c-lem2.29}, any 
admissible section on $X^\nu$ over $U_P$ descends to an 
admissible section on $X$ over $U_P$. 
Hence, by replacing $X$ with its normalization 
$X^\nu$, we may assume that $X$ is normal.

By \cite[Theorems 1.21 and 1.27]{fujino-bchm}, we can take a dlt blow-up $\varphi\colon X'\to X$ with $K_{X'}+\Delta'=\varphi^*(K_X+\Delta)$. 
We may assume that $\varphi$ is small, maps every log canonical center of $(X', \Delta')$ bimeromorphically onto a log canonical center of $(X, \Delta)$, and induces an isomorphism over general points of each log canonical center of $(X, \Delta)$. 
By replacing $(X, \Delta)$ with $(X', \Delta')$, we may further assume that $X$ is $\mathbb Q$-factorial over $W$.

By Lemma \ref{p-lem4.8} and Proposition \ref{p-prop4.9}, it suffices to prove this lemma for $(S, \Delta_S)$, where $S:=\lfloor \Delta\rfloor$ and $K_S+\Delta_S:=(K_X+\Delta)|_S$. 
By repeating this reduction process finitely many times, we can reduce the problem to the case where $(X, \Delta)$ is kawamata log terminal. 
In this case, any section is preadmissible (see Remark \ref{c-rem2.28}). 
Thus, by Lemma \ref{p-lem4.8}, we obtain the desired result.
\end{proof}

Let us prove Theorem \ref{b-thm1.1}, which is 
one of the main results of the present paper. 

\begin{proof}[Proof of Theorem \ref{b-thm1.1}]
We take an arbitrary point $P\in W$. 
Since $W$ is compact, it suffices to prove that 
there exists a positive integer $m_P$ such that 
$\mathcal O_X(m_P(K_X+\Delta))$ is $\pi$-generated 
over some open neighborhood of $P$. 
We take a semianalytic Stein open neighborhood 
$U_P$ of $P$ and a Stein compact subset $W_P$ of $Y$ with 
$U_P\subset W_P$ such that $\Gamma (W_P, \mathcal O_Y)$ is noetherian.  
Let $\nu\colon X^\nu\to X$ be the normalization with $K_{X^\nu}+\Theta
:=\nu^*(K_X+\Delta)$. 
By \cite[Theorems 1.21 and 1.27]{fujino-bchm}, after shrinking $Y$ around $W_P$ suitably, 
we take a dlt blow-up $\alpha\colon 
\widetilde X\to X^\nu$ with $K_{\widetilde X}+
\widetilde{\Delta}:=\alpha^*(K_{X^\nu}+\Theta)$ such that 
$\widetilde X$ is $\mathbb Q$-factorial over $W_P$ and $(\widetilde X, 
\widetilde \Delta)$ is divisorial log terminal. 
We consider $\widetilde \pi:=\pi\circ 
\nu\circ \alpha \colon \widetilde X\to Y$. 
If necessary, we replace $U_P$ with a smaller semianalytic Stein open 
neighborhood of $P$. 
Then, by Lemma \ref{p-lem4.10}, there exists a semianalytic 
Stein open neighborhood $U_P$ and a positive integer $m_P$ such that 
admissible sections generate $\mathcal O_{\widetilde X}(m_P(K_{\widetilde X}+
\widetilde{\Delta}))$ over $U_P$. By Lemma \ref{c-lem2.29}, 
any admissible section over $U_P$ descends to a section of 
$\mathcal O_X(m_P(K_X+\Delta))$ over $U_P$. 
Thus $\mathcal O_X(m_P(K_X+\Delta))$ is $\pi$-generated over $U_P$. 
As we mentioned above, since $W$ is compact, 
we can take an open neighborhood $U$ of $W$ and a 
divisible positive integer $m$ such that 
$\mathcal O_X(m(K_X+\Delta))$ is $\pi$-generated over $U$. 
We finish the proof of Theorem \ref{b-thm1.1}. 
\end{proof}

The following elementary lemma shows that \cite[Theorem 2]{hacon-xu} 
can be deduced from Theorem~\ref{b-thm1.1}. Consequently, Koll\'ar's 
gluing theory in \cite{kollar} is not required for the proof 
of \cite[Theorem 2]{hacon-xu}.

\begin{lem}\label{p-lem4.11}
Let $\pi\colon X\to Y$ be a proper morphism 
of algebraic schemes defined over $\mathbb C$ and 
let $\mathcal L$ be a line bundle on $X$. 
Let $U$ be a nonempty Euclidean open subset of $Y$. 
Assume that $\mathcal L$ is $\pi$-generated 
over $U$. 
Then there exists a Zariski open subset $V$ of $Y$ such 
that $\mathcal L$ is $\pi$-generated over $V$ with $U\subset V$. 
\end{lem}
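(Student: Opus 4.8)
The plan is to encode the failure of $\pi$-generation as the support of a coherent sheaf on $X$, and then transport that support to $Y$ using the properness of $\pi$.

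First I would introduce the evaluation homomorphism $\operatorname{ev}\colon \pi^*\pi_*\mathcal L\to\mathcal L$ of $\mathcal O_X$-modules. Since $\pi$ is proper and $\mathcal L$ is coherent, $\pi_*\mathcal L$ is coherent, hence so are $\pi^*\pi_*\mathcal L$ and the cokernel $\mathcal Q:=\operatorname{coker}(\operatorname{ev})$. For any subset $A\subset Y$ that is open in the Zariski or classical topology, $\mathcal L$ is $\pi$-generated over $A$ precisely when $\operatorname{ev}$ is surjective over $\pi^{-1}(A)$, i.e.\ when $\mathcal Q$ vanishes on $\pi^{-1}(A)$, i.e.\ when $\Supp\mathcal Q\cap\pi^{-1}(A)=\emptyset$. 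The one mild point is to see that the hypothesis phrased in the classical topology already has this set-theoretic consequence: since analytification is exact and faithfully flat on stalks (and, via relative GAGA, identifies $\pi_*\mathcal L$ with the analytic direct image), $\Supp\mathcal Q$ coincides, as a set of points, with the support of the cokernel of the analytic evaluation map; hence $\mathcal L$ being $\pi$-generated over the classical open set $U$ forces $\Supp\mathcal Q\cap\pi^{-1}(U)=\emptyset$. Equivalently, one may argue directly with closed $\mathbb C$-points: $\Supp\mathcal Q$ is a Zariski closed subset of $X$ containing no closed point lying over $U$, and a coherent sheaf on a scheme of finite type over $\mathbb C$ is determined by its stalks at closed points.

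Next I would set $Z:=\pi(\Supp\mathcal Q)$. As $\Supp\mathcal Q$ is Zariski closed in $X$ and $\pi$ is proper, $Z$ is Zariski closed in $Y$, so $V:=Y\setminus Z$ is Zariski open. By construction $\pi^{-1}(V)\cap\Supp\mathcal Q=\emptyset$, whence $\mathcal Q|_{\pi^{-1}(V)}=0$ and $\operatorname{ev}$ is surjective over $\pi^{-1}(V)$, i.e.\ $\mathcal L$ is $\pi$-generated over $V$. Moreover, if some point of $U$ were to lie in $Z$, then some point of $\Supp\mathcal Q$ would map into $U$, contradicting $\Supp\mathcal Q\cap\pi^{-1}(U)=\emptyset$; hence $U\cap Z=\emptyset$, that is, $U\subset V$. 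This is exactly the assertion of the lemma.

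There is no serious obstacle here; the only step requiring any care is the first one, namely converting the classical-topology hypothesis into the statement that the Zariski closed set $\Supp\mathcal Q$ is disjoint from $\pi^{-1}(U)$, and this follows from exactness of analytification (or from passing to closed $\mathbb C$-points) together with the elementary fact that a coherent sheaf vanishes on an open set exactly when that open set avoids its support.
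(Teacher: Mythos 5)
Your proposal is correct and is essentially the paper's own argument: the paper also takes the cokernel $\mathcal C$ of $\pi^*\pi_*\mathcal L\to\mathcal L$ and sets $V:=Y\setminus \pi(\Supp \mathcal C)$. You merely spell out the GAGA/analytification point that the paper leaves implicit in its phrase ``by definition,'' which is a reasonable clarification rather than a different route.
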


\begin{proof}
Let $\mathcal C$ be the cokernel of $\pi^*\pi_*\mathcal L\to \mathcal L$. 
We put $V:=Y\setminus \pi(\Supp \mathcal C)$. 
Then, by definition, $V$ is a Zariski open subset with $U\subset V$ and 
$\mathcal L$ is $\pi$-generated over $V$. 
\end{proof}

\section{Freeness for nef and log abundant log canonical bundles}\label{p-sec5}

In this section, we first prove Theorem \ref{b-thm1.4}. 
As a straightforward application of Theorem \ref{b-thm1.4}, 
we then establish Theorem \ref{b-thm1.10}. 
We also give proofs of Theorem \ref{b-thm1.5}, 
Theorem \ref{b-thm1.7}, and Corollary \ref{b-cor1.11}. 
The proof of Theorem \ref{b-thm1.4} begins with 
Theorem \ref{p-thm5.1}, which is well known in 
the algebraic setting (see \cite{fujino-basepoint-free}). 
Once Theorem \ref{p-thm5.1} is established, the proof of 
Theorem \ref{b-thm1.4} follows without much difficulty.

\begin{thm}\label{p-thm5.1} 
Let $(X, \Delta)$ be an irreducible 
divisorial log terminal pair and let $\pi\colon 
X\to Y$ be a projective morphism of complex analytic spaces. 
Assume that $K_X+\Delta$ is $\mathbb Q$-Cartier and 
is $\pi$-nef and $\pi$-abundant over $Y$. 
We further assume that $K_S+\Delta_S$ is $\pi$-semiample, 
where $S:=\lfloor \Delta\rfloor$ and $K_S+\Delta_S:=(K_X+\Delta)|_S$. 
Let $W$ be a compact subset of $Y$. 
Then there exists a positive integer $m$ such that 
$\mathcal O_X(m(K_X+\Delta))$ is $\pi$-generated 
over some open neighborhood of $W$. 
\end{thm}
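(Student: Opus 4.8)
The plan is to prove this by distinguishing two cases according to whether $K_X+\Delta$ is $\pi$-big, reducing the second case to the first by means of a canonical bundle formula. Since $W$ is compact it suffices to show that every $P\in W$ has an open neighborhood over which $\mathcal O_X(m_P(K_X+\Delta))$ is $\pi$-generated for some positive integer $m_P$; so fix $P$, choose a semianalytic Stein open neighborhood $U_P$ of $P$ and a Stein compact $W_P$ with $U_P\subset W_P$ and $\Gamma(W_P,\mathcal O_Y)$ noetherian, and replace $W$ by $W_P$. Throughout we may shrink $Y$ freely around $W$ without comment (the pair $(X,\Delta)$ is already divisorial log terminal, so no dlt blow-up is needed here, but one may of course take a log resolution of $X$ via \cite{bierstone-milman}). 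Put $S:=\lfloor\Delta\rfloor$ and $K_S+\Delta_S:=(K_X+\Delta)|_S$.

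First I would treat the case where $K_X+\Delta$ is $\pi$-big. Fix a divisible $m$ with $m(K_X+\Delta)$ Cartier and consider the short exact sequence
\[
0\to \mathcal O_X(m(K_X+\Delta)-S)\to \mathcal O_X(m(K_X+\Delta))\to \mathcal O_S(m(K_S+\Delta_S))\to 0.
\]
Since $m(K_X+\Delta)-S-(K_X+(\Delta-S))=(m-1)(K_X+\Delta)$ is $\pi$-nef and $\pi$-big for $m\geq 2$ and $(X,\Delta-S)$ is kawamata log terminal, the relative Kawamata--Viehweg vanishing theorem for projective morphisms of complex analytic spaces (see \cite{fujino-vanishing}) gives $R^1\pi_*\mathcal O_X(m(K_X+\Delta)-S)=0$ after shrinking $Y$ around $W$, so $\pi_*\mathcal O_X(m(K_X+\Delta))\to\pi_*\mathcal O_S(m(K_S+\Delta_S))$ is surjective there. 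By the relative base point free theorem (see \cite{fujino-bchm}) applied to $K_X+(\Delta-S)+(m-1)(K_X+\Delta)$, the sheaf $\mathcal O_X(m(K_X+\Delta)-S)$ is $\pi$-generated near $W$; by hypothesis $\mathcal O_S(m(K_S+\Delta_S))$ is $\pi$-generated near $W$; a diagram chase with Nakayama's lemma then gives that $\mathcal O_X(m(K_X+\Delta))$ is $\pi$-generated near $W$. The same argument works verbatim when $(X,\Delta)$ is merely log canonical and $K_X+\Delta$ is replaced by a $\pi$-nef, $\pi$-big $\mathbb Q$-Cartier divisor of the form $K_X+\Delta+M$ with $M$ only $\pi$-nef, which is the form we shall need below (the twist $M+(m-1)(K_X+\Delta+M)$ is still $\pi$-nef and $\pi$-big).

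Next, suppose $K_X+\Delta$ is not $\pi$-big. By the theory of abundant divisors (see \cite[Subsection 2.6]{enokizono-hashizume-mmp} and \cite{nakayama}), after a projective bimeromorphic log resolution $p\colon X'\to X$ there is a projective surjective morphism $f\colon X'\to Z$ over $Y$ with connected fibres and a $\mathbb Q$-divisor $D_Z$ on $Z$ which is $\pi_Z$-nef and $\pi_Z$-big such that $p^*(K_X+\Delta)\sim_{\mathbb Q}f^*D_Z$ and $\dim Z<\dim X$, where $\pi_Z\colon Z\to Y$ is the structure morphism. Writing $K_{X'}+\Delta':=p^*(K_X+\Delta)$ with $\operatorname{Supp}\Delta'$ simple normal crossing, we have $K_{X'}+\Delta'\sim_{\mathbb Q,f}0$, and an appropriate form of the canonical bundle formula (\cite{fujino-kawamata}, \cite{fujino-basepoint-free}, in its relative complex analytic version) yields $D_Z\sim_{\mathbb Q}K_Z+B_Z+M_Z$ with $(Z,B_Z)$ log canonical, $M_Z$ $\pi_Z$-nef, so that $K_Z+B_Z+M_Z$ is $\pi_Z$-nef and $\pi_Z$-big. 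By the compatibility of the canonical bundle formula with restriction to log canonical centres, every log canonical centre of $(Z,B_Z)$ arises from a log canonical centre of $(X,\Delta)$, i.e. from a log canonical stratum of $S$, and the restriction of $K_Z+B_Z+M_Z$ to it is a $\pi_Z$-semiample divisor because $(K_X+\Delta)|_S$ is $\pi$-semiample; hence $(K_Z+B_Z+M_Z)|_{\lfloor B_Z\rfloor}$ is $\pi_Z$-semiample after shrinking $Y$ around $W$. Applying the ($\pi_Z$-big) argument of the previous paragraph to $(Z,B_Z)$ with the extra nef summand $M_Z$, some multiple of $K_Z+B_Z+M_Z$, hence of $D_Z$, is $\pi_Z$-generated near $W$. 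Pulling this back to $X'$ (where it agrees with a multiple of $p^*(K_X+\Delta)$) and pushing forward by $p$ (using $p_*\mathcal O_{X'}\simeq\mathcal O_X$) shows that $\mathcal O_X(m(K_X+\Delta))$ is $\pi$-generated near $W$ for suitable divisible $m$, which finishes the proof.

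The step I expect to be the main obstacle is the use of the canonical bundle formula in the non-big case: one must have, in the complex analytic setting and over a compact set $W$, both the $\pi_Z$-nefness (semipositivity) of the moduli part $M_Z$ and a usable description of the discriminant $B_Z$, together with the functorial comparison of log canonical strata needed to transport the semiampleness hypothesis on $(K_X+\Delta)|_S$ down to $(K_Z+B_Z+M_Z)|_{\lfloor B_Z\rfloor}$. By contrast, the $\pi$-big case is a routine consequence of relative Kawamata--Viehweg vanishing and the relative base point free theorem, both available in this framework.
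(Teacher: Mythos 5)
Your overall skeleton --- treat the $\pi$-big case by lifting sections from $S$ via relative Kawamata--Viehweg vanishing plus a basepoint-free theorem, and reduce the non-big case to it by descending along the Iitaka-type fibration with a canonical bundle formula --- is exactly the route the paper takes, since its proof adapts \cite[Section 6]{fujino-basepoint-free} (Theorem 6.1 there is the generalized basepoint-free theorem, Theorem 6.2 the nef-and-abundant reduction) together with the analytic canonical bundle formula of \cite[Theorem 21.4]{fujino-bchm}. However, your execution of the big case has a genuine gap. You assert that $\mathcal O_X(m(K_X+\Delta)-S)$ is $\pi$-generated near $W$ ``by the relative base point free theorem applied to $K_X+(\Delta-S)+(m-1)(K_X+\Delta)$''. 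The Kawamata--Shokurov theorem frees a divisor $L$ only when $L$ itself is $\pi$-nef and $aL-(K_X+B)$ is $\pi$-nef and $\pi$-big for a kawamata log terminal pair $(X,B)$; here $L=m(K_X+\Delta)-S$ is not known to be $\pi$-nef (subtracting the effective divisor $S$ can make $L$ negative on curves contained in or meeting $S$), and the unconditional claim that $K_X+(\text{klt boundary})+(\text{nef and big})$ is $\pi$-generated is false. Consequently generation of $\mathcal O_X(m(K_X+\Delta))$ at points off $S$ is not established: global sections of $m(K_X+\Delta)$ vanishing along $S$ may have base points away from $S$. This is precisely where one needs the X-method/concentration argument behind \cite[Theorem 6.1]{fujino-basepoint-free} (freeness of a $\pi$-nef $L$ with $qL-(K_X+B)$ nef and abundant, given freeness of $L|_{\lfloor B\rfloor}$), which is the content the paper imports and which your shortcut does not replace.

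The non-big case also asserts more than it proves. Over a neighborhood of the compact set $W$ you need: $\pi_Z$-nefness of the moduli part $M_Z$; control of the discriminant $B_Z$ (note that $\Delta'=p^*(K_X+\Delta)-K_{X'}$ is only a subboundary on the log resolution); and the identification of the log canonical centres of $(Z,B_Z)$ with images of log canonical strata of $(X,\Delta)$, which holds only after replacing $Z$ by a sufficiently high model (the b-divisor formulation). These points are exactly what \cite[Theorem 21.4]{fujino-bchm} and the argument of \cite[Theorem 6.2]{fujino-basepoint-free} supply, and you flag them yourself. But even granting them, the pair you land on is a merely log canonical structure of the form $K_Z+B_Z+M_Z$ with $M_Z$ only nef, and your claim that Case 1 applies ``verbatim'' to it inherits the gap above; moreover, for an lc (non-dlt) pair $(Z,B_Z)$ the vanishing you invoke for $(Z,B_Z-\lfloor B_Z\rfloor)$ need not be available without first passing to a dlt model. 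In short, the strategy matches the paper's, but the substitute you offer for the generalized basepoint-free theorem is the real missing content, and as written that step fails.
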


\begin{proof}
We can modify the argument in \cite[Section 6]{fujino-basepoint-free} for 
our complex analytic setting. 
Since the Kawamata--Viehweg vanishing theorem 
holds for projective morphisms of complex analytic spaces, 
we can generalize \cite[Theorem 6.1]{fujino-basepoint-free}, 
which is a slight generalization of the Kawamata--Shokurov 
basepoint-free theorem,  
for our complex analytic setting. 
By \cite[Theorem 21.4]{fujino-bchm}, 
which is a kind of canonical bundle formula, and 
the argument in Step 2 in the proof of \cite[Theorem 23.2]{fujino-bchm}, 
we can prove a complex analytic generalization of 
\cite[Theorem 6.2]{fujino-basepoint-free}. 
Therefore, we see that the desired statement holds 
(see also \cite[Theorem 1.1]{fujino-basepoint-free}). 
\end{proof}

Let us prove Theorem \ref{b-thm1.4}. 

\begin{proof}[Proof of Theorem \ref{b-thm1.4}]
Let $P$ be an arbitrary point of $W$. 
Since $W$ is compact, it suffices to prove that there 
exist a positive integer $m_P$ and an open neighborhood 
$U_P$ of $P$ such 
that 
$\mathcal O_X(m_P(K_X+\Delta))$ is $\pi$-generated over $U_P$. 
By Theorem \ref{b-thm1.1}, we may assume that 
$X$ is normal. By taking a Stein 
compact subset $W_P$ such that $P\in W_P$ and 
$\Gamma (W_P, \mathcal O_Y)$ is noetherian. 
By \cite[Theorems 1.21 and 1.27]{fujino-bchm}, 
after shrinking $Y$ around $W_P$ suitably, 
we take a dlt blow-up and may assume that 
$(X, \Delta)$ is divisorial log terminal. 
By induction on dimension, we may assume that 
$K_S+\Delta_S$ is $\pi$-semiample over some open 
neighborhood of $P$, 
where $S:=\lfloor \Delta\rfloor$ and 
$K_S+\Delta_S:=(K_X+\Delta)|_S$. 
Hence, by Theorem \ref{p-thm5.1}, 
we obtain the desired statement. 
We finish the proof. 
\end{proof}

The following proof is essentially due to 
Kenta Hashizume (see \cite[Lemma 3.4]{hashizume}). 

\begin{proof}[Proof of Theorem \ref{b-thm1.5}]
We can freely shrink $Y$ around $W$ suitably and 
always assume that $Y$ is Stein. 
By taking a dlt blow-up 
(see \cite[Theorems 1.21 and 1.27]{fujino-bchm}), 
we may assume that $(X, \Delta)$ is divisorial log terminal and 
is $\mathbb Q$-factorial over $W$. 
By induction, we may assume that 
$K_S+\Delta_S:=(K_X+\Delta)|_S$ is $\pi$-semiample 
over some open neighborhood 
of $L$ for every log canonical center $S$ of 
$(X, \Delta)$. 
By applying the argument in the proof of \cite[Lemma 3.4]{hashizume}, 
we can write $K_X+\Delta=\sum _i r_i(K_X+\Delta_i)$ 
such that $(X, \Delta_i)$ is divisorial log terminal, 
$K_X+\Delta_i$ is $\mathbb Q$-Cartier, 
$r_i$ is a positive real number, and $K_X+\Delta_i$ is $\pi$-nef 
and $\pi$-log abundant over some open neighborhood 
of $L$ for every $i$. 
Hence, by Theorem \ref{b-thm1.4}, 
there exists a positive 
integer $m_i$ such that 
$\mathcal O_X(m_i(K_X+\Delta_i))$ is $\pi$-generated 
over some open neighborhood of $L$. 
Hence, $K_X+\Delta$ is $\pi$-semiample 
over some open neighborhood of $L$. 
This is what we wanted. 
\end{proof}

Theorem \ref{b-thm1.7} is almost obvious by Theorem 
\ref{b-thm1.5} and \cite[Theorem 1.2]{enokizono-hashizume-mmp}. 

\begin{proof}[Proof of Theorem \ref{b-thm1.7}]
We take an arbitrary point $P\in Z$. 
Then it suffices to prove the existence of 
a log canonical model of $(X, \Delta)$ over some open neighborhood 
of $P$. We take $P\in U_1\subset W_1\subset U_2\subset W_2$, 
where $U_i$ is a Stein open subset of $Z$ for $i=1, 2$ and 
$W_i$ is a Stein compact subset of $Z$ such that 
$\Gamma (W_i, \mathcal O_Z)$ is noetherian for $i=1, 2$. 
Throughout this proof, we can freely shrink $Z$ around $W_2$ suitably. 
Since $-(K_X+\Delta)$ is $\varphi$-ample, 
we can take an effective $\mathbb R$-divisor $A$ on $X$ 
such that $K_X+\Delta+A\sim _{\mathbb R, \varphi}0$ and 
that 
$(X, \Delta+A)$ is log canonical. 
By \cite[Theorems 1.21 and 1.27]{fujino-bchm}, 
we take a dlt blow-up $p\colon (X', \Delta')\to (X, \Delta)$ over 
some open neighborhood of $W_2$. We note 
that $(X', \Delta'+A')$ is log canonical 
with $K_{X'}+\Delta'+A'\sim _{\mathbb R, 
\varphi'}0$, where $A':=p^*A$ and $\varphi':=\varphi\circ p\colon X'\to Z$. 
It suffices to construct a log canonical model 
of $(X', \Delta')$ over some open neighborhood of $P$. 
By \cite[Theorem 1.2]{enokizono-hashizume-mmp}, 
after finitely many flips and divisorial contractions, 
we finally obtain $(X'', \Delta'')$ 
over some open neighborhood of $W_2$ such that $K_{X''}+\Delta''$ is 
nef over $W_2$. 
By construction, $K_{X''}+\Delta''+A''\sim _{\mathbb R, \varphi''}0$ 
holds, where $A''$ is the pushforward of $A'$ on $X''$ and 
$\varphi''\colon X''\to Z$ is the structure morphism. 
Thus, by \cite[Theorem 6.1]{gongyo-mmp}, we can check that 
$K_{X''}+\Delta''$ is $\varphi''$-nef and 
$\varphi''$-log abundant with 
respect to $(X'', \Delta'')$ over $U_2$  
(see also \cite[Remark 3.7]{hashizume3}). 
Therefore, by Theorem \ref{b-thm1.5}, 
$K_{X''}+\Delta''$ is $\varphi''$-semiample over some 
open neighborhood of $P$. 
This means that $(X', \Delta')$ has a log canonical model 
over some open neighborhood of $P$. 
This is what we wanted. We finish the proof.     
\end{proof}

We prove Theorem \ref{b-thm1.10} as an application of 
Theorem \ref{b-thm1.4}. 

\begin{proof}[Proof of Theorem \ref{b-thm1.10}]
Let $P$ be an arbitrary point of $W$. 
Since $W$ is compact, it suffices to 
prove that there exist a positive integer $m_P$ and 
an open neighborhood $U_P$ of $P$ 
such that $\mathcal O_X(m_P(K_X+\Delta))$ is $\pi$-generated 
over $U_P$. 
From now, we will freely shrink $Y$ around $P$. 
By \cite[Theorems 1.21 and 1.27]{fujino-bchm}, we take a dlt blow-up. 
Thus we may assume that $(X, \Delta)$ is divisorial 
log terminal. 
Let $S$ be a log canonical stratum of $(X, \Delta)$ 
with $K_S+\Delta_S:=(K_X+\Delta)|_S$. 
It is obvious that $K_S+\Delta_S$ is $\pi$-nef. 
By applying Conjecture \ref{b-conj1.9} to 
an analytically sufficiently general fiber $F$ of $S\to \pi(S)$, 
we see that $K_S+\Delta_S$ is $\pi$-nef and $\pi$-abundant. 
This means that $K_X+\Delta$ is $\pi$-nef and $\pi$-log 
abundant with respect to $(X, \Delta)$. 
Hence, by Theorem \ref{b-thm1.4}, 
we obtain $m_P$ such that $\mathcal O_X(m_P(K_X+\Delta))$ 
is $\pi$-generated over some open neighborhood 
of $P$. 
We finish the proof.  
\end{proof}

Let us prove Corollary \ref{b-cor1.11}, which is an easy application of 
Theorem \ref{b-thm1.10}. 

\begin{proof}[Proof of Corollary \ref{b-cor1.11}]
We can freely shrink $Y$ around $W$. 
By using Shokurov's polytope (see \cite{fujino-bchm}), 
we can write $K_X+\Delta=\sum _i r_i(K_X+\Delta_i)$ such that 
$(X, \Delta_i)$ is log canonical, $K_X+\Delta_i$ is 
$\mathbb Q$-Cartier, 
$r_i$ is a positive real number, and 
$K_X+\Delta_i$ is $\pi$-nef over $W$ for every $i$. 
In particular, $K_X+\Delta_i$ is $\pi$-nef over 
$U$ for every $i$. 
Then, by Theorem \ref{b-thm1.10}, 
there exists a positive integer $m_i$ such that 
$\mathcal O_X(m_i(K_X+\Delta_i))$ is $\pi$-generated 
over some open neighborhood of $L$ for every $i$. 
This implies that $K_X+\Delta$ is $\pi$-semiample 
over some open neighborhood of $L$. 
We finish the proof. 
\end{proof}

In any case, by Theorem \ref{b-thm1.10} and Corollary \ref{b-cor1.11}, 
the abundance conjecture for projective morphisms of complex analytic spaces 
is completely reduced to the original abundance conjecture 
for projective varieties (see Conjecture \ref{b-conj1.9}). 
We conclude this section with an important conjecture.

\begin{conj}\label{p-conj5.2}
Let $\pi\colon X\to Y$ be a projective 
surjective morphism of normal complex varieties and 
let $(X, \Delta)$ be a log canonical pair. 
Let $W$ be a compact subset of $Y$. 
Assume that $K_X+\Delta$ is $\pi$-nef over $W$. 
Then $K_X+\Delta$ is $\pi$-nef over some 
open neighborhood of $W$. 
\end{conj}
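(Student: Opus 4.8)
The plan is to reduce the statement to the structure of the relative cone of curves over a Stein compact neighbourhood of $W$, and then to identify the non‑nef locus of $K_X+\Delta$ in $Y$ with an analytically closed set disjoint from $W$. First I would pass to a convenient model: by \cite[Theorems 1.21 and 1.27]{fujino-bchm}, after shrinking $Y$ around $W$, there is a dlt blow‑up $p\colon (X',\Delta')\to (X,\Delta)$ with $K_{X'}+\Delta'=p^*(K_X+\Delta)$ and $X'$ $\mathbb Q$‑factorial over $W$; since $p$ is bimeromorphic and $p^*$ is exact, $K_X+\Delta$ is $\pi$‑nef over an open subset of $Y$ if and only if $K_{X'}+\Delta'$ is $(\pi\circ p)$‑nef there, so we may assume $(X,\Delta)$ is $\mathbb Q$‑factorial dlt. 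Next choose a semianalytic Stein compact subset $W'$ of $Y$ with $W\subset \operatorname{int}W'$ and $\Gamma(W',\mathcal O_Y)$ noetherian (possible since $W$ is compact, using Remark \ref{b-rem1.6}).

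By the relative cone and contraction theorems of \cite{fujino-bchm} applied over $W'$, there are only finitely many $(K_X+\Delta)$‑negative extremal rays $R_1,\dots,R_k$ of $\overline{\NE}(X/Y;W')$, each generated by a curve $C_j$ with $0<-(K_X+\Delta)\cdot C_j\le 2\dim X$; after shrinking $Y$ around $W'$, each $R_j$ admits a contraction morphism $\varphi_j\colon X\to Z_j$ over $Y$ with $(\varphi_j)_*\mathcal O_X\simeq \mathcal O_{Z_j}$ contracting exactly the curves whose class lies in $R_j$. Let $E_j\subseteq X$ be the union of the curves contracted by $\varphi_j$ and put $B:=\pi\bigl(\bigcup_j E_j\bigr)$, which is a closed analytic subset of $\operatorname{int}W'$ since $\pi$ is proper and each $E_j$ is closed. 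Every curve with class in some $R_j$ is $(K_X+\Delta)$‑negative, hence by hypothesis cannot lie over $W$; therefore $B\cap W=\varnothing$, and $U:=\operatorname{int}W'\setminus B$ is an open neighbourhood of $W$ in $Y$. The intended conclusion is that $K_X+\Delta$ is $\pi$‑nef over $U$: if $C$ is a curve with $\pi(C)\in U$, its class lies in $\overline{\NE}(X/Y;W')=\overline{\NE}(X/Y;W')_{K_X+\Delta\ge 0}+\sum_j R_j$, and one wants to deduce $(K_X+\Delta)\cdot C\ge 0$ from $C\not\subseteq \bigcup_j E_j$.

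The hard part, and the reason this is only stated as a conjecture, is precisely this last deduction. A curve $C$ lying over a point of $U$ whose numerical class has a nonzero component along some $R_{j_0}$ with $(K_X+\Delta)\cdot R_{j_0}<0$ need not itself have class in $R_{j_0}$, so $C$ need not be contained in any $E_j$, and a priori $C$ could still be $(K_X+\Delta)$‑negative; equivalently, the genuine non‑nef locus of $K_X+\Delta$ in $Y$ may be strictly larger than $B$ and need not be closed. Ruling this out would essentially require a cone theorem valid pointwise over the arbitrary compact $W$ rather than only over Stein compact subsets with noetherian sections, together with a boundedness statement preventing the images in $Y$ of $(K_X+\Delta)$‑negative curves from accumulating onto $W$; in the projective algebraic case this is taken care of by the Noetherian property of the base, which has no analogue here. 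A direct contradiction argument — take $C_n$ with $(K_X+\Delta)\cdot C_n<0$ and $\pi(C_n)\to P\in W$, and use the length bound and properness of $\pi$ to extract a limiting $1$‑cycle over $P$ — runs into the same wall, since the limit is only $(K_X+\Delta)$‑nonpositive and may even be $\pi$‑trivial. So resolving the conjecture appears to demand a genuinely new ingredient beyond the cone and contraction theorems presently available in the complex analytic setting.
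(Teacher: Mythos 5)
This statement is labeled as a \emph{conjecture} in the paper (Conjecture \ref{p-conj5.2}) and is not proved there: the author only remarks that its validity would upgrade Theorem \ref{b-thm1.5} and Corollary \ref{b-cor1.11} to semiampleness over a neighborhood of $W$, and that the case $\dim X=2$ was settled by Moriyama \cite{moriyama}. So there is no proof in the paper to compare your argument against, and your proposal, which openly stops short of a proof, is consistent with the status of the statement. Your analysis of why the natural approach fails is essentially the right diagnosis: the cone and contraction theorems of \cite{fujino-bchm} and \cite{fujino-cone-contraction} only control $(K_X+\Delta)$-negative extremal rays of $\overline{\NE}(X/Y;W')$ relative to a fixed Stein compact $W'$ with noetherian $\Gamma(W',\mathcal O_Y)$, and nothing currently prevents the $\pi$-images of $(K_X+\Delta)$-negative curves from accumulating onto $W$ from outside, i.e.\ the non-nef locus downstairs need not be closed near $W$. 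That accumulation problem is exactly the content of the conjecture, so the ``gap'' you identify is not a flaw in your write-up but the open problem itself.

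Two technical caveats on the part you do sketch, in case you try to push it further. First, the analytic cone theorem gives that the $(K_X+\Delta)$-negative extremal rays of $\overline{\NE}(X/Y;W')$ are discrete in the negative half-space, but the finiteness of $R_1,\dots,R_k$ that you invoke (needed to conclude that $B=\pi\bigl(\bigcup_j E_j\bigr)$ is analytic) is not automatic for a log canonical pair without an ample perturbation, so even the construction of your candidate neighborhood $U$ requires justification. Second, as you yourself note, a curve $C$ over a point of $U$ with $(K_X+\Delta)\cdot C<0$ need not have class on any single $R_j$ nor lie in any $E_j$; applying the cone theorem over smaller Stein compacta near $\pi(C)$ produces new negative extremal rays whose loci are not controlled by the $E_j$, which is precisely why the argument cannot close. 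Your proposal is a reasonable reduction and an accurate account of the obstruction, but it does not (and, given the current state of the analytic minimal model program, cannot be expected to) prove the conjecture.
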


If Conjecture \ref{p-conj5.2} 
holds, 
then we can prove that 
$K_X+\Delta$ is $\pi$-semiample 
over some open neighborhood 
of $W$ in 
Theorem \ref{b-thm1.5} and Corollary \ref{b-cor1.11}. 
In the case $\dim X = 2$, Conjecture \ref{p-conj5.2} has 
been completely resolved 
in \cite{moriyama}.

\section{Dlt blow-ups and some applications}\label{y-sec6}

In this section, we discuss {\em{dlt blow-ups}} 
(see also \cite[Theorems 1.21 and 1.27]{fujino-bchm}) and some applications. 
This section heavily relies on the minimal model 
program established in \cite{enokizono-hashizume-mmp} and 
hence in \cite{fujino-bchm}. 

We begin with the following statement. 
In the original algebraic setting, the reader can find it in  
\cite[Proposition 3.3.1]{hacon-mckernan-xu}. 

\begin{thm}[Dlt blow-ups for log canonical pairs]\label{y-thm6.1} 
Let $X$ be a normal complex variety and let $\Delta$ be an effective 
$\mathbb R$-divisor on $X$ such that $(X, \Delta)$ is log canonical. 
Let $W$ be a Stein compact subset of $X$ such that $\Gamma (W, \mathcal O_X)$ 
is noetherian. 
Then, after shrinking $X$ around $W$ suitably, 
we can construct a projective bimeromorphic morphism 
$f\colon Z\to X$ from a normal complex variety $Z$ with the following 
properties: 
\begin{itemize}
\item[{\em{(i)}}] $Z$ is $\mathbb Q$-factorial over $W$, 
\item[{\em{(ii)}}] $a(E, X, \Delta)= -1$ for every $f$-exceptional 
divisor $E$ on $Z$, and 
\item[{\em{(iii)}}] $(Z, \Delta_Z)$ is divisorial 
log terminal, where $K_Z+\Delta_Z=f^*(K_X+\Delta)$. 
\end{itemize} 
Moreover, let $S$ be an irreducible component of $\Delta$ and let 
$T$ be the strict transform of $S$ on $Z$. 
Then we can make $f\colon Z\to X$ satisfy: 
\begin{itemize}
\item[{\em{(iv)}}] there exists an effective $f$-exceptional $\mathbb Q$-divisor 
$F$ on $Z$ with 
$f(F)\subset S$ such that 
$-T-F$ is $f$-nef over $W$. 
\end{itemize}
\end{thm}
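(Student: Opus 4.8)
The plan is to adapt the classical two-stage construction of \cite[Proposition 3.3.1]{hacon-mckernan-xu}, replacing the algebraic minimal model program by the one for projective morphisms of complex analytic spaces established in \cite{enokizono-hashizume-mmp} (and hence \cite{fujino-bchm}), while carrying along the usual analytic caveats: every assertion holds only after shrinking $X$ around $W$, $\mathbb Q$-factoriality and nefness are the relative notions ``over $W$'', and noetherianity of $\Gamma(W,\mathcal O_X)$ is precisely what makes the relative minimal model program with scaling over $W$ available.

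First I would produce a model realizing (i)--(iii). Take a log resolution $g\colon Y\to X$ of $(X,\Delta)$, which exists by \cite{bierstone-milman} (cf.\ Remark \ref{c-rem2.5}), let $E_1,\dots,E_r$ be its exceptional prime divisors, and put $\Delta_Y:=g^{-1}_*\Delta+\sum_j E_j$, so that $(Y,\Delta_Y)$ is log smooth and
\[
K_Y+\Delta_Y=g^*(K_X+\Delta)+F_0,\qquad F_0:=\sum_j\bigl(1+a(E_j,X,\Delta)\bigr)E_j .
\]
Since $(X,\Delta)$ is log canonical, $F_0\ge 0$; moreover $F_0$ is $g$-exceptional and $\Supp F_0$ is exactly the set of exceptional divisors of discrepancy $>-1$. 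Running a $(K_Y+\Delta_Y)$-minimal model program over $X$ with scaling of a $g$-ample divisor, localized over $W$, amounts to running an $F_0$-minimal model program; as $F_0$ is effective and $g$-exceptional it is not pseudo-effective over $X$, so by \cite[Theorem 1.2]{enokizono-hashizume-mmp} it terminates after contracting every component of $\Supp F_0$. The resulting $f\colon Z\to X$ is projective bimeromorphic, $Z$ is $\mathbb Q$-factorial over $W$, the surviving $f$-exceptional divisors are exactly those of discrepancy $-1$, and, with $\Delta_Z$ the strict transform of $\Delta$ plus the reduced $f$-exceptional divisor, $K_Z+\Delta_Z=f^*(K_X+\Delta)$ and $(Z,\Delta_Z)$ is divisorial log terminal. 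This gives (i), (ii), (iii).

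For (iv), let $S$ be the prescribed component of $\Delta$ and $T\subset Z$ its strict transform, which is a component of $\Delta_Z$; since $Z$ is $\mathbb Q$-factorial over $W$, $T$ is $\mathbb Q$-Cartier over $W$ and $(Z,\Delta_Z-\varepsilon T)$ is divisorial log terminal for $0<\varepsilon\ll 1$. Because $K_Z+\Delta_Z=f^*(K_X+\Delta)\equiv_X 0$, every $f$-contracted curve is $(K_Z+\Delta_Z)$-trivial, so each step of a $(K_Z+\Delta_Z-\varepsilon T)$-minimal model program over $X$ is $(K_Z+\Delta_Z)$-crepant and preserves (i)--(iii); furthermore $T$ is never contracted, since a divisorial contraction over $X$ would push $T$ into a subset of codimension $\ge 2$ of the next model, impossible as $T$ dominates the divisor $S$, and no Mori fibre space can occur in a program over $X$ since $Z\to X$ is birational. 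As $K_Z+\Delta_Z-\varepsilon T\equiv_X-\varepsilon T$, I would run this program with scaling, localized over $W$; when it terminates it must terminate at a model on which $-\varepsilon T$, hence $-T$, is nef over $W$, so after renaming one obtains $f\colon Z\to X$ satisfying (i)--(iv) with $F=0$.

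The crux is the termination of this last program in the relative analytic setting, i.e.\ verifying that \cite[Theorem 1.2]{enokizono-hashizume-mmp} applies to the dlt pair $(Z,\Delta_Z-\varepsilon T)$ over $X$ around $W$, where $-\varepsilon T$ need not be pseudo-effective over $X$ a priori; for (iv) to hold $-T$ must become pseudo-effective after shrinking $X$ around $W$, and granting this, the program has a minimal model, avoids Mori fibre spaces, and yields $-T$ $f$-nef over $W$ with $F=0$. If only a weaker termination statement is available one settles for extracting, along the lines of \cite[Proposition 3.3.1]{hacon-mckernan-xu}, an effective $f$-exceptional $\mathbb Q$-divisor $F$ supported over $S$ (for instance the negative part over $X$ of $-T$ after shrinking) that absorbs the deficiency of $-T$, so that $-T-F$ becomes $f$-nef over $W$; this is the role of the auxiliary $F$ in the statement. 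The remaining points — existence of the log resolution, $\mathbb Q$-factoriality over $W$, and the crepancy bookkeeping — are routine transcriptions of the algebraic argument.
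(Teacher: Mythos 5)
Your first stage is fine in substance: the paper does not reprove the existence of dlt blow-ups but simply invokes \cite[Theorems 1.21 and 1.27]{fujino-bchm} to get $g\colon V\to X$ satisfying (i)--(iii). Note, however, that your justification there is off: $F_0$ is effective, hence certainly pseudo-effective over $X$, and over a bimeromorphic base no Mori fibre space can occur in any case; what one actually uses is that the relative MMP contracts exactly $\Supp F_0$ (negativity at the end), which is the content of the cited theorems.

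The genuine gap is in your treatment of (iv). You run a $(K_Z+\Delta_Z-\varepsilon T)$-MMP over $X$, observe correctly that every step is $(K_Z+\Delta_Z)$-trivial and that $T$ is never contracted, and then claim that the steps ``preserve (i)--(iii)'' so that the final model satisfies (iv) with $F=0$. Preservation of (iii) is exactly what fails: a $(K_Z+\Delta_Z)$-crepant flip or divisorial contraction of a divisorial log terminal pair need only produce a log canonical pair, since crepant modifications do not preserve dlt-ness, and this is precisely why the effective exceptional divisor $F$ appears in the statement of the theorem. The paper's proof runs the minimal model program for $(V,\Delta_V-T_V)$ over $X$ around $W$ (termination supplied by \cite[Theorem 1.2]{enokizono-hashizume-mmp}), records only that the resulting pair $(V',\Delta_{V'})$ is log canonical with $-T_{V'}$ nef over $W$, and then takes a second dlt blow-up $h\colon (Z,\Delta_Z)\to (V',\Delta_{V'})$ via \cite[Theorem 1.27]{fujino-bchm}, defining $F$ by $h^*(-T_{V'})=-T-F$; then $-T-F$ is $f$-nef over $W$ and $F\geq 0$ is $f$-exceptional with $f(F)\subset S$. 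Your closing fallback gestures at introducing such an $F$, but you misdiagnose the obstruction as one of pseudo-effectivity or termination (over a bimeromorphic base every divisor is relatively pseudo-effective and no Mori fibre space arises; termination is exactly what \cite[Theorem 1.2]{enokizono-hashizume-mmp} provides), and ``the negative part over $X$ of $-T$'' is not a construction: without the second dlt blow-up and the pullback computation you have neither a dlt model nor a definition of $F$ with the required properties.
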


The following proof is well known for algebraic varieties. 
In the complex analytic setting, it suffices to 
use \cite[Theorem 1.2]{enokizono-hashizume-mmp}. 

\begin{proof}[Proof of Theorem \ref{y-thm6.1}]
By \cite[Theorem 1.27]{fujino-bchm}, after shrinking $X$ around $W$ suitably, 
we can take a 
projective bimeromorphic morphism $g\colon V\to X$ from a normal complex 
variety $V$ with $K_V+\Delta_V=g^*(K_X+\Delta)$ satisfying 
(i), (ii), and (iii). 
Let $T_V$ be the strict transform of $S$ on $V$. 
By \cite[Theorem 1.2]{enokizono-hashizume-mmp}, 
after running a suitable minimal model program with ample 
scaling over $X$ around $W$, we 
obtain a minimal model $(V', \Delta_{V'}-T_{V'})$ 
of $(V, \Delta_V-T_V)$ over some open neighborhood 
of $W$. 
Let $\phi\colon V\dashrightarrow V'$ be the induced bimeromorphic map, 
and set $\Delta_{V'}:=\phi_*\Delta_V$ and $T_{V'}:=\phi_*T_V$. 
Let $g_{V'}\colon V'\to X$ be the induced morphism. 
Then we have $K_{V'}+\Delta_{V'}\sim _{\mathbb R, g_{V'}}0$. Moreover, 
$(V', \Delta_{V'})$ is log canonical, 
and $K_{V'}+(\Delta_{V'}-T_{V'})\sim _{\mathbb R, g_{V'}}-T_{V'}$ 
is $g_{V'}$-nef 
over $W$. 
By \cite[Theorem 1.27]{fujino-bchm} again, 
we can take a dlt blow-up $h\colon (Z, \Delta_Z)\to (V', \Delta_{V'})$ 
with $K_Z+\Delta_Z=h^*(K_{V'}+\Delta_{V'})$ after shrinking $X$ around $W$. 
Then we can write $h^*(-T_{V'})=-T-F$, where $T$ is the strict transform of $T_{V'}$ on $Z$. 
By construction, $F$ is an 
effective $f$-exceptional $\mathbb Q$-divisor with $f(F)\subset S$, where 
$f:=g_{V'}\circ h\colon Z\to X$. 
By construction, we can easily verify that $(Z, \Delta_Z)$ 
satisfies (i), (ii), (iii), and (iv). 
This completes the proof. 
\end{proof}

As a straightforward application of Theorem \ref{y-thm6.1}, 
we consider Theorem \ref{y-thm6.2}, which is a slight 
generalization of \cite[Theorem 2.1.6]{fujino-acc}. 
Note that \cite[Theorem 2.1.6]{fujino-acc} follows easily from 
Theorem \ref{y-thm6.2}. For further details, 
see \cite[Proof of (1.1)]{hacon-mckernan-xu}. 
Notably, Theorem \ref{y-thm6.2} can be viewed as 
a complex analytic generalization of \cite[Theorem 1.4]{hacon-mckernan-xu}.

\begin{thm}[ACC for the log canonical 
thresholds for complex analytic spaces]\label{y-thm6.2}
Fix a positive integer $n$ and a set $I\subset [0, 1]$ which satisfies 
the {\em{DCC}}. Then there is a finite subset $I_0\subset I$ with the following 
properties: 

If $X$ is a normal complex variety and let $\Delta$ be an effective 
$\mathbb R$-divisor on $X$ such that $K_X+\Delta$ is $\mathbb R$-Cartier and 
that 
\begin{itemize}
\item[{\em{(1)}}] $\dim X=n$, 
\item[{\em{(2)}}] $(X, \Delta)$ is log canonical, 
\item[{\em{(3)}}] the coefficients of $\Delta$ belong to $I$, and 
\item[{\em{(4)}}] there exists a log canonical center $C\subset X$ which 
is contained in every component of $\Delta$, 
\end{itemize} 
then the coefficients of $\Delta$ belong to $I_0$. 
\end{thm}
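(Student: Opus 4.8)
The plan is to reduce the statement to the algebraic ACC for log canonical thresholds, \cite[Theorem 1.4]{hacon-mckernan-xu}, with Theorem \ref{y-thm6.1} providing the passage to a convenient local model. The first point is that condition (4) forces a maximality property: if $E$ is a divisor over $X$ with $a(E,X,\Delta)=-1$ and $\mathrm{center}_X(E)=C$, then $\mathrm{ord}_E(S)\ge 1$ for every component $S$ of $\Delta$ (since $C\subset S$), so $a(E,X,\Delta+\varepsilon S)\le -1-\varepsilon<-1$ for every $\varepsilon>0$; hence no coefficient of $\Delta$ can be increased without destroying log canonicity. Equivalently, each coefficient $a$ of $\Delta$, say on the component $S$, is the log canonical threshold of $S$ with respect to $(X,\Delta-aS)$, a pair of dimension $n$ with boundary coefficients in $I$. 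Since the set of all coefficients appearing is contained in $I$ it is automatically DCC, so it suffices to prove that it is also ACC; a subset of $[0,1]$ that is both DCC and ACC is finite, which then yields the finite set $I_0$.

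To establish the ACC property I would argue by contradiction: suppose there are pairs $(X_j,\Delta_j)$ satisfying (1)--(4) together with a strictly increasing sequence of coefficients $a_j$ of $\Delta_j$. As the problem is local around the log canonical center, replace $X_j$ by a small Stein neighbourhood of a general point $x_j\in C_j$ and cut by $\dim C_j$ general hyperplane sections through $x_j$; by Bertini and adjunction for log canonical pairs this preserves (1)--(4) and all coefficients (lowering only the ambient dimension, which we still bound by $n$), so we may assume $C_j=\{x_j\}$ is an isolated log canonical center. Now apply Theorem \ref{y-thm6.1}, recalling from Remark \ref{c-rem2.5} (and \cite[Theorem 1.27]{fujino-bchm}) that the resulting $\mathbb Q$-factorial divisorial log terminal blow-up $f_j\colon Z_j\to X_j$, with $K_{Z_j}+\Delta_{Z_j}=f_j^\ast(K_{X_j}+\Delta_j)$ and all exceptional discrepancies equal to $-1$, may be taken to be a finite composite of blow-ups. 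On the one hand, this exhibits the germ $(X_j,x_j,\Delta_j)$ through finitely much resolution data, which can be realised by an algebraic log canonical pair with the same discrepancies and coefficients, so that \cite[Theorem 1.4]{hacon-mckernan-xu} applies and forces $\{a_j\}$ to be ACC --- a contradiction. On the other hand, one may instead redo the Hacon--McKernan--Xu induction directly: picking a component $E_j$ of $\lfloor\Delta_{Z_j}\rfloor$ lying over $x_j$ (it exists by the connectedness lemma, Lemma \ref{p-lem4.1}) and applying adjunction $K_{E_j}+\Delta_{E_j}=(K_{Z_j}+\Delta_{Z_j})|_{E_j}$ produces, since $f_j$ contracts $E_j$ to a point, a log canonical pair of dimension $\le n-1$ with $K_{E_j}+\Delta_{E_j}\sim_{\mathbb R}0$ and coefficients in a DCC set depending only on $n$ and $I$, to which one applies a lower-dimensional instance of the same circle of statements, the complex analytic minimal model program of \cite{enokizono-hashizume-mmp} and \cite{fujino-bchm} supplying the needed steps.

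The hard part is, in either route, to keep the coefficient $a_j$ under control through the reduction. In the algebraic-model route this means justifying that the relevant log canonical thresholds of analytic germs with the above bounded data coincide with those of algebraic germs --- an approximation statement that should follow from the finite-blow-up structure furnished by Theorem \ref{y-thm6.1}. In the inductive route it is the bookkeeping of \cite[Proof of (1.1)]{hacon-mckernan-xu}: one must track the contribution of the component carrying $a_j$ to the different after each adjunction, and verify that the analogue of condition (4) persists so that the induction genuinely sees $a_j$ rather than merely the DCC data of the new boundary. Once this is in place the finiteness of $I_0$ follows, and \cite[Theorem 2.1.6]{fujino-acc} is recovered as explained in \cite[Proof of (1.1)]{hacon-mckernan-xu}.
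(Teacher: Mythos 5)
Your overall strategy (reduce via the dlt blow-up of Theorem \ref{y-thm6.1} to the Hacon--M\textsuperscript{c}Kernan--Xu machinery) points in the right direction, but both of your routes stop exactly at the step that constitutes the actual proof, and you say so yourself (``the hard part is \dots to keep the coefficient $a_j$ under control''). The algebraization route is not justified: you assert that the analytic germ ``can be realised by an algebraic log canonical pair with the same discrepancies and coefficients,'' but an analytic germ need not be algebraizable, and no approximation argument is given; moreover the ``finite-blow-up structure'' you invoke is a misreading of Remark \ref{c-rem2.5}, which concerns log resolutions in Definition \ref{c-def2.4}, not the morphism produced by Theorem \ref{y-thm6.1} (that morphism comes from running a minimal model program over $X$ and is not a composite of smooth blow-ups). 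The inductive route is missing the two points that make the argument close. First, the coefficient $a_j$ of the chosen component $S$ must genuinely appear, monotonically, in the different on a divisor lying over the log canonical center; this is precisely the content of \cite[Lemma 5.1]{hacon-mckernan-xu}, and the analytic input needed for its proof is property (iv) of Theorem \ref{y-thm6.1}: the nefness of $-T-F$ over $W$ forces the exceptional divisors over $C$ to meet the strict transform $T$, so the coefficient of $S$ enters the different with the usual $(m-1+f+\sum k_ia_i)/m$ shape. Your proposal never uses (iv), and tracking this contribution is exactly the ``bookkeeping'' you leave open.

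Second, no lower-dimensional analytic induction or analytic minimal model program is needed at the final stage: since the relevant exceptional divisor (more precisely, its fiber over a general point of the log canonical center) is contracted to a point by a projective morphism, the adjoint pair one obtains is a \emph{projective}, hence algebraic, numerically trivial log canonical pair with coefficients in the derived DCC set, and one concludes directly by the purely algebraic ACC for numerically trivial pairs \cite[Theorem 1.5]{hacon-mckernan-xu}. This is how the paper proceeds: it observes that \cite[Lemma 5.1]{hacon-mckernan-xu} holds in the complex analytic setting by virtue of Theorem \ref{y-thm6.1}, so the arguments of \cite[Section 5]{hacon-mckernan-xu} apply verbatim and the theorem follows from the global algebraic statement. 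Until you either carry out the different computation with condition (iv) (route two) or supply a genuine algebraization/approximation argument for the germs (route one), the proposal has a real gap rather than a complete proof.
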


\begin{proof}[Proof of Theorem \ref{y-thm6.2}]
We note that \cite[Lemma 5.1]{hacon-mckernan-xu} also 
holds in the complex analytic setting by virtue of 
Theorem \ref{y-thm6.1}. For details, we refer 
the reader to the proof of \cite[Lemma 5.1]{hacon-mckernan-xu}. 
Consequently, the arguments in \cite[Section 5]{hacon-mckernan-xu} 
apply equally well in the complex analytic setting. 
Therefore, Theorem \ref{y-thm6.2} follows from the 
ACC for numerically trivial pairs (see \cite[Theorem 1.5]{hacon-mckernan-xu}).
\end{proof}

Theorem \ref{b-thm1.8} is significantly 
more profound than Theorem \ref{y-thm6.1}.
It can be regarded as a complex analytic version 
of \cite[Lemma 3.5]{fujino-hashizume}.
For related results in the original algebraic setting, 
see \cite{fujino-hashizume}. Let us prove Theorem \ref{b-thm1.8}. 

\begin{proof}[Proof of Theorem \ref{b-thm1.8}] 
Throughout this proof, we may freely shrink $X$ around $W$ without 
explicit mention. 
By \cite[Theorem 1.27]{fujino-bchm}, 
we can take a 
projective bimeromorphic morphism 
$g\colon V\to X$ from a normal complex variety $V$ with 
$K_V+\Delta_V=g^*(K_X+\Delta)$ satisfying (i), (ii), and (iii). 
We consider $K_V+\Delta^\dag_V=g^*(K_X+\Delta)-G_V$, where $\Delta^\dag_V:=
\Delta^{\leq 1}_V+\Supp \Delta^{>1}_V$. 
We take a rational number $\varepsilon $ with 
$0<\varepsilon \ll 1$. 
Then we consider 
\[
K_V+\Delta^\dag_V-\varepsilon G_V=g^*(K_X+\Delta)-(1+\varepsilon) G_V. 
\] 
Note that $(V, \Delta^\dag_V-\varepsilon G_V)$ is 
divisorial log terminal and $K_V+\Delta^\dag_V-\varepsilon G_V$ is 
$g$-log abundant with respect to $(V, \Delta^\dag_V-\varepsilon G_V)$ 
(see \cite[Theorem 6.1]{gongyo-mmp}). 
We take a general $g$-ample effective $\mathbb Q$-divisor $A$ on $V$ such 
that $(V, \Delta^\dag_V-\varepsilon G_V+A)$ is log 
canonical and $K_V+\Delta^\dag_V-\varepsilon G_V+A$ is $g$-nef over 
$W$. 
We run a $(K_V+\Delta^\dag_V-\varepsilon G_V)$-minimal 
model program over $X$ around $W$ with scaling of $A$ 
starting from $(V_0, \Delta^\dag_{V_0}-\varepsilon G_{V_0}):=
(V, \Delta^\dag_V-\varepsilon G_V)$. Then we have a sequence of 
flips and divisorial contractions:  
\[
V_0\overset{\phi_0}{\dashrightarrow} 
V_1 \overset{\phi_1}{\dashrightarrow} 
\cdots \overset{\phi_{i-1}}{\dashrightarrow} 
V_i 
\overset{\phi_i}{\dashrightarrow}  \cdots 
\] 
with 
\[
1\geq \lambda_0\geq \lambda_1\geq \cdots
\] 
such that $K_{V_i}+
\Delta^\dag_{V_i}-\varepsilon G_{V_i}+\lambda_iA_i$ is $g_{V_i}$-nef 
over $W$, where 
$g_{V_i}\colon V_i\to X$, 
$\Delta^\dag_{V_i}:=(\phi_{i-1})_*\Delta^\dag_{V_{i-1}}$, 
$G_{V_i}:=(\phi_{i-1})_*G_{V_{i-1}}$, and 
$A_i:=(\phi_{i-1})_*A_{i-1}$ for every $i\geq 1$. 
We put $\lambda_{\infty}:=\lim _{i\to \infty} \lambda_i$. 
Then we obtain $\lambda _{\infty}=0$. 
For the details, see, for example, the proof of 
\cite[Lemma 13.7]{fujino-bchm}. 
On the other hand, since $K_{V_i}+\Delta^\dag_{V_i}
+G_{V_i}\sim _{\mathbb R, 
g_{V_i}}0$, 
$K_{V_i}+\Delta^\dag_{V_i}-\varepsilon G_{V_i}$ is $g_{V_i}$-log 
abundant with respect to $(V_i, \Delta^\dag_{V_i}-\varepsilon G_{V_i})$ 
for every $i\geq 0$ by 
\cite[Theorem 6.1]{gongyo-mmp}. 
Then, by \cite[Theorem 1.3]{enokizono-hashizume-mmp}, 
it terminates at a minimal model 
$(V', \Delta^\dag_{V'}-\varepsilon G_{V'})$. 
We note that $\phi\colon V\dashrightarrow V'$, 
$\phi_*\Delta^\dag_V=\Delta^\dag_{V'}$, and $\phi_*G_V=G_{V'}$. 
We put $g_{V'}\colon V'\to X$. 
Since $K_V+\Delta^\dag_V\sim _{\mathbb R, g}-G_V$ and 
$K_V+\Delta^\dag_V-\varepsilon G_V\sim _{\mathbb R, g} -(1+\varepsilon) G_V$, 
the above minimal model program is also a $(K_V+\Delta^\dag_V)$-minimal model 
program. 
In particular, $(V', \Delta^\dag_{V'})$ is 
a divisorial log terminal pair. 
We put $(Z, \Delta_Z):=(V', \Delta_{V'})$ and $f:=g_{V'}$. 
Then it is easy to see that it satisfies (i), (ii), (iii), and (iv). Since 
\[
-(1+\varepsilon)G\sim _{\mathbb R, f}K_Z+\Delta^\dag_Z-\varepsilon G
\]  
and $K_Z+\Delta^\dag_Z-\varepsilon G$ is $f$-nef and 
$f$-log abundant with respect to $(Z, \Delta^\dag_Z-\varepsilon G)$, 
it is $f$-semiample over some open neighborhood of $L$ 
by Theorem \ref{b-thm1.5}. 
This implies that $-G$ is $f$-semiample over some open neighborhood 
of $L$.   
This is (v). 
We finish the proof of Theorem 
\ref{b-thm1.8}. 
\end{proof}

We can quickly recover the log canonical inversion of adjunction 
(see \cite[Theorem 1.1]{fujino-inversion}) from Theorem \ref{b-thm1.8}. 

\begin{thm}[{Inversion of adjunction for log canonicity, 
see \cite[Theorem 1.1]{fujino-inversion}}]\label{y-thm6.3}
Let $X$ be a normal complex variety and let $S+B$ be an 
effective $\mathbb R$-divisor on $X$ such that 
$K_X+S+B$ is $\mathbb R$-Cartier, $S$ is reduced, and 
$S$ and $B$ have no common irreducible components. 
Let $\nu\colon S^\nu\to S$ be the normalization with 
$K_{S^\nu}+B_{S^\nu}=\nu^*(K_X+S+B)$, where 
$B_{S^\nu}$ denotes Shokurov's different. Then 
$(X, S+B)$ is log canonical in a neighborhood 
of $S$ if and only if $(S^\nu, B_{S^\nu})$ is log 
canonical.  
\end{thm}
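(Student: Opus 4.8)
The plan is to deduce Theorem \ref{y-thm6.3} from the good dlt blow-up constructed in Theorem \ref{b-thm1.8}. The nontrivial direction is ``if'': assuming $(S^\nu, B_{S^\nu})$ is log canonical, we must show $(X, S+B)$ is log canonical near $S$; the converse is the standard easy direction via adjunction, which holds without any new input. Throughout we work locally, so we fix a point $P \in S$ and may freely shrink $X$ around a Stein compact neighborhood $W$ of $P$ with $\Gamma(W, \mathcal O_X)$ noetherian (such $W$ exist by Remark \ref{b-rem1.6}, e.g.\ a closed polydisc in a chart).

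First I would apply Theorem \ref{b-thm1.8} to the pair $(X, \Delta)$ with $\Delta := S + B$. This gives a projective bimeromorphic morphism $f \colon Z \to X$ from a normal complex variety $Z$, $\mathbb Q$-factorial over $W$, with $a(E, X, \Delta) \leq -1$ for every $f$-exceptional divisor $E$, and such that $(Z, \Delta_Z^{<1} + \Supp \Delta_Z^{\geq 1})$ is divisorial log terminal, where $K_Z + \Delta_Z = f^*(K_X + \Delta)$. Writing $\Delta_Z^\dag := \Delta_Z^{<1} + \Supp \Delta_Z^{\geq 1}$ and $G := \Delta_Z^{\geq 1} - \Supp \Delta_Z^{\geq 1} \geq 0$, we have $K_Z + \Delta_Z^\dag = f^*(K_X + \Delta) - G$, and by property (v) (choosing the auxiliary compact sets $L \subset U' \subset W' \subset U \subset W$ around $P$ appropriately) $-G$ is $f$-semiample over an open neighborhood of $P$. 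The key observation is then: $(X, S+B)$ is log canonical near $P$ if and only if $G = 0$ near $f^{-1}(P)$ and there are no $f$-exceptional divisors $E$ with $a(E,X,\Delta) < -1$; but property (ii) already forbids $a(E,X,\Delta) < -1$, so $(X, S+B)$ is log canonical near $P$ $\iff$ $G = 0$ over a neighborhood of $P$. Since $-G$ is $f$-semiample and $f$-exceptional, and $f_*\mathcal O_Z \simeq \mathcal O_X$, the negativity lemma (or: an $f$-exceptional $\mathbb R$-divisor that is $f$-semiample must be $\leq 0$, combined with $G \geq 0$) forces $G = 0$ over that neighborhood \emph{provided} we know $G$ does not meet a problematic fiber — and this is exactly where $S$ enters.

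Thus the crux is to show that the hypothesis ``$(S^\nu, B_{S^\nu})$ is log canonical'' prevents $G$ from having support over $P$. Let $T$ be the strict transform of $S$ on $Z$; since $(Z, \Delta_Z^\dag)$ is dlt and $T$ is a component of $\lfloor \Delta_Z^\dag \rfloor$, adjunction gives a dlt pair $(T, \Delta_T^\dag)$ with $K_T + \Delta_T^\dag = (K_Z + \Delta_Z^\dag)|_T = f^*(K_X+S+B)|_T - G|_T$. On the other hand, $f|_T \colon T \to S$ factors through $S^\nu$ (as $T$ is normal, being a dlt stratum), giving $h \colon T \to S^\nu$, and by compatibility of adjunction $K_T + \Delta_T = h^*(K_{S^\nu} + B_{S^\nu})$ where $\Delta_T := \Delta_Z|_T$. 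Now $(S^\nu, B_{S^\nu})$ log canonical means every coefficient of $\Delta_T$ over a neighborhood of the image of $P$ is $\leq 1$, i.e.\ $\Delta_T = \Delta_T^\dag$ and $G|_T = 0$ there; hence $T$ is disjoint from $\Supp G$ over a neighborhood of $P$. Since $-G$ is $f$-nef and $f$-semiample over that neighborhood, $-G$ is $f$-semiample and $f$-exceptional with $G \geq 0$, and $G|_T = 0$, a connectedness/negativity argument (using that the fibers of $f$ over $P$ are connected, that $\Supp G$ would otherwise be a nonzero effective $f$-semiample $f$-exceptional divisor disjoint from $T$ yet forced to meet $T$ by the dlt structure, or more directly applying the negativity lemma to $G$) yields $G = 0$ over a neighborhood of $P$. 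Therefore $(X, S+B)$ is log canonical near $S$. The main obstacle I anticipate is making the last step — extracting $G = 0$ from ``$-G$ is $f$-semiample, $f$-exceptional, and $G|_T = 0$'' — fully rigorous in the analytic setting; the cleanest route is to invoke the negativity lemma for $-G$ over a relatively compact neighborhood (so that $f$ is projective and the lemma applies), noting $-G$ is $f$-semiample hence $f$-nef, so $G \geq 0$ and $-G$ $f$-nef with $G$ $f$-exceptional forces $G = 0$.
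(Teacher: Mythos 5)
Your overall strategy coincides with the paper's: apply Theorem \ref{b-thm1.8} to $\Delta:=S+B$, and use adjunction together with the hypothesis that $(S^\nu,B_{S^\nu})$ is log canonical to see that the strict transform $T$ of $S$ is disjoint from $\Supp G$ (this part, and the easy direction, are fine). But the final step, which is the heart of the matter, does not work as written. Your ``cleanest route'' asserts that $G\geq 0$, $-G$ $f$-nef (or $f$-semiample) and $G$ $f$-exceptional force $G=0$. First, $G=\Delta^{>1}_Z-\Supp\Delta^{>1}_Z$ need not be $f$-exceptional: any component of $B$ with coefficient $>1$ contributes a non-exceptional component, and at this stage of the proof you cannot yet exclude such components near $P$. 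Second, even for an $f$-exceptional $G$ the implication is false: for the blow-up $f$ of a point on a smooth surface with exceptional curve $E$, the divisor $-E$ is $f$-nef and even $f$-semiample, while $E$ is effective, $f$-exceptional and nonzero. The negativity lemma yields effectivity, never vanishing; and no argument that never uses $T$ (your final invocation does not) can produce $G=0$ near $f^{-1}(P)$, since that would show every pair is log canonical near every point. Your alternative phrase that $\Supp G$ is ``forced to meet $T$ by the dlt structure'' is likewise unsubstantiated: when the theorem's conclusion holds, $\Supp G$ really is disjoint from $T$, so no such general forcing can exist.

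The missing ingredient is the actual content of (iv): since $-G$ is $f$-nef over $W$ and $G\geq 0$, every fiber of $f$ over $U$ is either contained in $\Supp G$ or disjoint from it (the standard consequence of the negativity lemma, see \cite[Lemma 3.39]{kollar-mori}); equivalently, $\Nlc(Z,\Delta_Z)=\Supp G=f^{-1}\left(\Nlc(X,\Delta)\right)$ holds over $U$. This is how the hypothesis gets connected to the conclusion: if there were a point $Q\in S\cap \Nlc(X,\Delta)\cap U$, then the whole fiber $f^{-1}(Q)$ would lie in $\Supp G$, while $T\to S$ is proper and surjective, so $T$ meets $f^{-1}(Q)$, contradicting $T\cap \Supp G=\emptyset$. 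Hence $S\cap\Nlc(X,\Delta)=\emptyset$ over $U$, which gives the statement; this is exactly the paper's argument. Note also that the paper emphasizes that only the $f$-nefness (iv) of $-G$ is needed here, not the semiampleness (v) on which your sketch leans; semiampleness does not help, as the blow-up example above already shows.
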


For the sake of completeness, we provide 
a proof of Theorem \ref{y-thm6.3} based on 
Theorem \ref{b-thm1.8}.
In the proof of Theorem \ref{y-thm6.3} below, we make 
use of (iv) of Theorem \ref{b-thm1.8}, but (v) of 
Theorem \ref{b-thm1.8} is not needed.

\begin{proof}[Proof of Theorem \ref{y-thm6.3}]
If $(X, S+B)$ is log canonical in a neighborhood of $S$, 
then we can easily check that $(S^\nu, B_{S^\nu})$ is log canonical. 
From now, we will prove that $(X, S+B)$ is log canonical 
in a neighborhood of $S$ under the assumption that 
$(S^\nu, B_{S^\nu})$ is log canonical. 
We take an arbitrary point $P\in S$. It suffices 
to prove that $(X, S+B)$ is log canonical around $P$. 
We take a Stein compact subset $W$ of $X$ such that 
$\Gamma (W, \mathcal O_X)$ is noetherian and 
an open neighborhood $U$ of $P$ such that 
$P\in U\subset W$. 
Let $f\colon Z\to X$ be a dlt blow-up after shrinking $X$ around 
$W$ with $K_Z+\Delta_Z=f^*(K_X+\Delta)$ as in 
Theorem \ref{b-thm1.8}. 
Note that $K_Z+\Delta^\dag_Z=f^*(K_X+\Delta)-G$ is $f$-nef over $W$. 
We also note that $\Nlc (Z, \Delta_Z)=\Supp G$. 
Since $-G$ is $f$-nef over $W$, 
$\Nlc(Z, \Delta_Z)=f^{-1}\left(\Nlc(X, \Delta)\right)$ holds 
over $U$. 
Let $T$ be the strict transform of $S$ on $Z$. 
Since $(S^\nu, B_{S^\nu})$ is log canonical, 
$T\cap \Supp G=\emptyset$. 
This implies that $S\cap \Nlc(X, \Delta)=\emptyset$ on $U$. 
Hence $(X, \Delta)$ is log canonical around $S$ on $U$. 
We finish the proof. 
\end{proof}

\section{Appendix:~On Fano contractions}\label{f-sec7}

In this appendix, we explicitly state some results on extremal Fano contractions. 
Here, by an extremal Fano contraction, we mean a contraction morphism associated to an extremal ray whose target has strictly smaller dimension than its source. 
While the results explained in this section are well known in the original algebraic setting, they do not seem to be explicitly stated in the literature within the complex analytic framework. 
For the convenience of the reader, we explain them below. 
Although these results are not strictly indispensable, they are used in Section \ref{p-sec4} of this paper.

\medskip

First, let us recall some basic definitions.

\begin{defn}[{Divisorial sheaves, see \cite[Definition 5.4.1]{fujino-cone-contraction}}]\label{f-def7.1}
Let $X$ be a normal complex variety. 
A coherent sheaf $\mathcal L$ on $X$ is said to be {\em{divisorial}} if it is reflexive of rank one. 
\end{defn}

In the complex analytic setting, a divisorial sheaf does not necessarily correspond to a Weil divisor. Thus, we need the following definition.

\begin{defn}[{Strong $\mathbb Q$-factoriality, see \cite[Definition 5.4.4]{fujino-cone-contraction}}]\label{f-def7.2}
Let $\mathcal L$ be a divisorial sheaf on a normal complex variety $X$. 
Let $x$ be a point of $X$. 
If there exists a positive integer $m_x$ such that 
\[
\mathcal L^{[m_x]}:=\left(\mathcal L^{\otimes m_x}\right)^{**}
\] 
is a line bundle on some open neighborhood of $x\in X$, then $\mathcal L$ is said to be {\em{$\mathbb Q$-Cartier at $x$}}. 

Let $\pi\colon X\to Y$ be a morphism of complex analytic spaces such that 
$X$ is a normal complex variety and let $W$ be a subset of $Y$. If every divisorial sheaf defined over some open neighborhood of $\pi^{-1}(W)$ is $\mathbb Q$-Cartier at any point of $\pi^{-1}(W)$, then we say that $X$ is {\em{strongly $\mathbb Q$-factorial over $W$}}. 
\end{defn}

Strong $\mathbb Q$-factoriality is preserved under extremal Fano contractions.

\begin{lem}[Fano contractions]\label{f-lem7.3}
Let $\pi\colon X\to Y$ be a projective morphism of complex analytic spaces, and let $W$ be a compact subset of $Y$ such that the dimension of $N^1(X/Y; W)$ is finite. 
Let $(X, \Delta)$ be a log canonical pair. 
Let $\varphi_R\colon X\to Z$ be the Fano contraction, that is, $\dim Z<\dim X$, associated to a $(K_X+\Delta)$-negative extremal ray $R$ of $\overline{\NE}(X/Y; W)$. 
Assume that $X$ is strongly $\mathbb Q$-factorial over $W$. 
Then $Z$ is strongly $\mathbb Q$-factorial over $W$. 
In particular, $K_Z$ is $\mathbb Q$-Cartier after shrinking $Y$ around $W$ suitably. 
\end{lem}

Although the following proof is more or less well known to experts (see, for example, \cite[Corollary 3.18]{kollar-mori} and the proof of \cite[Lemma 5.4.9]{fujino-cone-contraction}), we describe it in detail for the reader's convenience. 

\begin{proof}[Proof of Lemma \ref{f-lem7.3}] 
Throughout the proof, we will freely shrink $Y$ around $W$ without explicit mention. 
Let $\mathcal L$ be a divisorial sheaf on $Z$. 
We put $\mathcal M:=\left(\varphi^*_R\mathcal L\right)^{**}$. 
Then $\mathcal M$ is a divisorial sheaf on $X$. 
Since $X$ is strongly $\mathbb Q$-factorial over $\pi^{-1}(W)$, there exists a positive integer $m$ such that $\mathcal M^{[m]}$ is a line bundle on $X$. 
We see that $\mathcal M^{[m]}\cdot C=0$ for every $[C]\in R$. 
By the cone and contraction theorem (see, for example, \cite[Theorem 1.1.6]{fujino-cone-contraction}), there exists a line bundle $\mathcal N$ on $Z$ such that $\mathcal M^{[m]}\simeq \varphi^*_R\mathcal N$. 
This implies that $\mathcal L^{[m]}\simeq \mathcal N$. 
Hence $\mathcal L$ is $\mathbb Q$-Cartier. 
Thus $Z$ is strongly $\mathbb Q$-factorial over $W$. 
In particular, $K_Z$ is $\mathbb Q$-Cartier. 
\end{proof}

In the settings we frequently consider, the notions of $\mathbb Q$-factoriality and strong $\mathbb Q$-factoriality coincide.

\begin{lem}\label{f-lem7.4}
Let $\pi\colon X\to Y$ be a projective morphism of complex analytic spaces and let $W$ be a Stein compact subset of $Y$. 
Then $X$ is $\mathbb Q$-factorial over $W$ if and only if $X$ is strongly $\mathbb Q$-factorial over $W$. 
\end{lem}

\begin{proof}
Let $\mathcal L$ be a divisorial sheaf defined over some open neighborhood of $\pi^{-1}(W)$. 
Then, after replacing $Y$ with a suitable Stein open neighborhood of $W$, we can always find a Weil divisor $D$ on $X$ such that $\mathcal L \simeq \mathcal O_X(D)$. For details, see \cite[Remark 2.43]{fujino-bchm}. This implies the desired equivalence.
\end{proof}

The following theorem concerning the singularities of the target of a Fano contraction is nontrivial and deeper than Lemma \ref{f-lem7.3}. 
However, it is more or less a recognized fact in the algebraic case.

\begin{thm}\label{f-thm7.5}
Let $f\colon X\to Y$ be a projective surjective morphism of complex analytic spaces with $f_*\mathcal O_X\simeq \mathcal O_Y$. 
Assume that $(X, \Delta)$ is kawamata log terminal with $K_X+\Delta\sim _{\mathbb Q, f} 0$. 
We further assume that $K_Y$ is $\mathbb Q$-Cartier. 
Then $Y$ is kawamata log terminal. 
\end{thm}

This theorem is nothing but a generalization of \cite[Theorem 1.2]{fujino-application} to the complex analytic setting.

\begin{proof}[Sketch of Proof of Theorem \ref{f-thm7.5}] 
Since $K_Y$ is assumed to be $\mathbb Q$-Cartier, it suffices to take an arbitrary point $y\in Y$ and show that there exists an effective $\mathbb Q$-divisor $\Gamma$ on $Y$ such that the pair $(Y, \Gamma)$ is kawamata log terminal in a neighborhood of $y$. 
Therefore, we may fix a point $y\in Y$ and replace $Y$ with a sufficiently small Stein open neighborhood of $y$. 
Note that we are allowed to repeat this replacement finitely many times during the argument if necessary. 
Under this localized setting, it is possible to run the proof of \cite[Theorem 1.2]{fujino-application} within the complex analytic framework. 
In \cite{fujino-application}, Kawamata's positivity theorem (see \cite[Theorem 2.4]{fujino-application}) plays a key role as the main ingredient of the proof. 
In our complex analytic setting, one can replace it with \cite[Theorem 21.4]{fujino-bchm} to complete the proof.
\end{proof}

\begin{cor}\label{f-cor7.6}
Let $\varphi\colon X\to Z$ be a projective morphism of complex analytic spaces with $\varphi_*\mathcal O_X\simeq \mathcal O_Z$ such that $(X, \Delta)$ is kawamata log terminal, $-(K_X+\Delta)$ is $\varphi$-ample, and $K_Z$ is $\mathbb Q$-Cartier. 
Then $Z$ is kawamata log terminal. 
\end{cor}

\begin{proof}
Since the problem is local on $Z$, throughout this proof we will freely shrink $Z$ without explicit mention. 
By perturbing $\Delta$ slightly, we may assume that $K_X+\Delta$ is $\mathbb Q$-Cartier. 
Then we have an effective $\mathbb Q$-divisor $D$ on $X$ such that $D\sim _{\mathbb Q} -(K_X+\Delta)$ and that $(X, \Delta+D)$ is still kawamata log terminal. 
Then, by Theorem \ref{f-thm7.5}, we obtain that $Z$ is kawamata log terminal. 
\end{proof}

\section{Supplementary comments}\label{x-sec8}

In this final section, we provide some supplementary comments 
on \cite{fujino-abundance} and 
\cite{fujino-gongyo} for the reader's convenience. 

\begin{say}\label{x-say8.1} 
In \cite[2.20]{fujino-gongyo} and the proof of 
\cite[Theorem 4.3]{fujino-gongyo}, we claim that the 
results in \cite[Section 2]{fujino-abundance} can be 
freely used based on \cite{bchm}. However, in order to 
prove \cite[Proposition 2.1]{fujino-abundance} in 
dimension $n \geq 4$ (see also \cite[Remark 2.2]{fujino-abundance}), 
the minimal model program with scaling established in \cite{bchm} is 
not sufficient. The following result is required.

\begin{thm}[{cf.~\cite[Theorem 5.2]{birkar}}]\label{x-thm8.2}
Let $\pi\colon X\to Y$ be a projective surjective morphism 
of normal quasi-projective varieties and let $(X, \Delta)$ be 
a $\mathbb Q$-factorial divisorial log terminal pair such that 
$K_X+\Delta\sim _{\mathbb Q, \pi}0$. 
Assume that $\pi(\lfloor \Delta\rfloor)\subsetneq Y$, 
that is, $\lfloor \Delta\rfloor$ is vertical with respect to 
$\pi$. 
Then $(X, \Delta-\varepsilon \lfloor \Delta\rfloor)$ has a 
good minimal model over $Y$ for 
every rational number $\varepsilon$ 
with $0<\varepsilon \leq 1$. 
In particular, every $(K_X+\Delta-\varepsilon \lfloor 
\Delta\rfloor)$-minimal model program with ample 
scaling over $Y$ always terminates. 
\end{thm}

If $\pi(\lfloor \Delta\rfloor)=Y$, 
then
$K_X + \Delta - \varepsilon \lfloor \Delta\rfloor$ 
is not $\pi$-pseudo-effective
for any rational number $\varepsilon$ with $0 < \varepsilon \leq 1$.
In this case, the minimal model program established in \cite{bchm}
is sufficient to prove \cite[Proposition 2.1]{fujino-abundance} in 
dimension $n \geq 4$.
Theorem \ref{x-thm8.2} follows from the results in \cite{birkar}.
We note that the pair $(X, \Delta - \varepsilon \lfloor \Delta\rfloor)$ is 
kawamata log terminal
for every rational number $\varepsilon$ 
with $0 < \varepsilon \leq 1$.
Hence, to prove Theorem \ref{x-thm8.2},
we do not require any deep results 
related to the abundance conjecture for log canonical pairs. 
There is no circular reasoning even if one uses \cite[Theorem 5.2]{birkar}
in the context of \cite{fujino-gongyo}.
For further details, see \cite[Theorem 5.2]{birkar}.
We also point out that the most general 
result in this direction is treated in \cite{hashizume1}. 
By combining \cite{bchm} and Theorem \ref{x-thm8.2},
we are free to apply the results 
in \cite[Section 2]{fujino-abundance} for dimension $n \geq 4$.
Therefore, there are no significant issues in the 
arguments of \cite{fujino-gongyo}. 
In the present paper, in 
Step \ref{p-prop4.6-step1} of the proof of Proposition \ref{p-prop4.6},
we use \cite[Theorem 1.2]{enokizono-hashizume-mmp} instead of 
Theorem \ref{x-thm8.2}.
The minimal model program developed in \cite{fujino-bchm} 
is insufficient for the proof of Proposition \ref{p-prop4.6}.
\end{say}

\begin{say}\label{x-say8.3}
We make a small remark 
on \cite[Lemma 2.3]{fujino-abundance} 
for the reader's convenience. 
In the proof of \cite[Lemma 2.3]{fujino-abundance}, 
we claim that there exists a $\mathbb Q$-divisor 
$P$ on $V$ satisfying $K_{D_i}+\mathrm{Diff}(\Delta-D_i)
=u|_{D_i}^*(K_V+P)$. However, it is not clear 
when $D_1$ is irreducible and the mapping degree $\deg [D_1:V]=2$. 
In that case, we can not apply 
\cite[12.3.4 Theorem]{flips-and-abundance}. 

\begin{ex}\label{x-ex8.4}
We put $Z:=\mathbb P^1\times \mathbb P^1$. 
Let $\Delta$ be a general 
member of 
$|p^*_1\mathcal O_{\mathbb P^1}(2)
\otimes p^*_2\mathcal O_{\mathbb P^1}(2)|$, 
where $p_i$ is the $i$th projection for $i=1, 2$. 
Then $\Delta$ is a smooth elliptic curve and $K_Z+\Delta\sim 0$. 
We consider the first projection 
$h\colon Z\to R:=\mathbb P^1$. 
In this setting, 
$u:=h\colon Z\to V:=R$ is 
a $(K_Z+\Delta-\varepsilon \lfloor 
\Delta\rfloor)$-negative extremal 
Fano contraction over $R$. 
Of course, the horizontal part $\Delta^h=:D_1$ of $\lfloor 
\Delta\rfloor$ is irreducible and the mapping degree $\deg[D_1: V]$ is two. 
In \cite[Lemma 2.3]{fujino-abundance}, 
we claim that there exists an effective $\mathbb Q$-divisor 
$P$ on $V$ such that $K_{D_1}=u|^*_{D_1}(K_V+P)$ holds 
without explaining it explicitly. 
It is somewhat misleading when $D_1$ is irreducible 
with $\deg[D_1:V]=2$. 
\end{ex}

Fortunately, as shown in the proof of 
Lemma \ref{p-lem4.5} in the present paper, 
it is not necessary to construct a $\mathbb Q$-divisor 
$P$ on $V$ in Case (III). Therefore, there are no 
significant difficulties in the proof 
of \cite[Lemma 2.3]{fujino-abundance}.
\end{say}


\end{document}